\DeclareSymbolFontAlphabet{\mathbb}{AMSb}
\DeclareSymbolFontAlphabet{\mathbbl}{bbold}
\newcommand{\T}{\mathcal{T}}
\newcommand{\CH}{\mathcal{H}}
\newcommand{\CF}{\mathcal{F}}
\newcommand{\CC}{\mathcal{C}}
\newcommand{\CP}{\mathcal{P}}
\newcommand{\B}{\mathcal{B}}
\newcommand{\F}{\mathbb{F}}
\newcommand{\Z}{\mathbb{Z}}
\newcommand{\R}{\mathbb{R}}
\newcommand{\hX}{\widehat{X}}
\newcommand{\hx}{\hat{x}}
\newcommand{\hy}{\hat{y}}
\newcommand{\ord}{\mathop{\mathrm{ord}}\nolimits}
\newcommand{\Hom}{\mathop{\mathrm{Hom}}\nolimits}
\newcommand{\im}{\mathop{\mathrm{im}}\nolimits}
\newcommand{\Sym}{\mathop{\mathrm{Sym}}\nolimits}
\newcommand{\Aut}{\mathop{\mathrm{Aut}}\nolimits}
\newcommand{\ba}{\mathbf{a}}
\newcommand{\bu}{\mathbf{u}}
\newtheorem{theorem}{Theorem}[section]
\newtheorem{propos}[theorem]{Proposition}
\newtheorem{cor}[theorem]{Corollary}
\newtheorem{lem}[theorem]{Lemma}
\newtheorem{question}[theorem]{Question}
\newtheorem{prob}[theorem]{Problem}
\theoremstyle{definition}
\newtheorem{constr}[theorem]{Construction}
\newtheorem{defin}[theorem]{Definition}
\newtheorem{remark}[theorem]{Remark}
\newtheorem{example}[theorem]{Example}
\author{V.\,M. Buchstaber}
\author{A.\,A. Gaifullin}
\author{A.\,P.~Veselov}
\thanks{The work of V.\,M.~Buchstaber and A.\,A.~Gaifullin was performed at the Steklov International Mathematical Center and supported by the Ministry of Science and Higher Education of the Russian Federation (agreement no. 075-15-2022-265).}
\address{\textnormal{\bf Victor M. Buchstaber}}
\address{Steklov Mathematical Institute of Russian Academy of Sciences, Moscow, Russia}
\address{Lomonosov Moscow State University, Russia}
\address{National Research University ``Higher School of Economics'', Moscow, Russia}
\email{buchstab@mi-ras.ru}
\address{}
\address{\textnormal{\bf Alexander A. Gaifullin}}
\address{Steklov Mathematical Institute of Russian Academy of Sciences, Moscow, Russia}
\address{Skolkovo Institute of Science and Technology, Russia}
\address{Lomonosov Moscow State University, Russia}
\address{Institute for Information Transmission Problems of the Russian Academy of Sciences (Kharkevich Institute), Moscow, Russia}
\email{agaif@mi-ras.ru}
\address{}
\address{\textnormal{\bf Alexander P. Veselov}}
\address{Loughborough University, UK}
\email{A.P.Veselov@lboro.ac.uk}
\subjclass[2020]{20N99 (Primary); 08A05, 14L99, 22A30 (Secondary)}
\title{Classification of involutive commutative two-valued groups}
\begin{document}

\begin{abstract}
A complete classification of finitely generated involutive commutative two-valued groups is obtained. Three series of such two-valued groups are constructed: principal, unipotent and special, and it is shown that any finitely generated involutive commutative two-valued group is isomorphic to a two-valued group belonging to one of these series.
A number of classification results are obtained for topological involutive commutative two-valued groups in the Hausdorff and locally compact cases.
The classification of algebraic involutive two-valued groups in the one-dimensional case is also discussed.
\end{abstract}

\maketitle

\tableofcontents

\section{Introduction}\label{section_formulation}

A multi-valued group is a generalization of an ordinary group such that the product of any pair of elements is a multiset, that is, an unordered set of elements (possibly with repetitions). The theory of multi-valued multiplications, where the cardinality of the result of multiplication of two elements for a given multi-valued group can take different values, has rich history dating back to the 19th century, see ~\cite{Lit85}.

In the present paper we study the $n$-valued groups. The key to the definition of an $n$-valued group is fixing the cardinality~$n$ of a multiset that is a product of two elements. The concept of $n$-valued group originates from the construction of V.M.~Buchstaber and S.P.~Novikov~\cite{BuNo71} in the theory of characteristic classes of vector bundles. The foundations of the algebraic theory of $n$-valued groups were laid in the works of V.M.~Buchstaber. This theory and its applications were developed by V.M.~Buchstaber, his students (A.N.~Kholodov, P.V.~Yagodovsky and others) and co-authors (E.~ Rees, A.P.~Veselov, M.I.~Monastyrsky, V.~Dragovic and others), see references in surveys~\cite{Buc06} and ~\cite{BuVe19}.

In this paper we will deal only with two-valued groups. We will denote by $\Sym^2(X)$ the second symmetric power of a set~$X$, that is, the set of all two-element multisets with elements from~$X$. We will use square brackets to enumerate elements of multisets.

\begin{defin}
\textit{Two-valued group} is a set~$X$ with a two-valued multiplication $*\colon X\times X\to \Sym^2(X)$, identity element $e\in X$, and the inverse $x\mapsto x^{-1}$ with the following properties.
\begin{itemize}
\item \textit{Associativity}: for any three elements $x,y,z\in X$ there is a coincidence of four-element multisets
$$
x*(y*z)=(x*y)*z.
$$
\item \textit{Strong identity}: for any element~$x$ there are equalities
$$
e*x=x*e=[x,x].
$$
\item \textit{Existence and uniqueness of the inverse}: for any element $x\in X$, each of the multisets $x*x^{-1}$ and $x^{-1}*x$ contains the identity~$e$; moreover, if the element $y$ is such that one of the multisets $x*y$ and~$y*x$ contains the identity, then $y = x^{-1}$.
\end{itemize}
A two-valued group is called \textit{commutative} if
$$
x*y=y*x
$$
for all $x,y\in X$.
\end{defin}

\begin{remark}
The original definition of a two-valued group (see ~\cite{Buc06}) did not require the uniqueness of the inverse element. This additional condition was used in ~\cite{BVEP96} and ~\cite{BuVe19}. For the present work this condition turned out to be important.
\end{remark}

The most important source of $n$-valued groups is given by the construction of coset multi-valued groups, see ~\cite[ Section ~6]{ Buc06}. Coset $n$-valued groups are constructed from a pair consisting of a single-valued group~$G$ and a finite subgroup $H$ of its automorphism group. At the same time, the theory of $n$-valued groups is not exhausted by coset groups. Firstly, different pairs~$(G,H)$ can lead to isomorphic $n$-valued groups; secondly, there is a rich class of $n$-valued groups that are not coset, see \cite{Buc06},~\cite{BVEP96}. We will need the following construction of the coset two-valued groups.

\begin{constr}[Coset two-valued group]
Let $G$ be an ordinary (single-valued) group and $\iota\colon G\to G$ be its involutive (that is, such that $\iota^2=\mathrm{id}$) automorphism. Then the quotient set $X=G/\iota$ is endowed with the structure of a two-valued group in such a way that
$$
\pi(g)*\pi(h)=\bigl[\pi(gh),\pi(g\iota(h))\bigr]
$$
for all $g,h\in G$, where $\pi\colon G\to X$ is the natural projection. The identity of this two-valued group is the image of the identity of the group~$G$, and the inverse element is defined by the formula $\pi(g)^{-1} = \pi(g^{-1})$.
In the case of abelian groups $G$ there is an important example of the involution, namely taking the inverse element (antipodal involution): $\iota_{\ba}(g)= g^{-1}.$
\end{constr}

One of the important features of the theory of $n$-valued groups is a high complexity of classification and enumeration problems. This is due to the fact that the number of isomorphism classes of $n$-valued groups on the set of $k$ elements grows extremely rapidly with the growth of $k$ and $n$. The same effect holds even if we consider only commutative $n$-valued groups. We are aware of attempts to solve the problem of enumerating isomorphism classes of $n$-valued groups for given $n$ and~$k$ using high-performance computing technology, which did not lead to significant results due to the computational complexity of the problem.

In this connection the problem of identifying such classes of $n$-valued groups, for which classification problems are affordable, is of natural interest. An important class of this type is the class of involutive two-valued groups introduced by V.M.~Buchstaber and A.P.~Veselov in~\cite{BuVe19} in connection with the Conway topograph.

\begin{defin}\label{defin_invol}
An element $x$ of a two-valued group~$X$ is called \textit{weak involution} if $x^{-1}=x$ or, equivalently, if the multiset $x*x$ contains the identity~$e$. An element $x\in X$ is called \textit{strong involution} if $x*x=[e,e]$. A two-valued group~$X$ is called \textit{involutive} if it consists of weak involutions, that is, $x^{-1}=x$ for all $x\in X$.
\end{defin}

\begin{remark}
Note that in ~\cite{BVEP96} the concept of an involutive $n$-valued group is used in a different sense, namely, as the involutivity of the operation of taking the inverse element $x\mapsto x^{-1}.$ It is clear that the class of involutive groups in the sense of ~\cite{BVEP96} contains the class of involutive groups in sense of work~\cite{BuVe19}. The main result of ~\cite{BVEP96} is the proof that the group algebras of the involutive $n$-valued groups introduced in it coincide with the so-called combinatorial algebras, see ~\cite{BaIt87}. In this paper we use the term `involutive two-valued group' in the sense of ~\cite{BuVe19}.
\end{remark}

The main goal of this paper is to classify involutive commutative two-valued groups. The problem of classifying such two-valued groups was posed in ~\cite{BuVe19}. In that paper it was conjectured that every involutive commutative two-valued group is isomorphic to a coset two-valued group of the form $A/\iota_{\ba}$, where $A$ is an abelian group and $\iota_{\ba}(g)=g^{-1}$ is an antipodal involution on this group. We will show that this conjecture is false even for finite two-valued groups.

We consider the classification problem for involutive commutative two-valued groups in the following classes:
\begin{itemize}
\item finitely generated two-valued groups (theorem ~\ref{theorem_main_fg}, the proof of which occupies sections~\ref{section_1gen}--\ref{section_final_proof}),
\item arbitrary two-valued groups without topology (Sec. ~\ref{section_non_fg}),
\item Hausdorff topological two-valued groups (Sec. ~\ref{section_top}),
\item locally compact Hausdorff two-valued groups (Sec. ~\ref{section_lc}),
\item algebraic two-valued groups (Sec. ~\ref{section_alg}).
\end{itemize}

A complete classification will be obtained in the case of finitely generated groups (Theorem ~\ref{theorem_main_fg}), as well as in the case of compact topological groups without small subgroups (Theorem ~\ref{theorem_compact}).

Most of the work will be devoted to the finitely generated case. After that we will discuss how our approach extends to other classes of two-valued groups and what results and new questions are obtained along the way.

Let us now formulate a classification theorem for finitely generated involutive commutative two-valued groups. We will show that, along with \textit{principal series}, which consists of coset two-valued groups of the form $A/\iota_{\ba}$, there are two more series of finitely generated involutive commutative two-valued groups; we call them \textit{unipotent} and \textit{special series}. Interestingly, the two-valued groups of the unipotent series are also obtained using the coset construction from abelian groups, but with respect to a non-antipodal involution. Two-valued groups from the special series are not coset, see Proposition ~\ref{propos_no_coset} below. In particular, our result gives new constructions of non-coset two-valued groups.

We now present the constructions of those series of involutive commutative two-valued groups.

Throughout this article, we will denote by~$C_n$ the cyclic group of order~$n$; including $C_{\infty}$, which is an infinite cyclic group. It will be convenient for us to use the multiplicative notation for all groups, including abelian ones.

\subsection*{1. Principal series} Let $A$ be a (single-valued) abelian group. Consider \textit{antipodal involution} $\iota_{\ba}\colon A\to A$ defined by the formula $\iota_{\ba}(a)=a^{-1}$. Then the coset two-valued group
\begin{equation*}
X^{\ba}_A=A/\iota_{\ba}
\end{equation*}
is an involutive commutative group.

If $A$ is a finitely generated abelian group, then, as it is well known, it can be represented uniquely up to isomorphism as a product $C_{d_1}\times C_{d_2}\times \cdots\times C_{d_k}$, where $2\le d_i\le \infty$ and $d_i$ divides $d_{i+1}$ for all $i=1,\dots, k-1$ (see, for example, \cite{Vinberg}). Introduce the notation
$$
X_{d_1,\ldots,d_k}^{\ba}=X_{C_{d_1}\times \cdots\times C_{d_k}}^{\ba}.
$$
It can be directly verified that if the group~$A$ is finite, then the number of elements in the two-valued group~$X^{\ba}_A$ is equal to
$$
\frac12(d_1d_2\cdots d_k+2^r),
$$
where $r$ is the number of even numbers among numbers~$d_1,\ldots,d_k$.

\begin{remark}
The simplest but important case of involutive two-valued groups of the principal series are the groups $X_{m\times 2}^{\ba}=C_2^m/\iota_{\ba}$ (where $m\times 2$ denotes the sequence $2,\ldots,2$ of $m$ twos). The two-valued group $X_{m\times 2}^{\ba}$ is the so-called \textit{double} of the group~$C_2^m$. This means that $X_{m\times 2}^{\ba}$ coincides with ~$C_2^m$ as sets, and the operation in~$X_{m\times 2}^{\ba}$ is obtained by doubling the operation in~$C_2^m$, that is, $x*y=[xy,xy]$. Note that the double can be defined for any group~$G$. The result is always a two-valued group, which is involutive in the sense of ~\cite{BVEP96}. It is commutative and involutive in the sense of ~\cite{BuVe19} only if $G$ is a vector space over a field of two elements~$\F_2$.
\end{remark}

\begin{remark}
It is natural to consider two-valued groups~$X_{\infty,\ldots,\infty}^{\ba}$ as \textit{free} involutive commutative two-valued groups. The two-valued groups $X_{\infty}^{\ba}=C_{\infty}/\iota_{\ba}$ and $X_{\infty,\infty}^{\ba}=(C_{\infty}\times C_{\infty})/\iota_{\ba}$ are studied in detail in ~\cite{BuVe19}, where they were called \textit{Buchstaber-Novikov two-valued group} and \textit{Conway two-valued group} and denoted by ~$\mathbb{X}_1$ and ~$\mathbb{X}_2$ respectively.
\end{remark}

\subsection*{2. Unipotent series} We need the notion of the \textit{Boolean group}. A group is called \textit{Boolean} if each non-identity element of it has order~$2$, see ~\cite[ Chapter ~1]{ GiHa09}. It is easy to show that any such group is abelian. Thus, a Boolean group is a vector space over a field of two elements~$\F_2$ with an operation written in multiplicative form. For a Boolean group~$V$ we will denote by~$\dim V$ its dimension as a vector space.

Let $V$ be a Boolean group.
Consider \textit{unipotent involution}
$\iota_{\bu}\colon V\times V\to V\times V$, determined by the formula
\begin{equation}
\label{unip}
\iota_{\bu}(a,b)=(a,ab).
\end{equation}
Then the coset two-valued group
\begin{equation*}
X^{\bu}_V=(V\times V)/\iota_{\bu}
\end{equation*}
is an involutive commutative two-valued group.

\begin{remark}
Using the automorphism of the group~$V\times V$ defined by the formula $(a,b)\mapsto (ab,b)$, the involution~$\iota_{\bu}$ is translated into the permutation involution $$\iota_{\bf t}: (a,b)\mapsto (b,a),$$ but for our purposes it is more convenient to consider the involution in the unipotent form (\ref{unip}).
The equivalence of these two involutions is explained by the well-known isomorphism
$$
GL(2, \F_2)\cong S_3
$$
with the permutation group of a three-element set in which all elements of order two are conjugated.
\end{remark}

It can be directly verified that if $\dim V=n<\infty$, that is, $V\cong C_2^n$, then the two-valued group
$$
X^{\bu}_n=X^{\bu}_{C_2^n}
$$
consists of $2^{2n-1}+2^{n-1}$ elements.

If $n=1$, then $X_1^{\bu}=X^{\bu}_{C_2}$ is a three-element group~$\{e,v,x\}$ with multiplication table $v*v=[e,e]$, $v*x=[x,x]$, $x*x=[e,v]$. It is easy to see that $X^{\bu}_{C_2}\cong X^{\ba}_{C_4}$.

When $n=2$ there is a less obvious isomorphism of $10$-element two-valued groups $X^{\bu}_2\cong X^{\ba}_{4,4}$, that is, $X^{\bu}_{C_2\times C_2}\cong X^{\ba}_{C_4\times C_4}$ (see Example~\ref{ex_iso} and section ~\ref{section_124}).

Thus, when $\dim V\le 2$ the two-valued groups $X^{\bu}_V$ are contained in the principal series of examples. Nevertheless, as we will see below, for $\dim V\ge 3$, the two-valued groups~$X^{\bu}_V$ are not isomorphic to any two-valued groups of the principal series.

\begin{remark}
It is easy to see that a coset two-valued group $G/\iota$ is involutive if and only if for any  element $g\in G$ one of the two equalities $g^{-1}=g$ or $g^{-1}=\iota(g)$ holds. The first possibility is realised for two-valued groups of the unipotent series, the second for two-valued groups of the principal series.
\end{remark}

\subsection*{3. Special series}

V.\,M.~Buchstaber and E.~Rees proposed a way of constructing  two-valued groups by adding an additional element to a usual (single-valued) group, see Example~(6) in Section~3 of the paper~\cite{BR-97}. The special series of involutive commutative two-valued groups is obtained by applying this construction to Boolean groups.

Let $V$ be a Boolean group with identity~$e$. We add one more element to the set~$V$, which we denote by ~$s$, and define a commutative two-valued operation on the set $Y_V=V\cup\{s\}$ by the formulae
\begin{align*}
x*y&=[xy,xy],&&x,y\in V,\ x\ne y,\\
x*x&=[e,s],&&x\in V,\ x\ne e,\\
s*x&=[x,x],&&x\in V,\ x\ne e,\\
e*x&=[x,x],&&x\in V\cup\{s\},\\
s*s&=[e,e].&&
\end{align*}
It can be directly verified that the introduced operation is well defined, associative and transforms~$Y_V$ into an involutive commutative two-valued group.

If $\dim V=n<\infty$, that is, $V\cong C_2^n$, then the two-valued group
$$Y_n=Y_{C_2^n}$$ consists of the $2^n+1$ elements.

For $n=1$, the two-valued group~$Y_1$ consists of the three elements $e$, $s$ and~$x$ and has the multiplication table $s*s=[e,e]$, $s*x=[x,x]$, $x*x=[e,s]$. Thus, $Y_1\cong  X^{\ba}_{C_4} \cong X^{\bu}_{C_2}$.
Nevertheless, as we shall see below  (Proposition~\ref{propos_special_yn}), when $\dim V\ge 2$ the two-valued group~$Y_V$ is not coset and, in particular, it is not isomorphic to any of the two-valued groups of the principal and unipotent series. (One can also deduce that~$Y_V$ is not coset when $\dim V\ge 2$ from Proposition~3.1 in~\cite{BR-97}.)

Let us give one more interpretation of the two-valued group~$Y_V$. The set $V\setminus\{e\}$ can be naturally considered as the projectivization~$\mathbb{P}(V)$ of a vector space~$V$. Then $Y_V$ is the projective space~$\mathbb{P}(V)$ with two added elements~$e$ and~$s$. The multiplication rule is given as follows:
\begin{enumerate}
\item $e$ is the identity, i.e.  $e*x=[x,x]$ for all $x\in Y_V$,
\item $x*y=[z,z]$ if $x$ and~$y$ are two different points of the projective space~$\mathbb{P}(V)$ and $z$ is the third point on the line passing through~$x$ and~ $y$,
\item $x*x=[e,s]$ and $s*x=[x,x]$ if $x\in\mathbb{P}(V)$,
\item $s*s=[e,e]$.
\end{enumerate}

\subsection*{4. Product with Boolean group}
If $X$ is an involutive commutative two-valued group and $W$ is a Boolean group, then the direct product $X\times W$ is again endowed with the structure of an involutive commutative two-valued group, the multiplication in which is defined by the formula $(x_1,w_1)*(x_2,w_2)=(x_1*x_2,w_1w_2)$. More precisely, if $x_1*x_2=[z_1,z_2]$, then
$$
(x_1,w_1)*(x_2,w_2)=\bigl[(z_1,w_1w_2),(z_2,w_1w_2)\bigr].
$$
If we take involutive commutative groups of the principal series as $X$, then this construction does not lead to new examples, since for any abelian group~$A$ and any Boolean group~$W$ there is a canonical isomorphism
\begin{equation}\label{eq_iso_a}
X^{\ba}_A\times W\cong X^{\ba}_{A\times W}.
\end{equation}
However, if we take two-valued groups of the unipotent and special series as~$X$, then we obtain new examples of involutive two-valued groups.

\smallskip

We are now ready to state a classification theorem for finitely generated involutive commutative two-valued groups.

\begin{theorem}
\label{theorem_main_fg}
Every finitely generated involutive commutative two-valued group is isomorphic to one of the following two-valued groups:
\begin{itemize}
\item[\textnormal(1)] $X^{\ba}_{d_1,\ldots,d_k}$, where $2\le d_i\le\infty$ and $d_1|d_2|\cdots|d_k$;
\item[\textnormal(2)] $X^{\bu}_n\times C_2^m$, where $n\ge 1$, $m\ge 0$;
\item[\textnormal(3)] $Y_n\times C_2^m$, where $n\ge 1$, $m\ge 0$.
\end{itemize}
For each $m\ge 0$ there are isomorphisms
\begin{gather}
X^{\ba}_{m\times 2,4}\cong X^{\bu}_1\times C_2^m\cong Y_1\times C_2^m,\label{eq_main_iso1}\\
X^{\ba}_{m\times 2,4,4}\cong X^{\bu}_2\times C_2^m,\label{eq_main_iso2}
\end{gather}
where $m\times 2$ denotes the sequence $2,\ldots,2$ of $m$ twos. The indicated isomorphisms exhaust all isomorphisms between the listed two-valued groups.
\end{theorem}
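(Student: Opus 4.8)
The plan is to prove the two assertions of Theorem~\ref{theorem_main_fg} separately: first, the \emph{classification} statement that every finitely generated involutive commutative two-valued group is isomorphic to a member of the list (1)--(3); and second, the \emph{rigidity} statement that the only isomorphisms among the listed groups are those displayed in \eqref{eq_main_iso1}--\eqref{eq_main_iso2}. The classification I would obtain by induction on the number of generators, with the single-generator case (which, per section~\ref{section_1gen}, yields exactly the cyclic principal groups $X^{\ba}_{C_d}$, $2\le d\le\infty$) as the base. The engine of the induction is two canonical pieces of structure attached to any involutive commutative $X$: the set $S\subseteq X$ of strong involutions, and the \emph{squaring map} $\phi\colon X\to X$ sending $x$ to the unique element with $x*x=[e,\phi(x)]$. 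First I would show that $S$ is a sub-two-valued group isomorphic to the double of a Boolean group, and that $\phi$ is multiplicative in the sense that $\phi(x_1*x_2)=[\phi(x_1)\phi(x_2),\phi(x_1)\phi(x_2)]$ with $\phi|_S\equiv e$. Iterating $\phi$ produces a finite filtration of $X$ whose successive quotients are Boolean, and this filtration is what drives the induction.

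For the inductive step I would take a generating set, pass to the sub-two-valued group $X'$ generated by all but one generator $x$, and analyze the ways in which $x$ can extend $X'$ subject to associativity and to $x^{-1}=x$. The multiset $x*x=[e,\phi(x)]$ and the products $x*x'$ (for $x'\in X'$) are tightly constrained, and I expect exactly three admissible patterns to survive: $\phi(x)=e$ together with antipodal-type products (forcing a principal factor); $\phi(x)$ a new strong involution exhibiting the behaviour characteristic of the unipotent involution \eqref{unip} (forcing a unipotent factor $X^{\bu}_n$); or the appearance of the distinguished idempotent $s$ with $x*x=[e,s]$ (forcing a special factor $Y_n$). Splitting off the Boolean subgroup generated by the ``extra'' strong involutions via the canonical isomorphism \eqref{eq_iso_a} then places $X$ into one of the normal forms $X^{\ba}_{d_1,\dots,d_k}$, $X^{\bu}_n\times C_2^m$, or $Y_n\times C_2^m$.

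The rigidity half I would attack with invariants. The explicit isomorphisms \eqref{eq_main_iso1}--\eqref{eq_main_iso2} are the easy direction: they follow by combining the small-rank coincidences $X^{\bu}_1\cong Y_1\cong X^{\ba}_{C_4}$ and $X^{\bu}_2\cong X^{\ba}_{4,4}$ with the product rule \eqref{eq_iso_a}, namely $X^{\ba}_A\times W\cong X^{\ba}_{A\times W}$, which absorbs the factor $C_2^m$. For the converse I would separate the three series using, in increasing order of delicacy: cardinality; the order $|S|$ of the Boolean group of strong involutions; whether the group is coset (by Proposition~\ref{propos_no_coset} and Proposition~\ref{propos_special_yn} the special groups $Y_n$ with $n\ge2$ are not coset, which isolates the special series); and the vanishing of $\phi^{2}$ (which, among principal groups, singles out those with $A$ of exponent dividing~$4$). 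A short arithmetic comparison of these invariants shows that a given $X^{\bu}_n\times C_2^m$ can match at most one principal group, namely $X^{\ba}_{m\times 2,\,n\times 4}=X^{\ba}_{C_2^m\times C_4^n}$.

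The main obstacle is the very last comparison: deciding whether $X^{\bu}_n\cong X^{\ba}_{C_4^n}$. For $n\le 2$ this isomorphism \emph{does} hold (these are the coincidences recorded in the theorem), so no crude invariant can separate the two families in general. Indeed cardinality, $|S|$, coset-ness, the map $\phi$ (which in both groups is multiplicative with image $S$ and satisfies $\phi^2\equiv e$), and even the fibre structure of $\phi$ all agree. To prove $X^{\bu}_n\not\cong X^{\ba}_{C_4^n}$ for $n\ge 3$ I would therefore pass to a genuinely finer invariant of the underlying commutative association scheme---for instance its higher structure constants (the triple-product multiplicities forced by associativity) or, equivalently, the presence of a metric, $P$-polynomial structure on the folded group $C_4^n/\iota_{\ba}$ that is absent on $(C_2^n\times C_2^n)/\iota_{\bu}$ once $n\ge3$. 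Pinning down such an invariant, and checking that it degenerates exactly at $n\le2$ in parallel with the exceptional isomorphism $GL(2,\F_2)\cong S_3$, is the crux of the whole theorem.
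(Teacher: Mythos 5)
Your overall architecture (classify, then separate by invariants) matches the paper's, and your identification of the crux --- deciding whether $X^{\bu}_n\cong X^{\ba}_{C_4^n}$ for $n\ge 3$ --- is exactly right. But the proposal has a genuine gap, and it sits in both halves of your argument. The inductive step of your classification, the claim that when one generator $x$ is adjoined to $X'$ ``exactly three admissible patterns survive,'' is asserted rather than proved, and it is essentially the entire content of the theorem. The paper does not induct on generators at all; it runs a global trichotomy. If $X$ contains an element $t$ of order outside $\{1,2,4\}$, one must \emph{reconstruct} an honest abelian group $A$ with $X\cong A/\iota_{\ba}$; the paper does this by setting $A=\{(x,p)\in X\times X\mid p\in t*x\}$ and spending an entire section (Proposition~\ref{propos_main_construction} and its chain of lemmas) verifying that a canonical single-valued multiplication on $A$ is well defined and associative. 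Your sketch offers no mechanism for producing $A$. If instead all elements have order dividing $4$ and $X$ is non-special, the dichotomy between $X^{\ba}_{n\times 4}$ and $X^{\bu}_n$ is governed by the cohomology class of an involutive symmetric quasi-cocycle $\varphi\colon V\times V\to V$ on the Boolean group $V$ of involutions; the group $\CH(V)$ of such classes is trivial for $\dim V\le 2$ and is $C_2$ for $\dim V\ge 3$ (Propositions~\ref{propos_2gqc} and~\ref{propos_quasi}). This invariant is \emph{not} visible in the products of a single adjoined generator against a previously built subgroup --- precisely because $X^{\bu}_2\cong X^{\ba}_{4,4}$, a group with two inequivalent one-generator extensions that your three local patterns cannot distinguish --- and the same computation is what certifies that no further exotic groups exist in this case. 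You flag this as ``the crux'' only in the rigidity half, but leaving it unresolved also leaves the classification half unproved.

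Two smaller points. First, your multiplicativity claim $\phi(x_1*x_2)=[\phi(x_1)\phi(x_2),\phi(x_1)\phi(x_2)]$ is false in general: squares need not have order $\le 2$ (take $X^{\ba}_{C_8}$ or $X^{\ba}_{C_\infty}$), so the product of two squares is genuinely two-valued; the correct statement is the multiset identity $x^2*y^2=[z_1^2,z_2^2]$ for $x*y=[z_1,z_2]$, which the paper proves for non-special groups (Corollary~\ref{cor_z1z2sq}). Second, your rigidity scheme via cardinality, $|S|$, coset-ness and order counts does agree with the paper's endgame (Lemmas~\ref{lem_orders_compare}, \ref{lem_ranks_compare} and Propositions~\ref{propos_ua_different}, \ref{propos_different}), but the final comparison again reduces to Theorem~\ref{thm_124}, i.e.\ to the quasi-cocycle cohomology; the ``association scheme / $P$-polynomial'' invariant you gesture at is not developed and would need to be shown to degenerate exactly at $n\le 2$, which is the same unproved computation in different clothing.
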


\begin{example}\label{ex_iso}
Let us consider $10$-element two-valued groups~$X^{\ba}_{4,4}=C_4^2/\iota_{\ba}$ and~$X^{\bu}_2=(C_2^2\times C_2^2)/\iota_{\bu}$ and present an explicit isomorphism between them. Denote by~$a_1$ and~$a_2$ the generators of the factors in the group $C_4^2$ and denote by~$b_1$ and~$b_2$ the generators of the factors in the group $C_2^2$. Define the mapping $f\colon C_4^2\to C_2^2\times C_2^2$ by the formula
\begin{equation*}
f(a_1^ka_2^l)=\left(b_1^kb_2^l,b_1^{[k/2]}b_2^{[l/2]}\right),\qquad 0\le k\le 3,\ 0\le l\le 3,
\end{equation*}
where $[r]$ denotes the integer part of the number~$r$. The mapping~$f$ is not a homomorphism. However, it can be directly verified that for any elements $a,c\in C_4^2$ the element~$f(ac)$ coincides with one of the $4$ elements $f(a)f(c)$, $f(a)\iota_{\bu}\bigl(f(c)\bigr)$, $\iota_{\bu}\bigl(f(a)\bigr)f(c)$, and~$\iota_{\bu}\bigl(f(a)\bigr)\iota_{\bu}\bigl(f(c)\bigr)$; moreover, there is a coincidence of multisets
\begin{multline*}
\bigl[
f(ac), f(ac^{-1}), f(a^{-1}c), f(a^{-1}c^{-1})
\bigr]
={}\\
\left[
f(a)f(c), f(a)\iota_{\bu}\bigl(f(c)\bigr), \iota_{\bu}\bigl(f(a)\bigr)f(c),\iota_{\bu}\bigl(f(a)\bigr)\iota_{\bu}\bigl(f(c)\bigr)
\right].
\end{multline*}
Therefore, the map~$f$ induces a well-defined isomorphism $X^{\ba}_{4,4}\cong X^{\bu}_2$. Together with the isomorphism~\eqref{eq_iso_a}, this gives the isomorphism~\eqref{eq_main_iso2}.
\end{example}

\begin{example}\label{ex_Y2}
Consider the $5$-element group~$Y_2=Y_{C_2^2}$, which is the first group in the special series not contained in the principal series. Let $x$, $y$ and~$z$ be the three nontrivial elements in~$C_2^2$. Then the multiplication table in the two-valued group~$Y_2$ has the form
\begin{gather*}
x*x=y*y=z*z=[e,s],\quad s*s=[e,e],\\
\begin{aligned}
x*y&=[z,z],& y*z&=[x,x],& z*x&=[y,y],\\
s*x&=[x,x],& s*y&=[y,y],& s*z&=[z,z].
\end{aligned}
\end{gather*}
This two-valued group is related to the non-abelian $8$-element group of quaternions $$Q_8=\{\pm 1,\pm i,\pm j,\pm k\}\subset\mathbb{H},$$ where $\mathbb{H}$ is the algebra of quaternions. On the group $Q_8$ there is the conjugation anti-automorphism ~$\sigma$, $\sigma(gh)=\sigma(h)\sigma(g)$, leaving elements~$1$ and~$-1$ fixed and permuting $i\leftrightarrow -i$, $j\leftrightarrow -j$, $k\leftrightarrow -k$. The construction of a coset two-valued group by an involutive automorphism, as a rule, does not carry over to the case of an anti-automorphism. Nevertheless, the anti-automorphism~$\sigma$ of the group~$Q_8$ has the following special property: for any two elements $g,h\in Q_8$ the four-element multiset $[gh,g\sigma(h),\sigma(g)h,\sigma(g)\sigma(h)]$ is a doubling of some two-element multiset. This property easily implies that the $5$-element quotient set $Q_8/\sigma$ has a well-defined structure of a two-valued group. Since the multiplication of quaternion units is commutative up to sign, the two-valued group~$Q_8/\sigma$ is commutative. It is easy to see that it is isomorphic to the two-valued group~$Y_2$ with the isomorphism $1\mapsto e$, $-1\mapsto s$, $\pm i\mapsto x$, $\pm j\mapsto y$ and~$\pm k\mapsto z$.
\end{example}

\begin{example}\label{ex_Y3}
Similarly to the previous example, there is a connection between the $9$-element two-valued group~$Y_3$ and the $16$-element set $$O_{16}=\{\pm 1,\pm e_0,\pm e_1,\ldots,\pm e_6\}\subset\mathbb{O},$$ where $\mathbb{O}$ is the algebra of octonions and $1,e_0,\ldots,e_6$ is its standard basis. Since the multiplication of octonions is not associative, the set~$O_{16}$ is not even a group, but only a so-called \textit{Moufang loop}, see ~\cite[\S 7.1]{ CoSm09}. However, conjugation defines an involutive anti-automorphism of this algebraic structure. It can be directly verified that again for any two elements $g,h\in O_{16}$ the four-element multiset $[gh,g\sigma(h),\sigma(g)h,\sigma(g)\sigma(h)]$ is a doubling of some two-element multiset. This again implies that on the $9$-element quotient set $O_{16}/\sigma$ there is a well-defined two-valued multiplication. Although the multiplication in $O_{16}$ is neither associative nor commutative, it is associative and commutative up to sign. Using this, one can directly check that the resulting two-valued multiplication by $O_{16}/\sigma$ is associative and commutative and defines the structure of a commutative two-valued group on this set. Again, it can be directly verified that this two-valued group is isomorphic to the two-valued group~$Y_3$.
\end{example}

The work is organized as follows. In section ~\ref{section_1gen} we classify single-generated involutive two-valued groups. In particular, we show that all such two-valued groups are commutative. Sections ~\ref{section_ord2}--\ref{section_final_proof} are devoted to the classification of involutive commutative two-valued groups in the finitely generated case. Sections~\ref{section_non_fg}--\ref{section_lc} contain a number of classification results in the topological case. In section ~\ref{section_alg} the case of algebraic two-valued groups is considered. In the final section~\ref{section_conclusion} we discuss some open questions.

\section{Single-generated involutive two-valued groups}\label{section_1gen}

The classification problem of single-generated usual groups is very simple: the answer is given by the infinite and all possible finite cyclic groups. In the case of $n$-valued groups, the problem of classifying single-generated groups (even commutative two-valued groups) turned out to be difficult and is currently far from a complete solution.

A rich source of single-generated commutative $n$-valued groups is given by multivalued multiplication on the set of irreducible representations of a finite group, see ~\cite[ Sections ~10,~11]{ Buc06}. More precisely, the following theorem holds: \textit{If a finite group $G$ has a faithful irreducible representation~$\rho$, then the multivalued group on the set of its irreducible representations is single-generated}. The proof is based on Burnside's theorem (see ~\cite{CuRe62}), which states that in this case any irreducible representation of the group~$G$ is contained in a decomposition of some tensor power of the representation~$\rho$.

Note also that the problem of single-generated multivalued groups is closely related to the problem of integrability of multivalued dynamical systems with discrete time, see ~\cite{Ves91}, ~\cite{BuVe96}. Nevertheless, in the class of involutive two-valued groups considered in this paper, the problem of classifying single-generated groups can be completely solved even without the commutativity assumption. The answer turns out to be as simple as for the usual groups (the case of free single-generated two-valued groups was considered in \cite{BuVe19} in connection with the Conway topograph).

\begin{theorem}\label{theorem_1gen}
Any single-generated involutive two-valued group is isomorphic to a coset group of the form $X^{\ba}_C=C/\iota_{\ba}$, where $C$ is a cyclic group and $\iota_{\ba}\colon C\to C$ is the antipodal involution. In particular, every such two-valued group is commutative.
\end{theorem}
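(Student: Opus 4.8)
The plan is to take a single-generated involutive two-valued group $X$ with generator $x$ and show directly that it is a coset group $C/\iota_{\ba}$. First I would understand what ``single-generated'' means concretely: every element of $X$ arises inside the iterated products $x*x$, $x*(x*x)$, and so on, i.e. $X$ is the union of the supports of the multisets obtained by multiplying $x$ with itself repeatedly. The key reduction is to introduce the ``power'' notation and study the sequence of multisets $x^{*n}$ (the $n$-fold product, which is well defined up to multiplicity by associativity). Because $x^{-1}=x$ (involutivity) and because of the strong identity axiom $e*x=[x,x]$, the product $x*x$ must contain $e$ (this is exactly the weak-involution condition), so I expect $x*x=[e,y]$ for a single new element $y$. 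This single ``doubling plus one new element'' structure is what drives everything.

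Next I would set up a candidate cyclic model. The natural guess, motivated by the principal-series description, is that $X$ should look like $C/\iota_{\ba}$ for $C=\langle a\rangle$, with $x$ corresponding to the class $\pi(a)$. In $C_\infty/\iota_{\ba}$ one has the classes $[a^k]=\{a^k,a^{-k}\}$ indexed by $k\ge 0$, and the two-valued multiplication is $\pi(a^k)*\pi(a^l)=[\pi(a^{k+l}),\pi(a^{k-l})]$. So the concrete goal is to produce elements $x_0=e,x_1=x,x_2,x_3,\dots$ of $X$ obeying the recursion $x*x_n=[x_{n+1},x_{n-1}]$, and to show these exhaust $X$. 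I would define $x_n$ inductively: having $x_{n-1}$ and $x_n$, the multiset $x*x_n$ contains $x_{n-1}$ (this needs justification from associativity and the shape of $x*x$), and I would define $x_{n+1}$ to be the other element of that multiset. The heart of the argument is to prove that this recursion is self-consistent and that the $x_n$ satisfy the full multiplication rule $x_k*x_l=[x_{k+l},x_{|k-l|}]$, which one would verify by induction on $k$ using associativity $x*(x_{k-1}*x_l)=(x*x_{k-1})*x_l$ to reduce the $k$ case to the $k-1$ and $k-2$ cases.

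The classification then splits according to whether the sequence $x_0,x_1,x_2,\dots$ is eventually periodic or consists of distinct elements. If all $x_n$ are distinct, $X\cong C_\infty/\iota_{\ba}=X^{\ba}_\infty$. If there is a coincidence, I would find the smallest collision and analyze it: because the multiplication mimics the cosine addition law, a relation like $x_{n+1}=x_{n-1}$ or $x_m=x_0=e$ forces the sequence to fold back in exactly the way the classes of a finite cyclic group $C_N/\iota_{\ba}$ do, giving $X\cong X^{\ba}_{C_N}$ for the appropriate $N$. Here I must be careful to separate the cases coming from even versus odd order, since the fold-back point (whether the ``palindrome'' closes at a fixed point of $\iota_{\ba}$ or between two elements) differs; this corresponds to whether $a^{N/2}$ is its own inverse. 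The main obstacle I anticipate is precisely this step: verifying, without assuming commutativity, that the single generator forces the whole (a priori noncommutative) multiplication to coincide with the commutative cosine-type table, and that no other consistent extension of the recursion exists. Once the table is pinned down, commutativity is automatic because the formula $[x_{k+l},x_{|k-l|}]$ is manifestly symmetric in $k$ and $l$, which simultaneously yields the ``in particular'' conclusion that every single-generated involutive two-valued group is commutative.
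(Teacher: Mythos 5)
Your proposal follows essentially the same route as the paper: construct the sequence of powers $x^0=e$, $x^1=x$, $x^2,\dots$ via the recursion $x*x^n=[x^{n-1},x^{n+1}]$ (justifying that $x^{n-1}\in x*x^n$ and that $x$ commutes with its powers, which the paper does via the lemma that $z\in x*y$ implies $x\in y*z$), prove $x^k*x^l=[x^{k+l},x^{k-l}]$ by induction, and then identify $X$ with $(\Z/K)/\iota_{\ba}$ where $K$ is the subgroup of exponents killing $x$ — your collision analysis is a slightly less streamlined version of this last step. The approach and all the key ideas match; the proposal is correct in outline.
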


The proof will be divided into several Lemmas, which will be useful to us in their own right.

\begin{lem}\label{lem_basic}
Let $X$ be an involutive two-valued group and elements $x,y,z\in X$ be such that $z$ belongs to the multiset~$x*y$. Then $x$ belongs to the multiset~$y*z$.
\end{lem}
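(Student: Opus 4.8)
The plan is to use the defining axioms of a two-valued group, especially associativity together with the strong identity and uniqueness of the inverse, to produce a symmetric relation among triples $(x,y,z)$ with $z\in x*y$. The statement to prove is essentially that the ternary relation ``$z$ appears in $x*y$'' is invariant under a natural cyclic/reflective symmetry: from $z\in x*y$ we want $x\in y*z$. Since $X$ is involutive, $x^{-1}=x$, $y^{-1}=y$, $z^{-1}=z$, so I expect the proof to exploit that all elements equal their own inverses, which collapses many of the distinctions that would otherwise arise.

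First I would introduce the key idea that $z\in x*y$ should be detected by the presence of the identity $e$ in a suitable product. The cleanest route is through associativity applied to the triple product of $x$, $y$, and $z$ (or $z^{-1}=z$). Concretely, I would consider the four-element multiset $x*(y*z)=(x*y)*z$ and track where the identity $e$ can occur. By the uniqueness-of-inverse axiom, $e$ lies in a product $u*v$ precisely when $v=u^{-1}$; in the involutive case this means $e\in u*v$ iff $u=v$. So the plan is: from $z\in x*y$, use the axioms to deduce that $e$ appears in some product built from $y$, $z$ (and inverses), and then read off $x\in y*z$ from that occurrence.

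More precisely, the main computation I anticipate is the following. Because $z\in x*y$, I can write $x*y=[z,w]$ for some $w\in X$. Then I would compute $y*z$ as a sub-multiset arising inside the associative four-element product $(x*y)*y^{-1}$, i.e.\ $(x*y)*y=[z,w]*y$ which contains $z*y$; on the other hand $x*(y*y)$ contains $x*e=[x,x]$ because $y^{-1}=y$ forces $e\in y*y$. Matching these two four-element multisets should force $x$ to belong to $z*y=y*z$ (using commutativity is \emph{not} assumed here, so I must be careful to keep $y*z$ versus $z*y$ straight and instead use that the two-valued product $z*y$ and $y*z$ relate through the inverse/associativity axioms rather than outright commutativity).

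I expect the main obstacle to be the careful bookkeeping of multiset equalities: associativity gives equality of four-element multisets, but extracting a two-element sub-multiset and then locating a single element requires handling possible repetitions (the multiset $x*x=[x,x]$ and the doublings $[e,e]$ in particular). The delicate point is ruling out that a needed cancellation fails because some element is repeated; I would address this by treating the repeated-element cases separately and invoking the uniqueness of the inverse (in the form $e\in u*v\iff u=v$ for involutive $X$) to pin down exactly which elements coincide. Once the occurrence of $e$ in the relevant product is established, the conclusion $x\in y*z$ follows directly from the definition, completing the proof.
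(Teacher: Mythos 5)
Your opening plan names the right instance of associativity, $(x*y)*z=x*(y*z)$, which is exactly what the paper uses: writing $x*y=[z,z']$, one has $(x*y)*z=[z*z,z'*z]\ni e$ by involutivity, hence $e\in x*(y*z)=[x*a,x*b]$ where $y*z=[a,b]$, and the uniqueness of the inverse forces $a=x^{-1}=x$ or $b=x^{-1}=x$. But the computation you actually commit to is a different one, comparing $(x*y)*y$ with $x*(y*y)$, and that one does not close. Writing $x*y=[z,w]$ and $y*y=[e,c]$, the multiset equality you obtain is $[z*y,\,w*y]=[x,x,x*c]$. This only says that $x$ occurs at least twice in the union $z*y\cup w*y$; it is entirely consistent with $w*y=[x,x]$ and $x\notin z*y$, so you cannot conclude that $x$ belongs to $z*y$. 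The product $(x*y)*y$ treats $z$ and $w$ symmetrically and cannot by itself single out the factor $z*y$; your stated intention to ``treat the repeated-element cases separately'' does not supply the missing argument, and I do not see how it could along these lines.

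There is a second, independent problem: even granting $x\in z*y$, the lemma asserts $x\in y*z$, and the paper's definition of a (not necessarily commutative) two-valued group contains no axiom relating $u*v$ to $v*u$ for general elements (only the inverse axiom is stated symmetrically, and only for detecting $e$). You flag this left/right issue but offer no mechanism to resolve it. Both difficulties disappear if you carry out the computation you first proposed: multiplying by $z$ on the right rather than by $y$ places $y*z$ (in the correct order) inside $x*(y*z)$, and the identity is detected there via $e\in z*z$, after which uniqueness of the inverse pins $x$ down as an element of $y*z$.
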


\begin{proof}
Let $x*y=[z,z']$. Then, due to the involutivity of the two-valued group~$X$, the multiset $(x*y)*z=[z*z,z'*z]$ contains the identity~$e$. On the other hand, due to the associativity, this multiset can be written as $x*(y*z)$. From the fact that it contains~$e$ and involutivity of the two-valued group~$X$, it follows that $y*z$ contains~$x$.
\end{proof}

\begin{lem}\label{lem_sequence}
Let $X$ be an involutive two-valued group. Then for any element $x\in X$ there exists a unique sequence $x^0,x^1,x^2,\ldots$ of elements of the two-valued group~$X$ such that $x^0=e$, $x^1=x$ and
\begin{equation}\label{eq_1}
x*x^k=x^k*x=[x^{k-1},x^{k+1}]
\end{equation}
for all~$k\ge 1$.
\end{lem}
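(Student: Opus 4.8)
The plan is to construct the sequence recursively, defining $x^{k+1}$ at each stage as the unique second element of a two-element multiset whose first element is already known, and to use Lemma~\ref{lem_basic} together with associativity to verify that everything is well defined. First I would set $x^0=e$, $x^1=x$, and handle the base step: since $X$ is involutive, $x$ is a weak involution, so the multiset $x*x$ contains $e=x^0$; hence $x*x=[x^0,x^2]$ determines $x^2$ uniquely, and the relation \eqref{eq_1} for $k=1$ holds because $x*x^1=x^1*x$ trivially.

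For the inductive step I would assume that $x^0,\dots,x^{n+1}$ have been constructed and that \eqref{eq_1} is verified for $1\le k\le n$, and then produce $x^{n+2}$. From the established level-$n$ relation $x^n*x=[x^{n-1},x^{n+1}]$ we have $x^{n+1}\in x^n*x$, so Lemma~\ref{lem_basic} applied to the triple $x^n,x,x^{n+1}$ yields $x^n\in x*x^{n+1}$. Thus $x*x^{n+1}=[x^n,x^{n+2}]$ for a uniquely determined $x^{n+2}$, which is the natural candidate for the next term.

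The main obstacle is the commutativity half of \eqref{eq_1}, namely $x*x^{n+1}=x^{n+1}*x$; everything else is bookkeeping. I would extract it from associativity by expanding $(x*x^n)*x=x*(x^n*x)$ and using the level-$n$ relation on both sides:
\[
(x*x^n)*x=[x^{n-1}*x,\,x^{n+1}*x],\qquad x*(x^n*x)=[x*x^{n-1},\,x*x^{n+1}].
\]
By the inductive hypothesis at level $n-1$ (and, when $n=1$, directly from the strong identity $e*x=x*e=[x,x]$) one has $x^{n-1}*x=x*x^{n-1}$, so cancelling this common two-element submultiset from the two equal four-element multisets leaves $x^{n+1}*x=x*x^{n+1}$. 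Hence $x*x^{n+1}=x^{n+1}*x=[x^n,x^{n+2}]$, which is exactly \eqref{eq_1} at level $n+1$.

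Finally, uniqueness is immediate from the construction: once $x^{k-1}$ is known to lie in the two-element multiset $x*x^k$, the relation forces $x^{k+1}$ to be its second element, so the sequence is pinned down term by term. I expect the only points needing care to be the multiset cancellation (legitimate because both sides are four-element multisets sharing the identical two-element piece $x^{n-1}*x=x*x^{n-1}$) and the separate treatment of the initial levels $k=0,1$, where the strong identity plays the role of the inductive hypothesis.
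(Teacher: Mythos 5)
Your proposal is correct and follows essentially the same route as the paper: define each new term as the second element of a two-element multiset whose first element is supplied by Lemma~\ref{lem_basic}, and obtain the needed commutation $x*x^{n+1}=x^{n+1}*x$ by expanding two associativity rearrangements of a triple product and cancelling the common two-element submultiset $x^{n-1}*x=x*x^{n-1}$. The only (immaterial) difference is that the paper establishes commutativity of $x$ with the top known power first, via the triple $x*x*x^{n-1}$ versus $x^{n-1}*x*x$, before invoking Lemma~\ref{lem_basic}, whereas you define the next term first and then verify commutativity from $(x*x^{n})*x=x*(x^{n}*x)$; both cancellations are legitimate for multisets.
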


\begin{proof}
Let us show that the formula ~\eqref{eq_1} together with the initial conditions $x^0=e$, $x^1=x$ defines a well-defined recurrent rule for constructing uniquely the sequence $\{x^k\}_{k\ge 0}$. Assume that the elements of $x^0,x^1,\ldots,x^n$, where $n\ge 1$, are already defined such that equalities ~\eqref{eq_1} hold for $1\le k\le n-1$, and show that~$x^{n+1}$ can be uniquely defined such that equalities~\eqref{eq_1} hold for $k=n$.

From the equalities ~\eqref{eq_1} under $1\le k\le n-1$, in particular, it follows that the element~$x$ commutes with each of the elements of $x^0,x^1,\ldots,x^{n-1}$. Let us show that $x$ commutes with~$x^n$. This is obvious for $n=1$, so suppose $n\ge 2$. We have
\begin{align*}
x*x*x^{n-1}&=x*[x^{n-2},x^n]=[x*x^{n-2},x*x^n],\\
x^{n-1}*x*x&=[x^{n-2},x^n]*x=[x^{n-2}*x,x^n*x].
\end{align*}
Since $x$ commutes with~$x^{n-1}$, then $x*x*x^{n-1}=x^{n-1}*x*x$. Also, $x*x^{n-2}=x^{n-2}*x$. Hence $x*x^n=x^n*x$.

Now, applying Lemma~\ref{lem_basic} for the triple $(x^{n-1},x,x^n)$, we get that the element $x^{n-1}$ lies in the multiset~$x*x^n=x^n*x$. Therefore, we can and should take as~$x^{n+1}$ the second element of this multiset.
\end{proof}

For $k<0$, by definition, we set $x^k=x^{-k}$. Then the equality ~\eqref{eq_1} will be satisfied for all integers~$k$. It is natural to call the element~$x^k$ as \textit{$k$-th power} of the element~$x$.

\begin{lem}\label{lem_sequence_mult}
Let $X$ be an involutive two-valued group, $x$ be its arbitrary element. Then
\begin{align}\label{eq_mult1}
x^k*x^l&=[x^{k+l},x^{k-l}],\\
\label{eq_mult2}
(x^k)^l&=x^{kl}
\end{align}
for all integers~$k$ and~$l$.
\end{lem}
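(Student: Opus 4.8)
The plan is to prove both identities by induction, establishing \eqref{eq_mult1} first and then deducing \eqref{eq_mult2} from it. Throughout, the crucial algebraic tool is \emph{cancellation of multisets}: since the monoid of finite multisets under disjoint union is free commutative, an equality $A\sqcup B=A\sqcup C$ forces $B=C$. I will also freely use that $x$ commutes with all of its powers, as established inside the proof of Lemma~\ref{lem_sequence}, so that every occurrence of \eqref{eq_1} may be read in either order.

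For \eqref{eq_mult1} I would fix an arbitrary $k$ and induct on $l\ge 0$, with base cases $l=0$ (where $x^k*x^0=x^k*e=[x^k,x^k]$ by the strong identity, matching $[x^{k},x^{k}]$) and $l=1$ (which is exactly the defining recurrence \eqref{eq_1}). For the inductive step I would compute the four-element multiset $(x^k*x^l)*x=x^k*(x^l*x)$ in two ways. Expanding the left side with the induction hypothesis at $l$ and then applying \eqref{eq_1} to each term gives
\begin{equation*}
(x^k*x^l)*x=\bigl[x^{k+l-1},x^{k+l+1},x^{k-l-1},x^{k-l+1}\bigr];
\end{equation*}
expanding the right side with \eqref{eq_1} and then the induction hypothesis at $l-1$ gives
\begin{equation*}
x^k*(x^l*x)=\bigl[x^{k+l-1},x^{k-l+1}\bigr]\sqcup\bigl(x^k*x^{l+1}\bigr).
\end{equation*}
Cancelling the common sub-multiset $[x^{k+l-1},x^{k-l+1}]$ yields $x^k*x^{l+1}=[x^{k+l+1},x^{k-l-1}]$, which is \eqref{eq_mult1} at $l+1$. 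The passage to negative $l$ is immediate from the convention $x^l=x^{-l}$.

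For \eqref{eq_mult2} I would set $y=x^k$ and prove $y^l=x^{kl}$ by induction on $l\ge 0$, the cases $l=0,1$ being trivial. Writing the defining recurrence \eqref{eq_1} for $y$ as $y*y^l=[y^{l-1},y^{l+1}]$, substituting the induction hypotheses $y^{l}=x^{kl}$ and $y^{l-1}=x^{k(l-1)}$, and evaluating the left side by the already-proved identity \eqref{eq_mult1},
\begin{equation*}
x^k*x^{kl}=\bigl[x^{k(l+1)},x^{k(l-1)}\bigr],
\end{equation*}
I cancel the common term $x^{k(l-1)}=y^{l-1}$ to obtain $y^{l+1}=x^{k(l+1)}$. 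Negative $l$ (and negative $k$) follow at once from the convention $x^m=x^{-m}$.

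The computations are routine once the bookkeeping is in place; the one point demanding care is the inductive step of \eqref{eq_mult1}, where the recursion is second order (the step producing $l+1$ consumes the hypotheses at both $l$ and $l-1$), so two base cases are genuinely needed, and where one must confirm that $[x^{k+l-1},x^{k-l+1}]$ really occurs as a sub-multiset of the four-element product before cancelling. I expect this cancellation—legitimate precisely because multisets cancel—to be the conceptual crux; everything else is a direct unwinding of \eqref{eq_1} together with the commutativity of $x$ with its own powers.
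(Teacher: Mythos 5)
Your proof is correct and follows essentially the same strategy as the paper's: expand a triple product ($x^k*x^l*x$ in your version, $x*x^{k-1}*x^l$ in theirs) in two ways via associativity and cancel the common sub-multiset, with the convention $x^{-m}=x^m$ handling negative indices; your treatment of \eqref{eq_mult2} just re-derives the uniqueness clause of Lemma~\ref{lem_sequence} that the paper cites directly. The differences (inducting on $l$ rather than $k$, avoiding the case split $k\le l$ versus $k\ge l$) are cosmetic.
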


\begin{proof}
Due to $x^n=x^{-n}$, it suffices to prove the equality ~\eqref{eq_mult1} for non-negative~$k$ and~$l$. Let us prove it when $0\le k\le l$; case $k\ge l$ is completely analogous. Since $x^0=e$, ~\eqref{eq_mult1} is true for $k=0$. For $k=1$ this equality becomes formula~\eqref{eq_1}, which was used to construct the sequence $\{x^k\}$; thus it is also true. Let us prove the equality ~\eqref{eq_mult1} with $2\le k\le l$ by induction on~$k$. By the inductive hypothesis
$$
x^{k-1}*x^l=[x^{k+l-1},x^{k-l-1}].
$$
Multiplying both sides of this equation from the left by~$x$ and applying the formula~\eqref{eq_1}, we get
$$
x*x^{k-1}*x^l=[x^{k+l},x^{k+l-2},x^{k-l},x^{k-l-2}].
$$
On the other hand, using the inductive assumption and formula ~\eqref{eq_1} again, we obtain
$$
x*x^{k-1}*x^l=[x^{k-2}*x^l,x^k*x^l] = [x^{k+l-2},x^{k-l-2},x^k*x^l],
$$
whence it follows that $x^k*x^l = [x^{k+l},x^{k-l}]$.

From the formula ~\eqref{eq_mult1}, in particular, it follows that the sequence of elements $x^{kl}$, where $l = 0,1,2,\ldots$, satisfies the recurrence relation
$$
x^k*x^{kl}=[x^{k(l-1)},x^{k(l+1)}]
$$
and hence $(x^{k})^l = x^{kl}$.
\end{proof}

\begin{cor}
Every single-generated involutive two-valued group is commutative.
\end{cor}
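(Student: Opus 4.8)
The plan is to show that in a single-generated involutive two-valued group~$X$, generated by an element~$x$, every element coincides with one of the powers~$x^k$ constructed in Lemma~\ref{lem_sequence}; once this is known, commutativity is immediate from the explicit multiplication formula~\eqref{eq_mult1}.

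First I would consider the subset $P=\{x^k:k\ge 0\}$ of~$X$. By construction $P$ contains the identity $x^0=e$ and the generator $x^1=x$, and by the convention $x^{-k}=x^k$ together with involutivity each $x^k$ is its own inverse, so $P$ is closed under taking inverses. The essential point is that $P$ is closed under the two-valued multiplication: this is exactly the content of formula~\eqref{eq_mult1}, which states that $x^k*x^l=[x^{k+l},x^{k-l}]$, and both elements $x^{k+l}$ and~$x^{k-l}$ belong to~$P$. Hence $P$ is a sub-two-valued group of~$X$ containing the generator~$x$.

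Since $X$ is single-generated by~$x$, the smallest sub-two-valued group containing~$x$ is the whole of~$X$; therefore $P=X$, that is, every element of~$X$ is a power of~$x$. Commutativity now follows at once: for any two elements $x^k,x^l\in X$, formula~\eqref{eq_mult1} gives $x^k*x^l=[x^{k+l},x^{k-l}]$ and $x^l*x^k=[x^{k+l},x^{l-k}]$, and these two multisets coincide because $x^{l-k}=x^{k-l}$ by the convention $x^{-m}=x^m$.

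The only substantive step is the identity $P=X$, and it rests entirely on the closure property~\eqref{eq_mult1} proved in Lemma~\ref{lem_sequence_mult}: this is what guarantees that repeatedly multiplying, starting from~$x$, never produces an element outside the set of powers, so that the sub-two-valued group generated by~$x$ consists precisely of these powers. Everything else is a direct reading of the already-established multiplication formula, so I expect no further obstacle.
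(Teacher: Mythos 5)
Your proposal is correct and follows essentially the same route as the paper: the paper's proof of Theorem~\ref{theorem_1gen} likewise observes that the set of powers $x^k$ is closed under multiplication by formula~\eqref{eq_mult1} and hence exhausts the single-generated group, after which commutativity is immediate from the symmetry $x^{k-l}=x^{l-k}$ in that formula. No gaps.
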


\begin{lem}\label{lem_sequence_null}
Let $X$ be an involutive two-valued group and $x$ be its arbitrary element. Then the subset $K\subseteq\Z$ consisting of all integers~$k$ such that $x^{k}=e$ is a subgroup. If $k\in K$, then $x^{\pm l+k}=x^{l}$ for all $l\in\Z$. Conversely, if $x^l=x^m$, then either $l-m\in K$ or $l+m\in K$.
\end{lem}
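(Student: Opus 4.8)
The plan is to derive all three assertions from the power multiplication rule \eqref{eq_mult1}, namely $x^k*x^l=[x^{k+l},x^{k-l}]$, together with the strong identity axiom and the symmetry $x^k=x^{-k}$ that is built into the definition of negative powers. Each assertion then reduces to computing a single product $x^k*x^l$ in two different ways and comparing the resulting two-element multisets.

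For the subgroup property I would first note that $0\in K$ since $x^0=e$, and that $K$ is symmetric because $x^{-k}=x^k$. For closure it suffices to take $k,m\in K$ and evaluate $x^k*x^m$ twice: formula \eqref{eq_mult1} gives $x^k*x^m=[x^{k+m},x^{k-m}]$, while from $x^k=x^m=e$ the strong identity gives $x^k*x^m=e*e=[e,e]$. Comparing the two multisets yields $x^{k+m}=x^{k-m}=e$, hence $k+m\in K$, and together with symmetry this shows $K$ is a subgroup.

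For the shift assertion I would assume $k\in K$ and again compute $x^l*x^k$ in two ways: \eqref{eq_mult1} gives $[x^{l+k},x^{l-k}]$, while $x^k=e$ and the strong identity give $x^l*x^k=x^l*e=[x^l,x^l]$. Thus $x^{l+k}=x^{l-k}=x^l$, and combining $x^{l-k}=x^l$ with the identification $x^{-l+k}=x^{k-l}=x^{l-k}$ gives $x^{\pm l+k}=x^l$. For the converse, suppose $x^l=x^m$; then, applying \eqref{eq_mult1} twice, $[x^{l+m},x^{l-m}]=x^l*x^m=x^l*x^l=[x^{2l},e]$. Equality of these two-element multisets forces either $x^{l-m}=e$, i.e.\ $l-m\in K$, or $x^{l+m}=e$, i.e.\ $l+m\in K$.

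The computations are short, and the main point to watch is the converse. There the conclusion is a disjunction rather than a single equality, and this is precisely what the multiset equality $[x^{l+m},x^{l-m}]=[x^{2l},e]$ provides: one of the two entries on the left must coincide with the entry $e$ on the right, but a priori one cannot say which. One must also keep careful track of the identification $x^{j}=x^{-j}$ throughout, since it is exactly this symmetry that lets the two halves $x^{+l+k}$ and $x^{-l+k}$ of the shift assertion be handled by a single computation.
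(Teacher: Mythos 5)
Your proposal is correct and follows essentially the same route as the paper: both rest on computing $x^k*x^l$ once via formula \eqref{eq_mult1} and once via the strong identity, and both obtain the converse from the multiset $x^l*x^m=[x^{l+m},x^{l-m}]$ containing $e$. The only cosmetic differences are that the paper deduces closure of $K$ as a special case of the shift assertion rather than by a separate computation, and invokes involutivity directly instead of evaluating $x^l*x^l=[x^{2l},e]$; these are the same calculations.
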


\begin{proof}
Obviously, $0\in K$ and $-k\in K$ are for all $k\in K$. Since $e*x^l=[x^l,x^l]$, it immediately follows from the formula ~\eqref{eq_mult1} that $x^{\pm l+k}=x^{l}$ for all $k\in K$ and $l\in \Z$. In particular, if $l$ also lies in~$K$, we get that $x^{k+l}=e$, so $k+l\in K$. Thus $K\subseteq\Z$ is a subgroup.

Suppose now that $x^l=x^m$. By involutivity, the multiset $x^l*x^m=[x^{l+m},x^{l-m}]$ contains~$e$. Hence, either $x^{l+m}=e$ or $x^{l-m}=e$, so either $l+m\in K$ or $l-m\in K$.
\end{proof}

The smallest positive integer~$k$ such that $x^k=e$ will be called \textit{order} of element~$x$ and denoted by~$\ord x$. If $x^k\ne e$ for all positive integers~$k$, then we will say that the order of element~$x$ is equal to infinity. It is obvious that $\ord x=1$ if and only if $x=e$. In what follows, the elements of orders~$2$ and~$3$ will be of particular interest. It easily follows from the definition and Lemma ~\ref{lem_sequence_null} that a non-identity element~$x$ has order~$2$ if and only if $x*x=[e,e]$, and has order~$3$ if and only if $x*x=[e,x]$. In particular, the elements of orders $1$ and ~$2$ are exactly \textit{strong involutions}, see the definition~\ref{defin_invol}.

The following Corollary follows from Lemma ~\ref{lem_sequence_null}.

\begin{cor}
If $x$ is an element of an involutive two-valued group, then $x^l=x^m$ if and only if at least one of the numbers $l-m$ and~$l+m$ is divisible by $\ord x$. In particular, $x^l=e$ then and only when $l$ is divisible by $\ord x$.
\end{cor}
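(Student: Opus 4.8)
The plan is to read the statement off from Lemma~\ref{lem_sequence_null} after making the subgroup $K\subseteq\Z$ explicit. Every subgroup of~$\Z$ has the form $d\Z$ for a unique integer $d\ge 0$, where $d$ is the smallest positive element of the subgroup when it is nontrivial and $d=0$ otherwise. Since $\ord x$ was defined as precisely the smallest positive~$k$ with $x^k=e$ (and as $\infty$ when no such~$k$ exists), the first assertion of Lemma~\ref{lem_sequence_null} gives $K=(\ord x)\Z$; that is, $k\in K$ if and only if $\ord x$ divides~$k$, with the convention that divisibility by~$\infty$ means being equal to~$0$. Establishing this identification is the one step that carries any content, and even it is routine.

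Granting this, the main equivalence is immediate from the remaining two clauses of Lemma~\ref{lem_sequence_null}. For the forward direction, if $x^l=x^m$ then that lemma yields $l-m\in K$ or $l+m\in K$, i.e. one of $l-m$, $l+m$ is divisible by~$\ord x$. For the converse, I would first suppose $\ord x$ divides $l-m$, so $k:=l-m\in K$; applying the middle clause of the lemma in the form $x^{l'+k}=x^{l'}$ with $l'=m$ gives $x^l=x^{m+k}=x^m$. If instead $\ord x$ divides $l+m$, I would set $k:=l+m\in K$ and apply the other form $x^{-l'+k}=x^{l'}$ with $l'=m$ to obtain $x^l=x^{-m+k}=x^m$.

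Finally, the \textit{in particular} statement is just the special case $m=0$: since $x^0=e$, both $l-m$ and $l+m$ collapse to~$l$, so $x^l=e$ if and only if $\ord x$ divides~$l$. The only point needing attention is the bookkeeping for infinite order, where $K=\{0\}$ and one must match the divisibility convention with $\ord x=\infty$; this presents no genuine obstacle, since all three clauses of Lemma~\ref{lem_sequence_null} are used verbatim and the subgroup structure of~$\Z$ does the rest.
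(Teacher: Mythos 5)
Your proof is correct and follows exactly the route the paper intends: the paper gives no separate argument for this corollary beyond the remark that it follows from Lemma~\ref{lem_sequence_null}, and your write-up simply makes explicit the identification $K=(\ord x)\Z$ and then applies the three clauses of that lemma. Nothing is missing.
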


\begin{proof}[Proof of Theorem~\ref{theorem_1gen}]
Let $X$ be a single-generated involutive two-valued group with generator~$x$. According to Lemma~\ref{lem_sequence_mult}, for any $k,l\ge 0$, the equality~\eqref{eq_mult1} holds. A subset of the set~$X$ consisting of all powers~$x^k$ is closed under multiplication and contains~$x.$ Since the two-valued group~$X$ is generated by the element~$x$, we get that the powers $x^k$, $k\in\Z$ exhaust all elements of~$X$.

According to Lemma~\ref{lem_sequence_null}, the subset $K\subseteq\Z$ consisting of all integers~$k$ such that $x^k=e$ is a subgroup, and $x^{\pm l+k}=x^{l}$ for all $k\in K$ and $l\in\Z$. Therefore, the correspondence $k\mapsto x^k$ gives a well-defined surjective map $\varphi\colon(\Z/K)/\iota \to X$. From formula ~\eqref{eq_mult1} it follows that the map~$\varphi$ is a homomorphism of two-valued groups, and from the last statement of Lemma ~\ref{lem_sequence_null} it follows that the map~$\varphi$ is injective. Thus, $\varphi$ is an isomorphism of two-valued groups.
\end{proof}

\section{Subgroup of the elements of order 2}\label{section_ord2}

Let $X$ be an involutive commutative two-valued group.
Let us introduce a partially defined single-valued operation of multiplication~$\cdot$ on $X$ such that
\begin{enumerate}
\item $x\cdot y=z$ if $x*y=[z,z]$
\item The expression $x\cdot y$ is undefined if the multiset $x*y$ consists of two different elements.
\end{enumerate}
It immediately follows from the definition that $x\cdot y =y\cdot x$ and $x\cdot e=x$. Also, $x\cdot x=e$ if $\ord x\le 2$, and $x\cdot x$ is undefined if $\ord x> 2$.

\begin{lem}\label{lem_order2}
If at least one of the two elements $x$ and $y$ has order~$2$, then the multiset $x*y$ consists of two identical elements, and thus the expression $x\cdot y$ is defined.
\end{lem}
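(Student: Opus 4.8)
The plan is to reduce, via commutativity, to the case where it is $y$ that has order~$2$, so that $y*y=[e,e]$ by the characterization of order-$2$ elements recalled just before the statement. Writing $x*y=[z_1,z_2]$, the entire task then becomes showing that $z_1=z_2$, since this is exactly the condition that $x*y$ be a doubled multiset and hence that $x\cdot y$ be defined.

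First I would exploit associativity together with the strong identity to analyze the triple product $(x*y)*y$. On one hand, associativity gives $(x*y)*y=x*(y*y)=x*[e,e]$, and since $x*e=[x,x]$ this equals the four-element multiset $[x,x,x,x]$. On the other hand, $(x*y)*y=[z_1,z_2]*y=[z_1*y,\,z_2*y]$. Comparing the two expressions, the four-element multiset $[z_1*y,\,z_2*y]$ consists solely of copies of~$x$; since each $z_i*y$ is itself a two-element multiset, this forces $z_1*y=[x,x]$ and $z_2*y=[x,x]$ separately.

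The key step, and the place where a single pass through associativity is not enough, is to multiply the identity $z_1*y=[x,x]$ by~$y$ once more. Associativity turns the left-hand side into $z_1*(y*y)=z_1*[e,e]=[z_1,z_1,z_1,z_1]$, while the right-hand side becomes $[x,x]*y=[x*y,\,x*y]=[z_1,z_2,z_1,z_2]$. Equating these two four-element multisets yields $z_2=z_1$, which is precisely what we wanted. The only real subtlety is organizational rather than technical: the first computation merely shows that $z_1*y$ and $z_2*y$ both collapse to $[x,x]$, and it is the second multiplication by the order-$2$ element~$y$ that actually pins the two candidate values $z_1$ and $z_2$ together. Notably, the argument is purely a consequence of associativity and the strong identity and does not even require Lemma~\ref{lem_basic}.
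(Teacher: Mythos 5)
Your proof is correct. It is close in spirit to the paper's argument — both force the collapse of $x*y$ by expanding a triple product containing the strong involution in two ways — but the route is genuinely different in one respect. The paper picks $z\in x*y$, invokes Lemma~\ref{lem_basic} (and hence involutivity) to write $x*z=[y,y']$, and then reads off $x*y=[z,z]$ in a single step from $(x*x)*z=x*(x*z)=[x*y,x*y']=[z,z,z,z]$. You instead make two passes: first $(x*y)*y=x*(y*y)=[x,x,x,x]$ to get $z_1*y=z_2*y=[x,x]$, and then $(z_1*y)*y=z_1*(y*y)=[z_1,z_1,z_1,z_1]$ against $[x,x]*y=[z_1,z_2,z_1,z_2]$ to pin $z_1=z_2$. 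The cost is one extra application of associativity; the gain, as you observe, is that Lemma~\ref{lem_basic} — and with it any direct appeal to involutivity — is not needed: your argument uses only associativity, the strong identity, and the single relation $y*y=[e,e]$, so it proves the slightly more general fact that multiplication by a strong involution is always single-valued (after doubling) in any commutative two-valued group. The paper's version is marginally shorter given that Lemma~\ref{lem_basic} is already established in this section.
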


\begin{proof}
Let $\ord x=2$ and $z\in x*y$. By Lemma ~\ref{lem_basic}, we have $y\in x*z$, that is, $x*z=[y,y']$ for some~$y'$. Then
\begin{align*}
(x*x)*z&=[e,e]*z=[z,z,z,z],\\
x*(x*z)&=x*[y,y']=[x*y,x*y'],
\end{align*}
so $x*y=[z,z]$.
\end{proof}

\begin{propos}\label{propos_bullet}
Let $X$ be an involutive commutative two-valued group and $V\subseteq X$ be its subset consisting of the identity~$e$ and all the elements of order~$2$. Then
\begin{itemize}
\item[\textnormal{(a)}] the set $V$ is a Boolean group with respect to the operation~$\cdot$\,;
\item[\textnormal{(b)}] operation $\cdot$ defines the action of the group~$V$ on the set $X$; moreover, if $x*y=[z_1,z_2]$, then
\begin{equation}\label{eq_action}
(v\cdot x)*y=[v\cdot z_1,v\cdot z_2]
\end{equation}
for all $v\in V$;
\item[\textnormal{(c)}] if $v\in V\setminus\{e\}$ and $x\in X$, then $v\cdot x=x$ if and only if $x^2=v$; in particular, the equality $v\cdot x=x$ can take place only if $\ord x=4$.
\end{itemize}
\end{propos}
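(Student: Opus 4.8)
The plan is to exploit throughout the single observation, furnished by Lemma~\ref{lem_order2}, that for $v\in V$ every product $v*w$ is a doubled multiset $[v\cdot w,\,v\cdot w]$. Combined with associativity and commutativity, this lets the partial single-valued operation~$\cdot$ be pushed through two-valued products, so each part will follow by evaluating a two- or threefold product in two ways and comparing multisets. For part~(a) the only substantial point is closure. Given $v,w\in V$, I would put $u=v\cdot w$, so $v*w=[u,u]$, and compute the fourfold product $(v*w)*(v*w)$ twice: reordering by commutativity and using $v*v=w*w=[e,e]$ gives $(v*v)*(w*w)=[e,e]*[e,e]$, which is eight copies of~$e$, while $[u,u]*[u,u]$ is four copies of the multiset $u*u$. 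Comparison forces $u*u=[e,e]$, so $u\in V$. The identity law $e\cdot v=v$ and the self-inverse law $v\cdot v=e$ are immediate from $e*v=[v,v]$ and $v*v=[e,e]$, and associativity of~$\cdot$ on~$V$ follows by writing $u*v*w$ in two ways as four copies of a single element; hence $V$ is Boolean.

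For part~(b) I would first prove the equivariance formula~\eqref{eq_action}. With $x*y=[z_1,z_2]$ and $v\in V$, the identity $v*(x*y)=(v*x)*y$ becomes
$$
[\,v\cdot z_1,\,v\cdot z_1,\,v\cdot z_2,\,v\cdot z_2\,]=\bigl((v\cdot x)*y\bigr)\sqcup\bigl((v\cdot x)*y\bigr),
$$
so $(v\cdot x)*y=[v\cdot z_1,v\cdot z_2]$. That $\cdot$ is an action, $(v\cdot w)\cdot x=v\cdot(w\cdot x)$, is proved identically: both $v*(w*x)$ and $(v*w)*x$ reduce to four copies of a single element, which associativity then identifies.

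Part~(c) is where the multiset bookkeeping is most delicate, and I expect the careful exclusion of degenerate coincidences to be the main obstacle. Using $x*x=[e,x^2]$ from Lemma~\ref{lem_sequence_mult}, the hypothesis $v\cdot x=x$, i.e.\ $v*x=[x,x]$, turns $v*(x*x)=(v*x)*x$ into
$$
[\,v,\,v,\,v\cdot x^2,\,v\cdot x^2\,]=[\,e,\,e,\,x^2,\,x^2\,].
$$
Since $v\ne e$, matching the two multisets should force $v=x^2$, but this requires ruling out the coincidence $v=v\cdot x^2$; that coincidence means $x^2=e$, which puts $x$ in~$V$ and then $v\cdot x=x$ collapses to $v=e$ in the Boolean group~$V$, a contradiction. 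Hence $x^2\ne e$ and $v=x^2$.

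For the converse, if $x^2=v\ne e$ then $v*x=x^2*x=[x^3,x]$ by~\eqref{eq_mult1}, while $x^4=(x^2)^2=v^2=e$ gives $\ord x\mid 4$; since $x^2\ne e$ this means $\ord x=4$, so $x^3=x$ and therefore $v*x=[x,x]$, that is $v\cdot x=x$. The same computation shows that $v\cdot x=x$ with $v\ne e$ forces $x^2=v\ne e$ and hence $\ord x=4$, which yields the final assertion.
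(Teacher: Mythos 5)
Your proof is correct and follows essentially the same route as the paper: parts (a) and (b) are exactly the intended unwinding of Lemma~\ref{lem_order2} together with associativity and commutativity, and the forward direction of (c) is identical. The only divergence is the converse of (c), where the paper argues more directly — from $x\in v*x$, Lemma~\ref{lem_basic} gives $v\in x*x$, involutivity gives $e\in x*x$, and $v\ne e$ forces $x*x=[e,v]$ — whereas your threefold-product computation $v*(x*x)=(v*x)*x$ with the multiset matching (and the correct disposal of the degenerate case $v=v\cdot x^2$) reaches the same conclusion.
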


\begin{proof}
Statements~(a) and~(b) immediately follow from Lemma ~\ref{lem_order2}, as well as the associativity, commutativity and involutivity of the two-valued group~$X$.

Let us prove statement~(c). If $x^2=v$, then $\ord x=4$, then $x^3=x$, hence $v*x=x^2*x=[x^3,x]=[x,x]$, so $v\cdot x=x$.

Conversely, suppose $v\cdot x=x$, that is, $v*x=[x,x]$. Then it follows from Lemma~\ref{lem_basic} that the multiset~$x*x$ contains the element~$v$. However, the fact that the group~$X$ is involutive implies that this multiset also contains the element~$e$. Since $v\ne e$, we get that $x*x=[e,v]$, that is, $x^2=v$.
\end{proof}

The two-valued group~$X$ splits into orbits with respect to the action~$\cdot$ of the group~$V$ on it. These orbits we will call \textit{$V$-orbits}; the $V$-orbit containing the element~$x$ will be denoted by~$Vx$.

\begin{remark}
In what follows, we will usually write $xy$ instead of ~$x\cdot y$, which should not lead to confusion.
\end{remark}

\section{Special and non-special two-valued groups}

\begin{defin}
A pair $(x,y)$ consisting of elements of an involutive commutative two-valued group will be called \textit{special} if $\ord x>2$, $\ord y>2$, but the multiset $x*y$ consists of two identical elements. An involutive commutative two-valued group will be called \textit{special} if it contains at least one special pair of elements, and \textit{non-special} otherwise.
\end{defin}

\begin{remark}
In ~\cite{BuVe19}, non-special two-valued groups were called \text{strongly two-valued}.
\end{remark}

Thus, if a two-valued group is non-special, then the value~$x\cdot y$ is defined if and only if one of the elements~$x$ and~$y$ has order~$\le 2$.

\begin{propos}\label{propos_special_yn}
All two-valued groups of the form $X_A^{\ba}$ and $X_V^{\bu}\times W$ are non-special.  Two-valued groups ~$Y_V\times W$ are special when $\dim V\ge 2$.
\end{propos}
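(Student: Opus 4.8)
The plan is to handle the three assertions through one mechanism: for the two non-specialness claims I will show that whenever a product $x*y$ is a doubled multiset $[z,z]$, at least one factor has order at most~$2$; for the specialness claim I will exhibit an explicit special pair. First I would dispose of the factor~$W$. In the product $X\times W$ with $W$ Boolean, the rule $(x_1,w_1)*(x_2,w_2)=[(z_1,w_1w_2),(z_2,w_1w_2)]$ (where $x_1*x_2=[z_1,z_2]$) shows that this product is a doubled multiset precisely when $x_1*x_2=[z,z]$; moreover $w^2=e$ gives $\ord(x,w)\le 2$ if and only if $\ord x\le 2$. Hence $X\times W$ is special exactly when $X$ is, and the whole proposition reduces to the statements for $X^{\ba}_A$, $X^{\bu}_V$ and $Y_V$.

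For the principal series I would use the coset multiplication $\pi(g)*\pi(h)=[\pi(gh),\pi(gh^{-1})]$, together with the facts that $\pi(a)=\pi(b)$ iff $b\in\{a,a^{-1}\}$ and that $\ord\pi(g)\le 2$ iff $g^2=e$ (since $\pi(g)*\pi(g)=[\pi(g^2),e]$). Then $\pi(g)*\pi(h)$ is doubled iff $\pi(gh)=\pi(gh^{-1})$, that is, iff $gh^{-1}=gh$ or $gh^{-1}=(gh)^{-1}$; using commutativity of~$A$ these two cases collapse to $h^2=e$ and $g^2=e$ respectively. In either case one of $\pi(g),\pi(h)$ has order at most~$2$, so $X^{\ba}_A$ is non-special.

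The unipotent series is the computationally most delicate step, and the one I expect to be the main obstacle, since one must track the involution $\iota_{\bu}(a,b)=(a,ab)$ through the coset formula and through the identification $\pi(u)=\pi(w)\Leftrightarrow w\in\{u,\iota_{\bu}(u)\}$. Writing $g=(a_1,b_1)$ and $h=(a_2,b_2)$ in $V\times V$, I would compute the two representatives $gh=(a_1a_2,b_1b_2)$ and $g\iota_{\bu}(h)=(a_1a_2,a_2b_1b_2)$, and verify that their classes coincide exactly when $a_1=e$ or $a_2=e$ (the two possibilities $u=w$ and $u=\iota_{\bu}(w)$ yielding $a_2=e$ and $a_1=e$). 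A parallel short computation gives $\ord\pi(a,b)\le 2$ iff $a=e$. Comparing the two conditions shows that a doubled product again forces one factor to have order at most~$2$, so $X^{\bu}_V$ is non-special.

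Finally, for the special series it suffices to produce a single special pair in $Y_V$ when $\dim V\ge 2$. Every $x\in V\setminus\{e\}$ satisfies $x*x=[e,s]\ne[e,e]$, hence $\ord x>2$; and for distinct $x,y\in V\setminus\{e\}$ one has $x*y=[xy,xy]$, a doubled multiset with $xy\ne e$. Since $\dim V\ge 2$ guarantees at least two distinct non-identity elements, such a pair $(x,y)$ exists and is special, so $Y_V$---and therefore $Y_V\times W$---is special.
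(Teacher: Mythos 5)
Your proposal is correct and follows essentially the same route as the paper: a direct computation with coset representatives for $X^{\ba}_A$ and $X^{\bu}_V$ (showing a doubled product forces one factor to have order at most~$2$, which is just the contrapositive of the paper's phrasing), and the exhibition of an explicit special pair of distinct non-identity elements of~$V$ for~$Y_V$. The only cosmetic difference is that you make explicit, up front, the observation that $X\times W$ is special if and only if $X$ is, which the paper treats as an easy remark at the end of each case.
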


\begin{proof}
Let us prove the non-speciality of the two-valued group~$X_A^{\ba}$. Let $\pi\colon A\to X_A^{\ba}$ be the quotient map by antipodal involution. Consider elements~$x,y\in X_A^{\ba}$. If $a,b\in A$ are elements such that $x=\pi(a)$ and~$y=\pi(b)$, then $x*y=[\pi(ab),\pi(ab^{-1})]$. Assume that the orders of elements ~$x$ and~$y$ are greater than~$2$. Then the orders of the elements~$a$ and~$b$ are also greater than~$2$. Hence, the elements $ab$ and~$ab^{-1}$ are not equal and not inverse to each other. Therefore, the multiset $x*y$ consists of two different elements. Thus, the two-valued group~$X_A^{\ba}$ is non-special.

Let us prove the non-speciality of the two-valued group $X_V^{\bu}=(V\times V)/\iota_{\bu}$, where $\iota_{\bu}(a,b)=(a,ab)$. Let $\pi\colon V\times V\to X_V^{\bu}$ be the quotient map by involution~$\iota_{\bu}$. It can be directly verified that the order of element $\pi(a,b)$ of a two-valued group~$X_V^{\bu}$ is equal to~$4$ if $a\ne e$, and is equal to~$2$, if $a=e$ and ~$b\ne e$. Thus, any pair of elements of orders greater than~$2$ has the form $x=\pi(a_1,b_1)$, $y=\pi(a_2,b_2)$, where $a_1\ne e$ and $a_2\ne e$. Then $x*y=[\pi(a_1a_2,b_1b_2),\pi(a_1a_2,a_1b_1b_2)]$. Since the element $a_1b_1b_2$ is neither~$b_1b_2$ nor $a_1a_2b_1b_2$, then the elements of the multiset~$x*y$ are different. Thus, the two-valued group~$X_V^{\bu}$ is non-special. It is easy to see that this implies the non-speciality of all two-valued groups~$X_V^{\bu}\times W.$

Now consider the two-valued group~$Y_V$, where $\dim V\ge 2$. Let $x,y\in V$ be two distinct non-identity elements. Then the orders of the elements~$x$ and~$y$ in the two-valued group~$Y_V$ are equal to~$4$, since $x*x=y*y=[e,s]$ and $s$ is an element of order $2$. Moreover, $x*y=[xy,xy]$, hence $(x,y)$ is a special pair. Thus, the two-valued group~$Y_V$ is special, and hence all two-valued groups~$Y_V\times W$ are special.
\end{proof}

\begin{propos}\label{propos_no_coset}
Let $X$ be a special involutive commutative two-valued group. Then $X$ is not coset, that is, $X$ is not isomorphic to any two-valued group of the form $G/\iota$, where $G$ is usual group and $\iota$ is an automorphism of ~$G$ such that $\iota^2=\mathrm{id}$.
\end{propos}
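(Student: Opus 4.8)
The plan is to argue by contradiction: I assume that a special involutive commutative two-valued group $X$ is isomorphic to a coset group $G/\iota$ and derive a contradiction inside $G/\iota$. Since an isomorphism of two-valued groups is a bijection compatible with the multiplication, it preserves the order of every element (orders are defined purely in terms of $*$, by Lemma~\ref{lem_sequence_null} and its corollary) as well as the property of a pair being special. Hence $G/\iota$ itself must contain a special pair, and it suffices to prove that \emph{no} coset group $G/\iota$ is special. Throughout I write $\pi\colon G\to G/\iota$ for the projection, so that $\pi(a)=\pi(b)$ holds exactly when $b=a$ or $b=\iota(a)$, and I use $\pi(g)*\pi(h)=\bigl[\pi(gh),\pi(g\iota(h))\bigr]$.

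The first and central step is to characterize when such a product collapses to a doubled multiset. We have $\pi(g)*\pi(h)=[z,z]$ if and only if $\pi(gh)=\pi(g\iota(h))$, i.e.\ if and only if $g\iota(h)=gh$ or $g\iota(h)=\iota(gh)=\iota(g)\iota(h)$; cancelling, these two alternatives read $\iota(h)=h$ and $\iota(g)=g$ respectively. In words: a product of two elements in a coset group is a doubling precisely when at least one of the two factors lifts to an $\iota$-fixed element of~$G$. This is the only genuinely content-bearing observation; everything else is bookkeeping.

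Now suppose $\bigl(\pi(g),\pi(h)\bigr)$ is a special pair, so $\pi(g)*\pi(h)=[z,z]$ while $\ord\pi(g)>2$ and $\ord\pi(h)>2$. By the characterization just established, $g$ or $h$ is $\iota$-fixed; say $\iota(g)=g$, the other case being symmetric. Here I invoke involutivity through the criterion recorded in the excerpt (a coset group $G/\iota$ is involutive iff every $g\in G$ satisfies $g^{-1}=g$ or $g^{-1}=\iota(g)$): since $\iota(g)=g$, both alternatives collapse to $g^{-1}=g$, that is $g^2=e$. Consequently
$$
\pi(g)*\pi(g)=\bigl[\pi(g^2),\pi(g\iota(g))\bigr]=\bigl[\pi(g^2),\pi(g^2)\bigr]=[e,e],
$$
so $\pi(g)$ is a strong involution and $\ord\pi(g)\le 2$, contradicting $\ord\pi(g)>2$. (Had $h$ been the fixed factor, the identical computation would give $\ord\pi(h)\le 2$.) This contradiction shows $G/\iota$ has no special pair.

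I do not expect a serious obstacle in this proof; its whole weight sits in the collapse characterization of the second paragraph, and the rest is a short forced computation. The only points requiring care are applying the identity ``$\pi(a)=\pi(b)$ iff $b\in\{a,\iota(a)\}$'' correctly to the pair $gh$ and $g\iota(h)$, keeping the two collapse cases ($g$-fixed versus $h$-fixed) symmetric so that whichever factor is fixed contradicts the matching order hypothesis, and ensuring the involutivity criterion really forces the fixed factor to square to the identity rather than merely to equal its $\iota$-image.
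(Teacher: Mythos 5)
Your proposal is correct and rests on exactly the same key computation as the paper's proof: the product $\pi(g)*\pi(h)$ collapses to a doubled multiset if and only if $\iota(g)=g$ or $\iota(h)=h$. The only (inessential) difference is the direction of the endgame: the paper first deduces $\iota(g)\ne g$ and $\iota(h)\ne h$ from $\ord x,\ord y>2$ (since $x*x=[e,x^2]$ must then have distinct entries) and lets the collapse contradict that, whereas you let the collapse produce a fixed lift and then use the involutivity criterion for coset groups to force that lift to square to $e$, contradicting the order hypothesis.
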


\begin{proof}
Assume the opposite: let $X\cong G/\iota$. Denote by $\pi\colon G\to X$ the quotient map by automorphism~$\iota$. Let $(x,y)$ be a special pair in~$X$ and let $g,h\in G$ be elements such that $\pi(g)=x$ and $\pi(h)=y$.

We have $x*x=\bigl[\pi(g^2),\pi\bigl(g\iota(g)\bigr)\bigr]$. Since $\ord x>2$, the multiset $x*x$ must consist of two different elements, so it follows that $g^2\ne g\iota(g)$ and hence $\iota(g)\ne g$. Similarly, $\iota(h)\ne h$.

A multiset $x*y = \bigl[\pi(gh),\pi\bigl(g\iota(h)\bigr)\bigr]$ consists of two identical elements. Hence, either $g\iota(h)=gh$ or $g\iota(h)=\iota(gh)=\iota(g)\iota(h)$, which implies that either $\iota(h)=h$ or $\iota(g)=g$. As it has been proven, neither of these is possible, which completes the proof.
\end{proof}

\begin{remark}
Note that, generally speaking, commutative two-valued groups can be obtained from non-commutative single-valued groups using the coset construction. For example, as shown in ~\cite{Buc06} (see also ~\cite{BuVe19}), the two-valued Buchstaber--Novikov group $X^{\ba}_{C_{\infty}}=C_{\infty}/\iota_{\ba}$ is isomorphic to the coset group $(C_2*C_2)/\sigma$, where $*$ denotes the free product and $\sigma$ is an automorphism, interchanging the generators of factors~$C_2$.We emphasize that Proposition~\ref{propos_no_coset} asserts that a special involutive commutative two-valued group cannot be a coset two-valued group of any (not necessarily commutative) group~$G$ by its automorphism. Nevertheless, we recall that a special two-valued group~$Y_2$ (respectively, $Y_3$) can be obtained using an analogue of the coset construction for the anti-automorphism of a non-commutative group~$Q_8$ (respectively, a non-commutative and non-associative Moufang loop~$O_{16}$), see examples~\ref{ex_Y2} and ~\ref{ex_Y3}.
\end{remark}

\section{Quotient groups of two-valued groups}

The general question of the existence of quotient groups of two-valued groups requires a separate study. We give a positive answer to this question in our case of the involutive commutative two-valued groups.

\subsection{Existence of quotient groups}
A subset $Y$ of an involutive commutative two-valued group~$X$ will be called \textit{subgroup} if for any two elements of $y_1,y_2\in Y$ both elements of the multiset~$y_1*y_2$ lie in~$Y$.

To the subgroup $Y$ we assign an equivalence relation~$\sim$ on the two-valued group~$X$ defined by the rule: $x\sim x'$ if and only if there exists an element $y\in Y$ such that $x'\in x*y$.

\begin{lem}\label{lem_equiv}
\textnormal{(a)} The introduced relation~$\sim$ is indeed an equivalence relation.

\textnormal{(b)} $x\sim e$ if and only if $x\in Y$.

\textnormal{(c)} If $x_1*x_2=[z_1,z_2]$, $x_1'*x_2'=[z_1',z_2']$, $x_1\sim x_1'$ and~$x_2\sim x_2'$, then $[z_1,z_2]\sim [z_1',z_2']$, that is, either $z_1\sim z_1'$ and~$z_2\sim z_2'$, or $z_1\sim z_2'$ and~$z_2\sim z_1'$.
\end{lem}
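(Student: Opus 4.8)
The plan is to verify the three parts of Lemma~\ref{lem_equiv} in order, using the subgroup property of~$Y$ together with Lemma~\ref{lem_basic} and the associativity of~$X$.

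For part~(a), I would check reflexivity, symmetry, and transitivity. Reflexivity follows from $x\in x*e=[x,x]$ together with $e\in Y$ (note that $e\in Y$ since $Y$ is nonempty and closed, or because $e$ lies in $y*y^{-1}=y*y$ for any $y\in Y$). For symmetry, suppose $x'\in x*y$ with $y\in Y$. By Lemma~\ref{lem_basic} applied to the triple $(x,y,x')$, the element~$x$ lies in the multiset $y*x'=x'*y$; but I actually want a $y''\in Y$ with $x\in x'*y''$, so I would instead argue as follows. From $x'\in x*y$ and Lemma~\ref{lem_basic} we get $y\in x*x'$, hence (by another application of the lemma, or by commutativity and involutivity) $x\in x'*y^{-1}$, and since $Y$ is a subgroup containing $y$ it contains $y^{-1}=y$ as well, giving $x'\sim x$. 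For transitivity, if $x'\in x*y_1$ and $x''\in x'*y_2$ with $y_1,y_2\in Y$, then $x''$ lies in $(x*y_1)*y_2=x*(y_1*y_2)$; since both elements of $y_1*y_2$ lie in~$Y$, the element~$x''$ lies in $x*y_3$ for one of these $y_3\in Y$, so $x\sim x''$.

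For part~(b), the implication $x\in Y\Rightarrow x\sim e$ follows because $x\in e*x=[x,x]$ with $x\in Y$, giving $x\sim e$ directly from the definition (taking the distinguished element of $Y$ to be $x$ itself and noting $e\in e*x$... more carefully, $e\sim x$ since $e\in x*x^{-1}=x*x$ and $x\in Y$). Conversely, if $x\sim e$ then $x\in e*y=[y,y]$ for some $y\in Y$, so $x=y\in Y$.

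For part~(c), the key computation is that the four-element multiset $(x_1*x_2)*(y_1*y_2)$, where $x_i'\in x_i*y_i$ with $y_i\in Y$, can be regrouped by associativity and commutativity as $(x_1*y_1)*(x_2*y_2)$, which contains $x_1'*x_2'=[z_1',z_2']$ as a sub-multiset. On the other hand it equals $[z_1,z_2]*(y_1*y_2)$, and since both elements of $y_1*y_2$ lie in~$Y$, each of $z_1',z_2'$ is equivalent to $z_1$ or to~$z_2$; a short parity/matching argument then forces the stated pairing. I expect part~(c) to be the main obstacle: the delicate point is to show not merely that each $z_i'$ is equivalent to \emph{some} $z_j$, but that the two equivalences can be arranged into a consistent bijection of the multisets, which requires tracking the four-element multiset equality carefully and using that $\sim$ is already known (from parts~(a),(b)) to be an equivalence relation.
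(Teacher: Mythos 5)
Your parts (a) and (b) are in substance the paper's own proof; for symmetry your first instinct was already sufficient (from $x'\in x*y$, Lemma~\ref{lem_basic} together with commutativity gives $x\in y*x'=x'*y$ with $y\in Y$, which is exactly $x'\sim x$), and the detour through $y\in x*x'$ and $y^{-1}=y$ is harmless but unnecessary. Part (c) is where you genuinely diverge from the paper, and it is also where your argument is incomplete. The paper first reduces to the case $x_2=x_2'$ (changing one factor at a time and using transitivity of the induced relation on multisets), then compares $[z_1*y,z_2*y]=(x_1*y)*x_2=[z_1',z_2',p*x_2]$ and splits into the case where the required matching is read off directly and the case $z_1*y=[z_1',z_2']$, in which a second computation shows that all four elements $z_1,z_2,z_1',z_2'$ are pairwise equivalent, so any pairing works. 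You instead change both factors at once and use the eight-element (not four-element) multiset $(x_1*x_2)*(y_1*y_2)=(x_1*y_1)*(x_2*y_2)$; this correctly shows that each of $z_1',z_2'$ is equivalent to $z_1$ or to~$z_2$.

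The gap is that this one-directional statement does not by itself force the stated pairing: a priori both $z_1'$ and $z_2'$ could be equivalent to $z_1$ while $z_2$ is equivalent to neither, and no counting of the multiset equality obviously excludes this, since the other six elements of the eight-element multiset can absorb any imbalance (showing they cannot is essentially the statement being proved). The missing ingredient is the symmetric claim that each of $z_1,z_2$ is equivalent to $z_1'$ or to~$z_2'$; it follows by running your computation with the roles of primed and unprimed elements exchanged, which is legitimate because $x_i\in x_i'*y_i$ by Lemma~\ref{lem_basic}. With both directions in hand the pairing follows from a two-line case check: if $z_1\sim z_1'$ and $z_2\sim z_2'$ you are done; otherwise one of these fails, say $z_1\not\sim z_1'$, and then $z_1\sim z_2'$ and $z_1'\sim z_2$ because every one of the four elements has at least one equivalent partner on the other side (the case $z_1\sim z_1'$, $z_2\not\sim z_2'$ is handled the same way and again yields the cross pairing). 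So your route is viable, and arguably more symmetric than the paper's one-factor-at-a-time reduction, but the ``short parity/matching argument'' you defer to is precisely the step that needs this extra input, and as written the proof of (c) is not complete.
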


\begin{proof}
The relation~$\sim$ is reflexive since $x\in x*e$ is for all $x$. The symmetry of the relation~$\sim$ immediately follows from Lemma~\ref{lem_basic}. Let us prove its transitivity. Let $x_1\sim x_2$ and $x_2\sim x_3$. Then there are elements of $y_1, y_2\in Y$ such that $x_2\in x_1*y_1$ and $x_3\in x_2*y_2$, which immediately implies that $x_3\in x_1*y_1*y_2$. On the other hand, $y_1*y_2=[y_3,y_4]$ for some elements of $y_3,y_4\in Y$, so
$$
x_1*y_1*y_2=[x_1*y_3,x_1*y_4].
$$
Therefore, the element~$x_3$ belongs to one of the multisets~$x_1*y_3$ and $x_1*y_4$. Thus, $x_1\sim x_3$, which completes the proof that~$\sim$ is an equivalence relation.

If $x\in Y$, then $e\in x*x$ implies $x\sim e$. Conversely, if $x\sim e$, then there is an element $y\in Y$ such that $e\in x*y$, so from involutivity it follows that $x=y\in Y$.

Due to the commutativity of the two-valued group~$X$ and the fact that $\sim$ is an equivalence relation, it suffices to prove the statement~(c) in the case of $x_2=x_2'$. If $x_1\sim x_1'$ then there exists $y\in Y$ such that $x_1'\in x_1*y$. Then $x_1*y=[x_1',p]$ for some~$p$ and we have
$$
[z_1*y, z_2*y]=(x_1*x_2)*y=(x_1*y)*x_2=[x_1'*x_2,p*x_2]=[z_1',z_2',p*x_2].
$$
Swapping, if necessary, the elements~$z_1$ and~$z_2$, we can assume that $z_1'$ belongs to the multiset~$z_1*y$; then $z_1\sim z_1'$. If, in addition, the element~$z_2'$ lies in the multiset~$z_2*y$, then $z_2\sim z_2'$ and, thus, the claim follows. It remains for us to consider the case when $z_1*y=[z_1',z_2']$. Then $z_1'\sim z_1\sim z_2'$. Lemma ~\ref{lem_basic} implies that $x_1'*y=[x_1,q]$ for some~$q$. Then
$$
[z_1'*y, z_2'*y]=(x_1'*x_2)*y=(x_1'*y)*x_2=[x_1*x_2,q*x_2]=[z_1,z_2,q*x_2],
$$
which implies that $z_2$ belongs to one of the multisets~$z_1'*y$ and~$z_2'*y$. Hence, all four elements~$z_1$, $z_2$, $z_1'$, and~$z_2'$ are pairwise equivalent to each other, which implies the claim.
\end{proof}

Lemma ~\ref{lem_equiv} implies that
the set $X/Y=X/{\sim}$ is well defined and
two-valued multiplication in the group~$X$ induces a well-defined two-valued multiplication on the set~$X/Y$.

\begin{lem}
The introduced multiplication turns the set~$X/Y$ into an involutive commutative two-valued group.
\end{lem}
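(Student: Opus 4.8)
The plan is to verify each axiom of an involutive commutative two-valued group for $X/Y$ by transporting it from $X$ along the projection $\pi\colon X\to X/Y$. The essential observation is that Lemma~\ref{lem_equiv}(c) says precisely that $\pi$ is a homomorphism of the two-valued multiplications: for any $x,y\in X$ one has $\pi(x)*\pi(y)=\pi(x*y)$, where $\pi$ is extended to multisets elementwise. Together with part~(b) of the same lemma, which identifies the preimage of the class of the identity as $\pi^{-1}([e])=Y$, this should make most of the axioms descend automatically.

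First I would treat the structural axioms. Associativity follows because both iterated products $[x]*([y]*[z])$ and $([x]*[y])*[z]$ equal the images under $\pi$ of $x*(y*z)$ and $(x*y)*z$ respectively, and these four-element multisets already coincide in $X$. The strong identity holds with $[e]$ as identity, since $[e]*[x]=\pi(e*x)=\pi([x,x])=[[x],[x]]$; commutativity is immediate from $\pi(x*y)=\pi(y*x)$. Involutivity of $X/Y$ likewise descends: because $x*x$ contains $e$, its image $[x]*[x]=\pi(x*x)$ contains $\pi(e)=[e]$, so each class is its own inverse.

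The one axiom that needs a genuine, if short, argument is the uniqueness of the inverse. Here I would argue as follows: suppose $[y]$ is a class for which $[x]*[y]=\pi(x*y)$ contains $[e]$. Then some element $z$ of the multiset $x*y$ satisfies $\pi(z)=[e]$, i.e.\ $z\sim e$, which by Lemma~\ref{lem_equiv}(b) means $z\in Y$. Applying Lemma~\ref{lem_basic} to the triple $(x,y,z)$ gives $x\in y*z$, and since $z\in Y$ this says exactly $y\sim x$, whence $[x]=[y]$. Combined with the already-established involutivity, this shows the inverse exists and is unique, namely $[x]^{-1}=[x]$.

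I expect no serious obstacle; the whole proof is a routine descent once one notes that the projection respects the operation. The only place where a little care is needed is the uniqueness of the inverse, since this is the axiom that cannot be read off formally from $\pi$ being a homomorphism and genuinely requires Lemma~\ref{lem_basic} together with the characterization $\pi^{-1}([e])=Y$ from Lemma~\ref{lem_equiv}(b).
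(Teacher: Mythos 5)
Your proof is correct and follows essentially the same route as the paper: the structural axioms descend formally because the projection respects the operation, and the uniqueness of the inverse is handled by exactly the paper's argument, namely that an element of $Y$ in the multiset $x*y$ forces $x\sim y$ via Lemma~\ref{lem_basic} and Lemma~\ref{lem_equiv}(b).
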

\begin{proof}
The commutativity and associativity of two-valued multiplication in~$X/Y$ immediately follows from the commutativity and associativity of multiplication in~$X$. It is also obvious that the equivalence class of identity, which, according to statement~(b) of Lemma~\ref{lem_equiv}, coincides with the subgroup~$Y$, is the (strong) identity element of the introduced two-valued multiplication; we will also denote this identity by~$e$. 

Let us prove the involutivity (and at the same time the existence and uniqueness of the inverse). The fact that $e\in x*x$ for all $x\in X$ immediately implies that $e\in z*z$ for all $z\in X/Y$. Therefore, it is only necessary to prove that if $z_1$ and~$z_2$ are two different elements from~$X/Y$, then $e\notin z_1*z_2$. To do this, we need to show that if $x_1$ and~$x_2$ are representatives of two different equivalence classes~$z_1$ and~$z_2$, then the multiset $x_1*x_2$ does not contain elements from~$Y$. Assume that this is not the case, i.e. the element $y\in Y$ lies in the multiset~$x_1*x_2$. Then, by the Lemma ~\ref{lem_basic}, $x_2\in x_1*y$, it means that $x_1\sim x_2$, and hence $z_1=z_2$, which is not true. The contradiction completes the proof of the Lemma.
\end{proof}

The involutive commutative two-valued group~$X/Y$ defined in this way will be called \textit{quotient group} of the two-valued group~$X$ by its subgroup~$Y$. It follows from part~(b) of Lemma ~\ref{lem_equiv} that the kernel of the natural projection $\pi\colon X\to X/Y$ indeed coincides with the subgroup~$Y$. (The \textit{kernel} of a homomorphism of two-valued groups, as in the case of the ordinary groups, is defined as the preimage of the identity.)

\begin{remark}
Let $V\subseteq X$ be a subset consisting of all elements of order~$\le 2$. By Proposition~\ref{propos_bullet}, the set $V$ is a Boolean group, which acts on the two-valued group $X$. Hence if $W\subseteq V$ is a subgroup, then the notation $X/W$ can be given two meanings: (i) the quotient group of a two-valued group~$X$ by its subgroup~$W$; (ii) the space of orbits of the action of the group~$W$ on~$X$, where $W$ is considered as a single-valued group with respect to the operation~$\cdot$\, introduced in section ~\ref{section_ord2}. It is easy to see, however, that these two sets coincide.
\end{remark}

\subsection{Homomorphism Theorem}
The fundamental theorem on the image of a homomorphism carries over to the case of involutive commutative two-valued groups.

\begin{propos}
Let $f\colon X\to Z$ be a homomorphism of involutive commutative two-valued groups. Then its kernel~$\ker f$ and image $\im f$ are subgroups of two-valued groups~$X$ and~$Z$, respectively, and $f$ induces an isomorphism $X/\ker f\cong\im f$.
\end{propos}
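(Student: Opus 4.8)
The plan is to verify the three assertions in turn, reducing the isomorphism statement to the quotient construction already established. Throughout I use only that a homomorphism $f$ of involutive commutative two-valued groups sends $e$ to $e$ and carries the multiset $x*y=[z_1,z_2]$ to $f(x)*f(y)=[f(z_1),f(z_2)]$.

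First I would show that $\ker f$ is a subgroup in the sense defined above. Taking $y_1,y_2\in\ker f$ and writing $y_1*y_2=[z_1,z_2]$, applying $f$ gives $[f(z_1),f(z_2)]=f(y_1)*f(y_2)=e*e=[e,e]$, so $f(z_1)=f(z_2)=e$ and both $z_i$ lie in $\ker f$. The argument for $\im f$ is even shorter: if $w_i=f(x_i)$ and $x_1*x_2=[z_1,z_2]$, then $w_1*w_2=[f(z_1),f(z_2)]$, so both elements of $w_1*w_2$ again lie in $\im f$.

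Next I would construct the induced map. Writing $Y=\ker f$ and using the quotient group $X/Y$ with its equivalence relation $\sim$ from Lemma~\ref{lem_equiv}, I would check that $f$ is constant on $\sim$-classes: if $x'\in x*y$ with $y\in Y$, then $f(x')\in f(x)*f(y)=f(x)*e=[f(x),f(x)]$, hence $f(x')=f(x)$. This yields a well-defined map $\bar f\colon X/Y\to Z$; it is a homomorphism because the quotient multiplication is by construction the image of the multiplication on~$X$, and its image is exactly $\im f$, so $\bar f$ is surjective onto $\im f$.

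The main point is injectivity of~$\bar f$, and this is where involutivity enters. Suppose $f(x_1)=f(x_2)$ and write $x_1*x_2=[z_1,z_2]$. Then $[f(z_1),f(z_2)]=f(x_1)*f(x_2)=f(x_1)*f(x_1)$, and since $Z$ is involutive this multiset contains~$e$; hence one of $z_1,z_2$, say $z_1$, lies in $\ker f=Y$. From $z_1\in x_1*x_2$ Lemma~\ref{lem_basic} gives $x_1\in x_2*z_1$, so by the definition of~$\sim$ we obtain $x_1\sim x_2$ and the two classes coincide. Thus $\bar f$ is a bijective homomorphism onto $\im f$; since a bijective homomorphism of two-valued groups has a homomorphic inverse (the multiset structure is transported both ways), $\bar f$ is the desired isomorphism. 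I expect this injectivity step to be the only genuine obstacle, the remaining verifications being formal consequences of the definitions and of the quotient construction.
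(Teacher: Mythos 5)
Your proof is correct and follows essentially the same route as the paper's: the same computation showing $\ker f$ and $\im f$ are subgroups, constancy of $f$ on $\sim$-classes, and the injectivity step using involutivity of $Z$ together with Lemma~\ref{lem_basic} to conclude $x_1\sim x_2$ from $f(x_1)=f(x_2)$. The closing remark that the bijective homomorphism has a homomorphic inverse is a harmless addition that the paper leaves implicit.
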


\begin{proof}
Obviously, $\im f$ is a subgroup. If $y_1,y_2\in\ker f$ and $y_1*y_2=[z_1,z_2]$, then
$$
[f(z_1),f(z_2)]=f(y_1)*f(y_2)=e*e=[e,e],
$$
so $f(z_1)=f(z_2)=e$. Hence, $\ker f$ is also a subgroup. Let~$\sim$ be the defined above equivalence relation on~$X$ corresponding to this subgroup. To complete the proof of the Proposition, it suffices to prove that the inverse images of the elements $z\in \im f$ are precisely the equivalence classes with respect to~$\sim$. Let us prove this. If $x_1\sim x_2$, then there exists $y\in\ker f$ such that $x_2\in x_1*y$. Then $$f(x_2)\in f(x_1)*f(y)=f(x_1)*e=[f(x_1),f(x_1)],$$ so $f(x_2)=f(x_1)$. Conversely, let $x_1$ and~$x_2$ be two elements of~$X$ such that $f(x_1)=f(x_2)$. Let $x_1*x_2=[p,q]$. Then the multiset $[f(p),f(q)]=f(x_1)*f(x_2)=f(x_1)*f(x_1)$ must contain the identity due to involutivity. Hence, at least one of the elements~$p$ and~$q$ lies in~$\ker f$. Without loss of generality, we can assume that $p\in\ker f$. It follows from Lemma~\ref{lem_basic} that the element~$x_2$ lies in the multiset~$x_1*p$, hence $x_1\sim x_2$.
\end{proof}

The proved Proposition allows us to define \textit{exact sequences} of involutive commutative two-valued groups and work with them in exactly the same way as in the case of ordinary abelian groups.

\subsection{$C_2$-extensions of two-valued groups}

\begin{defin}
(\textit{Involutive commutative}) \textit{$C_2$-extension} of an involutive commutative two-valued group~$X$ is the exact sequence of involutive commutative two-valued groups
\begin{equation}\label{eq_ext}
1\rightarrow 2C_2\rightarrow \hX \xrightarrow{\pi} X\rightarrow 1,
\end{equation}
where $2C_2$ is the two-element two-valued group obtained by doubling the operation in the group~$C_2$.
\end{defin}

We could also study extensions in which the resulting group~$\hX$ is not involutive or commutative, but they will be not of our interest. In what follows, a $C_2$-extension is always understood as an involutive commutative $C_2$-extension.

The image of a two-valued group~$2C_2$ in~$\hX$ is a two-element subgroup, one of which elements is identity~$e$; its second element, which has order~$2$, we will denote by~$u$. It is easy to see that $X$ is obtained from~$\hX$ by taking quotient by the action of the group $C_2=\{e,u\}$ given by the operation described in section ~\ref{section_ord2}.

We will denote the identities of both groups~$X$ and~$\hX$ by~$e$, this should not not lead to confusion.

\begin{lem}
\textnormal{(a)} If $x\in X$ is an element of order not equal to~$2$, then its preimage~$\pi^{-1}(x)$ consists of two elements, the order of each of which is either~$\ord x$, or~$2\ord x.$ 

\textnormal{(b)} If $x\in X$ is an element of order~$2$, then its preimage~$\pi^{-1}(x)$ consists of either two elements of order~$2$, or one element of order~$4.$
\end{lem}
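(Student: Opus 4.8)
The plan is to treat each fiber $\pi^{-1}(x)$ as an orbit of the two-element group $\{e,u\}$ acting on $\hX$ by the single-valued operation~$\cdot$ of Section~\ref{section_ord2}, and to control orders by transporting powers through~$\pi$.

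First I would establish the compatibility $\pi(\hx^k)=x^k$ for every $\hx\in\pi^{-1}(x)$ and every $k\in\Z$. Since $\pi$ is a homomorphism of two-valued groups it preserves~$*$ and the identity, so applying $\pi$ to the recurrence $\hx*\hx^k=[\hx^{k-1},\hx^{k+1}]$ of Lemma~\ref{lem_sequence} yields $x*\pi(\hx^k)=[\pi(\hx^{k-1}),\pi(\hx^{k+1})]$, with $\pi(\hx^0)=e$ and $\pi(\hx^1)=x$; uniqueness in Lemma~\ref{lem_sequence} then forces $\pi(\hx^k)=x^k$. As $\ker\pi=\{e,u\}$, this already gives $x^{\ord\hx}=e$, hence $\ord x\mid\ord\hx$, and $\hx^{\ord x}\in\pi^{-1}(e)=\{e,u\}$ whenever $\ord x<\infty$.

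Next I would read off the order dichotomy. Put $n=\ord x<\infty$. If $\hx^{n}=e$ then $\ord\hx\mid n$, so with $n\mid\ord\hx$ we get $\ord\hx=n$. If instead $\hx^{n}=u$, then $\hx^{2n}=(\hx^{n})^2=u^2=e$ by~\eqref{eq_mult2}, so $\ord\hx\mid 2n$; writing $\ord\hx=nq$ gives $q\mid2$, and $q=1$ is excluded because $\hx^{n}=u\ne e$, whence $\ord\hx=2n$. (When $\ord x=\infty$ the identity $\pi(\hx^k)=x^k$ forces $\ord\hx=\infty=2\ord x$.) Thus every preimage of $x$ has order $\ord x$ or $2\ord x$. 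The size of the fiber is governed by Proposition~\ref{propos_bullet}(c): the orbit $\{\hx,u\cdot\hx\}$ collapses to a single point exactly when $u\cdot\hx=\hx$, i.e. when $\hx^2=u$, and this can happen only if $\ord\hx=4$.

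Finally I would assemble the two cases. For~(a), assume $\ord x\ne2$: a one-point fiber would require $\hx^2=u$ and hence $x^2=\pi(u)=e$, forcing $\ord x\in\{1,2\}$; ruling out $\ord x=2$ leaves $x=e$, but then $\hx\in\{e,u\}$ satisfies $\hx^2=e\ne u$, a contradiction. So $\pi^{-1}(x)$ has two elements, each of order $\ord x$ or $2\ord x$ by the previous paragraph. For~(b), let $\ord x=2$. If the fiber is a single point $\{\hx\}$ then $\hx^2=u$ and $\ord\hx=4$. If the fiber has two points then no preimage can satisfy $\hx^2=u$ (that would make it a fixed point of~$u$), so each preimage $\hx$ has $\hx^2\in\{e,u\}\setminus\{u\}$, i.e. $\hx^2=e$; since $\pi(\hx)=x\ne e$ gives $\hx\ne e$, each of the two preimages has order~$2$. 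The load-bearing step is the compatibility $\pi(\hx^k)=x^k$ together with the observation $\hx^{\ord x}\in\{e,u\}$; everything else is elementary divisibility plus the singleton criterion of Proposition~\ref{propos_bullet}(c), so I do not expect a serious obstacle here.
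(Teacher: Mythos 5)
Your argument is correct and follows essentially the same route as the paper: establish $\pi(\hx^k)=x^k$, deduce that $\hx^{\ord x}\in\{e,u\}$ to get the order dichotomy $\ord\hx\in\{\ord x,2\ord x\}$, and use Proposition~\ref{propos_bullet}(c) to characterize when the fiber $\{\hx,u\hx\}$ collapses to a point. The only difference is cosmetic — you spell out the uniqueness argument behind $\pi(\hx^k)=x^k$, which the paper dismisses as obvious.
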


\begin{proof}
First, note that $\pi^{-1}(e)=\{e,u\}$, so the claim of the Lemma is true for $x=e$.

If $\hx$ is an element in~$\pi^{-1}(x)$, then $\pi^{-1}(x)=\{\hx,u\hx\}$, so we only need to figure out what the orders of the elements~$\hx$ and $u\hx$ can be and when these elements can coincide with each other.

It is obvious that $\pi\bigl(\hx^k\bigr)=x^k$ for all $k\in\Z$. So if $\hx^k=e$, then $x^k=e$. Therefore, $\ord\hx$ is divisible by $\ord x$. Now take $k=\ord x$. Then $x^k=e$, which means that the element~$\hx^k$ belongs to the set $\pi^{-1}(e)=\{e,u\}$, that is, either $\hx^k=e$ or $\hx^k=u$. In the first case $\ord \hx=k$. In the second case $\ord \hx>k$, but at the same time $\hx^{2k}=u^2=e$, so $\ord\hx=2k$.

By Proposition ~\ref{propos_bullet}(c), equality $u\hx=\hx$ holds if and only if $\hx^2=u$. If so, then $x^2=e$, then $\ord x=2$, and also $\ord\hx=4$. In this case, the preimage~$\pi^{-1}(x)$ consists of one element~$\hx$. In all other cases $u\hx\ne\hx$, that is, the preimage~$\pi^{-1}(x)$ consists of two different elements~$\hx$ and~$u\hx$. It remains to note that if $\ord\hx=\ord x= 2$, then according to Proposition ~\ref{propos_bullet} $\ord(u\hx)=2$; thus~$\pi^{-1}(x)$ consists of two elements of order~$2$.
\end{proof}

Recall that, according to Proposition~\ref{propos_bullet}(a), the subset $V\subseteq X$ consisting of the identity~$e$ and all elements of order~$2$ is a Boolean group. Denote by $R\subset V$ the subset consisting of all elements~$x$ of order~$2$ such that the preimage~$\pi^{-1}(x)$ consists of one element of order~$4$. We will call ~$R$ \textit{branching set} of the extensions~\eqref{eq_ext}. Note that the identity~$e$ never belongs to the set~$R$. Moreover, the following statement is true.

\begin{lem}\label{lem_ram1}
The set $V\setminus R$ is a subgroup of the group~$V$.
\end{lem}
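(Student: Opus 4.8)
The plan is to realise $V\setminus R$ as the image of a single-valued group homomorphism and then invoke the fact that the image of a group homomorphism is a subgroup. First I would apply Proposition~\ref{propos_bullet}(a) to the two-valued group~$\hX$ itself: the subset $\widehat V\subseteq\hX$ consisting of the identity and all elements of order~$2$ is a Boolean group with respect to the operation~$\cdot$\,. I claim that $\pi$ restricts to a homomorphism of Boolean groups $\pi|_{\widehat V}\colon \widehat V\to V$ whose image is exactly $V\setminus R$; granting this, $V\setminus R$ is a subgroup of~$V$.

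The first step is to check that $\pi$ maps $\widehat V$ into~$V$ and is multiplicative for the $\cdot$ operations. Since $\pi(\hx^k)=x^k$, the order of $\pi(\hx)$ divides $\ord\hx\le 2$, so $\pi(\widehat V)\subseteq V$. For $v,w\in\widehat V$ the product $v\cdot w$ is defined by Lemma~\ref{lem_order2} (at least one factor has order~$\le 2$), so $v*w=[v\cdot w,\,v\cdot w]$; applying the homomorphism $\pi$ of two-valued groups gives $\pi(v)*\pi(w)=[\pi(v\cdot w),\,\pi(v\cdot w)]$, and since $\pi(v),\pi(w)\in V$ the same Lemma gives $\pi(v)*\pi(w)=[\pi(v)\cdot\pi(w),\,\pi(v)\cdot\pi(w)]$. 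Comparing the two multisets yields $\pi(v\cdot w)=\pi(v)\cdot\pi(w)$, i.e.\ $\pi|_{\widehat V}$ is a homomorphism of Boolean groups.

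It then remains to identify the image $\pi(\widehat V)$ with $V\setminus R$. Here I would use the preceding Lemma, which asserts that an element $x\in V\setminus\{e\}$ lies in $R$ precisely when its entire preimage $\pi^{-1}(x)$ is a single element of order~$4$; equivalently, $x\notin R$ exactly when $x$ admits a preimage of order~$\le 2$, that is, a preimage lying in~$\widehat V$. Consequently, on one hand any $\widehat v\in\widehat V$ has $\pi(\widehat v)\in V\setminus R$ (an element of $R$ has no order-$\le 2$ preimage), so $\pi(\widehat V)\subseteq V\setminus R$; on the other hand $e\in\pi(\widehat V)$, and every nonidentity $x\in V\setminus R$ has an order-$2$ preimage in~$\widehat V$, so $V\setminus R\subseteq\pi(\widehat V)$. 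Hence $\pi(\widehat V)=V\setminus R$, and being the image of the group homomorphism $\pi|_{\widehat V}$, the set $V\setminus R$ is a subgroup of~$V$.

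The only genuinely delicate point is the bookkeeping in this last step: one must invoke the dichotomy of the preceding Lemma to see that membership in~$R$ is exactly the obstruction to possessing an order-$\le 2$ lift, so that the homomorphic image $\pi(\widehat V)$ omits precisely~$R$. Everything else is a routine transport of the Boolean-group structure through~$\pi$, and closure under inverses is automatic since every element of a Boolean group is its own inverse. Alternatively, the same argument can be phrased element-wise: given $x_1,x_2\in V\setminus R$, lift them to order-$2$ elements $\hx_1,\hx_2\in\hX$, observe that $\hx_1\cdot\hx_2$ has order~$\le 2$ and projects to $x_1\cdot x_2$, whence $x_1\cdot x_2\notin R$.
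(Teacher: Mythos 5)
Your proof is correct and is essentially the paper's own argument: the paper also lifts elements of $V\setminus R$ to order-$\le 2$ elements of~$\hX$, multiplies them inside the Boolean group of such elements (Proposition~\ref{propos_bullet}(a) applied to~$\hX$), and projects back. Your packaging of this as ``$V\setminus R$ is the image of the homomorphism $\pi|_{\widehat V}$'' is just a more structured phrasing of the same closure computation, and your closing element-wise remark is verbatim the paper's proof.
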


\begin{proof}
Let $x,y\in V\setminus R$ and~$\hx$ and~$\hy$ be some elements from preimages~$\pi^{-1}(x)$ and~$\pi^{-1}(y)$ respectively. Then $\ord\hx=\ord\hy=2$. By Proposition ~\ref{propos_bullet}(a) we obtain that $\ord(\hx\hy)=2$. Since $\hx\hy\in \pi^{-1}(xy)$, it follows that $x y\in V\setminus R$. Thus, $V\setminus R$ is a subgroup.
\end{proof}

\section{Direct product decompositions}

Recall that in section ~\ref{section_formulation} we defined the direct product operation of an involutive commutative two-valued group~$X$ and a Boolean group~$W$; the operation of multiplication in the resulting involutive commutative group~$X\times W$ is given by the formula
$$
(x_1,w_1)*(x_2,w_2)=\bigl[(z_1,w_1w_2),(z_2,w_1w_2)\bigr],
$$
where $x_1*x_2=[z_1,z_2]$.

\begin{defin}
Let us say that an involutive commutative two-valued group~$X$ \textit{contains single-valued direct factor} if $X$ is isomorphic to a two-valued group of the form $X'\times W$, where $X'$ is an involutive commutative two-valued group and $W$ is a non-trivial Boolean group. Otherwise, we say that $X$ does not contain single-valued direct factor.
\end{defin}

We want to reduce the classification problem for finitely generated involutive commutative two-valued groups to the same problem for two-valued groups that do not contain single-valued direct factors. To do this, we will need the following two Propositions, the first of which
gives a convenient criterion when a two-valued group does not contain a single-valued direct factor, and the second one claims that the largest single-valued direct factor in a two-valued group is determined in a unique way up to isomorphism.

\begin{propos}\label{propos_neras}
A finitely generated involutive commutative two-valued group~$X$ does not contain a single-valued direct factor if and only if every element $v\in X$ of order~$2$ is a square of some element from~$X$.
\end{propos}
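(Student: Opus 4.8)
The plan is to prove both implications by contraposition. For the direction asserting that a single-valued direct factor yields a non-square involution, suppose $X\cong X'\times W$ with $W$ a non-trivial Boolean group, and fix $w\in W\setminus\{e\}$. Then $(e,w)$ has order~$2$, and if $(e,w)=\xi^2$ with $\xi=(x,w_0)$, the product formula gives $\xi*\xi=\bigl[(e,e),(x^2,e)\bigr]$, so that $\xi^2=(x^2,e)$; comparing second coordinates forces $w=e$, a contradiction. Hence $(e,w)$ is an order-$2$ element that is not a square, which is exactly the negation of the right-hand side.

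For the converse I would show that a non-square involution $v$ lets us split off a copy of $C_2=\{e,v\}$. By Proposition~\ref{propos_bullet}(c) the equality $v\cdot x=x$ holds iff $x^2=v$; since $v$ is not a square, $v$ acts on $X$ without fixed points, so $\langle v\rangle$ acts freely and $\pi\colon X\to\bar X=X/\langle v\rangle$ is two-to-one. The target is a character $\phi\colon X\to C_2=\{\pm1\}$, by which I mean a map with $\phi(z_1)=\phi(z_2)=\phi(x)\phi(y)$ whenever $x*y=[z_1,z_2]$, satisfying $\phi(v)=-1$. Given such a $\phi$, the element $v$ interchanges $X'=\phi^{-1}(1)$ and $\phi^{-1}(-1)$, the set $X'$ is a subgroup, and $(x',\varepsilon)\mapsto\varepsilon\cdot x'$ is an isomorphism $X'\times\langle v\rangle\xrightarrow{\sim}X$ by the action formula~\eqref{eq_action}.

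The construction of $\phi$ rests on two facts. First, the set $V_{\mathrm{sq}}\subseteq V$ of order-$\le2$ elements that are squares is a subgroup of $V$. Indeed, if $a^2=v$ and $b^2=w$ with $v\ne w$ distinct non-trivial involutions (so $\ord a=\ord b=4$), then the multiset identity $(a*b)*(a*b)=(a*a)*(b*b)$ shows, by an $e$-count, that $(a,b)$ cannot be special; writing $a*b=[c_1,c_2]$ with $c_1\ne c_2$, the same identity forces $c_1^2=c_2^2$ to equal one of $v,w,vw$, and Lemma~\ref{lem_basic} (applied to the order-$2$ elements appearing in $c_1*c_2$) rules out $v$ and $w$, leaving $c_1^2=vw$, so $vw\in V_{\mathrm{sq}}$. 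Since $X$ is finitely generated, $V$ is a finite-dimensional vector space over $\F_2$ and $v\notin V_{\mathrm{sq}}$; hence there is a homomorphism $\chi\colon V\to C_2$ with $\chi|_{V_{\mathrm{sq}}}=1$ and $\chi(v)=-1$, and this $\chi$ must be the restriction to $V$ of the sought $\phi$. Second, in the language of the $C_2$-extension~\eqref{eq_ext} with kernel $\{e,v\}$, the branching set $R$ is empty: a branch point would be an order-$2$ element of $\bar X$ whose preimage is a single order-$4$ element $\hx$ with $\hx^2=v$, again contradicting that $v$ is not a square (cf. Lemma~\ref{lem_ram1}).

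The main obstacle is extending $\chi$ from $V$ to all of $X$, i.e.\ showing that this empty-branching $C_2$-extension actually splits. After choosing a set-theoretic section $s$ of $\pi$ with $s(e)=e$, the failure of $s$ to be multiplicative defines a $C_2$-valued obstruction recording, for each product, whether $s(x)*s(y)$ lands on the chosen preimages or on their $v$-translates; correcting $s$ amounts to solving a coboundary-type equation for a sign function $\eta\colon\bar X\to\{\pm1\}$, and the claim is that emptiness of $R$ is exactly the condition that trivializes this obstruction. I expect this consistency check — carried out by induction on a generating set, with $V_{\mathrm{sq}}$ controlling the order-$2$ part and freeness of the $v$-action controlling the rest — to be the technical heart of the proof; everything else is bookkeeping with the action of $V$ and the multiset identities already established.
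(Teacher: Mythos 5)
The ``only if'' direction and the general shape of the ``if'' direction (reduce to constructing a homomorphism $\phi\colon X\to C_2$ with $\phi(v)=-1$, then split off $\{e,v\}$ via the $V$-action) match the paper. Your first ``fact'' is also fine, though it is just the intersection with $V$ of the paper's Lemma~\ref{lem_squares} (the set of \emph{all} squares is a subgroup), which you could cite instead of redoing the case analysis.

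The genuine gap is the extension of $\chi$ from $V$ to all of $X$. You explicitly defer it (``I expect this consistency check \dots to be the technical heart of the proof''), and the one substantive claim you make about it --- that emptiness of the branching set $R$ ``is exactly the condition that trivializes this obstruction'' --- is unjustified and essentially circular: $R=\varnothing$ is just a restatement of the hypothesis that $v$ is not a square, so asserting that it forces the extension to split is asserting the proposition itself. The branching set is a pointwise condition on fibres over order-$2$ elements of $X/\{e,v\}$, whereas the obstruction to a global sign-correction $\eta$ is a relation among \emph{all} the triples $z\in x*y$; nothing in your sketch connects the two. The paper closes exactly this gap by a direct $\F_2$-linear-algebra argument: the conditions $f(x)f(y)f(z)=e$ for $z\in x*y$ form a homogeneous linear system, and if $f(v)=e$ were an $\F_2$-linear consequence of it, multiplying the corresponding relations and using involutivity would place $v$ in the subgroup of $X$ generated by squares, whence $v$ would itself be a square by Lemma~\ref{lem_squares} --- contradiction. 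Some such global argument (this one, or a genuinely worked-out cocycle computation) is indispensable; without it your proof does not go through.
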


\begin{propos}\label{propos_rasch_iso}
Any finitely generated involutive commutative two-valued group~$X$ can be decomposed into a direct product of a finitely generated involutive commutative two-valued group without single-valued direct factors and a finite Boolean group. If $X=X_i\times W_i$, where $i=1,2$, are two such decompositions, then $X_1\cong X_2$ and~$W_1\cong W_2$.
\end{propos}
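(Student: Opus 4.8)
The plan is to prove existence and uniqueness separately, in both cases organising the argument around the Boolean group $V\subseteq X$ of elements of order $\le 2$ (Proposition~\ref{propos_bullet}) and the distinguished subset $V_{\mathrm{sq}}\subseteq V$ consisting of $e$ together with those order-$2$ elements that are squares, i.e. equal to $x^2$ for some $x\in X$ (necessarily with $\ord x=4$, by Proposition~\ref{propos_bullet}(c)). The point of Proposition~\ref{propos_neras} is exactly that $X$ has no single-valued direct factor if and only if $V_{\mathrm{sq}}=V$, so $V_{\mathrm{sq}}$ measures the obstruction we want to split off. For existence I would induct on $\dim V$, which is finite because $X$ is finitely generated. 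If $V_{\mathrm{sq}}=V$ we are done with $W$ trivial. Otherwise I pick an order-$2$ element $v_0$ that is not a square; then $\{e,v_0\}$ acts freely on $X$ (a fixed point would, by Proposition~\ref{propos_bullet}(c), force $v_0=\hx^2$), the orbit space $X''=X/\{e,v_0\}$ is a quotient two-valued group, and $X\to X''$ is a $C_2$-extension. Its branching set is empty, since by the preimage lemma of §\ref{section_ord2}--§5 a branch point would be an order-$2$ element of $X''$ whose unique preimage $\hx$ satisfies $\hx^2=v_0$, again making $v_0$ a square. Granting the splitting claim $X\cong X''\times C_2$, the group $X''$ is again finitely generated with $\dim V(X'')=\dim V-1$, and the inductive hypothesis applied to $X''$ produces the desired decomposition.

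For the uniqueness of $W$, fix a decomposition $X=X_i\times W_i$ with $X_i$ free of single-valued factors. A direct computation shows $V(X)=V(X_i)\times W_i$, while every square of $X$ has trivial $W_i$-component, so that $V_{\mathrm{sq}}(X)=V_{\mathrm{sq}}(X_i)\times\{e\}$; by Proposition~\ref{propos_neras} applied to $X_i$ this equals $V(X_i)\times\{e\}$. In particular $V_{\mathrm{sq}}(X)$ is a subgroup and $W_i\cong V(X)/V_{\mathrm{sq}}(X)$. Since both $V(X)$ and $V_{\mathrm{sq}}(X)$ are intrinsic to $X$, this immediately gives $W_1\cong W_2$.

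To obtain $X_1\cong X_2$, I would first observe that $X_i\cong X/W_i$, the quotient by the Boolean subgroup $W_i$ (which coincides with the orbit space by the remark in §5), and that $W_1,W_2$ are two complements of $S:=V_{\mathrm{sq}}(X)$ in $V(X)$. Working inside the fixed decomposition $X=X_1\times W_1$, the second complement is the graph $W_2=\{\phi(w)\,w:w\in W_1\}$ of a linear map $\phi\colon W_1\to S$. Now for each $s\in S$ and each coordinate functional $c$ on $W_1$, the formula $F(x_1,\delta)=s^{c(\delta)}\cdot(x_1,\delta)$ defines an automorphism of $X$: one checks the three cases of the two-valued product $(x_1,\delta)*(y_1,\gamma)$ directly, using the action identity of Proposition~\ref{propos_bullet}(b) and $s^2=e$. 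Such an $F$ modifies $\phi$ by the rank-one map $w\mapsto s^{c(w)}$, and as $s$ and $c$ vary these span all of $\Hom(W_1,S)$. Composing finitely many of them yields an automorphism $\Psi$ of $X$ with $\Psi(W_2)=W_1$, whence $X_2\cong X/W_2\cong X/\Psi(W_2)=X/W_1\cong X_1$.

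The hard part will be the splitting step used in existence: proving that a $C_2$-extension with empty branching set is a direct product $X''\times C_2$, equivalently that the free $\{e,v_0\}$-action on $X$ admits a multiplicative section $X''\to X$. I would isolate this as a separate lemma and attack it by building the section from a bipartition of $X$ into two ``sheets'', choosing one representative of each free orbit consistently across a finite generating set; the emptiness of the branching set is precisely what rules out the local obstruction to extending such a choice compatibly with the two-valued multiplication. The uniqueness half, by contrast, is essentially linear algebra over $\F_2$ once the intrinsic description $S=V_{\mathrm{sq}}(X)$ is available, the only verification being that the shear maps $F$ are genuine automorphisms, which is the short case check indicated above.
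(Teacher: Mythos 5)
The uniqueness half of your argument is sound and coincides with the paper's proof: the paper likewise shows that $W_1$ and $W_2$ are two complements of $U=Q\cap V$ (your $V_{\mathrm{sq}}(X)$) in $V$, and moves one onto the other by the shear automorphism $(x,w)\mapsto(xf(w),w)$ attached to a homomorphism $f\colon W_1\to U$; your factorisation of the shear into rank-one pieces is harmless but unnecessary, since the verification via formula~\eqref{eq_action} works for the full shear at once. The genuine gap is in the existence half, at the step you yourself flag as hard. The splitting $X\cong\bigl(X/\{e,v_0\}\bigr)\times C_2$ for a non-square $v_0$ of order~$2$ is exactly the nontrivial direction of Proposition~\ref{propos_neras}, which is proved before the present statement and should simply be cited; its proof constructs a homomorphism $f\colon X\to\{e,v_0\}$ with $f(v_0)=v_0$ by showing that the equation $f(v_0)=e$ is not an $\F_2$-linear consequence of the defining equations $f(x)f(y)f(z)=e$ for $z\in x*y$, the only possible obstruction being that $v_0$ lies in the subgroup generated by squares, which Lemma~\ref{lem_squares} reduces to $v_0$ being a square. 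Your proposed replacement --- choosing orbit representatives ``consistently across a finite generating set'', with emptiness of the branching set ruling out ``the local obstruction'' --- does not work as stated: a multiplicative section must satisfy a parity condition over \emph{every} relation $z\in x*y$ in $X$, not only over relations among generators, and freeness of the action (a pointwise condition) does not by itself give global solvability. The bridge from ``$v_0$ is not a square'' to ``$v_0$ is not a product of squares'' is precisely Lemma~\ref{lem_squares}, and no purely local argument sees it.

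A second, smaller problem is the termination of your induction. You induct on $\dim V$ and assert that it is finite because $X$ is finitely generated, but this is not justified at this stage: the paper establishes finiteness of $V$ only later, and only for the groups of Theorem~\ref{thm_124}, via the orbit count following Lemma~\ref{lem_124mult}; a subgroup of a finitely generated two-valued group is not obviously finitely generated. The paper's proof sidesteps the issue: after $k$ splittings one has $X\cong X^{(k)}\times W^{(k)}$ with $\dim W^{(k)}\ge k$, and $W^{(k)}$ with the doubled operation is a \emph{quotient} of $X$, hence generated by $N$ elements, so $k\le N$. Replacing the induction on $\dim V$ by this bound on the number of splitting steps repairs the existence argument.
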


Before proving Propositions ~\ref{propos_neras} and ~\ref{propos_rasch_iso}, we establish two useful properties of involutive commutative two-valued groups.

\begin{lem}\label{lem_z1z2_prelim}
Let $X$ be an involutive commutative two-valued group, $x,y\in X$ and $x*y=[z_1,z_2]$. Assume that $x^2\ne y^2$ and $z_1\ne z_2$, then $z_1*z_2=[x^2,y^2]$.
\end{lem}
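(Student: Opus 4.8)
The plan is to prove the two containments $x^2\in z_1*z_2$ and $y^2\in z_1*z_2$; since $z_1*z_2$ is a two-element multiset and $x^2\ne y^2$ by hypothesis, these two containments immediately force $z_1*z_2=[x^2,y^2]$. So the whole problem reduces to showing that $x^2$ (and, symmetrically, $y^2$) occurs in the product $z_1*z_2$.

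To get $x^2\in z_1*z_2$ I would first invoke Lemma~\ref{lem_basic}: from $z_1\in x*y=y*x$ it follows that $y\in x*z_1$, so I may write $x*z_1=[y,p]$ for a suitable element $p$. The key step is then to expand the triple product $x*x*z_1$ in two ways using associativity. Grouping it as $(x*x)*z_1$ and using $x*x=[e,x^2]$ gives the four-element multiset $[z_1,z_1]$ together with the two elements of $x^2*z_1$. Grouping it instead as $x*(x*z_1)=x*[y,p]$ gives $x*y=[z_1,z_2]$ together with the two elements of $x*p$. Equating these two four-element multisets and cancelling one common copy of $z_1$ yields the coincidence of three-element multisets
\[
[z_1]\cup(x^2*z_1)=[z_2]\cup(x*p).
\]
Here is where the hypothesis $z_1\ne z_2$ enters: the element $z_2$ on the right cannot be matched by the isolated $z_1$ on the left, so it must lie in $x^2*z_1$. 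Thus $z_2\in x^2*z_1$, and a final application of Lemma~\ref{lem_basic} converts this into $x^2\in z_1*z_2$.

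The same computation with the roles of $x$ and $y$ interchanged (legitimate since $x*y=y*x=[z_1,z_2]$ is symmetric) gives $z_2\in y^2*z_1$ and hence $y^2\in z_1*z_2$. As $x^2\ne y^2$ and $z_1*z_2$ has exactly two elements, it must equal $[x^2,y^2]$, as claimed. The only delicate point --- and the place where both hypotheses are genuinely used --- is the multiset bookkeeping: one must be sure that after deleting a single copy of $z_1$ the remaining three-element multisets still coincide, and that $z_1\ne z_2$ then forces $z_2$ into $x^2*z_1$ rather than absorbing it into the leftover singleton. The hypothesis $x^2\ne y^2$ is used only at the very end, to pin down the two-element multiset $z_1*z_2$ uniquely from the two containments.
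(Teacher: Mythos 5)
Your proof is correct, and at the level of strategy it coincides with the paper's: both establish the two containments $x^2\in z_1*z_2$ and $y^2\in z_1*z_2$ and then invoke $x^2\ne y^2$ to pin down the two-element multiset. The difference lies in how the containment is obtained. The paper notes that $y$ lies in both $x*z_1$ and $x*z_2$ (two applications of Lemma~\ref{lem_basic}), so that $e\in(x*z_1)*(x*z_2)=(x*x)*(z_1*z_2)=[e,x^2]*(z_1*z_2)$; since $z_1\ne z_2$ forces $e\notin z_1*z_2$, the identity must come from $x^2*(z_1*z_2)$, giving $x^2\in z_1*z_2$ directly. You instead expand the smaller product $x*x*z_1$ in two ways, use $z_1\ne z_2$ to force $z_2\in x^2*z_1$, and then apply Lemma~\ref{lem_basic} once more to convert this into $x^2\in z_1*z_2$. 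Both computations use the hypothesis $z_1\ne z_2$ in the same role, namely to exclude a degenerate matching in a multiset identity, and your bookkeeping --- matching the copy of $z_2$ on the right against $x^2*z_1$ on the left because $z_2\ne z_1$ --- is sound. Your route costs one extra use of Lemma~\ref{lem_basic} but works with a four-element rather than an eight-element multiset, and as a by-product it isolates the intermediate fact $z_2\in x^2*z_1$, which the paper only records later (Corollary~\ref{cor_z1z2}) in the non-special setting.
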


\begin{proof}
By Lemma~\ref{lem_basic}, the element~$y$ belongs to each of the multisets~$x*z_1$ and~$x*z_2$. Therefore, the identity ~$e$ belongs to the multiset
$$
(x*z_1)*(x*z_2)=(x*x)*(z_1*z_2)=[e,x^2]*(z_1*z_2)=[z_1*z_2,z_1*z_2,x^2*(z_1*z_2)].
$$
However, $e\notin z_1*z_2$ since $z_1\ne z_2$. Hence, $e\in x^2*(z_1*z_2)$, which implies that $x^2\in z_1*z_2$. Similarly we have that $y^2\in z_1*z_2$. Since $x^2\ne y^2$, this implies the Lemma.
\end{proof}

\begin{lem}\label{lem_squares}
Let $X$ be an involutive commutative two-valued group, $Q\subseteq X$ be a subset consisting of all squares of elements from~$X$. Then $Q$ is a subgroup.
\end{lem}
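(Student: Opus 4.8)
The plan is to verify the closure condition in the definition of a subgroup: given $p=x^2$ and $q=y^2$ in $Q$, I must show that both elements of the multiset $x^2*y^2$ are again squares. Write $x*y=[z_1,z_2]$, $z_1*z_2=[a,b]$ and $x^2*y^2=[w_1,w_2]$. The central device is to expand the four-fold product $x*x*y*y$ in two ways. On one hand, since $x*x=[e,x^2]$ and $y*y=[e,y^2]$,
\begin{equation*}
(x*x)*(y*y)=[e,e,x^2,x^2,y^2,y^2,w_1,w_2];
\end{equation*}
on the other hand, using commutativity together with $z_i*z_i=[e,z_i^2]$,
\begin{equation*}
(x*y)*(x*y)=[e,e,z_1^2,z_2^2,a,a,b,b].
\end{equation*}
By associativity these eight-element multisets coincide, and cancelling the common pair $[e,e]$ yields the master identity
\begin{equation*}
[z_1^2,z_2^2,a,a,b,b]=[x^2,x^2,y^2,y^2,w_1,w_2].\tag{$\ast$}
\end{equation*}

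First I would dispose of the case $x^2=y^2$: here $p=q$, so $x^2*y^2=p*p=[e,p^2]$, and both entries are squares, since $e=e^2$ and $p^2=(x^2)^2=x^4$ by Lemma~\ref{lem_sequence_mult}. So assume $x^2\ne y^2$, and split according to whether $z_1=z_2$. If $z_1\ne z_2$, then Lemma~\ref{lem_z1z2_prelim} applies and gives $z_1*z_2=[x^2,y^2]$, that is $[a,b]=[x^2,y^2]$. Substituting this into $(\ast)$, the four entries $a,a,b,b$ on the left become $x^2,x^2,y^2,y^2$, which cancel against the identical entries on the right, leaving $[w_1,w_2]=[z_1^2,z_2^2]$. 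Hence $x^2*y^2=[z_1^2,z_2^2]$, both of whose elements lie in $Q$.

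The remaining case $x^2\ne y^2$ with $z_1=z_2=z$ is the delicate one, and I expect it to be the main obstacle, since Lemma~\ref{lem_z1z2_prelim} is no longer available. Now $z*z=[e,z^2]$, so $[a,a,b,b]=[e,e,z^2,z^2]$ and $(\ast)$ becomes $[e,e,z^2,z^2,z^2,z^2]=[x^2,x^2,y^2,y^2,w_1,w_2]$. The only values occurring on the left are $e$ and $z^2$, so each of $x^2$ and $y^2$ must equal $e$ or $z^2$; as $x^2\ne y^2$, one of them, say $x^2$, equals $e$. But then $x^2*y^2=e*y^2=[y^2,y^2]$, whose entries are squares. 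This exhausts all cases, so $Q$ is closed under $*$ and is therefore a subgroup. The whole argument is essentially multiset bookkeeping built on the single identity $(\ast)$; the only substantive external input is Lemma~\ref{lem_z1z2_prelim}, and the real care is confined to the degenerate case just treated.
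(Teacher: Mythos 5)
Your proof is correct and follows essentially the same route as the paper's: both expand $x*x*y*y$ as $(x*x)*(y*y)$ and as $(x*y)*(x*y)$, invoke Lemma~\ref{lem_z1z2_prelim} in the generic case $x^2\ne y^2$, $z_1\ne z_2$, and dispose of the two degenerate cases by inspection of the resulting multisets. The only (harmless) difference is that in the case $z_1=z_2$ you go one step further and deduce that one of $x^2,y^2$ equals $e$, whereas the paper simply observes that the whole eight-element multiset then consists of $e$'s and $z_1^2$'s, all of which are squares.
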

\begin{proof}
We need to prove that  for any $x, y\in X$ the multiset $x^2*y^2$ consists of squares.
Let $x*y=[z_1,z_2]$, then
\begin{align*}
x*x*y*y&=(x*x)*(y*y)=[e,x^2]*[e,y^2]=[e,e,x^2,x^2,y^2,y^2,x^2*y^2],\\
x*x*y*y&=(x*y)*(x*y)=[z_1,z_2]*[z_1,z_2]=[e,e,z_1^2,z_2^2,z_1*z_2,z_1*z_2].
\end{align*}
If $x^2\ne y^2$ and $z_1\ne z_2$, then $z_1*z_2=[x^2,y^2]$ and hence $x^2*y^2=[z_1^2,z_2^2]$ and the claim is proved. It remains to analyze the cases when $x^2=y^2$, or $z_1= z_2.$

If $x^2=y^2$, then
$
x^2*y^2=\left[e,(x^2)^2\right]
$
in agreement with the claim.

If $z_1=z_2$, then $x*x*y*y=[e,e,e,e,z_1^2,z_1^2,z_1^2,z_1^2]$, then the multiset $x^2*y^2$ consists of squares, which completes the proof.
\end{proof}

\begin{proof}[Proof of Proposition~\ref{propos_neras}]
The `only if' part of the statement of the Proposition is obvious: indeed, if $X=X'\times W$, where $W$ is a non-trivial Boolean group, then for any element~$w\in W\setminus\{e\}$ the pair $(e,w)\in X$ is an element of order~$2$ that is not a square.

Conversely, suppose that in the two-valued group~$X$ there is an element~$w$ of order~$2$ that is not a square. We set $W=\{e,w\}$ and prove that the single-valued group~$W\cong C_2$ can be singled out as a direct factor in~$X$. To do this, it suffices to construct a homomorphism of two-valued groups $f\colon X\to W$ such that $f(w)=w$. Indeed, if such a homomorphism is constructed, then together with the natural projection $\pi \colon X\to X/W$ it will induce an isomorphism $X\cong(X/W)\times W$.

If we consider $W$ as a two-valued group, then the multiplication is given by the formulae
$$e*e=[e,e],\qquad e*w=[w,w],\qquad w*w=[e,e].$$
Therefore, the map $f\colon X\to W$ is a homomorphism of two-valued groups if and only if for any elements of $x,y,z\in X$ such that $z\in x*y$, we have the equality
\begin{equation}\label{eq_f_system_of_equations}
f(x)f(y)f(z)=e.
\end{equation}
These equalities for all triples $\{x,y,z\}$ such that $z\in x*y$ constitute a system of homogeneous linear equations over the field~$\F_2$ (in multiplicative notation) with respect to the variables $f(x)$, where $x\in X$. We need to prove that this system of equations has a solution that satisfies the additional condition $f(w)=w$. To do this it suffices to show that the equation $f(w)=e$  does not follow from equations~\eqref{eq_f_system_of_equations}.

Let us assume the opposite. Then equation $f(w)=e$ is a linear combination over~$\F_2$ of equations of the form ~\eqref{eq_f_system_of_equations}. Since we are using multiplicative notation, this means that the equation $f(w)=e$ is the product of the equations
$$
f(x_i)f(y_i)f(z_i)=e,\qquad i =1,\ldots,m,
$$
for some triples $\{x_i,y_i,z_i\}$ such that $z_i\in x_i*y_i$, taking into account that $f(t)^2=e$ for all~$t$. Therefore, the $3m$-element multiset $[x_1,y_1,z_1,\ldots,x_m,y_m,z_m]$ has the form
$$
\bigl[w,t_1,t_1,t_2,t_2,\ldots,t_n,t_n\bigr]
$$
for some $t_1,\ldots,t_n\in X$, where, of course, $n=(3m-1)/2$.

For each $i$ it follows from $z_i\in x_i*y_i$ that the multiset $x_i*y_i*z_i$ contains the identity~$e$: 
\begin{align*}
e\in(x_1*y_1*z_1)*\cdots*(x_m*y_m*z_m)&=w*(t_1*t_1)*(t_2*t_2)*\cdots*(t_n*t_n)\\
&=w*\bigl[e,t_1^2\bigr]*\bigl[e,t_2^2\bigr]*\cdots*\bigl[e,t_n^2\bigr].
\end{align*}
Since the two-valued group~$X$ is involutive, it follows that
$$
w\in \bigl[e,t_1^2\bigr]*\bigl[e,t_2^2\bigr]*\cdots*\bigl[e,t_n^2\bigr].
$$
Hence, $w$ lies in a subgroup of the two-valued group~$X$ generated by squares~$t_i^2$. Thus, by Lemma ~\ref{lem_squares}, the element $w$ is itself a square, which is impossible. The contradiction means that the system of linear equations ~\eqref{eq_f_system_of_equations} has a solution that satisfies $f(w)=w$. This solution is a two-valued group homomorphism which induces an isomorphism $X\cong (X/W)\times W$.\end{proof}

\begin{proof}[Proof of Proposition~\ref{propos_rasch_iso}]
If the two-valued group $X$ contains some single-valued direct factor, then choose it: $X=X'\times W'$, where $\dim W'>0$. If $X'$ again contains some single-valued direct factor, then we choose it again: $X'=X''\times \widetilde{W}$, where $\dim \widetilde{W}>0$, and hence $X=X''\times W''$, where $W''=W'\times \widetilde{W}$, $\dim W''\ge 2$. Continuing in the same way as long as possible, after the $k$th step we will have the decomposition $X=X^{(k)}\times W^{(k)}$, where $\dim W^{(k)}\ge k$. We need to prove that this process will stop at some point, that is, after a certain step, the two-valued group~$X^{(k)}$ will not contain a single-valued direct factor. To do this, note that every group~$W^{(k)}$ with a doubled operation is a quotient group of the two-valued group~$X$. Therefore, if the two-valued group~$X$ is generated by $N$ elements, then the group~$W^{(k)}$ is also generated by $N$ elements, and hence by $k\le N$. Thus, the process under consideration will stop no later than at the $N$th step, and we will obtain the desired decomposition.

Now let $X=X_1\times W_1=X_2\times W_2$ be two decompositions such that $X_1$ and~$X_2$ do not contain single-valued direct factors.
Assume first that $W_1=W_2$. Then each of the projections $X\to X_1$ and~$X\to X_2$ is taking the quotient with respect to the action of the same group $W_1=W_2$, which immediately implies that $X_1$ and~$X_2$ are isomorphic.

Now suppose that $W_1\ne W_2$.
Let $Q\subseteq X$ be the subset consisting of all squares of elements from~$X$. Then $Q\subseteq X_1$ and~$Q\subseteq X_2$. By Lemma~\ref{lem_squares}, the subset~$Q$ is a subgroup. Let $V\subseteq X$ be a subset consisting of all elements of orders~$\le 2$. Then $V$ is a Boolean group and $W_1$ and~$W_2$ are its subgroups. From the fact that $Q$ is a subgroup of the two-valued group~$X$, it follows that $U=Q\cap V$ is a subgroup of the Boolean group~$V$. Since $Q\subseteq X_1$ and ~$Q\subseteq X_2$, then $U\cap W_1=U\cap W_2=\{e\}$. Assume that the direct product $U\times W_1$ does not coincide with the entire group~$V$. Then the element~$v\in V\setminus(U\times W_1)$ goes under the projection $X=X_1\times W_1\to X_1$ into an element of order~$2$ that is not the square of any element from~$X_1$. By Proposition ~\ref{propos_neras}, this contradicts the fact that $X_1$ does not contain single-valued direct factors. Hence $V=U\times W_1$. Similarly, $V=U\times W_2$. Thus, $W_1$ and~$W_2$ are two subspaces of the vector space~$V$ over~$\F_2$, complementary to the subspace~$U$. Therefore, there is a homomorphism $f\colon W_1\to U$ such that the formula $w\mapsto wf(w)$ defines an isomorphism $W_1\cong W_2$, then the map $F\colon X_1\times W_1\to X_1\times W_1$ defined by the formula $F(x,w)=(x,wf(w))$ is an automorphism of the two-valued group~$X$ leaving~$X_1$ in place and taking~$W_1$ to~$W_2$. Thus there is a direct product decomposition $X=X_1\times W_2$, which implies that $X_1\cong X_2$.
\end{proof}

\section{Properties of non-special two-valued groups}

Recall that an involutive commutative two-valued group~$X$ is called \textit{non-special} if the product $x*y$ consists of two distinct elements for any elements~$x,y\in X$ whose orders are greater than~$2$. In this section, we will prove several useful properties of such two-valued groups.

In all statements of this section $X$ is a non-special involutive commutative two-valued group and $V\subseteq X$ is the subset consisting of the identity~$e$ and all elements of order~$2$. Recall that according to Proposition ~\ref{propos_bullet}, the set $V$ is a Boolean group that acts on~$X$. In all statements in this section, lowercase Latin letters $x,y,z,\ldots$ denote elements of the two-valued group~$X$ unless otherwise stated.

\begin{lem}\label{lem_equal_squares}
The equality $x^2=y^2$ holds if and only if the elements~$x$ and~$y$ lie in the same $V$-orbit.
\end{lem}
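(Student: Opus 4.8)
The plan is to prove both implications of the equivalence between $x^2 = y^2$ and $x, y$ lying in the same $V$-orbit, working in the non-special setting where the absence of special pairs gives tight control over when products collapse to a doubled multiset.

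\textbf{The easy direction} ($V$-orbit $\Rightarrow$ equal squares). First I would suppose $y = v \cdot x$ for some $v \in V$, and show $y^2 = x^2$. Using the action property~\eqref{eq_action} from Proposition~\ref{propos_bullet}(b) together with the single-generated power calculus of Lemma~\ref{lem_sequence_mult}, I would compute $y * y = (v \cdot x) * (v \cdot x)$. Applying the action formula twice (or once on each factor), the multiset $(v\cdot x)*(v\cdot x)$ equals $[v \cdot (v \cdot z_1), v \cdot (v \cdot z_2)]$ where $x * x = [z_1, z_2] = [e, x^2]$; since $v$ has order~$\le 2$ the double action $v \cdot (v \cdot w) = w$, so $y * y = [e, x^2]$, forcing $y^2 = x^2$.

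\textbf{The harder direction} (equal squares $\Rightarrow$ same $V$-orbit). Here I would assume $x^2 = y^2$ and produce an element $v \in V$ with $y = v \cdot x$. The natural route is to look at the product $x * y = [z_1, z_2]$ and exploit Lemma~\ref{lem_z1z2_prelim}, which under the hypotheses $x^2 \ne y^2$ and $z_1 \ne z_2$ gives $z_1 * z_2 = [x^2, y^2]$. Since now $x^2 = y^2$, that lemma is not directly applicable, so I would instead argue by cases on whether $z_1 = z_2$. The key claim is that $x^2 = y^2$ forces the product $x * y$ to collapse to a doubled multiset $[z, z]$: if instead $z_1 \ne z_2$, a computation of $(x*y)*(x*y)$ versus $(x*x)*(y*y)$ (as in the proof of Lemma~\ref{lem_squares}) would show $z_1 * z_2$ contains $e$ together with $x^2 = y^2$, which combined with Lemma~\ref{lem_basic} and non-speciality leads to a contradiction unless the product was already doubled. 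Once $x * y = [z, z]$, the single-valued product $z = x \cdot y$ is defined; I would then set $v = z \cdot x^{-1}$-type combination and verify $v \in V$ by checking $v^2 = e$, using $x^2 = y^2$ to cancel squares, and finally confirm $y = v \cdot x$.

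\textbf{Main obstacle.} The delicate point is establishing that $x^2 = y^2$ actually forces $x * y$ to be a doubled multiset. When both $x$ and $y$ have order $> 2$, this is exactly where non-speciality must be invoked in reverse: a priori $x * y = [z_1, z_2]$ with $z_1 \ne z_2$ is allowed for distinct high-order elements, so I cannot conclude collapse from order alone. I expect the argument to hinge on showing that if $z_1 \ne z_2$ then Lemma~\ref{lem_z1z2_prelim} would yield $z_1 * z_2 = [x^2, y^2] = [x^2, x^2]$, a doubled multiset, which by definition would make $(z_1, z_2)$ a special pair (as $z_1, z_2$ turn out to have order $> 2$), contradicting non-speciality; hence $z_1 = z_2$ after all. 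Verifying that $z_1$ and $z_2$ genuinely have order exceeding~$2$ in that sub-argument, and handling the degenerate cases where $x$ or $y$ has small order, are the routine but necessary checks.
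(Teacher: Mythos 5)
Your easy direction is fine and agrees with the paper. The harder direction, however, rests on a false key claim: you assert that $x^2=y^2$ forces $x*y$ to collapse to a doubled multiset $[z,z]$, and your case analysis is engineered to conclude $z_1=z_2$. This is exactly backwards. If $\ord x>2$ and $\ord y>2$, then non-speciality says by definition that $x*y$ consists of two \emph{distinct} elements; a doubled product would make $(x,y)$ a special pair. Concretely, in $X^{\ba}_{C_8}$ the elements $x=\pi(a)$ and $y=\pi(a^3)$ lie in the same $V$-orbit and have equal squares, yet $x*y=[\pi(a^2),\pi(a^4)]$ is not doubled. Your fallback via Lemma~\ref{lem_z1z2_prelim} cannot be repaired either: that lemma requires $x^2\ne y^2$, and when $z_1\ne z_2$ involutivity forces $e\notin z_1*z_2$, so $z_1*z_2$ cannot ``contain $e$'' as you suggest; the contradiction you hope to extract never materializes.

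The computation you name, comparing $(x*x)*(y*y)=[e,x^2]*[e,x^2]$ with $(x*y)*(x*y)=[e,e,z_1^2,z_2^2,z_1*z_2,z_1*z_2]$, is the right tool, but the correct conclusion is different. After the trivial case $x^2=y^2=e$ (then $x,y\in V$), both orders exceed $2$, so $z_1\ne z_2$ by non-speciality and $e\notin z_1*z_2$. The left-hand multiset $[e,e,e,x^2,x^2,x^2,x^2,x^4]$ contains $e$ with multiplicity at least $3$, while on the right only two copies of $e$ are accounted for outside $z_1^2$ and $z_2^2$; hence $z_1^2=e$ or $z_2^2=e$, say $z_1\in V$. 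Lemma~\ref{lem_basic} then gives $y\in z_1*x$, and since $z_1\in V$ this multiset equals $[z_1x,z_1x]$ by Lemma~\ref{lem_order2}, so $y=z_1x$ and $x,y$ lie in the same $V$-orbit. Your closing step of ``setting $v=z\cdot x^{-1}$'' is both unnecessary and undefined, since for elements of order $>2$ the single-valued product $\cdot$ does not exist.
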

\begin{proof}
If $x$ and $y$ are in the same $V$-orbit, so that $y=vx$ for some $v\in V$, then the equality $x^2=y^2$ immediately follows from the formula ~\eqref{eq_action}.

Let us prove the reverse implication. If $x^2=y^2=e$, then $x,y$ lie in~$V$, hence, in the same $V$-orbit. Assume that $x^2=y^2\ne e$. Then the orders of the elements $x$ and $y$ are greater than~$2$. Since the pair~$(x,y)$ is not special, it follows that $x*y=[z_1,z_2]$, where $z_1\ne z_2$. We have
\begin{align*}
x*x*y*y&=(x*x)*(y*y)=[e,x^2]*[e,x^2]=[e,e,e,x^2,x^2,x^2,x^2,x^4],\\
x*x*y*y&=(x*y)*(x*y)=[z_1,z_2]*[z_1,z_2]=[e,e,z_1^2,z_2^2,z_1*z_2,z_1*z_2].
\end{align*}
From the fact that $z_1\ne z_2$, it follows that $e\notin z_1*z_2$. So either $z_1^2=e$, or $z_2^2=e$. Without loss of generality we may assume that $z_1^2=e$, so that $z_1\in V$. Since $z_1\in x*y$ by Lemma ~\ref{lem_basic} we have $y\in z_1*x$, so $y=z_1x$.
\end{proof}

\begin{lem}\label{lem_z1z2}
If $x*y=[z_1,z_2]$, then $z_1*z_2=[x^2,y^2]$.
\end{lem}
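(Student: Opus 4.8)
The plan is to reduce everything to Lemma~\ref{lem_z1z2_prelim}, which already settles the generic situation, and to dispose of the two degenerate configurations separately. Accordingly, I would split into three cases according to whether $z_1=z_2$ and whether $x^2=y^2$. In the first case, where $z_1\ne z_2$ \emph{and} $x^2\ne y^2$, there is nothing to do: this is precisely the content of Lemma~\ref{lem_z1z2_prelim}, which gives $z_1*z_2=[x^2,y^2]$ directly.

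The second case is $z_1=z_2=:z$, and this is where the non-speciality hypothesis enters in an essential way. Since the multiset $x*y=[z,z]$ consists of two identical elements, if both $x$ and $y$ had order greater than~$2$ the pair $(x,y)$ would be special, contradicting non-speciality. Hence at least one of them, say $x$ by commutativity, lies in $V$, so that $x^2=e$. Then $z=x\cdot y$ lies in the same $V$-orbit as $y$, so Lemma~\ref{lem_equal_squares} gives $z^2=y^2$. Combining this with the identity $z*z=[e,z^2]$ coming from \eqref{eq_mult1}, I obtain
\[
z_1*z_2=z*z=[e,z^2]=[e,y^2]=[x^2,y^2],
\]
as required.

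The third case is $z_1\ne z_2$ together with $x^2=y^2$. Here I would use the $V$-orbit structure: by Lemma~\ref{lem_equal_squares}, the equality $x^2=y^2$ means $x$ and $y$ lie in the same $V$-orbit, so $y=v\cdot x$ for some $v\in V$. Applying \eqref{eq_action} to $x*x=[e,x^2]$ yields
\[
x*y=(v\cdot x)*x=[\,v,\ v\cdot x^2\,],
\]
so that $\{z_1,z_2\}=\{v,\,v\cdot x^2\}$. Since $v\in V$, Lemma~\ref{lem_order2} shows $z_1*z_2=v*(v\cdot x^2)$ consists of two equal elements, namely $v\cdot(v\cdot x^2)=(v\cdot v)\cdot x^2=x^2$ using the action property of Proposition~\ref{propos_bullet}(b) and $v\cdot v=e$. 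Thus $z_1*z_2=[x^2,x^2]=[x^2,y^2]$, again as required.

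The one genuinely nontrivial input beyond Lemma~\ref{lem_z1z2_prelim} is the handling of the degenerate case $z_1=z_2$: I expect this to be the main obstacle, since it cannot be treated by a purely formal manipulation of associativity and involutivity, and it is exactly the point where non-speciality is indispensable, forcing one of the factors into the Boolean group~$V$ and thereby reducing the computation to the already-understood action of~$V$ on~$X$.
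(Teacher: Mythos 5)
Your proof is correct and follows essentially the same route as the paper: the generic case is delegated to Lemma~\ref{lem_z1z2_prelim}, and the degenerate cases are handled by the same direct computations with the $V$-action (your case split by ``$z_1=z_2$?'' and ``$x^2=y^2$?'' is equivalent, via non-speciality and Lemma~\ref{lem_equal_squares}, to the paper's split by the $V$-orbit configuration of $x$ and~$y$).
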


\begin{proof}
Suppose first that the elements~$x$ and~$y$ belong to different $V$-orbits, and neither of them lies in~$V$. Then by Lemma ~\ref{lem_equal_squares} we have $x^2\ne y^2$. Moreover, from the fact that the considered two-valued group~$X$ is non-special it follows that $z_1\ne z_2$. Therefore the required equality follows from Lemma ~\ref{lem_z1z2_prelim}.

It remains to consider the case when either the elements~$x$ and~$y$ lie in the same $V$-orbit, or at least one of them lies in~$V$. If~$x$ and~$y$ are in the same $V$-orbit, so that $y=vx$ for some $v\in V$, then $x*y=[v,vx^2]$ and $v*(vx^2) =[x^2,x^2]=[x^2,y^2]$. If $x\in V$, then $z_1=z_2=xy$ and $z_1*z_2=[e,y^2]=[x^2,y^2]$; the case $y\in V$ is similar.
\end{proof}

\begin{cor}\label{cor_z1z2}
If $x*y=[z_1,z_2]$, then $z_2\in x^2*z_1$.
\end{cor}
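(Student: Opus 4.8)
The plan is to deduce the corollary directly from Lemma~\ref{lem_z1z2} together with the cyclic membership property supplied by Lemma~\ref{lem_basic}. First I would invoke Lemma~\ref{lem_z1z2} to record that $z_1*z_2=[x^2,y^2]$; in particular the element $x^2$ lies in the multiset $z_1*z_2$. This is the only substantive input, since all of the case analysis behind the equality $z_1*z_2=[x^2,y^2]$ (whether $x$ and $y$ lie in a common $V$-orbit, whether one of them lies in $V$, and so on) has already been carried out in the proof of that lemma.

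Next I would transport this membership around the product using Lemma~\ref{lem_basic}, which says precisely that $c\in a*b$ implies $a\in b*c$; that is, membership is preserved under a left cyclic rotation of the triple $(a,b,c)$. Starting from $x^2\in z_1*z_2$ and applying this rotation once gives $z_1\in z_2*x^2$, and applying it a second time yields $z_2\in x^2*z_1$, which is the desired conclusion. Equivalently, one may first rewrite $z_1*z_2$ as $z_2*z_1$ by commutativity, so that $x^2\in z_2*z_1$, and then a single application of Lemma~\ref{lem_basic} gives $z_2\in z_1*x^2=x^2*z_1$.

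Since everything reduces to Lemma~\ref{lem_z1z2} and a bookkeeping application of Lemma~\ref{lem_basic}, there is no genuine obstacle here; the only point requiring care is getting the direction of the membership relation right, namely rotating $(z_1,z_2,x^2)$ to $(x^2,z_1,z_2)$ rather than in the opposite sense. I would also remark that the argument needs no hypothesis such as $z_1\ne z_2$: in the degenerate case $z_1=z_2$ the same rotation applied to $x^2\in z_1*z_1$ still produces $z_1\in x^2*z_1$, so the statement holds in full generality.
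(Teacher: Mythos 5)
Your argument is correct and is exactly the derivation the paper intends (the corollary is stated without proof immediately after Lemma~\ref{lem_z1z2}): one reads off $x^2\in z_1*z_2$ and rotates the membership via Lemma~\ref{lem_basic} to obtain $z_2\in x^2*z_1$. Your remark that no hypothesis $z_1\ne z_2$ is needed is also accurate, since Lemma~\ref{lem_z1z2} already covers the degenerate cases.
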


Applying Lemma ~\ref{lem_z1z2} twice, we get the following statement.

\begin{cor}\label{cor_z1z2sq}
If $x*y=[z_1,z_2]$, then $x^2*y^2=[z_1^2,z_2^2]$.
\end{cor}

\begin{lem}\label{lem_ord4}
Let $x*y=[z_1,z_2]$.
\begin{itemize}
\item[\textnormal{(a)}] If the orders of the elements ~$x$ and~$y$ divide~$4$, then the orders of ~$z_1$ and~$z_2$ also divide~$4$. Thus, the set of all elements of a non-special involutive commutative two-valued group~$X$, whose orders divide~$4$, is a subgroup.
\item[\textnormal{(b)}] Elements~$z_1$ and~$z_2$ lie in the same $V$-orbit if and only if the order of at least one of the elements~$x$ and~$y$ divides~$4$. If $\ord x$ divides 4, then $z_2=x^2z_1$.
\item[\textnormal{(c)}] If all three elements~$x$, $z_1$ and~$z_2$ lie in the same $V$-orbit, then the order of element~$y$ divides~$4$.
\end{itemize}
\end{lem}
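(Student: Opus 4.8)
The plan is to derive all three parts from the two ``quadratic'' identities already in hand: Lemma~\ref{lem_z1z2}, which turns $x*y=[z_1,z_2]$ into $z_1*z_2=[x^2,y^2]$, together with its consequence Corollary~\ref{cor_z1z2sq}, giving $x^2*y^2=[z_1^2,z_2^2]$. The recurring device is that an element of order dividing~$4$ has its square in the Boolean subgroup~$V$, and a product one of whose factors lies in~$V$ is \emph{doubled} by Lemma~\ref{lem_order2}. I will do the three parts in order, with the converse half of~(b) as the technical heart.

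For part~(a), I would assume $\ord x\mid 4$ and $\ord y\mid 4$, so $x^2,y^2\in V$. Then Corollary~\ref{cor_z1z2sq} gives $[z_1^2,z_2^2]=x^2*y^2$, and since both factors lie in~$V$ this product is the doubled element $[x^2y^2,x^2y^2]$ by Lemma~\ref{lem_order2}. Hence $z_1^2=z_2^2=x^2y^2\in V$, so $z_1$ and $z_2$ have order dividing~$4$. The set of such elements contains~$e$ and, by what was just shown, is closed under~$*$, so it is a subgroup in the sense of the paper.

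For part~(b) I expect the forward implication, with the explicit formula, to be the clean part: assuming $\ord x\mid 4$ we have $x^2\in V$, and Corollary~\ref{cor_z1z2} states $z_2\in x^2*z_1$; but $x^2*z_1$ is doubled by Lemma~\ref{lem_order2}, equal to $[x^2z_1,x^2z_1]$, which forces $z_2=x^2z_1$ and in particular places $z_1,z_2$ in one $V$-orbit. The converse is the main obstacle. Here I would assume $z_1,z_2$ lie in the same $V$-orbit, so $z_1^2=z_2^2$ by Lemma~\ref{lem_equal_squares}; then Corollary~\ref{cor_z1z2sq} makes $x^2*y^2=[z_1^2,z_1^2]$ a doubled multiset, and the key move is to apply Lemma~\ref{lem_z1z2} a \emph{second} time to this product, obtaining $z_1^2*z_1^2=[x^4,y^4]$. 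Since the left-hand side equals $[e,z_1^4]$ by the power formula~\eqref{eq_mult1} (with $k=l=2$), comparing multisets gives $e\in\{x^4,y^4\}$, i.e.\ $\ord x\mid 4$ or $\ord y\mid 4$.

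Part~(c) follows the same ``apply Lemma~\ref{lem_z1z2} twice'' pattern, now forced all the way: if $x,z_1,z_2$ lie in one $V$-orbit then $z_1^2=z_2^2=x^2$ by Lemma~\ref{lem_equal_squares}, so Corollary~\ref{cor_z1z2sq} gives $x^2*y^2=[x^2,x^2]$; applying Lemma~\ref{lem_z1z2} yields $x^2*x^2=[x^4,y^4]$, while~\eqref{eq_mult1} gives $x^2*x^2=[e,x^4]$ directly. Comparing the two expressions forces $y^4=e$, that is $\ord y\mid 4$. The only points needing care throughout are verifying that the relevant squares genuinely land in~$V$ (so Lemma~\ref{lem_order2} applies) and tracking which entry plays the ``doubled'' role when Lemma~\ref{lem_z1z2} is re-applied to a product with a repeated outcome.
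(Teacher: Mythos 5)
Your proof is correct, and parts (a) and the forward half of (b) coincide with the paper's argument (Corollary~\ref{cor_z1z2sq} plus Lemma~\ref{lem_order2}, and Corollary~\ref{cor_z1z2} plus Lemma~\ref{lem_order2}, respectively). Where you diverge is in the converse of (b) and in (c). For the converse of (b) the paper argues in one line from non-speciality: $x^2*y^2=[z_1^2,z_2^2]$ is a doubled multiset precisely when one of $x^2,y^2$ lies in $V$, which is exactly the condition $\ord x\mid 4$ or $\ord y\mid 4$; your route instead applies Lemma~\ref{lem_z1z2} a second time to the product $x^2*y^2=[z_1^2,z_1^2]$ and compares $[x^4,y^4]$ with $[e,z_1^4]$ via~\eqref{eq_mult1}. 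For (c) the paper writes $z_1=v_1x$, $z_2=v_2x$ and computes $z_1*z_2=[v_1v_2,v_1v_2x^2]$ directly from the $V$-action, whereas you again square everything and compare $[x^4,y^4]$ with $[e,x^4]$. Both versions are sound: the paper's use of non-speciality and the $V$-action is shorter and keeps slightly more information (e.g.\ it separates the subcases $\ord y\mid 2$ and $\ord y=4$ in (c)), while your ``apply Lemma~\ref{lem_z1z2} twice and invoke the power formula'' device is more uniform across (b) and (c) and pushes the reliance on non-speciality entirely into the already-proved Lemma~\ref{lem_z1z2}. The only caution is the one you already flag: when re-applying Lemma~\ref{lem_z1z2} to a product with a repeated outcome you must track which entry is which, and your multiset comparisons at the end of (b) and (c) handle this correctly.
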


\begin{proof}
By Corollary ~\ref{cor_z1z2sq} we have $x^2*y^2=[z_1^2,z_2^2]$. Let now the orders of both elements~$x$ and~$y$ divide~$4$. Then $x^2$ and~$y^2$ belong to the set~$V$. Therefore, the element $z_1^2=z_2^2=x^2y^2$ also belongs to~$V$, so that $z_1^4=z_2^4=e$, which proves the statement~(a).

Further, using the fact that the considered two-valued group~$X$ is not special, and Lemma~\ref{lem_equal_squares}, we obtain the following chain of equivalences, proving the first part of statement~(b):
$$
(\ord x\mid4)\vee(\ord y\mid4)\Leftrightarrow (x^2\in V)\vee (y^2\in V)\Leftrightarrow z_1^2=z_2^2 \Leftrightarrow Vz_1=Vz_2\,.
$$
The second part of statement~(b) immediately follows from Corollary ~\ref{cor_z1z2}.

Let now all three elements~$x$, $z_1$ and~$z_2$ lie in the same $V$-orbit, so that $z_1=v_1x$ and $z_2=v_2x$, where $v_1, v_2\in V$, then $[x^2,y^2]=z_1*z_2=[v_1v_2,v_1v_2x^2]$. Hence either $y^2=v_1v_2$ and then $\ord y$ divides 4, or $x^2=v_1v_2$ and $y^2=v_1v_2x^2=e$ and then $\ord y$ divides 2.
\end{proof}

\begin{lem}\label{lem_xy=zt}
If $x*y=z*t$, then either there exists $v\in V$ such that $z=vx$ and $t=vy$, or there exists $v\in V$ such that $z=vy$ and $t=vx$.
\end{lem}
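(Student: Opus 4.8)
The plan is to first recover the common product as a multiset, use it to match up the squares of the two pairs, and only then chase down the single element $v\in V$ that realises the conclusion.

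First I would write $x*y=[w_1,w_2]=z*t$ and apply Lemma~\ref{lem_z1z2} to each of the two factorisations of this multiset. Since $x*y=[w_1,w_2]$ gives $w_1*w_2=[x^2,y^2]$ and $z*t=[w_1,w_2]$ gives $w_1*w_2=[z^2,t^2]$, I obtain $[x^2,y^2]=[z^2,t^2]$. Because interchanging $z$ and $t$ interchanges the two alternatives in the statement, I may assume $x^2=z^2$ and $y^2=t^2$. By Lemma~\ref{lem_equal_squares} this places $z$ in the $V$-orbit of $x$ and $t$ in the $V$-orbit of $y$, so $z=v_1x$ and $t=v_2y$ for some $v_1,v_2\in V$. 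The whole lemma now reduces to showing that $v_1$ and $v_2$ can be taken equal.

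Next, using the action formula~\eqref{eq_action} together with commutativity, $z*t=(v_1x)*(v_2y)=(v_1v_2)\cdot(x*y)$; since $z*t=x*y$, the element $v:=v_1v_2$ fixes the multiset $[w_1,w_2]$. If $v=e$ we are done, so assume $v\neq e$. The element $v_1$ is only determined modulo the stabiliser $\mathrm{Stab}_V(x)$, which by Proposition~\ref{propos_bullet}(c) is $\{e,x^2\}$ when $\ord x=4$ and trivial otherwise, and similarly for $v_2$; hence it is enough to prove the membership $v\in\mathrm{Stab}_V(x)\cdot\mathrm{Stab}_V(y)$, for then a suitable re-choice of $v_1,v_2$ makes their product trivial.

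The main obstacle is establishing this membership, and it is where the case analysis of how $v$ permutes $\{w_1,w_2\}$ enters. If $v$ interchanges $w_1$ and $w_2$, then $w_2=vw_1$, so $w_1*w_2=v\cdot(w_1*w_1)=[v,vw_1^2]$; comparing with $[x^2,y^2]$ shows that $v$ is one of $x^2,y^2$, and as $v\in V\setminus\{e\}$ the relevant element has order~$4$, so $v$ lies in a single stabiliser. If instead $v$ fixes each of $w_1,w_2$, then Proposition~\ref{propos_bullet}(c) gives $w_1^2=w_2^2=v$; applying Lemma~\ref{lem_z1z2} to $w_1*w_2=[x^2,y^2]$ yields $x^2\cdot y^2=v$, while Corollary~\ref{cor_z1z2sq} yields $x^4=y^4=e$, so that $x^2,y^2\in V$ each lie in the respective stabiliser and $v=x^2y^2\in\mathrm{Stab}_V(x)\cdot\mathrm{Stab}_V(y)$. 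In both cases the membership holds, which completes the reduction; the degenerate possibilities where $x$ or $y$ already lies in $V$ are absorbed into the same computation, the corresponding square being $e$.
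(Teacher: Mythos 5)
Your proof is correct and follows essentially the same route as the paper's: reduce to $z=v_1x$, $t=v_2y$ via Lemmas~\ref{lem_z1z2} and~\ref{lem_equal_squares}, observe that $v_1v_2$ fixes the multiset $x*y$, and split into the two cases according to whether it fixes each element or swaps them, using Proposition~\ref{propos_bullet}(c) and Lemma~\ref{lem_z1z2} again in each case. The paper constructs the common $v$ directly in each case rather than packaging the conclusion as membership of $v_1v_2$ in $\mathrm{Stab}_V(x)\cdot\mathrm{Stab}_V(y)$, but the underlying computations are identical.
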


\begin{proof}
Lemma ~\ref{lem_z1z2} immediately implies that if $x*y=z*t$, then $[x^2,y^2]=[z^2,t^2]$. Suppose $x^2=z^2$ and $y^2=t^2$; the second case is completely similar. Then by Lemma ~\ref{lem_equal_squares} there are elements~$v_1,v_2\in V$ such that $z=v_1x$ and $t=v_2y$. If $v_1=v_2$, then the statement of the Lemma is satisfied. 

Suppose $v_1\ne v_2$ and set $w=v_1v_2$, then $w\ne e$. The equality $x*y=z*t$ can be rewritten as $w(x*y)=x*y$. Let $x*y=[s_1,s_2]$, then there are the following two possible cases.

1) $ws_1=s_1$ and $ws_2=s_2$. Then it follows from statement~(c) of Proposition ~\ref{propos_bullet} that $s_1^2=s_2^2=w$, which means that by Lemma ~\ref{lem_equal_squares} we have $s_2=us_1$ for some $u\in V$. Now by Lemma ~\ref{lem_z1z2} we get that
$$
[x^2,y^2]=s_1*s_2=[u,us_1^2]=[u,uw].
$$
Without loss of generality $x^2=u$ and $y^2=uw$. Then $ux=x$ and $uv_1v_2y=y$, hence $z=v_1x=uv_1x$ and $t=v_2y=uv_1y$. Thus, the statement of the Lemma holds for $v=uv_1$.

2) $ws_1=s_2$ and $ws_2=s_1$. Then by Lemma ~\ref{lem_z1z2} we obtain that
$$
[x^2,y^2]=s_1*s_2=[w,ws_1^2].
$$
Without loss of generality $x^2=w=v_1v_2$. Then $v_1v_2x=x$, hence $z=v_1x=v_2x$. Thus, the statement of the Lemma holds for $v=v_2$.
\end{proof}

\begin{lem}\label{lem_3decomp}
Suppose that elements $x_1$, $x_2$, and~$x_3$ lie in pairwise distinct $V$-orbits, and none of them lies in~$V$. Then the multiset $x_1*x_2*x_3$ consists of four different elements. Also, if $x_1*x_2=[p_3,q_3]$, $x_2*x_3=[p_1,q_1]$, and $x_3*x_1 = [p_2,q_2]$, then
$$
x_1*x_2*x_3=(x_1*p_1)\cup (x_1*q_1) =(x_2*p_2)\cup (x_2*q_2)=(x_3*p_3)\cup (x_3*q_3)
$$
are all three different partitions of a four-element set $x_1*x_2*x_3$ into two two-element subsets. In other words, $x_1*p_1$, $x_1*q_1$, $x_2*p_2$, $x_2*q_2$, $x_3*p_3$, and~$x_3*q_3$ are exactly all six distinct two-element subsets of the four-element set $x_1*x_2*x_3$.
\end{lem}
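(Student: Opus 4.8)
The plan is to deduce everything from Lemmas~\ref{lem_basic}, \ref{lem_equal_squares} and~\ref{lem_xy=zt} together with non-speciality, organising the argument around the squares $a=x_1^2$, $b=x_2^2$, $c=x_3^2$. By Lemma~\ref{lem_equal_squares} the hypotheses say exactly that $a,b,c$ are pairwise distinct and all different from~$e$. First I would dispose of the bookkeeping. The three displayed decompositions of $M:=x_1*x_2*x_3$ are nothing but associativity and commutativity, e.g. $M=x_1*(x_2*x_3)=(x_1*p_1)\cup(x_1*q_1)$, and cyclically for $x_2,x_3$. Since $\ord x_2,\ord x_3>2$, non-speciality gives $p_1\ne q_1$; and $p_1,q_1\notin V$, because $p_1^2=e$ would give, via Corollary~\ref{cor_z1z2sq}, $e\in b*c$, hence $c=b$ by involutivity, which is false. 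The same applies to every pair $(p_i,q_i)$.

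Second, I would verify that each of the six products $x_i*p_i$, $x_i*q_i$ is a genuine two-element subset of~$M$, i.e. consists of two distinct elements. Take $x_1*p_1$, where $\ord x_1,\ord p_1>2$: if $x_1,p_1$ lie in distinct $V$-orbits this is immediate from non-speciality, while if $p_1=vx_1$ with $v\in V$ then $x_1*p_1=v\cdot(x_1*x_1)=[v,va]$, whose entries differ because $a\ne e$. Next I would show that $M$ has four distinct elements, equivalently that $x_1*p_1$ and $x_1*q_1$ are disjoint: a common element $m$ would force, by Lemma~\ref{lem_basic}, both $p_1,q_1\in x_1*m$, whence $x_1*m=[p_1,q_1]=x_2*x_3$, and Lemma~\ref{lem_xy=zt} would put $x_2$ or $x_3$ into the $V$-orbit of $x_1$, contrary to hypothesis. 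Once $M$ has four distinct elements, each pair $\{x_i*p_i,x_i*q_i\}$ consists of two two-element subsets whose union is~$M$, hence is automatically a partition of $M$ into two disjoint pairs.

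The main step, which I expect to be the real obstacle, is to show that the six products are pairwise distinct as subsets of~$M$. Products from the same partition are disjoint by the previous step, so the only danger is a coincidence across two partitions, say $x_1*p_1=x_2*p_2$. Applying Lemma~\ref{lem_xy=zt} and discarding the orbit-forbidden option $x_2=vx_1$, there is $v\in V$ with $p_1=vx_2$ and $p_2=vx_1$. Now $p_1=vx_2\in x_2*x_3$ gives, through Lemma~\ref{lem_basic}, $x_3\in x_2*(vx_2)=[v,vb]$; as $x_3\notin V$ this forces $b\notin V$ and $x_3=vb$. Symmetrically $p_2=vx_1\in x_1*x_3$ yields $x_3=va$. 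Hence $va=vb$, so $a=b$ and $x_1,x_2$ share a $V$-orbit, a contradiction. (If instead $b\in V$, i.e. $\ord x_2\mid4$, then already $x_3\in[v,vb]\subseteq V$ is impossible; the case $\ord x_1\mid4$ is symmetric.) Because $x_1,x_2,x_3$ enter symmetrically and each $p_i,q_i$ is a product of the other two generators, the identical computation---solving for the generator whose index is shared by the two triple products---rules out every cross-partition coincidence, each time producing an equality $x_i^2=x_j^2$ with $i\ne j$.

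With all six products shown to be pairwise distinct two-element subsets, the count closes the argument: a four-element set has exactly six two-element subsets and exactly three $2+2$ partitions, so the six products exhaust all two-element subsets and the three partitions $\{x_1*p_1,x_1*q_1\}$, $\{x_2*p_2,x_2*q_2\}$, $\{x_3*p_3,x_3*q_3\}$ are the three distinct partitions of~$M$. The delicate point throughout is that the $V$-action on~$X$ need not be free (its stabilisers are nontrivial precisely on the order-$4$ elements), so one cannot simply cancel the common~$v$; the contradiction must instead be read off from the positions of $p_i$ and $p_j$ inside the triple products $x_j*x_k$, which is what the computation above does.
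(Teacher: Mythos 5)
Your proof is correct, and its decisive case coincides with the paper's: for a cross-partition coincidence such as $x_1*p_1=x_2*p_2$, both you and the paper apply Lemma~\ref{lem_xy=zt}, discard the option $x_2=vx_1$, obtain $p_1=vx_2$ and $p_2=vx_1$, and then use Lemma~\ref{lem_basic} to pin $x_3$ down as both $vx_1^2$ and $vx_2^2$, contradicting the orbit hypothesis; the symmetry reduction to this single configuration is also the same. Where you genuinely diverge is the same-partition case and the order of deductions. The paper first rules out $x_1*p_1=x_1*q_1$ by a longer chain (Lemma~\ref{lem_xy=zt} with a fixed point of the $V$-action, then Proposition~\ref{propos_bullet}(c), Lemma~\ref{lem_ord4}(b) and Corollary~\ref{cor_z1z2sq}), and only afterwards deduces that $x_1*x_2*x_3$ has four distinct elements by counting two-element sub-multisets. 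You instead prove the stronger statement that $x_1*p_1$ and $x_1*q_1$ are disjoint: a common element $m$ gives $p_1,q_1\in x_1*m$ by a double application of Lemma~\ref{lem_basic}, hence $x_1*m=[p_1,q_1]=x_2*x_3$, and Lemma~\ref{lem_xy=zt} immediately puts $x_2$ or $x_3$ into $Vx_1$. This yields the four distinct elements of $x_1*x_2*x_3$ up front and is noticeably shorter than the paper's Case~1, at the price of having first to check that $p_i,q_i\notin V$ (which you do correctly via Corollary~\ref{cor_z1z2sq}). The one redundancy: once $p_1\notin V$ is known, non-speciality gives that $x_1*p_1$ has two distinct entries regardless of whether $x_1$ and $p_1$ share a $V$-orbit, so your sub-case $p_1=vx_1$ is unnecessary (though harmless).
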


\begin{proof}
Let us first prove that $6$ multisets~$x_1*p_1$, $x_1*q_1$, $x_2*p_2$, $x_2*q_2$, $x_3*p_3$ and~$x_3*q_3$ are pairwise distinct. Let's assume the opposite. There are two essentially different cases: $x_1*p_1=x_1*q_1$ or $x_1*p_1=x_2*p_2$. Each of the other cases is completely analogous to one of these two. Let's consider these cases in turn.

1) $x_1*p_1=x_1*q_1$. Then by Lemma ~\ref{lem_xy=zt} there is an element $v\in V$ such that either $p_1=q_1=vx_1$ or $vx_1=x_1$ and $vp_1=q_1$. However, the equality $p_1=q_1$ is impossible because the two-valued group~$X$ is not special and $x_2,x_3\notin V$. Therefore $vx_1=x_1$ and $vp_1=q_1$ and $v\ne e$. It follows from statement~(c) of Proposition~\ref{propos_bullet} that $\ord x_1=4$ and $v=x_1^2$, and then it follows from statement~(b) of Lemma~\ref{lem_ord4} that one of the elements~$x_2$ and~$x_3$ (without loss of generality,~$x_2$) has an order that divides~$4$ and $x_2^2p_1=q_1$. Hence $x_1^2x_2^2p_1=p_1$ and~$x_1^2x_2^2q_1=q_1$. However, since $x_1$ and~$x_2$ are in different $V$-orbits, we see from Lemma~\ref{lem_equal_squares} that $x_1^2\ne x_2^2$, so that $x_1^2x_2^2\ne e$. Now again it follows from the statement~(c) of Proposition ~\ref{propos_bullet} that $\ord p_1=\ord q_1=4$ and $p_1^2=q_1^2=x_1^2x_2^2$. Now it follows from Corollary ~\ref{cor_z1z2sq} that $x_2^2*x_3^2=[x_1^2x_2^2,x_1^2x_2^2]$, so that $x_2^2x_3^2=x_1^2x_2^2$ and hence $x_3^2=x_1^2$. We get a contradiction with the fact that $x_1$ and~$x_3$ are in different $V$-orbits.

2) $x_1*p_1=x_2*p_2$. Since $x_1$ and~$x_2$ are in different $V$-orbits, by Lemma~\ref{lem_xy=zt} there is an element $v\in V$ such that $p_2=vx_1$ and~$p_1=vx_2$. Then by Lemma ~\ref{lem_basic} we have $x_3\in x_1*p_2=[v,vx_1^2]$, so $x_3=vx_1^2$ since $x_3\notin V$. Similarly, $x_3=vx_2^2$. So $x_1^2=x_2^2$. We get a contradiction with the fact that $x_1$ and~$x_2$ are in different $V$-orbits.

So the six multisets~$x_1*p_1$, $x_1*q_1$, $x_2*p_2$, $x_2*q_2$, $x_3*p_3$ and~$x_3*q_3$ are pairwise distinct. Each of these $6$ multisets consists of two (possibly identical) elements and is contained in the four-element multiset $x_1*x_2*x_3$. However, if the four-element multiset~$x_1*x_2*x_3$ had multiple elements, then the number of different two-element multisets contained in it would be strictly less than~$6$. Therefore, the multiset~$x_1*x_2*x_3$ consists of four different elements, which completes the proof of the Lemma.
\end{proof}

\begin{lem}\label{lem_yyxxx}
Suppose that the elements $x_1$, $x_2$, and~$x_3$ lie in pairwise distinct $V$-orbits, and none of them lies in~$V$.
If a multiset~$y_1*y_2$ is contained in the multiset $x_1*x_2*x_3$, then there are a permutation~$\lambda,\mu$ of numbers~$1,2$, a permutation $i,j,k$ of numbers~$1,2,3$, and an element~$v\in V$ such that $y_{\lambda}=vx_i$ and $y_{\mu}\in v(x_j*x_k)$.
\end{lem}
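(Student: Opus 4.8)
The plan is to deduce this statement almost directly from Lemmas~\ref{lem_3decomp} and~\ref{lem_xy=zt}, the real combinatorial work having already been carried out in the former.

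First I would record that $y_1\ne y_2$. By Lemma~\ref{lem_3decomp} the multiset $x_1*x_2*x_3$ consists of four \emph{distinct} elements, so any submultiset of it is automatically repetition-free; since $y_1*y_2$ is contained in $x_1*x_2*x_3$, its two elements are therefore distinct, and $\{y_1,y_2\}$ is a genuine two-element subset of the four-element set $x_1*x_2*x_3$. Next I would invoke the explicit description in Lemma~\ref{lem_3decomp}. Writing $x_2*x_3=[p_1,q_1]$, $x_3*x_1=[p_2,q_2]$, $x_1*x_2=[p_3,q_3]$, that lemma asserts that $x_1*p_1$, $x_1*q_1$, $x_2*p_2$, $x_2*q_2$, $x_3*p_3$, $x_3*q_3$ are exactly the six distinct two-element subsets of $x_1*x_2*x_3$. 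Hence $y_1*y_2$ coincides with one of them; say $y_1*y_2=x_i*r$, where $r\in\{p_i,q_i\}$ and the indices are chosen so that $\{i,j,k\}=\{1,2,3\}$ and $r\in x_j*x_k$.

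Finally I would apply Lemma~\ref{lem_xy=zt} to the equality $y_1*y_2=x_i*r$. It produces an element $v\in V$ together with one of the two labellings: either $y_1=vx_i$ and $y_2=vr$, or $y_1=vr$ and $y_2=vx_i$. Taking $\lambda,\mu$ to be the corresponding permutation of $1,2$, we get $y_\lambda=vx_i$ and $y_\mu=vr$. Since $r\in x_j*x_k$, the action property of $V$ from Proposition~\ref{propos_bullet}(b) gives $y_\mu=vr\in v(x_j*x_k)$, which is precisely the asserted conclusion.

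I do not expect a genuine obstacle here: the content of the lemma is essentially a repackaging of Lemma~\ref{lem_3decomp} through Lemma~\ref{lem_xy=zt}. The only points requiring care are the remark that $y_1*y_2$ is a repetition-free two-element subset (so that it really is one of the six sets), and the bookkeeping of the permutation $\lambda,\mu$ emerging from the two cases of Lemma~\ref{lem_xy=zt}.
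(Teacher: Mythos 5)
Your proof is correct and follows essentially the same route as the paper: identify $y_1*y_2$ with one of the six two-element subsets of $x_1*x_2*x_3$ supplied by Lemma~\ref{lem_3decomp}, then apply Lemma~\ref{lem_xy=zt}. Your explicit remark that $y_1\ne y_2$ (since the four-element multiset has no repeated elements) is a point the paper leaves implicit, but it is the same argument.
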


\begin{proof}
Let $x_i*x_j=[p_k,q_k]$ for each permutation $i,j,k$ of numbers~$1,2,3$. By Lemma~\ref{lem_3decomp}, we know that~$x_1*p_1$, $x_1*q_1$, $x_2*p_2$, $x_2*q_2$, $x_3*p_3$, and~$x_3*q_3$ are exactly all $6$ two-element subsets of the four-element set~$x_1*x_2*x_3$, so one of them coincides with the multiset~$y_1*y_2$. Without loss of generality, we may assume that $y_1*y_2=x_1*p_1$. Then by Lemma~\ref{lem_xy=zt} there is an element~$v$ such that $y_{\lambda}=vx_1$ and $y_{\mu}=vp_1\in v(x_2*x_3)$ for some permutation~$\lambda,\mu$ of numbers~$1,2$.
\end{proof}

\section{Two-valued groups with an element of order other than 1, 2, or~4}\label{section_not124}

In this section, we will prove the following result.

\begin{theorem}
\label{theorem_nonspecial_classify}
Let $X$ be a non-special involutive commutative two-valued group containing at least one element whose order is neither~$1$, nor~$2$, nor~$4$. Then $X$ is isomorphic to a coset two-valued group of the form $X_A^{\ba}=A/\iota_{\ba}$, where $A$ is an abelian group and $\iota_{\ba}$ is the antipodal involution on~$A$.
\end{theorem}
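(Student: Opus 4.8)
The plan is to reconstruct the single-valued abelian group $A$ together with an antipodal involution $\iota_{\ba}$ for which $X\cong A/\iota_{\ba}$. The projection $\pi\colon A\to X$ must be two-to-one away from the $2$-torsion and satisfy $\pi(a)*\pi(b)=[\pi(ab),\pi(ab^{-1})]$; the whole point is that $X$ remembers only the unordered pair $\{ab,ab^{-1}\}$, so I have to recover the lost information of which lift is $ab$ and which is $ab^{-1}$. As a set I would take $A=V\sqcup\bigl((X\setminus V)\times\{+,-\}\bigr)$, with $\pi$ forgetting the sign, $\iota_{\ba}$ fixing $V$ and flipping the sign on the remaining elements, and $e\in V$ as identity. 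The Boolean group $V$ of elements of order $\le 2$ (Proposition~\ref{propos_bullet}) will play the role of the $2$-torsion $A[2]$; Lemma~\ref{lem_squares} identifies the squares as a subgroup, and Lemma~\ref{lem_equal_squares} shows that $x\mapsto x^2$ has fibres exactly the $V$-orbits, which is precisely how squaring behaves on an abelian group modulo its $2$-torsion.

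To define the group law I would exploit Lemma~\ref{lem_z1z2}: if $x*y=[z_1,z_2]$ then $z_1*z_2=[x^2,y^2]$. Read on the prospective cover, this is the abelian identity $(ab)(ab^{-1})=a^2$ together with $(ab)(ab^{-1})^{-1}=b^2$, so the two lifts $z_1,z_2$ of a product are distinguished by the requirement that the correct one, multiplied with the chosen lifts of $x$ and $y$, reproduces $a^2$ and $b^2$. Concretely I would fix signs orbit by orbit and define $(x,\varepsilon)\cdot(y,\delta)$ to be the sign-lift of the element of $x*y$ singled out by these square constraints, using Lemma~\ref{lem_basic}, Lemma~\ref{lem_xy=zt} and Corollary~\ref{cor_z1z2} to see that the choice is then forced. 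Elements of order dividing~$4$ (those $x$ with $x^2\in V$) need separate care: although they too have two lifts, for them the two outputs $z_1,z_2$ of a product lie in a single $V$-orbit (Lemma~\ref{lem_ord4}(b)), so the relevant sign is transported by the $V$-action rather than chosen independently.

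The hard part will be \emph{global coherence}: that the orbitwise sign choices can be made simultaneously so that $\cdot$ is well defined, commutative, associative, and has $\iota_{\ba}$ as inversion. This is a cocycle-type consistency condition, and it is exactly here that the hypothesis is used. An element $x_0$ with $\ord x_0\notin\{1,2,4\}$, equivalently $x_0^2\notin V$, guarantees that the double cover does not collapse and supplies a rigid pivot: by Lemmas~\ref{lem_3decomp} and~\ref{lem_yyxxx} the four-element triple products $x_0*y*z$ split into two-element blocks in a unique way, and this pins down the comparison of signs across different orbits and forbids the monodromy that would otherwise arise. When no such $x_0$ exists, every orbit is of the self-mirror type and the consistency genuinely fails, producing instead the unipotent and special series; so this hypothesis is precisely what selects the principal series. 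Once coherence is established, associativity of $\cdot$ follows from associativity of $*$ together with Lemma~\ref{lem_3decomp}, commutativity is immediate from commutativity of $*$, the relation $a\cdot\iota_{\ba}(a)=e$ holds because $e\in x*x$ for every $x$, and $\pi$ realises the isomorphism $X\cong A/\iota_{\ba}$ by construction. I expect the verification of coherence to be the main technical obstacle; the remaining group axioms are formal consequences of the structural lemmas proved above.
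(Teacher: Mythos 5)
Your high-level strategy is the right one --- reconstruct the double cover $A\to X$, use Lemma~\ref{lem_z1z2} to recognize $z_1*z_2=[x^2,y^2]$ as the abelian identities $(ab)(ab^{-1})=a^2$, $(ab)(ab^{-1})^{-1}=b^2$, and identify $V$ with $A[2]$ --- but the proposal has a genuine gap exactly where you say you expect ``the main technical obstacle'' to be. You set $A=V\sqcup\bigl((X\setminus V)\times\{+,-\}\bigr)$ and propose to ``fix signs orbit by orbit'' and then verify a cocycle-type coherence condition, but you never specify the rule that assigns the signs, and without such a rule neither well-definedness nor associativity of $\cdot$ can even be checked. Pointing at Lemmas~\ref{lem_3decomp} and~\ref{lem_yyxxx} as ``pinning down the comparison of signs'' does not produce the assignment; this is where all the content of the theorem lives, and it is left undone.

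The paper resolves this not by choosing signs and checking coherence, but by using the distinguished element $t$ with $\ord t\notin\{1,2,4\}$ to label the two lifts \emph{canonically}: it defines $A=\{(x,p)\in X\times X\mid p\in t*x\}$, so that the lift of $x$ ``is'' the datum of which element of $t*x$ plays the role of $\pi(ta)$. No global choice is ever made, and the sign-coherence problem you foresee disappears; what remains is to show that the product $(z,r)$ is uniquely characterized by membership in the various multisets $x*y$, $p*q'$, $p'*q$, $x*q$, $p*y$ (Proposition~\ref{propos_main_construction}(a)), and then to prove associativity. That verification is long and genuinely case-heavy (the degenerate configurations where some of $x,p,y,q$ lie in $V$ or $Vt$ or share a $V$-orbit each need separate treatment, and associativity is first proved for triples in distinct $G$-orbits and then propagated via the $V\times\T$-action), so your one-sentence claim that ``associativity of $\cdot$ follows from associativity of $*$ together with Lemma~\ref{lem_3decomp}'' substantially underestimates what has to be done even after the construction is fixed. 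In short: the skeleton is right, but the key idea (the $t$-labelling of lifts) that makes the construction well posed is missing, and the step you defer is the theorem.
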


\begin{remark}\label{remark_nonspecial_classify}
This theorem does not require that the two-valued group~$X$ be finitely generated. Moreover, it is easy to check that a two-valued group $A/\iota_{\ba}$ is finitely generated if and only if the abelian group~$A$ is finitely generated.
\end{remark}

The proof of Theorem ~\ref{theorem_nonspecial_classify} will be given using the explicit construction of the abelian group~$A$. This construction will essentially depend on the choice of the element $t\in X$ such that $\ord t\notin\{1,2,4\}$. However, as it will be shown in section ~\ref{section_final_proof} (Proposition ~\ref{propos_different}), for a finitely generated two-valued group~$X$, the resulting abelian group~$A$ will not depend on the choice of the element~$t$ up to isomorphism. Apparently, the same is true without the assumption of finite generation, but we do not know the proof of this fact.

The following Proposition describes the explicit construction of an abelian group~$A$. The proof, broken down into a series of Lemmas, will take up the rest of this section.

\begin{propos}\label{propos_main_construction}
Let $X$ be a non-special involutive commutative two-valued group, $V\subseteq X$ be the subset consisting of all elements of orders~$\le 2$, and $t\in X$ be an element such that $\ord t\notin\{1,2,4\}$. Consider the set
\begin{equation}
A=\{(x,p)\in X\times X\mid p\in t*x\}.
\end{equation}
Then

\textnormal{(а)} If $(x,p)\in A$, $(y,q)\in A$, $t*x=[p, p']$ and $t*y=[q,q']$, then there is a unique pair $(z,r)\in A$ such that
\begin{itemize}
\item[(i)] $z$ lies in each of the three multisets $x*y$, $p*q'$ and $p'*q$;
\item[(ii)] $r$ lies in each of the two multisets $x*q$ and $p*y$.
\item[(iii)] an element~$r'$ such that $t*z=[r,r']$ lies in each of the two multisets $x*q'$ and $p'*y$.
\end{itemize}

\textnormal{(b)} Define the operation of multiplication $\bullet\colon A\times A\to A$ by the formula
$$
(x,p)\bullet (y,q)= (z,r),
$$
where $(z,r)\in A$ is the only pair that has the properties ~\textnormal{(i)}, \textnormal{(ii)} and ~\textnormal{(iii)}. Then $(A,\bullet)$ is a commutative (single-valued) group with identity $(e,t)$ and the inverse element defined by the formula $(x,p)^{-1}=(x,p')$, where $t*x=[p,p']$.

\textnormal{(c)} The projection onto the first factor $A\to X$, $(x,p)\mapsto x$ induces an isomorphism of two-valued groups $A/\iota_{\ba}\cong X$, where $\iota_{\ba}$ is the antipodal involution: $\iota_{\ba}(a)=a^{-1}$.
\end{propos}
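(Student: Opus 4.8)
The guiding principle is the coset model: if $X\cong A'/\iota_{\ba}$ with projection $\pi$ and $t=\pi(\tau)$, then a pair $(x,p)\in A$ with $x=\pi(a)$ records a choice between $\tau a$ and $\tau a^{-1}$, and the intended bijection $A'\to A$ is $a\mapsto\bigl(\pi(a),\pi(\tau a)\bigr)$, under which $\bullet$ becomes the group law of $A'$ and $(e,t)$ corresponds to the identity. Every one of the conditions (i)--(iii) is exactly what these formulas dictate, so the real content of the Proposition is to recover this structure \emph{intrinsically}, using only the properties of non-special two-valued groups collected in the previous section. The hypothesis $\ord t\notin\{1,2,4\}$ will enter precisely as $t\notin V$ and $t^2\notin V$, which is what keeps the relevant products genuinely two-valued and the fibers of $(x,p)\mapsto x$ in general position.

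For part~(a) I would first isolate the \emph{intersection principle}: for multisets $M_1,M_2$ one has $M_1\cap M_2\neq\varnothing$ as soon as $e$ lies in the eight-element multiset $M_1*M_2$, since by involutivity $e\in s_1*s_2$ forces $s_1=s_2$. The basic relations $t\in x*p$, $t\in x*p'$, $t\in y*q$, $t\in y*q'$ follow from $p,p'\in t*x$ and $q,q'\in t*y$ by Lemma~\ref{lem_basic}, and feeding them into the rearrangements $(x*y)*(p*q')=(x*p)*(y*q')$, $(x*q)*(p*y)=(x*p)*(q*y)$, and their analogues produces $t*t=[e,t^2]$, hence $e$, in each product; thus all four intersections $x*y\cap p*q'$, $x*y\cap p'*q$, $x*q\cap p*y$, $x*q'\cap p'*y$ are nonempty. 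The heart of the matter is to organize these common elements consistently. In the main case, where $t,x,y$ lie in pairwise distinct $V$-orbits and none lies in $V$, Lemma~\ref{lem_3decomp} presents $t*x*y$ as a genuine four-element set whose six two-element subsets are exactly $t*z_1,t*z_2,x*q,x*q',y*p,y*p'$ (with $x*y=[z_1,z_2]$)---the three perfect matchings of a four-point set. I would let $r$ be the unique common element of $x*q$ and $p*y$, note that $x*q\subseteq x*(t*y)=t*x*y$ so that $r$ lies in exactly one of $t*z_1,t*z_2$, and \emph{define} $z$ to be the corresponding $z_i$; this already gives $(z,r)\in A$. Conditions~(i) and~(iii) then follow by matching the remaining subsets and applying the intersection principle to pin the common elements of $x*y$ with $p*q'$ and with $p'*q$ to this same $z$, and the companion $r'$ in $t*z$ to $x*q'\cap p'*y$; ruling out the \emph{other} element of $x*y$ here is exactly where $t^2\notin V$ (equivalently $\ord t\neq4$) is used, via Lemmas~\ref{lem_equal_squares} and~\ref{lem_ord4}. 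Uniqueness is then immediate, since the conditions determine $r$ as $x*q\cap p*y$ and $z$ as the $z_i$ with $r\in t*z_i$.

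The degenerate configurations---some of $x,y,t$ lying in $V$ or sharing a $V$-orbit---fall outside Lemma~\ref{lem_3decomp} and must be handled separately, but there the products collapse to $V$-translates and are controlled by the action of $V$ (Proposition~\ref{propos_bullet}) together with Lemmas~\ref{lem_equal_squares} and~\ref{lem_ord4}; for example if $x\in V$ then $p=p'=tx$ and the computation reduces to the $V$-orbit bookkeeping. For part~(b), commutativity is built into the symmetry of conditions~(i)--(iii) under the swap $(x,p)\leftrightarrow(y,q)$ (which also interchanges $p*q'\leftrightarrow p'*q$ and $x*q\leftrightarrow p*y$), while the identity law $(x,p)\bullet(e,t)=(x,p)$ and the inverse law $(x,p)\bullet(x,p')=(e,t)$ are direct specializations of the construction. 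I expect the genuine obstacle to be \emph{associativity}. My plan there is to pass to the four-element triple product $x*y*w$, use associativity of $*$ to see that both first coordinates of $(x,p)\bullet(y,q)\bullet(w,s)$ lie in $x*y*w$, and then reapply the matching structure of Lemma~\ref{lem_3decomp} to triples drawn from $x,y,w,t$ to check that the selected element and its recorded companion agree for the two bracketings; the bookkeeping of which of the two elements of each intermediate product gets chosen is where all the difficulty concentrates.

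Finally, for part~(c) I would analyze the projection $\rho\colon A\to X$, $(x,p)\mapsto x$. It is surjective, and its fiber over $x$ is $\{(x,p),(x,p')\}$, a single point exactly when $x\in V$; since $(x,p)^{-1}=(x,p')$, the orbits of $\iota_{\ba}$ on $A$ are precisely the fibers of $\rho$, so $\rho$ descends to a bijection $A/\iota_{\ba}\to X$. To see that this is an isomorphism of two-valued groups it suffices to verify the coset formula $\rho(a)*\rho(b)=\bigl[\rho(a\bullet b),\rho(a\bullet b^{-1})\bigr]$. Writing $a=(x,p)$, $b=(y,q)$, the first coordinate of $a\bullet b$ is the selected $z\in x*y$, whereas $b^{-1}=(y,q')$ makes the first coordinate of $a\bullet b^{-1}$ the element of $x*y$ selected through $q'$ in place of $q$; since the associated $r\in x*q$ and $r^{\flat}\in x*q'$ are distinct, their two selections exhaust $[z_1,z_2]=x*y$, which is exactly the coset formula. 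Overall the principal obstacle is the associativity in part~(b), with the consistency assertion in~(a)---that the three multisets in~(i) really share a single common element---a close second.
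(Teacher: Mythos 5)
Your outline of part~(a) in the general-position case matches the paper's actual route: use Lemma~\ref{lem_3decomp} to see that $t*x*y$ is a four-element set whose six two-element subsets are the three perfect matchings, take $r$ to be the unique common element of $x*q$ and $p*y$, and let $z$ be whichever of $z_1,z_2$ has $r\in t*z$. But you substantially underestimate two points. First, the statement that $z$ lies in $p*q'$ (rather than $p*q$) is not a formal consequence of the matching structure; the paper has to assume $z\in p*q$ and derive a contradiction by showing $t*p*q=t*x*y$ would force the $32$-element multiset $(t*p*q)*(t*x*y)=(t*t)*[t,f]*[t,g]$ to contain $e$ with multiplicity at least~$4$, while $t*t*t*t$ contributes only~$3$ and the remaining terms are excluded using $\ord t\notin\{1,2,4\}$ and Lemma~\ref{lem_equal_squares}. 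Second, the degenerate configurations are not mere ``$V$-orbit bookkeeping'': the paper needs nine explicit special formulae (Lemma~\ref{lem_formulae_special}), each with its own uniqueness argument, and these formulae are later indispensable inputs to the associativity proof, not just boundary cases to be dismissed.

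The genuine gap is associativity in part~(b). Your plan --- compare the two bracketings inside the four-element multiset $x*y*w$ using the matching structure of Lemma~\ref{lem_3decomp} --- can only work when $t,x,y,w$ and the various $p_i,p_i'$ are in general position (pairwise distinct $V$-orbits, none in $V$ or $Vt$), because Lemma~\ref{lem_3decomp} requires exactly that. It gives you nothing for triples such as $(\alpha,\alpha,\beta)$ or $(\alpha,\alpha,\alpha\bullet\alpha)$, where the first coordinates share a $V$-orbit and the triple product collapses. The paper's proof spends most of its length building the machinery needed to handle these: the cancellation law $\alpha^{-1}\bullet(\alpha\bullet\beta)=\beta$ (so that multiplication by a fixed element is a bijection), the commuting actions of $V$ and of the cyclic group $\T=\{\tau^k\}$ giving $g(\alpha\bullet\beta)=(g\alpha)\bullet\beta$ for $g\in G=V\times\T$, the reduction of associativity to a statement about $G$-orbits (Lemma~\ref{lem_assoc_compare}), the hard general-position case (Lemma~\ref{lem_assoc_different}, itself a three-case contradiction argument), and then the separate identities $(\alpha\bullet\alpha)\bullet(\alpha\bullet\alpha)=\alpha\bullet(\alpha\bullet(\alpha\bullet\alpha))$ and $(\alpha\bullet\alpha)\bullet\beta=\alpha\bullet(\alpha\bullet\beta)$ deduced by cancellation. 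None of this reduction apparatus appears in your proposal, and without it the ``bookkeeping'' you defer is not merely tedious but unreachable by the method you describe: the non-generic triples are precisely where the matching lemma does not apply, and they cannot be avoided since, e.g., proving that inverses are two-sided and that squares behave associatively is part of what must be established.
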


The following Lemma is a direct consequence of Proposition ~\ref{propos_bullet}.

\begin{lem}\label{lem_uvaction}
The group~$V$ acts on the set~$A$ according to the formula $v(x,p)=(vx,vp)$, where $v\in V$, $(x,p)\in A$. If statement~\textnormal{(а)} of Proposition~\ref{propos_main_construction} is true for elements $\alpha=(x,p)$ and $\beta=(y,q)$ of set~$A$, then for any elements $u,v\in V$ statement~\textnormal{(а)} of Proposition~\ref{propos_main_construction} is true for elements $u\alpha=(ux,up)$ and~$v\beta=(vy,vq)$ and
\begin{equation}\label{eq_uv_mult}
(u\alpha)\bullet (v\beta)=uv(\alpha\bullet\beta).
\end{equation}
\end{lem}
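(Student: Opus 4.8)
The plan is to reduce the whole statement to one distributivity identity: for $u,v\in V$ and any $x,y\in X$,
$$(ux)*(vy)=uv\,(x*y),$$
where $uv\,(\cdot)$ means applying the element $uv\in V$ to each element of the multiset. This is just the action property~\eqref{eq_action} of Proposition~\ref{propos_bullet} used on each factor in turn (together with commutativity of $*$), combined with the fact that $V$ acts on $X$, so that $u\cdot(v\cdot S)=(uv)\cdot S$ for every multiset~$S$ and $uv=vu$.

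First I would verify that $v(x,p)=(vx,vp)$ is indeed an action of $V$ on $A$. Componentwise action on $X\times X$ is automatic from the action on $X$; the only point is that it preserves $A$, which follows since $p\in t*x$ gives $vp\in v\,(t*x)=t*(vx)$, so $(vx,vp)\in A$.

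Then, assuming statement~(a) of Proposition~\ref{propos_main_construction} for $\alpha=(x,p)$ and $\beta=(y,q)$ with unique product $(z,r)=\alpha\bullet\beta$, I would check that $(uvz,uvr)$ satisfies conditions~(i)--(iii) for the pair $u\alpha=(ux,up)$, $v\beta=(vy,vq)$. Here the companions of $ux$ and $vy$ relative to $t$ are $up'$ and $vq'$ (because $t*(ux)=[up,up']$ and $t*(vy)=[vq,vq']$), while $t*(uvz)=uv\,(t*z)=[uvr,uvr']$, so $uvr'$ is the companion of $uvz$. Each membership condition in (i)--(iii) is obtained from the corresponding original one by applying $uv$ and invoking the distributivity identity: for instance $z\in x*y$ yields $uvz\in(ux)*(vy)$, $z\in p'*q$ yields $uvz\in(up')*(vq)$, $r\in p*y$ yields $uvr\in(up)*(vy)$, and $r'\in x*q'$ yields $uvr'\in(ux)*(vq')$, and similarly for the remaining memberships; finally $uvr\in t*(uvz)$ shows $(uvz,uvr)\in A$. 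This gives existence of a pair with properties~(i)--(iii) for $u\alpha,v\beta$.

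For uniqueness, and hence for the equality~\eqref{eq_uv_mult}, I would note that $(z,r)\mapsto(uvz,uvr)$ is a bijection from the set of pairs satisfying~(i)--(iii) for $(\alpha,\beta)$ onto the analogous set for $(u\alpha,v\beta)$: the computation above shows it sends solutions to solutions, and since $u^2=v^2=e$ the same argument run again is its own inverse. As statement~(a) for $\alpha,\beta$ makes the source a singleton $\{(z,r)\}$, the target is the singleton $\{(uvz,uvr)\}$, which proves statement~(a) for $u\alpha,v\beta$ and simultaneously gives $(u\alpha)\bullet(v\beta)=(uvz,uvr)=uv(\alpha\bullet\beta)$. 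I do not expect a genuine obstacle; the only care needed is keeping track of the primed companions $p',q',r'$ under the action, which becomes automatic once the distributivity identity is in hand.
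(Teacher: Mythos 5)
Your proof is correct and is exactly the argument the paper has in mind: the paper dismisses this lemma as ``a direct consequence of Proposition~\ref{propos_bullet}'', and your distributivity identity $(ux)*(vy)=uv\,(x*y)$ is precisely the spelled-out form of the equivariance property~\eqref{eq_action}, with the transfer of uniqueness via the involutive bijection $(z,r)\mapsto(uvz,uvr)$ being the only detail worth recording.
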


\begin{lem}\label{lem_formulae_special}
Statement~\textnormal{(а)} of Proposition~\ref{propos_main_construction} holds in special cases when $(y,q)$ is one of the pairs $(e,t)$, $(t,e)$, $(t,t^2)$, $(t^2,t)$, $(x,p)$, $(x,p')$, $(p,x)$, $(p',x)$ and~$(s,p)$, where $t*p=[x,s]$. We have the following formulae
\begin{align}
(x,p)\bullet (e,t) &= (x,p),\label{eq_et}\\
(x,p)\bullet (t,e) &= (p',x),\label{eq_te}\\
(x,p)\bullet (t,t^2)&=(p,s),\label{eq_tt^2}\\
(x,p)\bullet (t^2,t)&=(s',p')\label{eq_t^2t}\\
(x,p)\bullet (x,p) &= (x^2,f),\label{eq_xp}\\
(x,p)\bullet (x,p') &=(e,t),\label{eq_xp'}\\
(x,p)\bullet (p,x) &=(t,e),\label{eq_px}\\
(x,p)\bullet (p',x) &=(f',x^2),\label{eq_p'x}\\
(x,p)\bullet (s,p) &=(t^2,t),\label{eq_sp}
\end{align}
where $t*p'=[x,s']$, $x*p=[t,f]$ and $x*p'=[t,f']$.
\end{lem}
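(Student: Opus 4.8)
The plan is to verify Lemma~\ref{lem_formulae_special} directly by showing, for each of the nine listed pairs $(y,q)$, that the right-hand side of the claimed formula is precisely the pair $(z,r)\in A$ satisfying conditions~(i)--(iii) of Proposition~\ref{propos_main_construction}(а). For each case this reduces to three checks: that $z$ lies in the three multisets $x*y$, $p*q'$ and $p'*q$; that $r$ lies in $x*q$ and $p*y$; and that the partner $r'$ of $r$ under $t*z=[r,r']$ lies in $x*q'$ and $p'*y$. Since Proposition~\ref{propos_main_construction}(а) guarantees uniqueness of such a pair once existence is known, it suffices to exhibit the candidate and confirm the memberships, the candidate being dictated by the formula to be proved.

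The key computational tool throughout is the power law~\eqref{eq_mult1}, $t^k*t^l=[t^{k+l},t^{k-l}]$, together with Lemma~\ref{lem_basic} (membership symmetry) and Lemma~\ref{lem_z1z2}, which says that if $x*y=[z_1,z_2]$ then $z_1*z_2=[x^2,y^2]$. First I would record the immediate consequences of the defining relation $p\in t*x$: by Lemma~\ref{lem_basic} this gives $x\in t*p$ and $t\in x*p$, and by Lemma~\ref{lem_z1z2} applied to $t*x=[p,p']$ we get $p*p'=[t^2,x^2]$. These identities, applied repeatedly, let me compute the relevant products. For instance, in the identity case $(y,q)=(e,t)$ formula~\eqref{eq_et} is essentially the statement that $(e,t)$ is the neutral pair, which should fall out of $e*x=[x,x]$ and $e*p=[p,p]$. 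The case $(t,e)$ giving~\eqref{eq_te} uses $t*x=[p,p']$ to identify $z=p'$, and then $t*t=[t^2,e]$ so that $q'=t$, making the auxiliary memberships collapse to the already-known relations.

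The genuinely substantive cases are~\eqref{eq_tt^2}, \eqref{eq_t^2t}, \eqref{eq_sp}, and the square-type identities~\eqref{eq_xp}--\eqref{eq_p'x}, where the auxiliary elements $s,s',f,f'$ enter. Here the hypothesis $\ord t\notin\{1,2,4\}$ becomes essential: it guarantees that $t$, $t^2$ are genuinely distinct from $e$ and from each other and that the relevant multisets such as $t*p=[x,s]$ really split, so the notation for $s,s',f,f'$ is well defined. For~\eqref{eq_xp} and~\eqref{eq_xp'} I would first compute $x*x=[e,x^2]$ and $p*p=[e,p^2]$ and use Lemma~\ref{lem_z1z2} on $x*p=[t,f]$ to get $t*f=[x^2,p^2]$, which pins down the second coordinate $f$; the identity~\eqref{eq_xp'} that $(x,p)\bullet(x,p')=(e,t)$ is exactly the inverse-element assertion of part~(b), so establishing it here is what underpins the later claim that $(x,p)^{-1}=(x,p')$.

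I expect the main obstacle to be purely bookkeeping: tracking which of the two elements of each split multiset is $p$ versus $p'$, and correspondingly $s$ versus $s'$, $f$ versus $f'$, so that conditions~(ii) and~(iii) are matched to the correct (unprimed vs.\ primed) partner rather than merely to the unordered pair. Because the defining data of $A$ records an \emph{ordered} pair $(x,p)$ selecting one branch of $t*x$, the consistency of the primes across the three conditions is the delicate point; getting it right is what distinguishes~\eqref{eq_te} from its primed analogue and what makes the inverse formula in~(b) come out with $p'$ rather than $p$. The uniqueness clause of Proposition~\ref{propos_main_construction}(а) is a safety net: once a candidate is shown to satisfy all three conditions, no sign ambiguity can remain, so the verification, though lengthy, is mechanical once the splittings $t*p=[x,s]$, $t*p'=[x,s']$, $x*p=[t,f]$ and $x*p'=[t,f']$ are fixed at the outset.
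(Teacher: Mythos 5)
Your plan covers only half of what the lemma asserts, and the half it omits is the harder one. The statement to be proved is that Proposition~\ref{propos_main_construction}(а) \emph{holds} for the nine listed pairs, i.e.\ that a pair $(z,r)$ with properties (i)--(iii) both \emph{exists} and is \emph{unique}. Your proposal verifies existence of the candidate (which is indeed the mechanical part, done exactly as you describe, via Lemma~\ref{lem_basic}, Corollary~\ref{cor_z1z2} and the relations $p*p'=[t^2,x^2]$, $t*f=[x^2,p^2]$, etc.), but then declares that ``the uniqueness clause of Proposition~\ref{propos_main_construction}(а) is a safety net'' and that ``it suffices to exhibit the candidate and confirm the memberships.'' This is circular: Proposition~\ref{propos_main_construction}(а) is not yet available — this lemma is one of the steps in its proof — so you cannot invoke its uniqueness clause to discharge the uniqueness you are supposed to establish. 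A priori two distinct pairs in~$A$ could satisfy (i)--(iii), and nothing in your argument rules this out.

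In the paper, the uniqueness analysis occupies essentially the entire proof and is genuinely nontrivial. It first disposes of the cases where one of $x,p,p',y,q,q'$ lies in~$V$ (there the conditions force $(z,r)$ outright), and then, for each remaining pair such as $(t^2,t)$, $(x,p)$, $(x,p')$, $(p,x)$, $(p',x)$, $(s,p)$, it intersects the constraining multisets (e.g.\ $r\in x*q=[p,p']$ together with $r\in p*y$) and eliminates the spurious alternative using non-speciality, Lemma~\ref{lem_equal_squares}, Proposition~\ref{propos_bullet}(c), and crucially the hypothesis $\ord t\notin\{1,2,4\}$ (e.g.\ to exclude $t^2=uu'$ or $x^2=t^2$ in the degenerate subcases). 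Your proposal does correctly identify where $\ord t\notin\{1,2,4\}$ matters for the auxiliary elements being well defined, but it never engages with the elimination arguments that make the pair unique, so as written the proof is incomplete.
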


\begin{proof}
The fact that the pairs~$(z,r)$ on the right-hand sides of formulae~\eqref{eq_et}--\eqref{eq_sp} belong to the set~$A$ and have the properties~(i), (ii) and~(iii) can be verified directly using Lemma~\ref{lem_basic} and Corollary~\ref{cor_z1z2}. It remains to prove the uniqueness of such a pair~$(z,r)$ in each of the cases.

First of all, note that the pair $(z,r)$ is definitely unique if at least one of the elements~$x$, $p$, $p'$, $y$, $q$, and~$q'$ lies in~$V$. Indeed,
\begin{itemize}
\item if $x\in V$, then $z\in x*y=[xy,xy]$ and $r\in x*q=[xq,xq]$, hence $(z,r)=(xy,xq)$;
\item if $p\in V$, then $z\in p*q'=[pq',pq']$ and $r\in p*y=[py,py]$, hence $(z,r)=(pq',py)$;
\item if $p'\in V$, then $z\in p'*q=[p'q,p'q]$ and $r'\in p'*y=[p'y,p'y]$, hence $(z,r')=(p'q,p'y)$, after that the element~$r$ is uniquely determined from the condition of $t*z=[r,r']$;
\item cases $y\in V$, $q\in V$ and $q'\in V$ are completely similar.
\end{itemize}
These arguments cover the cases $(y,q)=(e,t)$, $(t,e)$ and $(t,t^2)$. Consider the remaining cases in turn, assuming that none of the elements~$x$, $p$, $p'$, $y$, $q$ and~$q'$ lies in~$V$.

1. In the case when $(y,q)=(t^2,t)$ we have $r\in x*q=[p,p']$, and hence $r=p$ or $r=p'$. In addition, we have $r\in p*y= p*t^2$. Assume first that $p\notin p*t^2$. Then the only possible value is $r=p'$. Moreover, the element $z$ must belong to the multisets $t*r=[x,s']$ and $x*y=x*t^2$. These two conditions uniquely define~$z$ if $x\notin x*t^2$. If $x\in x*t^2$, then $t^2\in x*x$, hence $t^2=x^2$, so by Lemma ~\ref{lem_equal_squares} we get that $x=ut$ for some $u\in V$. Then $[p,p']=x*t=[u,ut^2]$, which is impossible, because by assumption none of the elements~$p$ and~$p'$ lie in~$V$.

Suppose now that $p\in p*t^2$, so that $t^2\in p*p$. Then $t^2=p^2$, so by Lemma ~\ref{lem_equal_squares} we have $p=ut$ for some $u\in V$.
On the other hand, $y=t^2$, so $t*y=[t,t^3]$ and $q'=t^3$.
The element~$z$ must belong to the multisets $x*y=[u,ut^4]$ and $p*q'=[ut^2,ut^4]$. Since $t^2\ne e$, it follows that $z=ut^4$.
Now the element~$r$ must belong to the multisets $t*z=[ut^3,ut^5]$ and $x*q=p*y=[ut,ut^3]$. If $t^5\ne t$, then the only element $z=ut^3$ satisfies these two conditions. If $t^5=t$, then either $t^4=e$ or $t^6=e$. The first of these equalities is impossible since $\ord t\notin\{1,2,4\}$, the second one is impossible since in that case the element $q'=t^3$ would belong to the set~$V$, which contradicts our assumption.

Before going through the rest of the cases, we make one general remark. We have already proved that statement~(a) is true when $(y,q)$ is one of the pairs of $(e,t)$, $(t,e)$, $(t,t^2)$ and $(t^2,t)$, that is, when one of the elements~$y$ and ~$q$ is equal to~$t$. According to Lemma~\ref{lem_uvaction}, this implies that statement~(a) is true if one of the elements of~$y$ and~$q$ lies in the orbit~$Vt$. Similarly, the statement ~(a) is true if one of the elements~$x$ and~$p$ lies in the orbit~$Vt$. Hence in the proof of statement  (a) we can assume that none of $x$, $p$, $y$, and~$q$ lies in~$Vt.$ According to Lemma ~\ref{lem_equal_squares}, this is equivalent to the assumption that none of the elements of~$x^2$, $p^2$, $y^2$ and ~$q^2$ is equal to~$t^2.$ Now we continue to consider all the cases under this additional assumption.

2. In the case when $(y,q)=(x,p)$, we have $z\in x*x=[e,x^2]$ and $z\in p*p'$. If $z=e$, then $p=p'$, which is impossible due to non-speciality of the two-valued group~$X$, since $x\notin V$ and $t\notin V$. So the only possible value is $z=x^2$. Further we have $r\in x*p=[t,f]$ and $r\in x^2*t$. If $t\in x^2*t$, then $x^2\in t*t=[e,t^2]$, which is impossible, because by the assumptions $x\notin V$ and $x^2\ne t^2$. So $t\notin x^2*t$, and hence the only possible value is $r=f$.

3. In the case when $(y,q)=(x,p')$, the element $z$ must belong to each of the multisets $x*x=[e,x^2]$, $p*q'=p*p=[e,p^2]$, and $p'*q=p'*p'=[e,{p'}^2]$. If the elements $x^2$, $p^2$, and~${p'}^2$ are not all the same, then it follows that $z=e$ and hence $r=t$. If $x^2=p^2={p'}^2$, then by Lemma ~\ref{lem_equal_squares} we get that $p=ux$ and $p'=u'x$, where $u,u'\in V$. Then, by Lemma ~\ref{lem_z1z2}, the equality $[x^2,t^2]=p*p'=[uu',uu'x^2]$ holds. So either $t^2=uu',$ or $x^2=uu'$ and $t^2=uu'x^2=e$. Neither is possible since $\ord t\notin\{1,2,4\}$. Hence, the case $x^2=p^2={p'}^2$ is impossible.

4. In the case when $(y,q)=(p,x)$, we have $r\in x*x=[e,x^2]$ and $r\in p*p=[e,p^2]$. If $x^2\ne p^2$, then it follows that $r=e$ and hence $z=t$. Let $x^2=p^2$. Then by Lemma ~\ref{lem_equal_squares} we get that $p=ux$ for some $u\in V$, so $t\in x*p=[u,ux^2]$. Since $t\notin V$, it follows that $t=ux^2$. Then $t*x=[ux,ux^3]$ and hence $p'=ux^3$. Therefore, the element~$z$ belongs to the multisets $x*y=[u,ux^2]$ and $p'*q=[ux^2,ux^4]$. Since $x^4=t^2\ne e$ the only possible value is $z=ux^2=t$. Thus $r\in z*t=[e,x^4]$. However, it was already noted above that $r\in [e,x^2]$. Moreover, by assumption we have $x^4=t^2\ne x^2$. So the only possible value is $r=e$.

5. In the case when $(y,q)=(p',x)$, we have $r\in x*q=[e,x^2]$ and $r\in p*y=p*p'$. However, since $x\notin V$ and $t\notin V$, from the non-speciality of the two-valued group~$X$ it follows that $p\ne p'$. Therefore $e\notin p*p'$ and the only possible value is $r=x^2$. Further we have $z\in x*y=[t,f']$ and $z\in t*r=t*x^2$. If the inclusion $t\in t*x^2$ were true, then we would get that $x^2\in t*t=[e,t^2]$, which is impossible since $x\notin V$ and $x^2\ne t^2$. So $t\notin t*x^2$ and hence the only possible value is $z=f'$.

6. In the case when $(y,q)=(s,p)$, we have $z\in p'*q=p'*p=[t^2,x^2]$ (by Lemma ~\ref{lem_z1z2}) and $z\in x*s$.

Suppose that $x^2\notin x*s$. Then the only possible value is $z=t^2$, and hence $r=t$ or~$r=t^3$. However, the element~$r$ must lie in the multiset $x*q=x*p=[t,f]$. If $t^3=f$ were true, then according to Lemma ~\ref{lem_z1z2} we would have $[x^2,p^2]=t*f=[t^2,t^4]$, which is impossible, since by assumption $x^2\ne t^2$ and $p^2\ne t^2$. So $t^3\ne f$  and therefore the only possible value is $r=t$.

Now suppose that $x^2\in x*s$. Then $s\in x^2*x=[x,x^3]$. However, $x\ne s$, since $t,p\notin V$ and the two-valued group~$X$ is not special. Therefore $s=x^3$. By Lemma ~\ref{lem_z1z2}, we have $[t^2,p^2]=x*s=[x^2,x^4]$. Since $x^2\ne t^2$, it follows that $t^2=x^4$ and $p^2=x^2$. Hence $p=ux$ and $t=vx^2$ for some $u,v\in V$. Then $[p,p']=t*x=[vx,vx^3]$. If the equality $p'=vx$ were true, then by Lemma ~\ref{lem_z1z2} we would get that $[t^2,x^2]=p*p'=[uv,uvx^2]$, which would imply that either $t^2=uv$ or $x^2=uv$ and $t^2=uvx^2=e$, which is impossible since $\ord t\notin\{1,2,4\}$. Hence, $p=vx$ and $p'=vx^3$. Further $[q,q']=t*s=[vx,vx^5]$, hence $q'=vx^5$. The element $z$ must belong to the multisets $x*y=[x^2,x^4]$ and $p*q'=[x^4,x^6]$. If the equality $x^2=x^6$ were true, then it would follow that $x^8=e$, hence $t^4=(vx^2)^4=e$, which is not true. Therefore $x^2\ne x^6$ and hence the only possible value is $z=x^4$. Further, the element $r$ must belong to the multisets $x*q=[v,vx^2]$ and $p*y=[vx^2,vx^4]$. Equality $x^4=e$ is impossible, because it would imply that $t^2=(vx^2)^2=e$. Therefore, the only possible value is $r=vx^2$.
\end{proof}

\begin{proof}[Proof of statement (\textnormal{а}) of Proposition ~\ref{propos_main_construction}]
Lemmas ~\ref{lem_uvaction} and ~\ref{lem_formulae_special} imply that the statement~(\textnormal{а}) for the pairs $(x,p)$ and~$(y,q)$ holds if at least one of the following conditions is satisfied:
\begin{enumerate}
\item one of the orbits $Vx$, $Vy$, $Vp$ and~$Vq$ coincides either with~$V$ or with~$Vt$,
\item one of the orbits $Vx$ and~$Vp$ coincides with one of the orbits $Vy$ and~$Vq$.
\end{enumerate}

It remains for us to consider the case of `general position', when neither of these two conditions is satisfied. Then by Lemma ~\ref{lem_3decomp} the multiset $t*x*y$ consists of four different elements and $(x*q)\cup (x*q')$ and $(p*y)\cup (p'*y)$ are two different partitions of this four-element set into two two-element subsets. Therefore, the sets $x*q$ and~$p*y$ have a single common element, which we denote by ~$r$. Also, $t*x*y=(t*z)\cup (t*z')$, where $x*y=[z,z']$, which means that $r$ belongs to exactly one of the two two-element sets $t*z$ and~$t*z'$. Renaming the elements ~$z$ and ~$z'$ as needed, we can assume that $r\in t*z$ and $r\notin t*z'$. Then $(z,r)$ is the only pair in~$A$ that can have properties~(i), (ii) and~(iii). It remains to show that it does indeed have them. Property~(ii) and inclusion $z\in x*y$ hold by construction.

Let us prove property~(iii). By Lemma~\ref{lem_3decomp}
$$
t*x*y=(x*q)\cup (x*q')=(p*y)\cup (p'*y)=(t*z)\cup (t*z')
$$
are all three different partitions of the four-element set~$t*x*y$ into two two-element subsets. Therefore, since the element $r$ belongs to the subsets $x*q$, $p*y$ and $t*z$, it follows that the element~$r'$ of the subset~$t*z$, which is different from~$r$, belongs to the subsets~$x*q'$ and~$p'*y$, which is the property~(iii).

Now let us prove that $z\in p*q'$. The element~$z$ belongs to the multiset
$$
x*y\subseteq (p*t)*y=p*(t*y)=(p*q)\cup(p*q').
$$
Therefore, if $z\notin p*q'$, then the element~$z$ must lie in the multiset~$p*q$. Let's assume that this is indeed the case. By Lemma ~\ref{lem_3decomp} for the triple $(t,x,y)$ the multiset $t*x*y$ consists of four different elements and $x*q$, $p*y$ and $t*z$ are its three pairwise distinct two-element subsets intersecting in the common element~$r$. Hence, the union of these three subsets (without taking into account the multiplicities of the elements) coincides with the entire set $t*x*y$. On the other hand, since $x\in t*p$, $y\in t*q$, and $z\in p*q$, each of the multisets $x*q$, $p*y$, and $t*z$ is contained in the four-element multiset $t*p*q$. Hence $t*p*q=t*x*y$. Therefore, the $32$-element multiset $(t*p*q)*(t*x*y)$ must contain the identity~$e$ with multiplicity at least~$4$. On the other hand, we have $x*p = [t,f]$ and $y*q=[t,g]$ for some $f,g\in X$; then
\begin{multline*}
(t*p*q)*(t*x*y)=(t*t)*(x*p)*(y*q)=(t*t)*[t,f]*[t,g]\\{}=[t*t*t*t, t*t*t*f, t*t*t*g, t*t*f*g].
\end{multline*}
At the same time, the multiset
$$
t*t*t*t=[e,t^2]*[e,t^2]=[e,e,e,t^2,t^2,t^2,t^2,t^4]
$$
contains $e$ only with multiplicity~$3$ since $\ord t\ne \{1,2,4\}$. Therefore, the element~$e$ must belong to at least one of the three multisets~$t*t*t*f$, $t*t*t*g$ and~$t*t*f*g$.

If $e\in t*t*t*f$, then $f\in t*t*t=[t,t,t,t^3]$, then either $f=t$ or $f=t^3$. By Lemma~\ref{lem_z1z2}
$$
[x^2,p^2]=t*f=\left\{
\begin{aligned}
&[e,t^2],&&\text{если}\ f=t,\\
&[t^2,t^4],&&\text{если}\ f=t^3.
\end{aligned}
\right.
$$
Hence, $x^2=t^2$ or $p^2=t^2$, so by Lemma ~\ref{lem_equal_squares} it follows that $Vx=Vt$ or~$Vp=Vt$, which contradicts the assumption that condition~(1) is not satisfied. Similarly, we arrive at a contradiction by assuming that $e\in t*t*t*g$. If $e\in t*t*f*g$, then by Lemma ~\ref{lem_z1z2} we have
$$
t*t*f*g=(t*f)*(t*g)=[x^2,p^2]*[y^2,q^2].
$$
Hence, one of the equalities $x^2=y^2$, $x^2=q^2$, $p^2=y^2$, and $p^2=q^2$ holds, which by Lemma ~\ref{lem_equal_squares} implies that condition~(2) holds. This leads to a contradiction again.

Thus, $z\notin p*q$, so $z\in p*q'$ and, similarly, $z\in p'*q$. Therefore, the pair $(z,r)$ indeed has properties~(i), (ii) and~(iii).
\end{proof}

By construction, the operation~$\bullet$ is commutative. It follows from the formula~\eqref{eq_et} that the element $\varepsilon=(e,t)$ is the identity for the multiplication~$\bullet$, that is, $\varepsilon\bullet\alpha=\alpha$ for all~$\alpha\in A$. Further, it follows from the formula ~\eqref{eq_xp'} that $(x,p)\bullet (x,p')=\varepsilon$ if $t*x=[p,p']$. If $\alpha=(x,p)\in A$, then we will denote the element~$(x,p')$ by~$\alpha^{-1}$. Then $(\alpha^{-1})^{-1}=\alpha$ and $\varepsilon^{-1}=\varepsilon$. Note, however, that we have not yet proved the uniqueness of the inverse, that is, that the equality $\alpha\bullet\beta=\varepsilon$ implies that $\beta=\alpha^{-1}$. In order to prove that $(A,\bullet)$ is an abelian group, we need to prove the uniqueness of the inverse and the associativity of multiplication. The proof will be divided into several Lemmas.

The next Lemma immediately follows from the fact that properties~(i), (ii) and~(iii) do not change when $p\leftrightarrow p'$, $q\leftrightarrow q'$ and $r\leftrightarrow r'$ are simultaneously interchanged.

\begin{lem}\label{lem_ab-1}
For any two elements $\alpha,\beta\in A$, the equality $(\alpha\bullet\beta)^{-1}=\alpha^{-1}\bullet \beta^{-1}$ holds.
\end{lem}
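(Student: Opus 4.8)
The plan is to unwind the two definitions involved and then exploit the manifest symmetry of the conditions (i)--(iii) that characterize the product. Write $\alpha=(x,p)$ and $\beta=(y,q)$, and fix the notation $t*x=[p,p']$, $t*y=[q,q']$, so that by definition $\alpha^{-1}=(x,p')$ and $\beta^{-1}=(y,q')$. Put $\alpha\bullet\beta=(z,r)$, and let $r'$ be the element determined by $t*z=[r,r']$, so that $(\alpha\bullet\beta)^{-1}=(z,r')$ directly from the definition of the inverse. The entire goal thus reduces to showing the single equality $\alpha^{-1}\bullet\beta^{-1}=(z,r')$.

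First I would record exactly what conditions (i)--(iii) become for the product $\alpha^{-1}\bullet\beta^{-1}=(x,p')\bullet(y,q')$. Here the pair $(x,p')$ has companion $p$ (since $t*x=[p',p]$), and the pair $(y,q')$ has companion $q$; in other words, forming the inverses performs the substitution $p\leftrightarrow p'$, $q\leftrightarrow q'$ inside the defining conditions. Carrying this substitution through condition~(i), the first-coordinate requirement for $\alpha^{-1}\bullet\beta^{-1}$ reads ``the first coordinate lies in $x*y$, in $p'*q$, and in $p*q'$'', which is literally the same triple of memberships as condition~(i) for $\alpha\bullet\beta$; hence the first coordinate is again $z$. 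For the second coordinate, condition~(ii) for $\alpha^{-1}\bullet\beta^{-1}$ becomes the membership of that coordinate in $x*q'$ and in $p'*y$, while condition~(iii) requires its companion (with respect to $t*z$) to lie in $x*q$ and in $p*y$.

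The key step is to recognize that these are precisely conditions (ii)--(iii) for the original product $\alpha\bullet\beta$ \emph{with the roles of $r$ and $r'$ interchanged}: condition~(ii) for $\alpha\bullet\beta$ places $r$ in $x*q$ and $p*y$, and condition~(iii) places $r'$ in $x*q'$ and $p'*y$. Therefore the pair $(z,r')$ satisfies all three conditions defining $\alpha^{-1}\bullet\beta^{-1}$, and by the uniqueness asserted in statement~(a) of Proposition~\ref{propos_main_construction} we conclude $\alpha^{-1}\bullet\beta^{-1}=(z,r')=(\alpha\bullet\beta)^{-1}$, as required.

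The only place where a slip is easy --- and hence the main (albeit minor) obstacle --- is the bookkeeping of which primed element serves as the companion of which in the product $(x,p')\bullet(y,q')$; one must track that taking $\alpha^{-1}$ swaps $p\leftrightarrow p'$ and taking $\beta^{-1}$ swaps $q\leftrightarrow q'$, and then observe that the induced swap $r\leftrightarrow r'$ on the output is forced. Once this is set up correctly there is no computation to perform, so I would present the argument as a short verification that matches defining conditions and appeals to Proposition~\ref{propos_main_construction}(a) for existence and uniqueness, rather than as a calculation.
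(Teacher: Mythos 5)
Your argument is correct and is exactly the paper's proof: the paper disposes of this lemma in one sentence by observing that conditions (i)--(iii) are invariant under the simultaneous interchange $p\leftrightarrow p'$, $q\leftrightarrow q'$, $r\leftrightarrow r'$, which is precisely the bookkeeping you carry out explicitly before invoking the uniqueness in Proposition~\ref{propos_main_construction}(a). Your version is just a more detailed write-up of the same verification.
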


\begin{lem}\label{lem_a-1ab}
For any two elements $\alpha,\beta\in A$, the equality $\alpha^{-1}\bullet(\alpha\bullet\beta)=\beta$ holds.
\end{lem}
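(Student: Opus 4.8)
The plan is to deduce the identity from the uniqueness clause of part~(a) of Proposition~\ref{propos_main_construction}: I will describe $\alpha^{-1}\bullet(\alpha\bullet\beta)$ as the unique element of~$A$ satisfying the defining conditions of~$\bullet$, and then check that $\beta$ itself already satisfies those conditions, so the two pairs must coincide. Fix $\alpha=(x,p)$ and $\beta=(y,q)$ with $t*x=[p,p']$ and $t*y=[q,q']$, so that $\alpha^{-1}=(x,p')$. Put $(z,r)=\alpha\bullet\beta$ and let $r'$ be defined by $t*z=[r,r']$. By definition $(z,r)$ enjoys the properties that $z$ lies in each of $x*y$, $p*q'$, $p'*q$; that $r$ lies in each of $x*q$, $p*y$; and that $r'$ lies in each of $x*q'$, $p'*y$.

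The key observation is that the ternary relation ``$c\in a*b$'' is fully symmetric in $a,b,c$. Indeed, Lemma~\ref{lem_basic} gives the implication $c\in a*b\Rightarrow a\in b*c$, and combining it with commutativity $a*b=b*a$ yields the equivalence of $c\in a*b$, $a\in b*c$ and $b\in a*c$, together with all their permutations. This single fact is what makes $\beta$ satisfy the required conditions essentially for free.

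Now I apply the definition of~$\bullet$ to the factors $\alpha^{-1}=(x,p')$ (so that here $p'$ is the companion of the first coordinate and $p$ its conjugate) and $(z,r)$ (with companion $r$ and conjugate $r'$). The product is the unique pair $(w,s)\in A$ such that $w$ lies in each of $x*z$, $p'*r'$, $p*r$; such that $s$ lies in each of $x*r$, $p'*z$; and such that $s'$ lies in each of $x*r'$, $p*z$, where $t*w=[s,s']$. I claim that $(y,q)$ meets all of these requirements, with $w=y$, $s=q$, $s'=q'$. Applying the symmetry of the incidence relation to the seven memberships recorded above for $(z,r)$: from $z\in x*y$ we get $y\in x*z$, from $r'\in p'*y$ we get $y\in p'*r'$, and from $r\in p*y$ we get $y\in p*r$, giving the first condition; from $r\in x*q$ we get $q\in x*r$ and from $z\in p'*q$ we get $q\in p'*z$, giving the second; and from $r'\in x*q'$ we get $q'\in x*r'$ while from $z\in p*q'$ we get $q'\in p*z$, giving the third.

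Thus $(y,q)$ satisfies every defining condition of $\alpha^{-1}\bullet(z,r)$, and the uniqueness in part~(a) of Proposition~\ref{propos_main_construction} forces $\alpha^{-1}\bullet(\alpha\bullet\beta)=(y,q)=\beta$. The only delicate point is the bookkeeping of the primed companions: one must carefully track that passing from $\alpha$ to $\alpha^{-1}$ interchanges $p\leftrightarrow p'$, and that the companion and conjugate of the second factor $(z,r)$ are $r$ and $r'$; once this labelling is pinned down, the verification is a purely formal transposition of the seven memberships already established for $(z,r)$. As an immediate consequence one obtains uniqueness of the inverse, since $\alpha\bullet\beta=\varepsilon$ gives $\beta=\alpha^{-1}\bullet(\alpha\bullet\beta)=\alpha^{-1}\bullet\varepsilon=\alpha^{-1}$.
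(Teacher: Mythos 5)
Your proof is correct and follows essentially the same route as the paper's: both arguments rewrite the defining conditions (i)--(iii) for the product $\alpha^{-1}\bullet(\alpha\bullet\beta)$ with the triples $(x,p',p)$ and $(z,r,r')$ in place, observe via Lemma~\ref{lem_basic} (plus commutativity) that $\beta=(y,q)$ satisfies all of them, and invoke the uniqueness clause of Proposition~\ref{propos_main_construction}(a). Your bookkeeping of the primed companions matches the paper's properties (i$'$)--(iii$'$) exactly.
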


\begin{proof}
Let $\alpha=(x,p)$, $\beta=(y,q)$ and $\alpha\bullet\beta=(z,r)$; let also $t*x=[p,p']$, $t*y=[q,q']$ and $t*z=[r,r']$. Then the triples~$(x,p,p')$, $(y,q,q')$, and $(z,r,r')$ satisfy properties~(i), (ii) and~(iii). We need to prove that $(x,p')\bullet (z,r)=(y,q)$. This means that we need to prove that properties~(i),(ii),(iii) will still hold if we substitute triples~$(x,p',p)$, $(z,r,r')$ and~$(y,q,q')$ instead of $(x,p,p' )$, $(y,q,q')$ and $(z,r,r')$ respectively. Let's write down the required properties:
\begin{itemize}
\item[(i')] $y$ belongs to each of the multisets~$x*z$, $p*r$ and~$p'*r'$,
\item[(ii')] $q$ belongs to each of the multisets~$x*r$ and~$p'*z$,
\item[(iii')] $q'$ belongs to each of the multisets~$x*r'$ and~$p*z$.
\end{itemize}
The fact that the union of these three properties is equivalent to the union of the three initial properties~(i), (ii) and~(iii) immediately follows from Lemma ~\ref{lem_basic}.
\end{proof}

\begin{cor}\label{cor_bijection}
For any element $\alpha\in A$, the operation of multiplication by $\alpha$ defines a bijection of the set~$A$ onto itself. In particular, if $\alpha\bullet\beta=\varepsilon$, then $\beta=\alpha^{-1}$, that is, the inverse element is unique.
\end{cor}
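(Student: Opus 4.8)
The plan is to deduce everything formally from Lemma~\ref{lem_a-1ab}, which asserts that $\alpha^{-1}\bullet(\alpha\bullet\beta)=\beta$ for all $\alpha,\beta\in A$; in other words, left multiplication by~$\alpha^{-1}$ is a left inverse to left multiplication by~$\alpha$. Beyond this identity, the only facts I would invoke are that $\varepsilon=(e,t)$ is a two-sided identity for~$\bullet$ and that $(\alpha^{-1})^{-1}=\alpha$, both already established in the text preceding the corollary. I expect no genuine obstacle here: the statement is a routine consequence of Lemma~\ref{lem_a-1ab}, in exactly the way that bijectivity of translations in an ordinary group follows once a one-sided inverse is available.

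First I would prove that left multiplication by~$\alpha$ is injective. Suppose $\alpha\bullet\beta_1=\alpha\bullet\beta_2$; applying $\alpha^{-1}\bullet(\,\cdot\,)$ to both sides and using Lemma~\ref{lem_a-1ab} on each side yields $\beta_1=\beta_2$.

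Next I would establish surjectivity. Given an arbitrary $\gamma\in A$, I would set $\beta=\alpha^{-1}\bullet\gamma$ and apply Lemma~\ref{lem_a-1ab} with~$\alpha^{-1}$ playing the role of~$\alpha$, recalling $(\alpha^{-1})^{-1}=\alpha$, to obtain
$$
\alpha\bullet\beta=(\alpha^{-1})^{-1}\bullet(\alpha^{-1}\bullet\gamma)=\gamma.
$$
Thus every $\gamma$ lies in the image, and left multiplication by~$\alpha$ is a bijection of~$A$ onto itself.

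Finally, for the uniqueness of the inverse, suppose $\alpha\bullet\beta=\varepsilon$. Applying $\alpha^{-1}\bullet(\,\cdot\,)$ to both sides turns the left-hand side into~$\beta$ by Lemma~\ref{lem_a-1ab} and the right-hand side into $\alpha^{-1}\bullet\varepsilon=\alpha^{-1}$, since $\varepsilon$ is the identity; hence $\beta=\alpha^{-1}$, which is the asserted uniqueness. (Alternatively, this last claim is already subsumed by the injectivity step, since $\alpha\bullet\alpha^{-1}=\varepsilon$ by~\eqref{eq_xp'}.)
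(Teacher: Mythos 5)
Your proof is correct and is exactly the derivation the paper intends: the corollary is stated as an immediate consequence of Lemma~\ref{lem_a-1ab}, with injectivity and surjectivity of translation both read off from that identity (the latter via $(\alpha^{-1})^{-1}=\alpha$), and uniqueness of the inverse following by applying $\alpha^{-1}\bullet(\,\cdot\,)$ to $\alpha\bullet\beta=\varepsilon$. No issues.
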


Introduce the notation $\tau=(t,e)$ and $\tau^k=(t^k,t^{k-1})$ for all $k\in\Z$. Denote by $\T\subseteq A$ the subset consisting of all elements of $\tau^k$, $k\in\Z$. By directly verifying properties~(i), (ii), and~(iii), we easily obtain the following statement.

\begin{lem}\label{lem_T}
The equalities $\tau^k\bullet\tau^m=\tau^{k+m}$ hold for all $k,m\in\Z$. Thus, the set $\T$ with the operation~$\bullet$ is the cyclic group generated by the element~$\tau$.
\end{lem}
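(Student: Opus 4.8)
The plan is to verify directly that the pair $(t^{k+m},t^{k+m-1})$ satisfies the three defining properties~(i), (ii), (iii) of the product $\tau^k\bullet\tau^m$, and then to invoke the uniqueness already established in part~(a) of Proposition~\ref{propos_main_construction} to conclude that this pair \emph{is} the product. Everything reduces to repeated application of the multiplication formula~\eqref{eq_mult1}, namely $t^a*t^b=[t^{a+b},t^{a-b}]$, together with the convention $t^{-n}=t^n$.

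First I would fix the data of the two factors. For $\tau^k=(t^k,t^{k-1})$ put $x=t^k$ and $p=t^{k-1}$; by~\eqref{eq_mult1} one has $t*x=t*t^k=[t^{k+1},t^{k-1}]$, which both confirms that $\tau^k\in A$ (since $p=t^{k-1}\in t*x$) and identifies the companion element as $p'=t^{k+1}$. Likewise, for $\tau^m=(t^m,t^{m-1})$ set $y=t^m$, $q=t^{m-1}$, $q'=t^{m+1}$. The candidate output is $(z,r)=(t^{k+m},t^{k+m-1})$, with companion $r'=t^{k+m+1}$ determined by $t*z=t*t^{k+m}=[t^{k+m+1},t^{k+m-1}]$; the same computation shows $(z,r)\in A$.

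Next I would check properties~(i)--(iii), each of which is a membership of one fixed power of $t$ in a product $t^a*t^b$, hence settled by~\eqref{eq_mult1}. For~(i), $z=t^{k+m}$ lies in $x*y=[t^{k+m},t^{k-m}]$, in $p*q'=[t^{k+m},t^{k-m-2}]$, and in $p'*q=[t^{k+m},t^{k-m+2}]$. For~(ii), $r=t^{k+m-1}$ lies in $x*q=[t^{k+m-1},t^{k-m+1}]$ and in $p*y=[t^{k+m-1},t^{k-m-1}]$. For~(iii), $r'=t^{k+m+1}$ lies in $x*q'=[t^{k+m+1},t^{k-m-1}]$ and in $p'*y=[t^{k+m+1},t^{k-m+1}]$. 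Since all required memberships hold, uniqueness from Proposition~\ref{propos_main_construction}(a) forces $\tau^k\bullet\tau^m=(t^{k+m},t^{k+m-1})=\tau^{k+m}$. I note that possible coincidences among these powers (arising when $\ord t$ is finite) are harmless, as the argument only asserts that each prescribed element appears in the relevant multiset.

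Finally, the group statement is a formal consequence. The law $\tau^k\bullet\tau^m=\tau^{k+m}$ shows that $\T=\{\tau^k\}$ is closed under $\bullet$; the element $\tau^0=(e,t^{-1})=(e,t)=\varepsilon$ (using $t^{-1}=t$) is the identity, $\tau^{-k}$ is inverse to $\tau^k$, and associativity within $\T$ follows from associativity of addition in $\Z$. Thus $k\mapsto\tau^k$ is a surjective homomorphism $\Z\to\T$, exhibiting $\T$ as the cyclic group generated by $\tau$. I do not expect a genuine obstacle here: the only care needed is in bookkeeping the exponents under the convention $t^{-n}=t^n$ and in remembering that $p',q',r'$ are the second entries of the respective multisets $t*t^{j}$; the substantive content (existence and uniqueness of the pair defined by (i)--(iii)) was already supplied by part~(a).
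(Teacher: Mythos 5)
Your proof is correct and follows exactly the route the paper takes: the paper's own justification of Lemma~\ref{lem_T} is precisely ``by directly verifying properties~(i), (ii), and~(iii)'' and then invoking the uniqueness from Proposition~\ref{propos_main_construction}(a), which is what you carry out in full detail via formula~\eqref{eq_mult1}. Your added remarks about the convention $t^{-n}=t^{n}$ and the harmlessness of coincidences among powers are accurate and do not change the argument.
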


\begin{lem}\label{lem_tab}
For any two elements $\alpha,\beta\in A$, the equality $\tau\bullet(\alpha\bullet\beta)=(\tau\bullet\alpha)\bullet\beta$ holds.
\end{lem}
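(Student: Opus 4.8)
The plan is to reduce the claimed identity to a single computation of a $\bullet$-product and then identify that product by matching only enough of the defining conditions (i)--(iii). Write $\alpha=(x,p)$, $\beta=(y,q)$ with $t*x=[p,p']$, $t*y=[q,q']$, and put $\alpha\bullet\beta=(z,r)$ with $t*z=[r,r']$. Using the commutativity of $\bullet$ together with formula~\eqref{eq_te}, both outer products collapse to the shift $(a,b)\mapsto(b',a)$: the left-hand side is $\tau\bullet(z,r)=(z,r)\bullet(t,e)=(r',z)$, while the right-hand side is $(\tau\bullet\alpha)\bullet\beta=(p',x)\bullet(y,q)$. Thus the whole lemma reduces to the single equality
$$
(p',x)\bullet(y,q)=(r',z).
$$
Note first that $(r',z)\in A$: since $r'\in t*z$, Lemma~\ref{lem_basic} gives $z\in t*r'$, as required.

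The core step exploits existence and uniqueness from Proposition~\ref{propos_main_construction}(a). The product $(p',x)\bullet(y,q)$ exists and equals a unique pair $(Z,R)$, where $t*p'=[x,s']$, subject to the conditions: (I) $Z\in p'*y$, $Z\in x*q'$, $Z\in s'*q$; (II) $R\in p'*q$, $R\in x*y$; (III) the conjugate $R'$ (with $t*Z=[R,R']$) lies in $p'*q'$ and in $s'*y$. Rather than checking all of these by hand, I pin down $Z$ and $R$ from two conditions apiece. From~(II), $R$ lies in both $x*y$ and $p'*q$; but the original relations give $z\in x*y$ and $z\in p'*q$ (property~(i) of $(z,r)$). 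From~(I), $Z$ lies in both $p'*y$ and $x*q'$; and property~(iii) of $(z,r)$ gives $r'\in p'*y$ and $r'\in x*q'$. Hence it suffices to show that in each case the two two-element multisets meet in a single common element, which must then be $z$ and $r'$ respectively; this forces $(Z,R)=(r',z)$, and the lemma follows, with property~(III) holding automatically because $(r',z)$ is then the genuine product.

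It remains to secure the single-intersection claims. If, say, $x*y$ and $p'*q$ had two common elements, they would coincide as multisets, and Lemma~\ref{lem_xy=zt} would force an orbit coincidence, $Vx=Vp'$ and $Vy=Vq$, or $Vx=Vq$ and $Vy=Vp'$; analogously for $p'*y$ and $x*q'$. In the generic situation, where $x$ and $y$ lie in distinct $V$-orbits none of which is $V$ or $Vt$, all products in sight are genuine two-element multisets (by non-speciality, as the orders exceed~$2$), and via Lemmas~\ref{lem_equal_squares} and~\ref{lem_z1z2} these orbit coincidences are ruled out, so the intersections are indeed singletons. The main obstacle is the bookkeeping of the degenerate configurations: when one of $x,p,p',y,q,q'$ lies in $V$ or in $Vt$, or when two of the relevant orbits coincide. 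These are handled exactly as in the proof of Proposition~\ref{propos_main_construction}(a): the $V$-equivariance of Lemma~\ref{lem_uvaction} lets us normalise, and the explicit product formulae \eqref{eq_et}--\eqref{eq_sp} of Lemma~\ref{lem_formulae_special} let us compute both sides directly and compare them case by case. I expect this degenerate-case analysis, rather than the generic argument, to be the most laborious part of the proof.
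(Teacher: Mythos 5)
Your reduction of the lemma to the single identity $(p',x)\bullet(y,q)=(r',z)$ via formula~\eqref{eq_te}, and your strategy of pinning down the product by intersecting $x*y$ with $p'*q$ (for the second component) and $p'*y$ with $x*q'$ (for the first), is exactly the paper's proof: the paper likewise assumes the two candidates differ, concludes that the two two-element multisets coincide, and invokes Lemma~\ref{lem_xy=zt} to force $p'\in Vx,\ q\in Vy$ or $p'\in Vy,\ q\in Vx$ (and the analogous alternatives with $q'$). So the skeleton is right and matches the source.

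Two caveats, the second of which is a real gap. First, your ``generic situation'' is too weakly stated: requiring only that $Vx\ne Vy$ and that neither orbit is $V$ or $Vt$ does \emph{not} rule out the coincidence $p'\in Vy$, $q\in Vx$. For example, in $C_{100}/\iota_{\ba}$ with $t=\pi(a^{23})$, $x=\pi(a^{10})$, $y=\pi(a^{13})$ one may take $p'=y$ and $q=x$, so that $x*y=p'*q$ even though $x$ and $y$ lie in distinct orbits away from $V$ and $Vt$. Your later sentence does sweep this into the degenerate bucket (``when two of the relevant orbits coincide''), but it contradicts the claim that the coincidences are ruled out generically; the genericity hypothesis must also exclude coincidences among the orbits of $p,p',q,q'$. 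Second, and more importantly, the deferred case analysis is not routine bookkeeping: in each of the four coincidence cases the forced relation is \emph{not} a contradiction, and the identity $\tau\bullet(\alpha\bullet\beta)=(\tau\bullet\alpha)\bullet\beta$ has to be verified there by a separate computation. The paper does this by normalising with Lemma~\ref{lem_uvaction} and applying the explicit formulae \eqref{eq_xp}--\eqref{eq_sp}, but in the case $p'\in Vy$, $q'\in Vx$ a direct comparison is not how it goes: the paper instead computes $\alpha^{-1}\bullet\beta^{-1}=u\tau$ and concludes via Lemma~\ref{lem_ab-1}, Lemma~\ref{lem_T} and the cancellation Corollary~\ref{cor_bijection}. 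These verifications constitute the bulk of the actual proof; your outline correctly identifies the tools but leaves all of them undone.
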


\begin{proof}
Let $\alpha=(x,p)$, $\beta=(y,q)$, $\alpha\bullet\beta=(z,r)$ and $t*x=[p,p']$, $t*y=[q,q']$, $t*z=[r,r']$. Then the triples~$(x,p,p')$, $(y,q,q')$, and $(z,r,r')$ satisfy properties~(i), (ii) and~(iii). According to formula ~\eqref{eq_te}, we have $\tau\bullet\alpha=(p',x)$ and $\tau\bullet(\alpha\bullet\beta)=(r',z)$. Thus, we need to prove that
\begin{equation}\label{eq_tauab}
(p',x)\bullet (y,q)=(r',z). 
\end{equation}
Let $(p',x)\bullet (y,q)=(\zeta,\rho)$.

Let us first prove that $\rho=z$. Indeed, according to the multiplication construction~$\bullet$, the element~$\rho$ must belong to the multisets~$p'*q$ and~$x*y$. However, the element~$z$ also belongs to these two multisets. Assume that $\rho\ne z$. Then $p'*q=x*y=[z,\rho]$. By Lemma~\ref{lem_xy=zt}, this implies that there is an element $u\in V$ such that either $p'=ux$ and $q=uy$ or $p'=uy$ and $q=ux$. Let's consider these cases separately.

1. Let $p'=ux$ and $q=uy$. Then the element~$t$ belongs to each of the multisets $x*p'=[u,ux^2]$ and $y*q=[u,uy^2]$. Since $t\notin V$, we get that $t=ux^2=uy^2$, so $x^2=y^2$. By Lemma ~\ref{lem_equal_squares}, this implies that $y=vx$ for some element $v\in V$. We have $t*x=[ux,ux^3]$, hence $p=ux^3$.
Thus, $\alpha=(x,ux^3)$, $\beta=(vx,uvx)$. Now it follows from the formula ~\eqref{eq_xp'} that $\alpha\bullet\beta=(v,vt)$, that is, $z=v$ and $r=r'=vt$. The equality ~\eqref{eq_tauab}, which we need to prove, takes the form $(ux,x)\bullet (vx,uvx)=(vt,v)$ and immediately follows from the formula ~\eqref{eq_px}.

2. Let $p'=uy$ and $q=ux$. From formulae~\eqref{eq_p'x} and ~\eqref{eq_xp} it follows that
\begin{gather*}
\alpha\bullet\beta = (x,p)\bullet (up',ux)=(uf',ux^2),\\
(\tau\bullet\alpha)\bullet\beta = (p',x)\bullet (up',x) = (u{p'}^2,uf'),
\end{gather*}
where $x*p'=[t,f']$. By Lemma~\ref{lem_z1z2} we have $t*f'=[x^2,{p'}^2]$, so ~\eqref{eq_tauab} becomes
$$
\tau\bullet (uf',ux^2)= (u{p'}^2,uf')
$$
and follows from formula~\eqref{eq_te}.

Thus we have proved that  $\rho=z$ in all cases.
We need to prove that $\zeta=r'$. Let's assume the opposite. According to the multiplication construction~$\bullet$ the element $\zeta$ must belong to the multisets~$p'*y$ and~$x*q'$. However, the element~$r'$ also belongs to these two sets. Hence $p'*y=x*q'=[\zeta,r']$. According to Lemma~\ref{lem_xy=zt}, this implies that there is an element $u\in V$ such that either $p'=ux$ and $q'=uy$ or $p'=uy$ and $q'=ux$. Let's consider these cases separately.

1. Let $p'=ux$ and $q'=uy$. Just like in case 1 above, we get that $t=ux^2=uy^2$ and $y=vx$ for some element $v\in V$. We have $t*x=[ux,ux^3]$, hence $p=ux^3$. Similarly, $q=uy^3=uvx^3$. Thus, $\alpha=(x,ux^3)$, $\beta=(vx,uvx^3)$. Now it follows from formula~\eqref{eq_xp} that $\alpha\bullet\beta=(vx^2,vf)$, where $x*p=[t,f]$. However, $x*p=[ux^2,ux^4]$, so $f=ux^4$. Thus $z=vx^2=uvt$ and $r=uvx^4$. We have $t*z=[uv,uvx^4]$, hence $r'=uv$. The equality ~\eqref{eq_tauab} takes the form $(ux,x)\bullet (vx,uvx^3)=(uv,uvt)$ and immediately follows from the formula ~\eqref{eq_xp'}.

2. Let $p'=uy$ and $q'=ux$. From the formula ~\eqref{eq_px} it follows that
$$
\alpha^{-1}\bullet\beta^{-1}=(x,p')\bullet (up',ux)=(ut,u)=u\tau.
$$
Hence, by Lemmas~\ref{lem_ab-1} and ~\ref{lem_T}
\begin{gather*}
\alpha\bullet\beta = u\tau^{-1},\\
\tau\bullet(\alpha\bullet\beta)=u\varepsilon,\\
(\tau\bullet(\alpha\bullet\beta))\bullet\beta^{-1}=u\beta^{-1}=(p',x)=\tau\bullet\alpha,
\end{gather*}
hence, in view of Corollary ~\ref{cor_bijection}, we have the required equality ~\eqref{eq_tauab}.

Thus, $\zeta=r'$ in all cases, which completes the proof of the Lemma.
\end{proof}

\begin{cor}\label{cor_T}
The operation~$\bullet$ restricted to $\T\times A\subseteq A\times A$ defines the action of the cyclic group~$\T$ on the set~$A$ such that for any elements $\alpha,\beta\in A$ and any integer~$k$ the equality $\tau^k\bullet(\alpha\bullet\beta)=(\tau^k\bullet\alpha)\bullet\beta$ is satisfied.
\end{cor}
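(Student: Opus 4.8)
The plan is to deduce everything from the single associativity identity
\[
\tau^k\bullet(\alpha\bullet\beta)=(\tau^k\bullet\alpha)\bullet\beta,\qquad\alpha,\beta\in A,\ k\in\Z,\tag{$\star_k$}
\]
which I would prove for all integers $k$ by induction on $|k|$. Granting $(\star_k)$, the action statement is immediate: substituting $\tau^m$ for $\alpha$ (and renaming the second argument) and invoking Lemma~\ref{lem_T} gives $\tau^k\bullet(\tau^m\bullet\alpha)=(\tau^k\bullet\tau^m)\bullet\alpha=\tau^{k+m}\bullet\alpha$, while $\tau^0=(e,t)=\varepsilon$ acts trivially by~\eqref{eq_et}. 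So the genuine content is $(\star_k)$.

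For the base of the induction the case $k=0$ is trivial, since $\tau^0=\varepsilon$ is the identity of $\bullet$, and the case $k=1$ is exactly Lemma~\ref{lem_tab}. The one case that really needs an argument is $k=-1$, where I would transfer Lemma~\ref{lem_tab} across the inverse. First I would put $\alpha'=\tau^{-1}\bullet\alpha$ into $(\star_1)$, obtaining $\tau\bullet(\alpha'\bullet\beta)=(\tau\bullet\alpha')\bullet\beta$; since $\tau\bullet\alpha'=\tau\bullet(\tau^{-1}\bullet\alpha)=\alpha$ by the cancellation Lemma~\ref{lem_a-1ab} (recall that $\tau^{-1}$ is the two-sided inverse of $\tau$ by Lemma~\ref{lem_T} and Corollary~\ref{cor_bijection}), the right-hand side equals $\alpha\bullet\beta$. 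Applying $\tau^{-1}\bullet(-)$ to both sides and cancelling once more by Lemma~\ref{lem_a-1ab} yields $\alpha'\bullet\beta=\tau^{-1}\bullet(\alpha\bullet\beta)$, which is $(\star_{-1})$.

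For the inductive step I would first record the single-step action identity $\tau^{\pm1}\bullet(\tau^k\bullet\gamma)=\tau^{k\pm1}\bullet\gamma$, which follows from $(\star_{\pm1})$ applied with middle argument $\tau^k$ together with Lemma~\ref{lem_T}. Assuming $(\star_k)$, I would then compute
\[
\tau^{k+1}\bullet(\alpha\bullet\beta)=\tau\bullet\bigl(\tau^k\bullet(\alpha\bullet\beta)\bigr)=\tau\bullet\bigl((\tau^k\bullet\alpha)\bullet\beta\bigr)=\bigl(\tau\bullet(\tau^k\bullet\alpha)\bigr)\bullet\beta=(\tau^{k+1}\bullet\alpha)\bullet\beta,
\]
using in turn the single-step identity, $(\star_k)$, Lemma~\ref{lem_tab}, and the single-step identity again; the descending step $(\star_k)\Rightarrow(\star_{k-1})$ is identical with $\tau^{-1}$ and $(\star_{-1})$ replacing $\tau$ and $(\star_1)$. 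This closes the induction. The only nontrivial point, and the main obstacle, is the base case $k=-1$: passing associativity from $\tau$ to its inverse is not formal for a structure built through the partial operation $\bullet$, and it is exactly the cancellation law (Lemma~\ref{lem_a-1ab}) and the uniqueness of inverses (Corollary~\ref{cor_bijection}) that legitimize it. Everything after that is bookkeeping on top of Lemma~\ref{lem_tab}.
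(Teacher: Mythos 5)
Your proof is correct and fills in, with appropriate care, exactly the argument the paper leaves implicit: the corollary is meant to follow by iterating Lemma~\ref{lem_tab} together with Lemma~\ref{lem_T}, which is precisely your induction. Your treatment of the base case $k=-1$ via the cancellation law (Lemma~\ref{lem_a-1ab}) is the right way to transfer the identity to $\tau^{-1}$, and the rest matches the intended route.
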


Recall that the formula $v(x,p)=(vx,vp)$, $v\in V$, $(x,p)\in A$ defines the action of the commutative group~$V$ on the set~$A$. According to the formula ~\eqref{eq_uv_mult}, this action has the property $v(\alpha\bullet\beta)=(v\alpha)\bullet\beta$ for all $v\in V$, $\alpha,\beta\in A$. It is easy to see that the constructed actions of the groups~$V$ and~$\T$ on the set~$A$ commute. Thus, we get the action of the direct product $G=V\times \T$ on the set~$A$, which has the property
\begin{equation}\label{eq_G_action}
g(\alpha\bullet\beta)=(g\alpha)\bullet\beta
\end{equation}
for all $g\in G$, $\alpha,\beta\in A$. It is easy to see that the following two Lemmas follow from this property and the uniqueness of the inverse with respect to the operation~$\bullet$.
 
\begin{lem}\label{lem_Gorbits}
The operation~$\bullet$ induces a well-defined commutative operation on the set of $G$-orbits $A/G$, with respect to which the orbit~$G\varepsilon$ is the unit, and the element inverse to each orbit $G\alpha$ is unique and coincides with the orbit~$G\alpha^{-1}$.
\end{lem}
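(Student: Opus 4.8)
The plan is to deduce all four assertions---well-definedness of the induced operation, commutativity, the identity, and uniqueness of the inverse---from the single transport identity~\eqref{eq_G_action} together with the commutativity of~$\bullet$ and Corollary~\ref{cor_bijection}. First I would establish well-definedness. Given $\alpha,\beta\in A$ and $g_1,g_2\in G$, I claim that $(g_1\alpha)\bullet(g_2\beta)$ lies in the $G$-orbit of $\alpha\bullet\beta$. Indeed, applying~\eqref{eq_G_action} to the first factor gives $(g_1\alpha)\bullet(g_2\beta)=g_1\bigl(\alpha\bullet(g_2\beta)\bigr)$; rewriting $\alpha\bullet(g_2\beta)=(g_2\beta)\bullet\alpha$ and applying~\eqref{eq_G_action} once more yields $(g_2\beta)\bullet\alpha=g_2(\beta\bullet\alpha)=g_2(\alpha\bullet\beta)$, so altogether $(g_1\alpha)\bullet(g_2\beta)=(g_1g_2)(\alpha\bullet\beta)$. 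Thus the product of two $G$-orbits is a well-defined $G$-orbit, and the commutativity of the induced operation is inherited directly from that of~$\bullet$.

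Next I would dispose of the identity and the existence of inverses, both immediate. Since $\varepsilon\bullet\alpha=\alpha$ by~\eqref{eq_et}, the previous computation gives $G\varepsilon\bullet G\alpha=G\alpha$, so $G\varepsilon$ is the unit. Since $\alpha\bullet\alpha^{-1}=\varepsilon$ by~\eqref{eq_xp'}, the orbit $G\alpha^{-1}$ is an inverse of $G\alpha$.

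The crux is the uniqueness of the inverse on orbits and the identification of that inverse with $G\alpha^{-1}$. Suppose $G\alpha\bullet G\beta=G\varepsilon$, that is, $\alpha\bullet\beta=g\varepsilon$ for some $g\in G$. Transporting by $g^{-1}$ via~\eqref{eq_G_action} gives $(g^{-1}\alpha)\bullet\beta=\varepsilon$, whence $\beta=(g^{-1}\alpha)^{-1}$ by the uniqueness of the inverse under~$\bullet$ (Corollary~\ref{cor_bijection}). It remains to check that $(g'\alpha)^{-1}\in G\alpha^{-1}$ for every $g'\in G$; this is the step I expect to require the most care, since it forces me to track the $V$- and $\T$-components of the $G$-action separately. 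For the $V$-part, writing $\alpha=(x,p)$ with $t*x=[p,p']$ and using~\eqref{eq_action} to see $t*(vx)=[vp,vp']$, I obtain $(v\alpha)^{-1}=(vx,vp')=v\alpha^{-1}$. For the $\T$-part, Lemma~\ref{lem_ab-1} combined with Lemma~\ref{lem_T} gives $(\tau^k\bullet\alpha)^{-1}=\tau^{-k}\bullet\alpha^{-1}$. Composing these two facts (the two actions commuting) shows $(g\alpha)^{-1}\in G\alpha^{-1}$ for every $g\in G$, so in particular $\beta=(g^{-1}\alpha)^{-1}\in G\alpha^{-1}$ and hence $G\beta=G\alpha^{-1}$. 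This proves that the inverse orbit is unique and equals $G\alpha^{-1}$, completing the argument.
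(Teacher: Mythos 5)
Your argument is correct and is exactly the route the paper intends: the paper gives no written proof, merely remarking that the lemma follows from the transport identity~\eqref{eq_G_action} together with the uniqueness of the inverse for~$\bullet$ (Corollary~\ref{cor_bijection}), and your write-up fills in precisely those details, including the only slightly delicate point that $(g\alpha)^{-1}$ stays in the orbit $G\alpha^{-1}$ (handled correctly via~\eqref{eq_action} for the $V$-part and Lemmas~\ref{lem_ab-1} and~\ref{lem_T} for the $\T$-part).
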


\begin{lem}\label{lem_assoc_compare}
Let the elements~$\alpha_1$, $\alpha_2$, $\alpha_3$, $\beta_1$, $\beta_2$, $\beta_3$ of the set~$A$ be such that for each~$i$ the elements~$\alpha_i$ and~$\beta_i$ lie in the same $G$-orbit. Then $(\alpha_1\bullet\alpha_2)\bullet\alpha_3=\alpha_1\bullet(\alpha_2\bullet\alpha_3)$ if and only if $(\beta_1\bullet\beta_2)\bullet\beta_3=\beta_1\bullet(\beta_2\bullet\beta_3)$.
\end{lem}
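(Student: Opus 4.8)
The plan is to reduce the associativity of the $\beta$-triple to the associativity of the $\alpha$-triple by transporting everything through the $G$-action, relying only on the single key property \eqref{eq_G_action}, the commutativity of $\bullet$, and the fact that $G$ acts on $A$ by bijections. Since $\alpha_i$ and $\beta_i$ lie in the same $G$-orbit for each $i$, I would first choose $g_1,g_2,g_3\in G$ with $\beta_i=g_i\alpha_i$.

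The basic computational tool is a \emph{product rule}. Combining \eqref{eq_G_action} with the commutativity of $\bullet$ gives the two interactions
$$
g(\alpha\bullet\beta)=(g\alpha)\bullet\beta=\alpha\bullet(g\beta),
$$
and peeling off the group elements one factor at a time yields
$$
(g_1\alpha_1)\bullet(g_2\alpha_2)=(g_1g_2)(\alpha_1\bullet\alpha_2)
$$
for all $\alpha_1,\alpha_2\in A$ and $g_1,g_2\in G$. Here I always remove the outer group element from the first factor first, so that the two group elements combine in a fixed order; only that $G$ is a group (associativity) is used, not its commutativity.

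Applying this rule twice to each bracketing and writing $h=g_1g_2g_3\in G$, I would obtain
$$
(\beta_1\bullet\beta_2)\bullet\beta_3=h\bigl((\alpha_1\bullet\alpha_2)\bullet\alpha_3\bigr),\qquad
\beta_1\bullet(\beta_2\bullet\beta_3)=h\bigl(\alpha_1\bullet(\alpha_2\bullet\alpha_3)\bigr),
$$
where in both cases associativity in $G$ collapses the accumulated group elements to the same $h$, independent of the parenthesization. Because $G$ acts on $A$ through a genuine group action, the element $h$ acts as a bijection of $A$; this is precisely where the uniqueness of the inverse enters, since Corollary~\ref{cor_bijection} guarantees that the $\T$-action by $\bullet$ is invertible, while the $V$-action is visibly invertible. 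Consequently the two right-hand sides above are equal if and only if $(\alpha_1\bullet\alpha_2)\bullet\alpha_3=\alpha_1\bullet(\alpha_2\bullet\alpha_3)$, which is exactly the asserted equivalence.

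The argument is essentially formal, so there is no serious obstacle. The only point requiring care is the justification of the product rule: one must keep track of the two ways $g$ interacts with $\bullet$ (as a factor-wise action on one side, and, after applying commutativity of $\bullet$, on the other), and confirm that peeling the $g_i$ out of every bracketing always leaves precisely the common factor $h=g_1g_2g_3$, so that the bijection $h$ can be cancelled from both sides.
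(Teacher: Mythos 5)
Your proof is correct and is exactly the argument the paper intends: the authors merely state that the lemma "follows from this property \eqref{eq_G_action} and the uniqueness of the inverse," and your write-up supplies precisely those details — deriving the product rule $(g_1\alpha_1)\bullet(g_2\alpha_2)=(g_1g_2)(\alpha_1\bullet\alpha_2)$ from \eqref{eq_G_action} and commutativity of $\bullet$, pulling out the common factor $h=g_1g_2g_3$ from both bracketings, and cancelling it because $h$ acts bijectively (Corollary~\ref{cor_bijection} for the $\T$-part). No gaps; same route as the paper.
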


From the last Lemma and Lemma ~\ref{lem_a-1ab}, the following statement easily follows.

\begin{cor}\label{cor_assoc_special}
If the elements~$\alpha_1$, $\alpha_2$, $\alpha_3$ of the set~$A$ are such that
$\alpha_i$ lies in the $G$-orbit $G\varepsilon$ for some~$i$ or
$\alpha_i$ and $\alpha_j^{-1}$ lie in the same $G$-orbit for some $i\ne j$, then $(\alpha_1\bullet\alpha_2)\bullet\alpha_3=\alpha_1\bullet(\alpha_2\bullet\alpha_3)$.
\end{cor}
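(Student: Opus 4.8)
The plan is to exploit the two preceding lemmas so as to reduce the verification of associativity to configurations in which it is essentially trivial. The engine is Lemma~\ref{lem_assoc_compare}: it asserts that whether or not the triple $(\alpha_1,\alpha_2,\alpha_3)$ associates depends only on the three $G$-orbits $G\alpha_1$, $G\alpha_2$, $G\alpha_3$. Hence under either hypothesis we are free to replace each $\alpha_k$ by any representative of its own orbit that happens to be convenient, without affecting the truth of the associativity identity. The whole argument is then a matter of choosing good representatives and quoting Lemma~\ref{lem_a-1ab}.

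First I would treat the case in which some $\alpha_i$ lies in the orbit $G\varepsilon$. Applying Lemma~\ref{lem_assoc_compare} with $\beta_i=\varepsilon$ and $\beta_k=\alpha_k$ for the other two indices (each of these lying trivially in its own orbit), I may assume $\alpha_i=\varepsilon$. Since $\varepsilon=(e,t)$ is the identity for $\bullet$ by formula~\eqref{eq_et}, associativity is immediate in each of the three positions $i=1,2,3$: the factor $\varepsilon$ may be cancelled on both sides, and both sides collapse to the product of the two remaining factors.

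Next I would treat the case in which $\alpha_i$ and $\alpha_j^{-1}$ lie in the same orbit for some $i\ne j$. Here I invoke Lemma~\ref{lem_assoc_compare} to replace $\alpha_i$ by the representative $\alpha_j^{-1}$, leaving $\alpha_j$ and the remaining factor fixed. After this reduction the triple contains two mutually inverse factors, say $\beta$ and $\beta^{-1}$, occupying two of the three slots, with an arbitrary third factor $\gamma$. It then remains to check associativity in each arrangement of $\beta,\beta^{-1},\gamma$. Every arrangement collapses by combining commutativity of $\bullet$ with the identity property $\beta\bullet\beta^{-1}=\varepsilon$ and with Lemma~\ref{lem_a-1ab}, which supplies $\alpha^{-1}\bullet(\alpha\bullet\beta)=\beta$: on one side the pair $\beta,\beta^{-1}$ multiplies out to $\varepsilon$ and disappears, while on the other side Lemma~\ref{lem_a-1ab} cancels $\beta$ against $\beta^{-1}$, so that both sides reduce to $\gamma$. (For the arrangements in which the inverse pair is not adjacent one first uses commutativity of $\bullet$ to bring $\beta$ and $\beta^{-1}$ together before applying Lemma~\ref{lem_a-1ab}; recall also that $(\alpha^{-1})^{-1}=\alpha$, so the roles of $\beta$ and $\beta^{-1}$ are symmetric.)

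The argument involves no genuine obstacle; the only care needed is bookkeeping. One must verify that each substitution supplied by Lemma~\ref{lem_assoc_compare} is legitimate, meaning that every replaced element indeed remains in its original $G$-orbit, and that all positions of the distinguished factor in the first case, and of the inverse pair in the second, are covered. Both points are routine, so the corollary follows at once from Lemmas~\ref{lem_assoc_compare} and~\ref{lem_a-1ab} together with the identity and commutativity of the operation~$\bullet$.
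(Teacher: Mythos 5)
Your proposal is correct and follows exactly the route the paper intends: the paper gives no written proof, stating only that the corollary ``easily follows'' from Lemma~\ref{lem_assoc_compare} and Lemma~\ref{lem_a-1ab}, and your argument is precisely the fleshed-out version of that reduction (using Lemma~\ref{lem_assoc_compare} to normalize the orbit representatives, then the identity property of $\varepsilon$, the commutativity of $\bullet$, and Lemma~\ref{lem_a-1ab} to collapse both sides). Your care in checking all placements of $\varepsilon$ and of the inverse pair, and in noting that only commutativity of the binary operation (not the yet-unproved associativity) is used to bring the pair together, is exactly the bookkeeping required.
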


\begin{lem}\label{lem_diff_orb}
Let the elements~$\alpha=(x,p)$ and~$\beta=(y,q)$ of the set~$A$ be such that $\beta$ does not lie in the same $G$-orbit with either the element~$\alpha$ or the element~$\alpha^{-1}$, and let $t*x=[p,p']$ and $t*y=[q,q']$. Then none of the elements~$x$, $p$ and~$p'$ lie in the same $V$-orbit with any of the elements~$y$, $q$ and~$q'$.
\end{lem}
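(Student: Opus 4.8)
The plan is to prove the \emph{contrapositive}: assuming that some element of $\{x,p,p'\}$ lies in the same $V$-orbit as some element of $\{y,q,q'\}$, I will show that $\beta\in G\alpha\cup G\alpha^{-1}$. The first step is to realise the two triples as first components of elements of the relevant $G$-orbits. Writing $\pi_1\colon A\to X$, $(x,p)\mapsto x$ for the projection onto the first factor, I record that $\{x,p,p'\}$ is exactly the set of first components of the three elements $\alpha,\ \tau\bullet\alpha,\ \tau^{-1}\bullet\alpha$, all of which lie in $G\alpha$. Indeed $\pi_1(\alpha)=x$; formula~\eqref{eq_te} gives $\tau\bullet\alpha=(p',x)$, so $\pi_1(\tau\bullet\alpha)=p'$; and since $\tau^{-1}=(t,t^2)$ (as $t^{-1}=t$ and $t^{-2}=t^2$ in $X$), formula~\eqref{eq_tt^2} gives $\tau^{-1}\bullet\alpha=(p,s)$, so $\pi_1(\tau^{-1}\bullet\alpha)=p$. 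The identical computation applied to $\beta$ shows that $\{y,q,q'\}$ is the set of first components of $\beta,\ \tau\bullet\beta,\ \tau^{-1}\bullet\beta\in G\beta$.

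The core of the argument is a dichotomy for $\pi_1$: two elements of $A$ with the same first component are either equal or mutually inverse. This is immediate from the definition of $A$, since for a fixed first component $c$ the admissible second components are precisely the at most two elements of $t*c$, and if $t*c=[d,d']$ then $(c,d)$ and $(c,d')$ are inverse to each other by the description of the inverse in Proposition~\ref{propos_main_construction}(b). I will also use that the $G$-action commutes with taking the inverse: for the $V$-action this follows from $v(x,p)=(vx,vp)$ together with~\eqref{eq_action}, and for the $\T$-action from Lemmas~\ref{lem_ab-1} and~\ref{lem_T}; consequently $(g\gamma)^{-1}\in G\gamma^{-1}$ for every $g\in G$ and $\gamma\in A$.

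Now suppose a $V$-orbit coincidence occurs. Then I may choose $\gamma\in\{\alpha,\tau\bullet\alpha,\tau^{-1}\bullet\alpha\}\subseteq G\alpha$ and $\delta\in\{\beta,\tau\bullet\beta,\tau^{-1}\bullet\beta\}\subseteq G\beta$ with $\pi_1(\delta)=v\,\pi_1(\gamma)$ for some $v\in V$. Then $\delta$ and $v\gamma$ share the first component $v\,\pi_1(\gamma)$, so by the dichotomy $\delta=v\gamma$ or $\delta=(v\gamma)^{-1}$. In the first case $\delta\in G\alpha$, and in the second $\delta\in G\alpha^{-1}$ by the compatibility of the inverse with the $G$-action. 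Since also $\delta\in G\beta$ and the $G$-orbits partition $A$, this forces $G\beta=G\alpha$ or $G\beta=G\alpha^{-1}$, i.e. $\beta\in G\alpha\cup G\alpha^{-1}$, contrary to hypothesis.

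I do not expect a serious obstacle here; the only genuine content is the equal-or-inverse dichotomy for $\pi_1$ (coming from the two-element fibres of the projection) and the compatibility of inversion with the $G$-action, both of which are essentially immediate from the definitions and the already established Lemmas~\ref{lem_ab-1} and~\ref{lem_T}. The one point requiring care is the bookkeeping of the first step, namely packaging a $V$-orbit coincidence as an honest coincidence of first components of two specific elements of $A$ lying in $G\alpha$ and $G\beta$, after which the $G$-orbit equality can simply be read off.
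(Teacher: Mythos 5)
Your proof is correct and follows essentially the same route as the paper: both arguments prove the contrapositive by realising $\{x,p,p'\}$ and $\{y,q,q'\}$ as first components of elements of $G\alpha$ and $G\beta$ via formulas~\eqref{eq_te} and~\eqref{eq_tt^2}, and then use the fact that the second component of an element of $A$ is pinned down to the two-element multiset $t*(\text{first component})$, forcing the two elements either to coincide or to be mutually inverse. The paper phrases the last step through Lemma~\ref{lem_Gorbits} where you argue directly that inversion is compatible with the $G$-action, but this is the same content.
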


\begin{proof}
It follows from the equalities ~\eqref{eq_te} and ~\eqref{eq_tt^2} that the elements~$(x,p)$, $(p',x)$ and~$(p,s)$ (where $t*p=[x,s]$) lie in the same $G$-orbit. Similarly, the elements~$(y,q)$, $(q',y)$ and~$(q,z)$ (where $t*q=[y,z]$) also lie in the same $G$-orbit. Therefore, if one of the elements~$x$, $p$ and~$p'$ lies in the same $V$-orbit with one of the elements~$y$, $q$ and~$q'$, then in $G$-orbits of elements~$\alpha$ and~$\beta$ there are elements $\widetilde{\alpha}=\bigl(\widetilde{x},\widetilde{p}\bigr)$ and $\widetilde{\beta}=\bigl(\widetilde{y},\widetilde{q}\bigr)$, respectively, such that $\widetilde{y}=v\widetilde{x}$ for some $v\in V$. Then either $\widetilde{q}=v\widetilde{p}$ or $\widetilde{q}=v\widetilde{p}\,'$, where $t*\widetilde{x}=[\widetilde{p},\widetilde{p}\,']$. Hence, the element $\widetilde{\beta}$ lies in the same $G$-orbit with one of the elements~$\widetilde{\alpha}$ and~$\widetilde{\alpha}^{-1}$, so, by Lemma~\ref{lem_Gorbits}, it follows that the element $\beta$ lies in the same $G$-orbit with one of the elements~$\alpha$ and~$\alpha^{-1}$.
\end{proof}

The next Lemma follows directly from the construction of the operation~$\bullet$.

\begin{lem}\label{lem_triple_condition}
Let $\alpha_i=(x_i,p_i)$, $i=1,2,3$, be elements of the set~$A$ and $t*x_i=[p_i,p_i']$. Let $(z,r)=(\alpha_1\bullet \alpha_2) \bullet\alpha_3$ and $t*z=[r,r']$. Then
\begin{enumerate}
\item the element $z$ belongs to the multiset $x_1*x_2*x_3$ and to each of the six multisets $x_i*p_j*p_k'$, where $i,j,k$ are all possible permutations of numbers~$1,2,3$,
\item the element~$r$ belongs to each of the three multisets $x_1*x_2*p_3$, $x_1*p_2*x_3$ and~$p_1*x_2*x_3$,
\item the element~$r'$ belongs to each of the three multisets $x_1*x_2*p'_3$, $x_1*p'_2*x_3$ and~$p'_1*x_2*x_3$.
\end{enumerate}
\end{lem}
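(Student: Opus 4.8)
The plan is to unwind the definition of the operation~$\bullet$ exactly twice, since $(z,r)=(\alpha_1\bullet\alpha_2)\bullet\alpha_3$ is computed in two stages, and then to glue the resulting membership relations together using associativity of the two-valued multiplication~$*$. The one elementary principle I will invoke repeatedly is a transitivity of membership: if $c\in a*b$ and $d\in c*e$, then $d\in(a*b)*e=a*b*e$. This is immediate from associativity, since $c*e\subseteq(a*b)*e$ whenever $c\in a*b$.

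First I would name the intermediate product: set $\alpha_1\bullet\alpha_2=(w,m)$ and let $t*w=[m,m']$. Applying properties~(i), (ii), (iii) of Proposition~\ref{propos_main_construction} to the pair $\alpha_1,\alpha_2$ gives
\begin{gather*}
w\in x_1*x_2,\quad w\in p_1*p_2',\quad w\in p_1'*p_2,\\
m\in x_1*p_2,\quad m\in p_1*x_2,\qquad m'\in x_1*p_2',\quad m'\in p_1'*x_2,
\end{gather*}
and applying the same properties to the pair $(w,m)$ and $\alpha_3=(x_3,p_3)$, with $t*z=[r,r']$, gives
\begin{gather*}
z\in w*x_3,\quad z\in m*p_3',\quad z\in m'*p_3,\\
r\in w*p_3,\quad r\in m*x_3,\qquad r'\in w*p_3',\quad r'\in m'*x_3.
\end{gather*}

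It then remains to combine these relations by the transitivity principle. For statement~(1): $z\in w*x_3$ with $w\in x_1*x_2$ yields $z\in x_1*x_2*x_3$; $z\in m*p_3'$ with the two relations for~$m$ yields $z\in x_1*p_2*p_3'$ and $z\in p_1*x_2*p_3'$; $z\in m'*p_3$ with the two relations for~$m'$ yields $z\in x_1*p_2'*p_3$ and $z\in p_1'*x_2*p_3$; and $z\in w*x_3$ with the relations $w\in p_1*p_2'$ and $w\in p_1'*p_2$ yields $z\in p_1*p_2'*x_3$ and $z\in p_1'*p_2*x_3$. Reordering factors, these are precisely the six multisets $x_i*p_j*p_k'$ over permutations $i,j,k$ of $1,2,3$. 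Statements~(2) and~(3) are obtained identically: $r\in w*p_3$ with $w\in x_1*x_2$ gives $r\in x_1*x_2*p_3$, and $r\in m*x_3$ with the two relations for~$m$ gives $r\in x_1*p_2*x_3$ and $r\in p_1*x_2*x_3$; symmetrically, $r'\in w*p_3'$ with $w\in x_1*x_2$, and $r'\in m'*x_3$ with the two relations for~$m'$, give the three required memberships for~$r'$.

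I do not expect a genuine obstacle here, as the content is purely bookkeeping; this is why the lemma can be asserted to follow directly from the construction of~$\bullet$. The only point requiring care is matching each target multiset $x_i*p_j*p_k'$ to the correct intermediate relation, keeping track of which branch ($p$ versus~$p'$, and $m$ versus~$m'$) supplies each containment so that the primes line up consistently across the two stages.
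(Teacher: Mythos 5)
Your argument is correct and is exactly the intended one: the paper gives no written proof, stating only that the lemma "follows directly from the construction of the operation~$\bullet$," and your write-up is the careful unwinding of that construction (properties (i)--(iii) applied at each of the two stages, glued by the transitivity of membership that associativity provides). All six permutations in statement~(1) and the three memberships in each of~(2) and~(3) are accounted for correctly.
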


\begin{lem}\label{lem_assoc_different}
Let the elements $\alpha_1$, $\alpha_2$, and~$\alpha_3$ of the set~$A$ lie in pairwise distinct $G$-orbits. Then $(\alpha_1\bullet\alpha_2)\bullet\alpha_3=\alpha_1\bullet(\alpha_2\bullet\alpha_3)=(\alpha_1\bullet\alpha_3)\bullet\alpha_2$.
\end{lem}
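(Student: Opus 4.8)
The plan is to exploit the symmetry that is already built into Lemma~\ref{lem_triple_condition}: the conditions it records on the coordinates of $(\alpha_1\bullet\alpha_2)\bullet\alpha_3$ are invariant under permutations of the indices $1,2,3$, so once I show that they pin down a single element of $A$, every bracketing of $\alpha_1,\alpha_2,\alpha_3$ must produce that same element.

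First I would reduce to the generic case. By Corollary~\ref{cor_assoc_special} associativity already holds whenever some $\alpha_i$ lies in $G\varepsilon$ or some $\alpha_i$ lies in the $G$-orbit of $\alpha_j^{-1}$ with $i\ne j$; since the hypothesis there is symmetric in the three factors, applying it to the various orderings and combining with the commutativity of $\bullet$ yields all three asserted equalities at once in these cases. Hence I may assume, on top of the hypothesis that $\alpha_1,\alpha_2,\alpha_3$ lie in pairwise distinct $G$-orbits, that none lies in $G\varepsilon$ and that no $\alpha_i$ lies in the $G$-orbit of $\alpha_j^{-1}$. Writing $\alpha_i=(x_i,p_i)$ with $t*x_i=[p_i,p_i']$, two consequences follow: any $(x,p)\in A$ with $x\in V$ satisfies $t*x=[xt,xt]$ by Lemma~\ref{lem_order2}, hence $(x,p)=x\varepsilon\in G\varepsilon$, so none of $x_1,x_2,x_3$ lies in $V$; and, applying Lemma~\ref{lem_diff_orb} to each pair $(\alpha_i,\alpha_j)$, no element among $x_i,p_i,p_i'$ shares a $V$-orbit with any element among $x_j,p_j,p_j'$ for $i\ne j$. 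In particular $x_1,x_2,x_3$ lie in pairwise distinct $V$-orbits and none lies in $V$, so Lemma~\ref{lem_3decomp} applies and $M:=x_1*x_2*x_3$ is a four-element set whose six two-element subsets are described there.

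Next, the symmetric characterization. By Lemma~\ref{lem_triple_condition}, if $(z,r)=(\alpha_1\bullet\alpha_2)\bullet\alpha_3$ and $t*z=[r,r']$, then $z$ lies in $M$ and in each of the six multisets $x_i*p_j*p_k'$, while $r$ lies in each of $x_1*x_2*p_3$, $x_1*p_2*x_3$, $p_1*x_2*x_3$ and $r'$ lies in the three primed analogues. All of these conditions are unchanged under any permutation of $\{1,2,3\}$. By the commutativity of $\bullet$, each of $\alpha_1\bullet(\alpha_2\bullet\alpha_3)$ and $(\alpha_1\bullet\alpha_3)\bullet\alpha_2$ equals a product of the form $(\alpha_{\sigma(1)}\bullet\alpha_{\sigma(2)})\bullet\alpha_{\sigma(3)}$, to which Lemma~\ref{lem_triple_condition} applies with the permuted triple, producing exactly the same list of conditions. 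It therefore suffices to prove that there is a \emph{unique} pair $(z,r)\in A$ meeting all of them: the three bracketings, all satisfying them, will then coincide.

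The uniqueness splits into two parts, and this is where the real work lies. For the first coordinate I must show that at most one element of the four-element set $M$ can lie simultaneously in the six multisets $x_i*p_j*p_k'$. The tool is Lemma~\ref{lem_yyxxx}: any two-element submultiset of $M$ has the form $\{vx_i\}\cup v(x_j*x_k)$, so each candidate value of $z$ forces a rigid relation between the $V$-orbits of the $x_i$, of the $p_j,p_k'$, and of $z$ itself. Assuming two distinct elements of $M$ both satisfied all six conditions, I would compare the resulting orbit relations, using Lemma~\ref{lem_equal_squares} to convert $V$-orbit coincidences into equalities of squares and Lemma~\ref{lem_z1z2} to control the squares of the entries of the relevant products, and derive a forbidden coincidence of $V$-orbits among the $x_i$ or between some $x_i$ and $p_j$, contradicting the cross-orbit distinctness obtained above from Lemma~\ref{lem_diff_orb}. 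This pins $z$ uniquely. For the second coordinate, once $z$ is fixed the only two candidates are $r$ and $r'$ with $t*z=[r,r']$, and the symmetric conditions that $r$ lie in $x_1*x_2*p_3\cap x_1*p_2*x_3\cap p_1*x_2*x_3$ while $r'$ lies in the primed analogues select exactly one of them, again by the same orbit bookkeeping. I expect the orbit-tracking in the uniqueness of the first coordinate to be the main obstacle: one must enumerate how the six conditions constrain a possible repeated element of $M$ and rule out every way two of its four elements could both be admissible, which is the step most prone to case analysis; everything else is structural and short.
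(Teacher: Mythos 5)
Your overall strategy coincides with the paper's: reduce to the generic case via Corollary~\ref{cor_assoc_special} and Lemma~\ref{lem_diff_orb}, observe that the conditions of Lemma~\ref{lem_triple_condition} are permutation-invariant, and then argue that they determine the pair $(z,r)$ uniquely. The reduction and the symmetry observation are correct and match the paper. The uniqueness of the first coordinate $z$ is also attacked the right way (Lemma~\ref{lem_3decomp}, Lemma~\ref{lem_yyxxx}, orbit bookkeeping), although you have only sketched it; be aware that in the paper's three-case analysis the contradictions are not always ``a forbidden coincidence of $V$-orbits'': in two of the cases the contradiction is instead that $\ord t$ would be forced to divide~$4$, or that some $x_i$ would be forced into $V\cup Vt$, so the bookkeeping has to track orders of elements as well as orbits.

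The genuine gap is in the second coordinate. You assert that, once $z$ is fixed, the conditions $r\in x_1*x_2*p_3\cap x_1*p_2*x_3\cap p_1*x_2*x_3$ versus the primed analogues for $r'$ ``select exactly one of them, again by the same orbit bookkeeping,'' but you give no argument, and it is not clear that a direct argument of this shape works: a priori both elements of $t*z$ could satisfy the unprimed conditions, and ruling this out head-on would require a fresh case analysis at least as heavy as the one for $z$. The paper avoids this entirely by a different device: it replaces $\alpha_1$ by $\beta_1=\tau\bullet\alpha_1$ and uses Lemma~\ref{lem_tab} together with formula~\eqref{eq_te} to show that the two candidate products for the $\beta$-triple are $(r_1',z)$ and $(r_2',z)$; the already-established uniqueness of the \emph{first} coordinate, applied to the $\beta$-triple, then yields $r_1'=r_2'$ and hence $r_1=r_2$. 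Without either carrying out the second-coordinate bookkeeping explicitly or importing this $\tau$-shift trick, your proof is incomplete at precisely the step the paper had to work around.
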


\begin{proof}
If any of the elements~$\alpha_i$ lies in the $G$-orbit $G\varepsilon$ or the elements
$\alpha_i$ and $\alpha_j^{-1}$ lie in the same $G$-orbit for some $i\ne j$, then the required equality holds by Corollary ~\ref{cor_assoc_special}. Therefore, we can assume that none of the elements~$\alpha_i$ lies in the same $G$-orbit with either the element~$\varepsilon$ or any of the elements~$\alpha_j^{\pm 1}$, where $j\ne i$. Then it follows from Lemma~\ref{lem_diff_orb} that, first, none of the elements~$x_i$, $p_i$ and~$p_i'$, where $i=1,2,3$, lies either in~$V$ or in~$Vt$, and secondly, for $i\ne j$ none of the elements~$x_i$, $p_i$ and~$p_i'$ lies in the same $V$-orbit with any of the elements~$x_j$, $p_j$ and~$p_j'$.

Assume that the statement of the Lemma is false so that the products of elements~$\alpha_1$, $\alpha_2$ and~$\alpha_3$ in two of the indicated three orders give different results~$(z_1,r_1)\ne (z_2,r_2)$. Let $t*z_i=[r_i,r_i']$, $i=1,2$. Note that properties~(1)--(3) from Lemma~\ref{lem_triple_condition} are invariant under permutations of elements~$\alpha_1$, $\alpha_2$ and~$\alpha_3$, so these properties hold for each of the two triples $(z_1,r_1,r_1')$ and $(z_2,r_2,r_2')$.

Let $(y_i,q_i)=\alpha_j\bullet\alpha_k$ for each permutation~$i,j,k$ of numbers~$1,2,3$. Then $y_i\in x_j*x_k$ and $q_i\in t*y_i$. Let $x_j*x_k=[y_i,y_i']$ and $t*y_i=[q_i,q_i']$.

Assume first that $z_1\ne z_2$. According to property~(1) from Lemma~\ref{lem_triple_condition}, each of the elements~$z_1$ and~$z_2$ belongs to the multiset~$x_1*x_2*x_3$. However, since the elements~$x_1$, $x_2$, and~$x_3$ lie in pairwise distinct $V$-orbits, it follows from Lemma~\ref{lem_3decomp} that the multiset~$x_1*x_2*x_3$ consists of four distinct elements and $x_1*y_1$ , $x_1*y'_1$, $x_2*y_2$, $x_2*y'_2$, $x_3*y_3$, and $x_3*y'_3$ are exactly all six of its two-element subsets. Therefore, one of these six subsets coincides with the set $[z_1,z_2]$. Renumbering the elements~$\alpha_1$, $\alpha_2$, $\alpha_3$, we can assume that $[z_1,z_2]=x_1*h$, where $h$ is one of the elements~$y_1$ and~$y_1'$.

Now consider the multisets $x_i*p_j*p_k'$, where $i,j,k$ are permutations of numbers~$1,2,3$. According to the property~(1) from Lemma~\ref{lem_triple_condition}, each of these multisets contains the multiset~$[z_1,z_2]=x_1*h$. Therefore, by Lemma~\ref{lem_yyxxx} for each triple $(x_i,p_j,p_k')$ there is an element~$v\in V$ such that one of the elements of the triple is equal to one of the two elements~$vx_1$ and~$vh$, and the product of the other two elements of the triple contains the remaining of the two elements~$vx_1$ and ~$vh$. Moreover, by what has been proved, the elements~$x_2$, $p_2$, $p_2'$, $x_3$, $p_3$, $p_3'$ cannot lie in the orbit~$Vx_1$. In addition, both elements $p_1$ and~$p_1'$ cannot simultaneously lie in the orbit~$Vx_1$, since by Lemma~\ref{lem_ord4}(c) this would imply that $\ord t$ divides~$4$. Consider 3 cases.

1. \textit{Both elements $p_1$ и~$p_1'$ lie in the orbit~$Vh$.} Since $p_1$ and~$p_1'$ cannot simultaneously lie in the orbit~$Vx_1$, then $Vx_1\ne Vh$. Since $\ord t\notin\{1,2,4\}$, by Lemma ~\ref{lem_ord4}(b) we have $\ord x_1\in\{1,2,4\}$. None of the elements~$x_2$, $p_2$, $p_2'$, $x_3$, $p_3$, $p_3'$ lies in either~$Vx_1$ or $Vp_1=Vh$. Thus, from the Lemma~\ref{lem_yyxxx} for the triples $(x_i,p_j',p_k)$, $i=2,3$, it follows that each of the multisets $x_2*p_3$, $x_2*p_3'$, $x_3*p_2$ and~$x_3*p_2'$ intersects the orbit~$Vx_1$. Hence, by Lemma~\ref{lem_basic}, it follows that the multiset~$x_1*x_2$ intersects with each of the orbits~$Vp_3$ and~$Vp_3'$, and the multiset~$x_1*x_3$ with each of the orbits~$Vp_2$ and~$Vp_2'$. However, since the order of the element~$x_1$ divides~$4$, it follows from the Lemma~\ref{lem_ord4}(b) that each of the multisets~$x_1*x_2$ and~$x_1*x_3$ consists of two elements lying in the same $V$-orbit. Therefore, $p_2'=w_2p_2$ and $p_3'=w_3p_3$ for some elements $w_2,w_3\in V$. Since $t*x_i=[p_i,p_i']$, it now follows from Lemma~\ref{lem_ord4}(b) that the orders of the elements~$x_2$ and~$x_3$ also divide~$4$. Note that the element~$t$ belongs to the subgroup of the two-valued group~$X$ generated by the set~$V$ and the elements~$x_1$, $x_2$ and~$x_3$. Indeed, $t\in x_3*p_3$, and the element~$p_3$ lies in the same $V$-orbit with the elements of the multiset $x_1*x_2$. Hence, by Lemma~\ref{lem_ord4}(a) we get that the order of the element~$t$ divides~$4$, which is a contradiction.

2. \textit{One of the elements $p_1$ and ~$p_1'$ lies in the orbit~$Vx_1$ with the second one  belonging to the orbit~$Vh$.} Since $p_1$ and~$p_1'$ cannot simultaneously lie in the orbit~$Vx_1$, then $Vx_1\ne Vh$. Consider the case of $p_1\in Vx_1$ and $p_1'\in Vh$; the second case is completely similar. From the fact that $p_1=ux_1$ for some $u\in V$, it follows that $t\in x_1*p_1=[u,ux_1^2]$, hence $t=ux_1^2$ since $t\notin V$. Also, since $t\notin V$, we have $x_1^2\notin V$, which means that the order of the element~$x_1$ does not divide~$4$. Then, according to statement~(c) of Proposition ~\ref{propos_bullet}, all elements~$vx_1$, where $v$ runs through the group~$V$, are pairwise distinct. In particular, $vx_1\ne x_1$ under $v\ne e$ and $vx_1\ne ux_1$ under $v\ne u$. Moreover, none of the elements~$x_2$, $p_2$, $p_2'$, $x_3$, $p_3$, $p_3'$ lies either in~$Vx_1$ or in $Vp_1=Vh$. Therefore, from Lemma~\ref{lem_yyxxx} for the triples $(x_1,p_2,p_3')$, $(x_1,p_3,p_2')$, $(x_2,p_1,p_3')$ and~$(x_3,p_1,p_2')$ it follows that the element $h$ lies in each of the multisets~$p_2*p_3'$ and~$p_3*p_2'$, and the element~$uh$ lies in each of the multisets~$x_2*p_3'$ and~$x_3*p_2'$. Using Lemma~\ref{lem_basic}, we get from here that the multiset $h*p_2'$ contains the elements~$ux_3$ and~$p_3$, and the multiset $h*p_3'$ contains the elements~$ux_2$ and~$p_2$. If the equality $p_2=ux_2$ were true, then we would get that $t\in x_2*p_2=[u,ux_2^2]$, so $t=ux_2^2$ since $t\notin V$. But then it would follow from the equality $x_2^2=ut=x_1^2$ and Lemma~\ref{lem_equal_squares} that the elements~$x_1$ and~$x_2$ lie in the same $V$-orbit, which is not true. Hence $ux_2\ne p_2$ and thus $h*p_3'=[ux_2,p_2]$. Likewise, $ux_3\ne p_3$ and $h*p_2'=[ux_3,p_3]$. Then by Lemma~\ref{lem_z1z2} we get that the element~$h^2$ belongs to the multisets~$ux_2*p_2$ and~$ux_3*p_3$. However, the element~$ut$ also belongs to the same two multisets. Moreover, it follows from Lemma~\ref{lem_xy=zt} that $ux_2*p_2\ne ux_3*p_3$, since none of the elements~$x_2$ and~$p_2$ lie in the same $V$-orbit with any of the elements~$x_3$ and~$p_3$. Therefore, $h^2=ut=x_1^2$, which, in view of Lemma ~\ref{lem_equal_squares}, contradicts $Vh\ne Vx_1$.

3. \textit{At least one of the elements $p_1$ и~$p_1'$ belongs neither to~$Vx_1$ nor to~$Vh$}. We will assume that $p_1'\notin Vx_1\cup Vh$; the second case is similar. In the triples~$(x_2,p_3,p_1')$ and~$(x_3,p_2,p_1')$ none of the elements lies in the orbit~$Vx_1$; moreover, the element~$p_1'$ does not lie in the orbit~$Vh$. By Lemma~\ref{lem_yyxxx}, for these triples at least one of the elements~$x_2$ and~$p_3$ lies in~$Vh$ and at least one of the elements~$x_3$ and~$p_2$ lies in~$Vh$. However, none of the elements~$x_2$ and~$p_2$ lie in the same $V$-orbit with any of the elements~$x_3$ and~$p_3$. Therefore, either $x_2,p_2\in Vh$ and $x_3,p_3\notin Vh$, or $x_2,p_2\notin Vh$ and $x_3,p_3\in Vh$. These cases are similar to each other, so we will consider the first one. Let $x_2=uh$ and $p_2=vh$, where $u,v\in V$. It follows from the equality $p_2=uvx_2$ that $t\in x_2*p_2=[uv,uvx_2^2]$, so $t=uvx_2^2$ since $t\notin V$. Now recall that $h$ is either~$y_1$ or~$y_1'$ and hence $h\in x_2*x_3$. Therefore, $x_3\in x_2*h=[u,ux_2^2]=[u,vt]$, which contradicts the fact that the element~$x_3$ lies neither in~$V$ nor in~$Vt$.

The obtained contradiction proves that $z_1=z_2$. Denote this element simply by~$z$. Consider the triple of elements~$\beta_1=\tau\bullet\alpha_1$, $\beta_2=\alpha_2$, $\beta_3=\alpha_3$.
Lemma~\ref{lem_tab} implies that $\beta_i\bullet(\beta_j\bullet\beta_k)=\tau\bullet(\alpha_i\bullet(\alpha_j\bullet\alpha_k))$ for all permutations~$i,j,k$ of numbers~$1,2,3$. Therefore from the fact that the products of the elements $\alpha_1$, $\alpha_2$, and~$\alpha_3$ in two different orders are equal to~$(z,r_1)$ and~$(z,r_2)$ and the formula~\eqref{eq_te}, it follows that the products of the elements $\beta_1$ , $\beta_2$ and~$\beta_3$ in the same two orders are equal to~$\tau\bullet (z,r_1)=(r_1',z)$ and~$\tau\bullet (z,r_2)=(r_2',z)$ respectively, where $t*z=[r_1,r_1']=[r_2,r_2']$. The elements $\beta_1$, $\beta_2$, and~$\beta_3$ also lie in pairwise different $G$-orbits. Repeating for them the arguments above for the elements~$\alpha_1$, $\alpha_2$ and~$\alpha_3$ proving that $z_1=z_2$, we get that $r_1'=r_2'$, hence $r_1=r_2$
\end{proof}

\begin{lem}\label{lem_aaa}
For any element~$\alpha$ of the set~$A$ we have the following equality $(\alpha\bullet\alpha)\bullet(\alpha\bullet\alpha)=\alpha\bullet(\alpha\bullet(\alpha\bullet\alpha))$.
\end{lem}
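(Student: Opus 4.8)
The plan is to read Lemma~\ref{lem_aaa} as the assertion that the triple $(\alpha,\alpha,\beta)$, where $\beta=\alpha\bullet\alpha$, is associative: indeed $(\alpha\bullet\alpha)\bullet\beta=\beta\bullet\beta$ is the left-hand side, while $\alpha\bullet(\alpha\bullet\beta)=\alpha\bullet(\alpha\bullet(\alpha\bullet\alpha))$ is the right-hand side. First I would dispatch the degenerate configurations through the associativity already available. By Corollary~\ref{cor_assoc_special} the triple $(\alpha,\alpha,\beta)$ associates whenever $\alpha\in G\varepsilon$, or $\alpha$ and $\alpha^{-1}$ lie in one $G$-orbit, or $\beta$ and $\alpha^{-1}$ lie in one $G$-orbit; so it suffices to treat $\alpha=(x,p)$ for which none of these holds. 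I would check that this removes all small-order cases. If $\ord x\le 2$ then $x\in V$ and a short computation with Lemma~\ref{lem_order2} gives $\alpha\in G\varepsilon$; if $x\in Vt$ then $\alpha$ is one of $v\tau^{\pm1}\in G\varepsilon$; and if $\ord x=4$ then Proposition~\ref{propos_bullet}(c) together with Lemma~\ref{lem_ord4}(b) yields $p'=x^2p$, i.e. $\alpha^{-1}=x^2\alpha\in G\alpha$, so again the triple associates. Hence in the remaining case $\ord x\notin\{1,2,4\}$ and $x^2\ne t^2$.

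In this case I would compute both sides explicitly. Using formula~\eqref{eq_xp}, $\beta=\alpha\bullet\alpha=(x^2,f)$ with $x*p=[t,f]$, and a second application gives $\beta\bullet\beta=(x^4,g)$ with $x^2*f=[t,g]$ (here $(x^2)^2=x^4$ by Lemma~\ref{lem_sequence_mult}). For the right-hand side I would evaluate $\alpha\bullet\beta$ and then $\alpha\bullet(\alpha\bullet\beta)$ from the defining conditions (i)--(iii), using the power laws of Lemmas~\ref{lem_sequence_mult} and~\ref{lem_sequence_null} to read off the multisets $x*x^2=[x^3,x]$ and $x*x^3=[x^4,x^2]$, and Lemma~\ref{lem_z1z2} to control the accompanying second coordinates. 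The key point is that the first coordinate is forced to be $x^4$ on both sides: this rests on $x^3\ne x$ and $x^4\notin\{e,x^2\}$, which follow from $\ord x\notin\{1,2,4\}$ by Lemma~\ref{lem_sequence_null}, once the borderline values $\ord x\in\{3,6\}$ (for which $x^4=x^2$) are seen to satisfy $G\beta=G\alpha^{-1}$ and thus to have been eliminated in the reduction step.

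Having matched the first coordinates, both products lie in the two-element fibre of the projection $A\to X$ over $x^4$, whose two points are mutually inverse; thus the two sides are either equal or inverse to one another. To rule out the second alternative I would compute the common second coordinate directly: property~(ii) expresses it as the distinguished element of $x*\big(\text{second coordinate of }\alpha\bullet\beta\big)$ and of $x^2*f$, and Lemma~\ref{lem_z1z2} applied to these products shows the two values coincide; uniqueness of the inverse (Corollary~\ref{cor_bijection}) then forces equality.

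The main obstacle is precisely this last ``generic'' case. Because the triple $(\alpha,\alpha,\beta)$ contains two entries in the same $G$-orbit, Lemma~\ref{lem_assoc_different} cannot be invoked, and by Lemma~\ref{lem_assoc_compare} no choice of orbit representatives can separate them; so a hands-on computation is unavoidable. Within it the delicate part is not the first coordinate, which the power laws pin down immediately, but keeping track of the second coordinates through the conditions (i)--(iii) and confirming that the balanced and right-leaning bracketings select the same point of the fibre rather than inverse points.
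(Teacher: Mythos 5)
Your reduction step is sound: for the triple $(\alpha,\alpha,\beta)$ with $\beta=\alpha\bullet\alpha$, the eliminations via Corollary~\ref{cor_assoc_special} all check out (order $\le 2$ gives $\alpha\in G\varepsilon$; $x\in Vt$ gives $\alpha=v\tau^{\pm1}$; order $4$ gives $p'=x^2p$ and hence $\alpha^{-1}=x^2\alpha\in G\alpha$). The gap is in the remaining ``generic'' case, which is where the whole difficulty sits and which you only sketch. Two concrete problems. First, the claim that $\ord x\in\{3,6\}$ forces $G\beta=G\alpha^{-1}$ is justified only for $\ord x=3$ (there $x^2=x$, so $\beta=(x,f)$ with $f\in\{p,p'\}$, and $f=p$ is excluded by Corollary~\ref{cor_bijection} since $\alpha\bullet\alpha=\alpha$ would give $\alpha=\varepsilon$); for $\ord x=6$ the first coordinates of the orbit $G\alpha^{-1}$ are the $V$-translates of the $t$-power chain through $x$, and there is no reason for $x^2$ to lie on that chain, so this case is neither eliminated nor covered by your generic argument (which assumes $x^4\ne x^2$). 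Second, pinning down the first coordinate of $\alpha\bullet(\alpha\bullet\alpha)$ as $x^3$ rather than $x$ in the multiset $x*x^2=[x^3,x]$ does not follow from $x^3\ne x$: it requires analysing the multisets $p*f'$ and $p'*f$ in conditions (i)--(iii), and the matching of second coordinates that you attribute to ``Lemma~\ref{lem_z1z2} applied to these products'' presupposes that $\alpha\bullet\beta$ has already been fully computed. Judging by the analogous cases in Lemma~\ref{lem_formulae_special}, each of these verifications is a paragraph of delicate multiset manipulation that your proposal does not carry out.

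The paper avoids the computation entirely. It sets $\beta'=\alpha\bullet(\alpha\bullet\alpha)$ and $\gamma=(\alpha\bullet\alpha)^{-1}$ and observes that if $\beta'$ shares no $G$-orbit with $\alpha$ or with $\gamma$, then $(\alpha\bullet\beta')\bullet\gamma=\alpha\bullet(\beta'\bullet\gamma)$ --- by Lemma~\ref{lem_assoc_different} when the three orbits are distinct, and by Lemma~\ref{lem_assoc_compare} (reducing to the tautology $(\alpha\bullet\beta')\bullet\alpha=\alpha\bullet(\beta'\bullet\alpha)$) when $G\alpha=G\gamma$. Multiplying both sides of the desired identity by $\gamma$ and applying Lemma~\ref{lem_a-1ab} shows both products become $\alpha\bullet\alpha$, so Corollary~\ref{cor_bijection} gives equality; the two residual orbit coincidences ($G\beta'=G\alpha$, which forces $\alpha^{-1}\in G\alpha$, and $G\beta'=G\gamma$) are dispatched by Corollary~\ref{cor_assoc_special} and a short calculation with Lemma~\ref{lem_ab-1} and Corollary~\ref{cor_T}. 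If you want to salvage your approach, you would need to either execute the generic computation in full (including the $\ord x=6$ case) or adopt this translation trick, which is what makes the lemma provable without touching the underlying two-valued group $X$.
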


\begin{proof}
Consider the three elements $\alpha$, $\beta=\alpha\bullet(\alpha\bullet\alpha)$ and~$\gamma=(\alpha\bullet\alpha)^{-1}$. Suppose first that $\alpha$ and~$\beta$ lie in different $G$-orbits and that $\beta$ and~$\gamma$ lie in different $G$-orbits. Then
\begin{equation}\label{eq_abc}
(\alpha\bullet\beta)\bullet\gamma=\alpha\bullet(\beta\bullet\gamma).
\end{equation}
Indeed, if $\alpha$, $\beta$, and~$\gamma$ belong to three pairwise distinct $G$-orbits, then ~\eqref{eq_abc} follows from Lemma~\ref{lem_assoc_different}, and if $\alpha$ and~$\gamma$ lie in the same $G$-orbit, then by Lemma~\ref{lem_assoc_compare} the equality ~\eqref{eq_abc} is equivalent to the valid equality $(\alpha\bullet\beta)\bullet\alpha=\alpha\bullet(\beta\bullet\alpha)$. Using equality~\eqref{eq_abc} and Lemma~\ref{lem_a-1ab}, we obtain
\begin{align*}
\bigl((\alpha\bullet\alpha)\bullet(\alpha\bullet\alpha)\bigr)\bullet (\alpha\bullet\alpha)^{-1}&=\alpha\bullet\alpha,\\
\bigl(\alpha\bullet(\alpha\bullet(\alpha\bullet\alpha))\bigr)\bullet (\alpha\bullet\alpha)^{-1}&=(\alpha\bullet\beta)\bullet\gamma=\alpha\bullet(\beta\bullet\gamma)\\
{}&=
\alpha\bullet\left(\bigl(\alpha\bullet(\alpha\bullet\alpha)\bigr)\bullet (\alpha\bullet\alpha)^{-1}\right)=\alpha\bullet\alpha.
\end{align*}
Applying Corollary~\ref{cor_bijection}, we obtain the statement of the Lemma.

It remains to consider the case when some of the elements~$\alpha$ and~$\gamma$ lies in the same $G$-orbit with the element~$\beta$.

Assume that the elements~$\alpha$ and $\beta=\alpha\bullet(\alpha\bullet\alpha)$ lie in the same $G$-orbit. Then by Lemma~\ref{lem_Gorbits} the element~$\alpha\bullet\alpha$ lies in the same $G$-orbit with the element~$\varepsilon$, hence the elements~$\alpha$ and~$\alpha^{-1}$ lie in the same $G$-orbit. In this case, the equality being proved follows from Corollary ~\ref{cor_assoc_special} applied to the triple $\alpha$, $\alpha$, $\alpha\bullet\alpha$.

Suppose now that the elements~$\beta=\alpha\bullet(\alpha\bullet\alpha)$ and~$\gamma=(\alpha\bullet\alpha)^{-1}$ lie in the same $G$-orbit. Then $\beta=v\tau^k\bullet \gamma$ for some $v\in V$, $k\in \Z$. Then by Lemma~\ref{lem_ab-1} $$\alpha\bullet\alpha =\gamma^{-1}=v\tau^k\bullet\beta^{-1}=v\tau^k\bullet \left(\alpha^{-1}\bullet(\alpha\bullet\alpha)^{-1}\right).$$
Applying Lemmas~\ref{lem_ab-1} and ~\ref{lem_a-1ab} and Corollary~\ref{cor_T}, we obtain
\begin{align*}
(\alpha\bullet\alpha)\bullet(\alpha\bullet\alpha)&=(\alpha\bullet\alpha)\bullet\left(v\tau^k\bullet \left(\alpha^{-1}\bullet(\alpha\bullet\alpha)^{-1}\right)\right)\\
{}&=v\tau^k\bullet\left((\alpha\bullet\alpha)\bullet \left(\alpha^{-1}\bullet(\alpha\bullet\alpha)^{-1}\right)\right)=v\tau^k\bullet\alpha^{-1},\\
\alpha\bullet(\alpha\bullet(\alpha\bullet\alpha))&=\alpha\bullet\left(v\tau^k\bullet(\alpha\bullet\alpha)^{-1}\right)=v\tau^k\bullet\left(\alpha\bullet\bigl(\alpha^{-1}\bullet\alpha^{-1}\bigr)\right)=v\tau^k\bullet\alpha^{-1},
\end{align*}
which completes the proof of the Lemma.
\end{proof}

\begin{lem}\label{lem_aab}
For any elements~$\alpha$ and~$\beta$ of the set~$A$, the equality $(\alpha\bullet\alpha)\bullet\beta=\alpha\bullet(\alpha\bullet\beta)$ holds.
\end{lem}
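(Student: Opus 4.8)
The plan is to prove the equivalent assertion that the triple $(\alpha,\alpha,\beta)$ is associative, i.e.\ $(\alpha\bullet\alpha)\bullet\beta=\alpha\bullet(\alpha\bullet\beta)$, and to reduce it, via two cancellation tricks, to the already established associativity of triples whose three entries lie in pairwise distinct $G$-orbits (Lemma~\ref{lem_assoc_different}). The two reductions are as follows. \emph{First reduction:} multiplying the desired identity on the left by $\alpha^{-1}$ and using Lemma~\ref{lem_a-1ab} (which gives $\alpha^{-1}\bullet(\alpha\bullet\alpha)=\alpha$ and $\alpha^{-1}\bullet\bigl(\alpha\bullet(\alpha\bullet\beta)\bigr)=\alpha\bullet\beta$), both sides become $\alpha\bullet\beta$ provided the triple $(\alpha^{-1},\alpha\bullet\alpha,\beta)$ is associative; the identity then follows by Corollary~\ref{cor_bijection}. \emph{Second reduction:} multiplying on the right by $\beta^{-1}$ and using Corollary~\ref{cor_assoc_special} on the triples $(\alpha\bullet\alpha,\beta,\beta^{-1})$ and $(\alpha,\beta,\beta^{-1})$ (in which $\beta$ and $\beta^{-1}$ are mutually inverse, and $\beta\bullet\beta^{-1}=\varepsilon$), both sides become $\alpha\bullet\alpha$ provided the triple $(\alpha,\alpha\bullet\beta,\beta^{-1})$ is associative. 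Thus it suffices to show that in every case at least one of these two auxiliary triples is associative.

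Before the generic analysis I would dispose of several cases directly. If $\alpha$ lies in $G\alpha^{-1}$, or $\beta$ lies in $G\varepsilon$, or $\beta$ lies in $G\alpha^{-1}$, then $(\alpha,\alpha,\beta)$ is associative at once by Corollary~\ref{cor_assoc_special}. If $\beta\in G\alpha$, then by Lemma~\ref{lem_assoc_compare} the claim reduces to the associativity of $(\alpha,\alpha,\alpha)$, which holds by commutativity of $\bullet$ alone. If $\beta\in G(\alpha\bullet\alpha)$, then by Lemma~\ref{lem_assoc_compare} the claim reduces to the associativity of $(\alpha,\alpha,\alpha\bullet\alpha)$, and this is precisely Lemma~\ref{lem_aaa}; this is the point where Lemma~\ref{lem_aaa} enters.

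After these reductions I may assume $\alpha\notin G\alpha^{-1}$ and that $\beta$ lies in none of the orbits $G\varepsilon$, $G\alpha$, $G\alpha^{-1}$, $G(\alpha\bullet\alpha)$. In this generic situation I would test the two auxiliary triples against Lemma~\ref{lem_assoc_different}, the orbit coincidences being computed routinely from Lemma~\ref{lem_Gorbits} and the $G$-equivariance~\eqref{eq_G_action}. The triple $(\alpha^{-1},\alpha\bullet\alpha,\beta)$ has pairwise distinct orbits unless $\alpha\bullet\alpha\in G\alpha^{-1}$ (the remaining distinctnesses follow from the standing assumptions), so the first reduction finishes the proof whenever $\alpha\bullet\alpha\notin G\alpha^{-1}$. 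Dually, the triple $(\alpha,\alpha\bullet\beta,\beta^{-1})$ has pairwise distinct orbits unless $\beta\bullet\beta\in G\alpha^{-1}$, so the second reduction finishes the proof whenever $\beta\bullet\beta\notin G\alpha^{-1}$. Hence the only case left is the \emph{doubly degenerate} one in which both $\alpha\bullet\alpha$ and $\beta\bullet\beta$ lie in $G\alpha^{-1}$.

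This doubly degenerate case is the main obstacle, and I would resolve it by passing to the element $\alpha^{-1}$. Since $\alpha\bullet\alpha\in G\alpha^{-1}$, the first reduction together with Lemma~\ref{lem_assoc_compare} reduces the claim for $(\alpha,\beta)$ to the claim for $(\alpha^{-1},\beta)$, namely $(\alpha^{-1}\bullet\alpha^{-1})\bullet\beta=\alpha^{-1}\bullet(\alpha^{-1}\bullet\beta)$. For this latter identity I apply the second reduction to $\alpha^{-1}$: it needs the associativity of $(\alpha^{-1},\alpha^{-1}\bullet\beta,\beta^{-1})$, and here the obstruction moves to whether $\beta\bullet\beta$ lies in the orbit of $(\alpha^{-1})^{-1}=\alpha$ rather than of $\alpha^{-1}$. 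But $\beta\bullet\beta\in G\alpha^{-1}$ by hypothesis, while $G\alpha\neq G\alpha^{-1}$ because $\alpha\notin G\alpha^{-1}$; thus $\beta\bullet\beta\notin G\alpha$, the three orbits are pairwise distinct, and Lemma~\ref{lem_assoc_different} applies. The crucial feature that makes the argument terminate is exactly this asymmetry between $\alpha$ and $\alpha^{-1}$ supplied by $\alpha\notin G\alpha^{-1}$, which forbids the two reductions from degenerating simultaneously once we are allowed to switch $\alpha$ with its inverse.
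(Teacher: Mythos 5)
Your proof is correct. Its generic engine coincides with the paper's: both cancel $\beta^{-1}$ on the right and reduce the claim to the associativity of the triple $(\alpha,\alpha\bullet\beta,\beta^{-1})$, supplied by Lemma~\ref{lem_assoc_different} when the three $G$-orbits are pairwise distinct, and both identify the genuinely problematic locus as $G(\alpha\bullet\beta)=G\beta^{-1}$, which in your notation is $\beta\bullet\beta\in G\alpha^{-1}$. The treatments of that degenerate case diverge. The paper observes that $G(\alpha\bullet\beta)=G\beta^{-1}$ forces $\alpha\in G(\beta^{-1}\bullet\beta^{-1})$, replaces $\alpha$ by $\beta^{-1}\bullet\beta^{-1}$ via Lemma~\ref{lem_assoc_compare}, and verifies the resulting identity $(\gamma\bullet\gamma)\bullet\beta=\gamma\bullet(\gamma\bullet\beta)$ for $\gamma=\beta^{-1}\bullet\beta^{-1}$ by a direct chain of equalities in which Lemma~\ref{lem_aaa} does the work. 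You instead introduce a second, left-handed cancellation through the auxiliary triple $(\alpha^{-1},\alpha\bullet\alpha,\beta)$, observe that the two cancellations can fail simultaneously only when both $\alpha\bullet\alpha$ and $\beta\bullet\beta$ lie in $G\alpha^{-1}$, and escape that doubly degenerate case by switching $\alpha$ to $\alpha^{-1}$, exploiting $G\alpha\ne G\alpha^{-1}$ (guaranteed by your earlier disposal of $\alpha\in G\alpha^{-1}$ via Corollary~\ref{cor_assoc_special}); Lemma~\ref{lem_aaa} enters your argument only through the separately handled coincidence $\beta\in G(\alpha\bullet\alpha)$. The paper's route is somewhat shorter because a single substitution kills the whole degenerate case at once; yours trades that computation for a longer but purely orbit-theoretic case analysis. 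I checked that the orbit-distinctness claims you leave as routine ($G\alpha=G(\alpha\bullet\beta)$ iff $\beta\in G\varepsilon$; $G\alpha=G\beta^{-1}$ iff $\beta\in G\alpha^{-1}$; $G(\alpha\bullet\beta)=G\beta^{-1}$ iff $\beta\bullet\beta\in G\alpha^{-1}$) do follow from Lemma~\ref{lem_Gorbits}, Lemma~\ref{lem_a-1ab} and~\eqref{eq_G_action}, so the argument is complete.
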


\begin{proof}
Suppose first that none of the elements~$\alpha$ and~$\beta^{-1}$ lie in the same $G$-orbit with the element~$\alpha\bullet\beta$. Using Lemmas ~\ref{lem_assoc_different}, \ref{lem_assoc_compare} and ~\ref{lem_a-1ab}, we obtain
$$
(\alpha\bullet(\alpha\bullet\beta))\bullet \beta^{-1}=\alpha\bullet((\alpha\bullet\beta)\bullet \beta^{-1})=\alpha\bullet\alpha.
$$
Multiplying both sides of this equality by~$\beta$ and once again using Lemma~\ref{lem_a-1ab}, we obtain the required equality~$\alpha\bullet(\alpha\bullet\beta)=(\alpha\bullet\alpha)\bullet\beta$.

It remains to consider the case when some of the elements~$\alpha$ and~$\beta^{-1}$ lies in the same $G$-orbit with the element~$\alpha\bullet\beta$.

Assume that the elements~$\alpha$ and~$\alpha\bullet\beta$ lie in the same $G$-orbit. Then it follows from formula~\eqref{eq_G_action} and Lemma~\ref{lem_a-1ab} that the elements $\varepsilon=\alpha^{-1}\bullet\alpha$ and $\beta=\alpha^{-1}\bullet(\alpha\bullet\beta)$ also lie in the same $G$-orbit. In this case, the statement to be proved immediately follows from Corollary ~\ref{cor_assoc_special}.

Suppose now that the elements~$\alpha\bullet\beta$ and~$\beta^{-1}$ lie in the same $G$-orbit. Then it follows from Lemmas~\ref{lem_Gorbits} and ~\ref{lem_a-1ab} that the element~$\alpha$ lies in the same $G$-orbit with the element~$\beta^{-1}\bullet\beta^{-1}$.
Therefore, it follows from Lemma ~\ref{lem_assoc_compare} that the required associativity property  for the triple $(\alpha,\alpha,\beta)$ is equivalent to the associativity property for the triple $\bigl(\beta^{-1}\bullet\beta^{-1},\beta^{-1}\bullet\beta^{-1},\beta\bigr)$. However, using Lemmas ~\ref{lem_aaa} and ~\ref{lem_a-1ab}, we get that
\begin{multline*}
\left((\beta^{-1}\bullet\beta^{-1})\bullet(\beta^{-1}\bullet\beta^{-1})\right)\bullet\beta=
\left(\bigl((\beta^{-1}\bullet\beta^{-1})\bullet\beta^{-1}\bigr)\bullet\beta^{-1}\right)\bullet\beta\\{}=
(\beta^{-1}\bullet\beta^{-1})\bullet\beta^{-1}=(\beta^{-1}\bullet\beta^{-1})\bullet\left((\beta^{-1}\bullet\beta^{-1})\bullet\beta\right).
\end{multline*}
The Lemma is proven.
\end{proof}

Now we can finally fully prove the associativity of multiplication~$\bullet$.

\begin{lem}\label{lem_assoc_always}
The equality $(\alpha_1\bullet\alpha_2)\bullet\alpha_3=\alpha_1\bullet(\alpha_2\bullet\alpha_3)$ holds for any elements $\alpha_1$, $\alpha_2$ and~$\alpha_3$ of the set~$A$.
\end{lem}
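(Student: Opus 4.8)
The plan is to bootstrap full associativity from the partial associativity results already in hand, exploiting Lemma~\ref{lem_assoc_compare}: whether the identity $(\alpha_1\bullet\alpha_2)\bullet\alpha_3=\alpha_1\bullet(\alpha_2\bullet\alpha_3)$ holds depends only on the $G$-orbits $G\alpha_1$, $G\alpha_2$, $G\alpha_3$. I would therefore argue by cases according to which of these three orbits coincide, arranging the case division so that every configuration reduces, via Lemma~\ref{lem_assoc_compare} and the commutativity of $\bullet$, to a statement already proved.

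First I would dispose of the generic case in which $G\alpha_1$, $G\alpha_2$, $G\alpha_3$ are pairwise distinct: here the required identity is exactly the content of Lemma~\ref{lem_assoc_different}. Next, whenever $G\alpha_1=G\alpha_3$ (irrespective of $G\alpha_2$), I would invoke Lemma~\ref{lem_assoc_compare} to replace $\alpha_3$ by $\alpha_1$, turning the identity into $(\alpha_1\bullet\alpha_2)\bullet\alpha_1=\alpha_1\bullet(\alpha_2\bullet\alpha_1)$; both sides equal $\alpha_1\bullet(\alpha_1\bullet\alpha_2)$ by commutativity of $\bullet$, so nothing further is needed. This single observation also absorbs the case where all three orbits coincide.

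It then remains to treat the situation in which exactly two orbits coincide while $G\alpha_1\neq G\alpha_3$, that is, either $G\alpha_1=G\alpha_2$ or $G\alpha_2=G\alpha_3$. In the first subcase Lemma~\ref{lem_assoc_compare} lets me take $\alpha_2=\alpha_1$, and the identity becomes $(\alpha_1\bullet\alpha_1)\bullet\alpha_3=\alpha_1\bullet(\alpha_1\bullet\alpha_3)$, which is precisely Lemma~\ref{lem_aab}. In the second subcase, after replacing $\alpha_3$ by $\alpha_2$ the identity reads $(\alpha_1\bullet\alpha_2)\bullet\alpha_2=\alpha_1\bullet(\alpha_2\bullet\alpha_2)$; applying commutativity at the outer and inner products rewrites this as $(\alpha_2\bullet\alpha_2)\bullet\alpha_1=\alpha_2\bullet(\alpha_2\bullet\alpha_1)$, which is again Lemma~\ref{lem_aab}, now with $\alpha_2$ in the role of $\alpha$ and $\alpha_1$ in the role of $\beta$. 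Since these cases are exhaustive, associativity holds for all triples.

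The genuinely hard work has already been carried out in Lemmas~\ref{lem_assoc_different}, \ref{lem_aaa} and \ref{lem_aab}, so I do not expect a real obstacle here; the only subtlety in this concluding step is organisational. One must choose the orbit-coincidence case division carefully so that each configuration lands on exactly one of the previously established partial results, and must correctly combine Lemma~\ref{lem_assoc_compare} (to pass from a given triple to a convenient representative triple in the same $G$-orbits) with commutativity (to rearrange the product into the standard shape of Lemma~\ref{lem_aab}). Getting the bookkeeping right, rather than any new computation, is the crux of the argument.
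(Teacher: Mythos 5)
Your proposal is correct and follows essentially the same route as the paper, which likewise dispatches the pairwise-distinct-orbit case via Lemma~\ref{lem_assoc_different} and reduces the case where two arguments share a $G$-orbit to Lemma~\ref{lem_aab} by means of Lemma~\ref{lem_assoc_compare} and commutativity. Your write-up merely makes the orbit-coincidence bookkeeping explicit (including the observation that the subcase $G\alpha_1=G\alpha_3$ needs only commutativity), which the paper leaves to the reader.
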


\begin{proof}
The required equality follows from Lemma~\ref{lem_assoc_different} if the elements $\alpha_1$, $\alpha_2$ and~$\alpha_3$ lie in pairwise distinct $G$-orbits, and from Lemmas~\ref{lem_aab} and ~\ref{lem_assoc_compare} if two of these three elements lie in the same $G$-orbit.
\end{proof}

Statement~(b) of Proposition ~\ref{propos_main_construction} follows from Corollary~\ref{cor_bijection} and Lemma~\ref{lem_assoc_always}; statement~(c) immediately follows from statements~(a) and~(b).

\section{Non-special two-valued groups consisting of elements of orders~$1$, $2$ and~$4$}
\label{section_124}

\begin{theorem}\label{thm_124}
Let $X$ be a finitely generated non-special involutive commutative two-valued group that does not contain a single-valued direct factor and consists entirely of elements of orders~$1$, $2$ and~$4$. Then $X$ is isomorphic to one of the groups in the following two series:
\begin{enumerate}
\item $X_{n\times 4}^{\ba}=C_4^n/\iota_{\ba}$, where $\iota_{\ba}(a)=a^{-1}$;
\item $X_n^{\bu}=(C_2^n\times C_2^n)/\iota_{\bu}$, where $\iota_{\bu}(a,b)=(a,ab)$.
\end{enumerate}
Each of the two-valued groups~$X_{n\times 4}^{\ba}$ and~$X_n^{\bu}$ consists of $2^{2n-1}+2^{n-1}$ elements. When $n\le 2$ these two two-valued groups are isomorphic, while for $n\ge 3$ they are not isomorphic.
\end{theorem}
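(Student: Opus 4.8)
The plan is to first extract the rigid combinatorial skeleton of $X$, then reconstruct an abelian single-valued group behind it, and finally separate the two resulting families by an isomorphism invariant. First I would record the structural consequences of the hypotheses. Let $V\subseteq X$ be the Boolean group of elements of order $\le 2$ (Proposition~\ref{propos_bullet}) and put $n=\dim V$. Since $X$ has no single-valued direct factor, Proposition~\ref{propos_neras} shows that every $v\in V$ is a square, so the subgroup $Q$ of squares (Lemma~\ref{lem_squares}) equals $V$; as all orders divide~$4$, conversely every square lies in $V$. The squaring map $x\mapsto x^2$ therefore surjects onto $V$, and by Lemma~\ref{lem_equal_squares} it is constant exactly on $V$-orbits, inducing a bijection between the orbit set $X/V$ and $V$; since $\overline{Vx}\cdot\overline{Vy}\mapsto x^2y^2$ by Corollary~\ref{cor_z1z2sq}, this bijection is a group isomorphism $X/V\cong V\cong C_2^n$. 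Each nonidentity orbit $O_v=\{x:x^2=v\}$ is a torsor under $V/\langle v\rangle$ by Proposition~\ref{propos_bullet}(c), hence has $2^{n-1}$ elements, while the orbit of $e$ is $V$ itself. This gives the count $|X|=2^n+(2^n-1)2^{n-1}=2^{2n-1}+2^{n-1}$, which also matches $(|A|+|A^{\iota}|)/2=(2^{2n}+2^n)/2$ for both model groups $A$.

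The heart of the argument is to realise $X$ as a coset group $A/\iota$ with $A$ an abelian group of exponent dividing~$4$ and no $C_2$ direct factor. I would choose a basis $v_1,\dots,v_n$ of $V$ together with order-$4$ elements $x_i$ with $x_i^2=v_i$; since their orbits generate $X/V$ and their squares generate $V$, these generate $X$. The whole multiplication is encoded by the products $x_i*x_j$: here $x_i*x_i=[e,v_i]$, while for $i\ne j$ Lemma~\ref{lem_ord4}(b) gives $x_i*x_j=[z_{ij},\,v_iz_{ij}]$ with $z_{ij}\in O_{v_iv_j}$ and $v_iz_{ij}=v_jz_{ij}$, and Lemma~\ref{lem_z1z2} gives $z_{ij}*(v_iz_{ij})=[v_i,v_j]$. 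Reading off from these data the forced group law on lifts---in particular whether an order-$4$ element of $X$ lifts to an element of order $4$ or of order $2$ in $A$---produces a symmetric $\F_2$-valued datum. I would then build $A$ as the abelian exponent-$4$ group generated by lifts $\tilde x_i$ subject to exactly these relations, equipped with the covering involution $\iota$ fixing $V$ pointwise and interchanging the two lifts of each order-$4$ element, and verify that $A\to X$ descends to an isomorphism $A/\iota\cong X$. The \textbf{main obstacle} is precisely this verification: one must check that the reconstructed multiplication is well defined and that the coset product matches $*$ on all pairs, which requires careful bookkeeping of the four-element multisets using Lemmas~\ref{lem_basic}, \ref{lem_xy=zt} and~\ref{lem_3decomp} in the style of Section~\ref{section_not124}, together with the associativity constraints that the elements $z_{ij}$ must satisfy.

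Once $X\cong A/\iota$ is established, classifying $(A,\iota)$ is a finite linear-algebra problem over $\F_2$. The condition that $A$ have no $C_2$ direct factor forces every cyclic summand to be $C_4$ or to be paired off; analysing the admissible involutions on an exponent-$4$ abelian group and using the isomorphism~\eqref{eq_iso_a} to absorb split directions, I expect exactly two surviving normal forms: the antipodal pair $(C_4^n,\iota_{\ba})$, giving $X^{\ba}_{n\times 4}$, and the unipotent pair $(C_2^n\times C_2^n,\iota_{\bu})$, giving $X^{\bu}_n$. This shows $X$ is isomorphic to one of the two listed series.

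Finally I would settle the isomorphisms between the two series. For $n=1$ the three-element identification in the text and for $n=2$ the explicit map of Example~\ref{ex_iso} give $X^{\ba}_{n\times 4}\cong X^{\bu}_n$. For $n\ge 3$ the two are non-isomorphic; the cleanest route is to show that the reconstructed group $A$ becomes an isomorphism invariant of $X$ once $\dim V\ge 3$---the freedom in the choice of the $x_i$ and of the lifts $z_{ij}$ no longer alters the isomorphism type of $A$, in contrast to the low-dimensional coincidences---so that $X^{\ba}_{n\times 4}\cong X^{\bu}_n$ would force $C_4^n\cong C_2^n\times C_2^n$, which is false by comparing exponents. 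I expect the verification that $A$ stabilises for $n\ge 3$---equivalently, that the relevant $\F_2$ quadratic/Arf-type invariant of the associativity data separates the two series exactly when $n\ge 3$---to be the second delicate point of the proof.
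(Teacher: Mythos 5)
Your first paragraph is correct and agrees with the paper's setup (Lemma~\ref{lem_124mult} and its corollaries), but the ``heart of the argument'' has a genuine gap, and it sits exactly where you flag the ``main obstacle''. The datum you propose to read off --- the pairwise products $x_i*x_j$ of lifts of a \emph{basis} of $V$, together with ``whether an order-$4$ element lifts to order $4$ or order $2$ in $A$'' --- does not distinguish the two series. In both model groups one can choose representatives so that $x_i*x_i=[e,v_i]$ and $x_i*x_j=[z_{ij},v_iz_{ij}]$ with $z_{ij}^2=v_iv_j$, identically; and the ``order of the lift'' is not an invariant of $X$ at all (the same three-element group is $C_4/\iota_{\ba}$ and $(C_2\times C_2)/\iota_{\bu}$, with lifts of order $4$ and $2$ respectively). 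The paper encodes the full multiplication by an involutive symmetric quasi-cocycle $\varphi\colon V\times V\to V$, $x_u*x_v=\varphi(u,v)[x_{uv},ux_{uv}]$, defined on \emph{all} pairs $(u,v)\in V\times V$, and the invariant separating the two series is the class of $\varphi$ in the cohomology-type group $\CH(V)$. The nontrivial class is represented by $\varphi_{\B}(b_I,b_J)=b_{I\cap J}$, which on pairs of basis vectors is congruent to the trivial quasi-cocycle modulo $\langle b_i,b_j\rangle$; the difference only becomes visible on non-basis pairs such as $(b_1b_2,b_2b_3)$, where $b_2\notin\langle b_1b_2,b_2b_3\rangle$. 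So no ``symmetric $\F_2$-valued datum'' extracted from basis products can carry the obstruction, and your reconstruction of $A$ by generators and relations would produce the same presentation for both $X^{\ba}_{n\times 4}$ and $X^{\bu}_n$.

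The substance of the paper's proof is precisely the computation you defer twice: Proposition~\ref{propos_2gqc} reduces the classification to computing $\CH(V)$, and Section~\ref{section_quasi} shows $\CH(V)=0$ for $\dim V\le 2$ and $\CH(V)\cong C_2$ for $\dim V\ge 3$ (via an explicit analysis in dimension $3$, Lemma~\ref{lem_dim3}, and surjectivity/injectivity of restriction to $3$-dimensional subgroups, Lemmas~\ref{lem_restrict_3_sur} and~\ref{lem_restrict_3_inj}). Your closing argument --- that for $n\ge 3$ the covering group $A$ ``stabilises'' so an isomorphism would force $C_4^n\cong C_2^n\times C_2^n$ --- presupposes exactly this dichotomy and is therefore circular as stated. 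To repair the proposal you would need to supply an invariant of the whole multiplication table (not of a generating set) and prove it takes exactly two values for $n\ge 3$ and one value for $n\le 2$; that is the quasi-cocycle cohomology computation in all but name.
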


Let $X$ be a two-valued group satisfying the conditions of Theorem ~\ref{thm_124}, and $V\subseteq X$ be the subgroup consisting of all elements of orders~$1$ and~$2$. For each element $x\in X\setminus V$, its order is ~$4$, so the order of its square~$x^2$ is ~$2$, i.e. $x^2\in V\setminus\{e\}$. For each element $v\in V$, denote by $X_v\subseteq X$ the subset consisting of all elements~$x$ such that $x^2=v$. Then $X_e=V$ and $X_v$ consists of elements of order~$4$ for $v\ne e$. According to Proposition ~\ref{propos_neras}, the fact that $X$ does not contain a single-valued direct factor implies that all subsets~$X_v$ are non-empty. Moreover, Lemma~\ref{lem_equal_squares}, Corollary~\ref{cor_z1z2sq}, and Lemma~\ref{lem_ord4}(b) immediately imply the following statement.

\begin{lem}\label{lem_124mult}
Each of the sets~$X_v$ is a $V$-orbit. For $v\ne e$ the stabilizer of the elements of this orbit is the two-element group~$\{e,v\}$. If $x\in X_u$, $y\in X_{v}$ and~$x*y=[z_1,z_2]$, then $z_1,z_2\in X_{uv}$ and $z_2=uz_1=vz_1$.
\end{lem}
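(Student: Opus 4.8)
The plan is to derive all three assertions directly from the three results cited in the sentence preceding the lemma, treating each claim in turn. Since the lemma is asserted to follow immediately, the only work is to check that each cited fact applies cleanly in the present setting, where every element has order $1$, $2$, or $4$ and $V$ is precisely the set of squares; I expect no genuine obstacle, only one point of consistency to verify.

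First I would show that each $X_v$ is a single $V$-orbit. By definition $X_v=\{x\in X:x^2=v\}$, and Lemma~\ref{lem_equal_squares} states that $x^2=y^2$ holds exactly when $x$ and $y$ lie in the same $V$-orbit. Hence, fixing any $x\in X_v$ (such an $x$ exists because $X$ has no single-valued direct factor, by Proposition~\ref{propos_neras}), the set $X_v$ coincides with the $V$-orbit $Vx$. For the stabiliser claim with $v\ne e$, I would invoke Proposition~\ref{propos_bullet}(c): for $w\in V\setminus\{e\}$ one has $wx=x$ if and only if $x^2=w$, so the only non-identity element of $V$ fixing $x$ is $w=x^2=v$, and the stabiliser is $\{e,v\}$.

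For the multiplicative statement, suppose $x\in X_u$, $y\in X_v$ and $x*y=[z_1,z_2]$. Corollary~\ref{cor_z1z2sq} gives $x^2*y^2=[z_1^2,z_2^2]$. Since $x^2=u$ and $y^2=v$ lie in $V$, the left-hand side is the doubled product $u*v=[uv,uv]$, which forces $z_1^2=z_2^2=uv$, that is, $z_1,z_2\in X_{uv}$. Finally, because $\ord x$ divides $4$, Lemma~\ref{lem_ord4}(b) yields $z_2=x^2z_1=uz_1$; applying the same lemma to the equal product $y*x=[z_1,z_2]$ (using commutativity and $\ord y\mid 4$) gives $z_2=y^2z_1=vz_1$. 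Hence $z_2=uz_1=vz_1$, as required.

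The only step needing a moment's care is the mutual consistency of the two relations $z_2=uz_1$ and $z_2=vz_1$, since both are asserted for the same labelling of $z_1,z_2$. These are compatible precisely because $z_1^2=uv$: when $uv\ne e$, Proposition~\ref{propos_bullet}(c) shows that $uv$ stabilises $z_1$, so $uz_1=vz_1$ automatically; and when $u=v$ the two relations coincide and $z_1,z_2\in V$. Thus all three parts of the lemma follow, each as a one-line application of a previously established fact.
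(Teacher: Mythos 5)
Your proof is correct and follows exactly the route the paper intends: the paper gives no written proof, merely asserting that the lemma follows from Lemma~\ref{lem_equal_squares}, Corollary~\ref{cor_z1z2sq} and Lemma~\ref{lem_ord4}(b), and your write-up is precisely that derivation, with the stabiliser claim correctly supplied by Proposition~\ref{propos_bullet}(c). The consistency check $uz_1=vz_1$ via $z_1^2=uv$ is a nice touch, though it is automatic since both relations are independently established.
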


\begin{cor}
If the two-valued group $X$ is finitely generated, then the Boolean group~$V$ is finite.
\end{cor}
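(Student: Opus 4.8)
The plan is to exhibit the squaring map as a surjective homomorphism of two-valued groups from~$X$ onto the double of~$V$, and then to deduce finiteness of~$V$ from the finite generation of~$X$. Concretely, I would define $q\colon X\to X$ by $q(x)=x^2$. Since $X$ consists only of elements of orders~$1$, $2$ and~$4$, we have $q(x)\in V$ for every $x\in X$, so $q$ takes values in the sub-two-valued group $V\subseteq X$; by Lemma~\ref{lem_order2} this sub-two-valued group is precisely the double~$2V$ of the Boolean group~$V$, since the product of any two elements of~$V$ is a doubled singleton.

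First I would check that $q$ is a homomorphism onto~$2V$. This is exactly the content of Lemma~\ref{lem_124mult}: if $x\in X_u$, $y\in X_v$ and $x*y=[z_1,z_2]$, then $z_1,z_2\in X_{uv}$, so that $q(x*y)=[z_1^2,z_2^2]=[uv,uv]$, while $q(x)*q(y)=u*v=[uv,uv]$ in~$2V$; together with $q(e)=e$ this gives $q(x*y)=q(x)*q(y)$. Next I would check surjectivity: because $X$ contains no single-valued direct factor, Proposition~\ref{propos_neras} guarantees that every element of~$V$ is a square, i.e.\ each fibre~$X_v$ is non-empty, whence $q(X)=V$. (Equivalently, the set $Q=\{x^2\mid x\in X\}$ is a subgroup by Lemma~\ref{lem_squares} and coincides with~$V$.)

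Finally, I would transfer finite generation across~$q$. If $X$ is generated by $g_1,\dots,g_N$, then the surjective homomorphism~$q$ forces~$2V$ to be generated by the images $q(g_1),\dots,q(g_N)$, that is by $g_1^2,\dots,g_N^2$. But generation in a double coincides with single-valued generation of the underlying group (every product is a doubled singleton, and inverses are trivial in a Boolean group), so the Boolean group~$V$ is generated by these $N$ elements as an $\F_2$-vector space; hence $\dim_{\F_2}V\le N$ and $|V|\le 2^N<\infty$.

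The only genuinely substantive point---and thus where I would concentrate---is recognizing that Lemma~\ref{lem_124mult} is precisely the assertion that $q$ respects the two-valued multiplication with target the double~$2V$. Once this reinterpretation is made, the remaining steps (surjectivity from Proposition~\ref{propos_neras}, and the passage from a finitely generated double to a finite-dimensional $\F_2$-vector space) are routine, requiring no delicate estimates or case analysis.
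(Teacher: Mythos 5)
Your proof is correct and is essentially the paper's argument: both rest on Lemma~\ref{lem_124mult} (products of elements of $X_u$ and $X_v$ land in $X_{uv}$) and on Proposition~\ref{propos_neras} (every $X_v$ is non-empty), concluding that the squares of the $N$ generators generate $V$ as an $\F_2$-vector space. You merely repackage this as a surjective homomorphism $x\mapsto x^2$ onto the double $2V$, where the paper tracks the orbits $X_v$ directly; the substance is identical.
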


\begin{proof}
Let $X$ be generated by the elements of $x_1,\ldots,x_N$ and let $x_i\in X_{v_i}$ under $i=1,\ldots,N$. Then any multiset of the form $x_{i_1}*\cdots*x_{i_k}$ consists of elements lying in the orbits $X_v$, where $v=v_{i_1}^{\pm1}\cdots v_{i_k}^{\pm 1}$. Hence, the union of orbits~$X_v$, where $v$ runs through the subgroup generated by the elements $v_1,\ldots,v_N$, is the entire two-valued group~$X$. Therefore, the elements $v_1,\ldots,v_N$ generate the whole group~$V$, so $\dim V \le N$.
\end{proof}

\begin{cor}
If $\dim V=n<\infty$, then the two-valued group $X$ consists of exactly $2^{2n-1}+2^{n-1}$ elements.
\end{cor}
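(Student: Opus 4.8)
The plan is to count $X$ by partitioning it according to the value of the square of an element. Since every element of $X$ has order dividing~$4$, each $x\in X$ satisfies $x^2\in V$, so $x$ belongs to exactly one of the subsets~$X_v$, $v\in V$. Hence $X=\bigsqcup_{v\in V}X_v$ is a disjoint union indexed by the $2^n$ elements of~$V$, and $|X|=\sum_{v\in V}|X_v|$. By Proposition~\ref{propos_neras}, the absence of a single-valued direct factor guarantees that every~$X_v$ is non-empty, so each term genuinely contributes to the sum.

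First I would compute $|X_e|$. Since $X_e=V$ and $V$ acts on itself by the regular action (the operation~$\cdot$ makes~$V$ into a group), the stabilizer of any element of~$V$ is trivial, so that $X_e$ is a single free $V$-orbit and $|X_e|=|V|=2^n$.

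Next, for $v\ne e$, I would invoke Lemma~\ref{lem_124mult}, according to which $X_v$ is a single $V$-orbit whose point stabilizer is the two-element group~$\{e,v\}$ (consistent with Proposition~\ref{propos_bullet}(c), which identifies $v\cdot x=x$ with $x^2=v$). By the orbit--stabilizer theorem, $|X_v|=|V|/2=2^{n-1}$ for each of the $2^n-1$ non-identity elements $v\in V$.

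Combining these counts gives
$$
|X|=|X_e|+\sum_{v\in V\setminus\{e\}}|X_v|=2^n+(2^n-1)\cdot 2^{n-1}=2^{2n-1}+2^{n-1}.
$$
The argument is purely a counting one and presents no real obstacle: the only inputs are the orbit decomposition recorded in Lemma~\ref{lem_124mult}, the non-emptiness of each $X_v$ from Proposition~\ref{propos_neras}, and the orbit--stabilizer theorem.
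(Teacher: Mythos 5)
Your proof is correct and is exactly the counting argument the paper intends: the corollary is stated as an immediate consequence of Lemma~\ref{lem_124mult} (each $X_v$ is a $V$-orbit with stabilizer $\{e,v\}$ for $v\ne e$) together with the non-emptiness of all $X_v$ guaranteed by Proposition~\ref{propos_neras}, which is precisely what you use. The arithmetic $2^n+(2^n-1)\cdot 2^{n-1}=2^{2n-1}+2^{n-1}$ checks out.
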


In each of the subsets $X_v$, $v\in V$, we choose some representative~$x_v$, and as~$x_e$ we take the identity~$e$. Lemma ~\ref{lem_124mult} implies that
\begin{equation}\label{eq_phi_defin}
x_u*x_v=\bigl[\varphi(u,v)x_{uv},u\varphi(u,v)x_{uv}\bigr]=\varphi(u,v)[x_{uv},ux_{uv}]
\end{equation}
for some element~$\varphi(u,v)\in V$, satisfying
\begin{gather}
\varphi(u,v)\equiv \varphi(v,u) \pmod{\langle u,v\rangle},\label{eq_phi_cond1}\\
\varphi(u,u)\equiv e\pmod{\langle u\rangle},\label{eq_phi_cond2}\\
\varphi(e,u)\equiv\varphi(u,e)\equiv e\pmod{\langle u\rangle},\label{eq_phi_cond3}
\end{gather}
where by $\langle v_1,\ldots,v_k\rangle$ we denote the subgroup generated by the elements of $v_1,\ldots,v_k$.

Note that the condition~\eqref{eq_phi_defin} defines the element~$\varphi(u,v)$ not uniquely, but only up to multiplication by an element from the subgroup~$\langle u,v\rangle$ generated by~$u$ and~$v$. Indeed, multiplication by each of the elements~$u$ and~$v$ simply rearranges the elements of the multiset~$x_u*x_v$.

It is easy to see that multiplication in the two-valued group $X$ can be completely recovered from the constructed map $\varphi\colon V\times V\to V$. Let us now write down the condition that a map~$\varphi$ satisfying~\eqref{eq_phi_cond1}--\eqref{eq_phi_cond3} defines indeed an associative multiplication on~$X$. For arbitrary $u,v,w\in V$ we have
\begin{align*}
(x_u*x_v)*x_w&=\varphi(u,v)[x_{uv},ux_{uv}]*x_w\\
{}&=\varphi(u,v)\varphi(uv,w)[x_{uvw},ux_{uvw},vx_{uvw},wx_{uvw}]
\end{align*}
and
\begin{align*}
x_u*(x_v*x_w)&=x_u*\varphi(v,w)[x_{vw},vx_{vw}]\\
{}&=\varphi(v,w)\varphi(u,vw)[x_{uvw},ux_{uvw},vx_{uvw},wx_{uvw}].
\end{align*}
Since the element~$x_{uvw}$ is stabilized by the elements~$e$ and~$uvw$ and is not stabilized by any other elements of the group~$V$, the condition that these two multisets coincide is written as an inclusion
\begin{equation}\label{eq_cocycle}
\varphi(u,v)\varphi(uv,w)\varphi(u,vw)\varphi(v,w)\in\langle u,v,w\rangle
\end{equation}
If instead of a set of representatives~$\{x_v\}$ of sets~$X_v$ we choose another set of representatives~$\{x_v'\}$, where again $x_e'=e$, then $x_v'=\chi(v)x_v$ for some~$\chi(v)\in V$ such that $\chi(e)=e$. Then the map $\varphi$ will be replaced by the map given by the formula
\begin{equation*}
\varphi'(u,v)=\varphi(u,v)\chi(u)\chi(v)\chi(uv).
\end{equation*}
In view of the fact that, as noted above, the value~$\varphi(u,v)$ is defined up to multiplication by an element from the group~$\langle u,v\rangle$, it is more correct to say that~$\varphi$ will be replaced by some map~$\varphi'$ such that
\begin{equation}\label{eq_homological}
\varphi'(u,v)\equiv\varphi(u,v)\chi(u)\chi(v)\chi(uv)\pmod{\langle u,v\rangle}.
\end{equation}

We will call maps $\varphi\colon V\times V\to V$ satisfying the conditions ~\eqref{eq_phi_cond3} and ~\eqref{eq_cocycle} \textit{quasi-cocycles} on the Boolean group~$V$. The quasi-cocycle~$\varphi$ will be called \textit{symmetric} if it satisfies the condition ~\eqref{eq_phi_cond1}, and \textit{involutive} if it satisfies the condition ~\eqref{eq_phi_cond2}. We call two quasi-cocycles~$\varphi$ and~$\varphi'$ \textit{cohomologous} if there exists a map $\chi\colon V\to V$ such that~$\chi(e)=e$ and for all $u,v\in V$ the congruence~\eqref{eq_homological} holds. In particular, a quasi-cocycle is called \textit{cohomologically trivial} if it is cohomologous to the quasi-cocycle that is identically equal to one.

Since for the quasi-cocycle~$\varphi$ in formula~\eqref{eq_phi_defin} the values~$\varphi(u,v)$ are defined only up to multiplication by elements from~$\langle u,v\rangle$, it will be convenient to say that two quasi-cocycles~$\varphi$ and~$\varphi'$ on~$V$ are \textit{equivalent} if for all $u,v\in V$ there is a congruence
$
\varphi(u,v)\equiv \varphi'(u,v)\pmod{\langle u,v\rangle}.
$

The following Proposition immediately follows from the above.

\begin{propos}\label{propos_2gqc}
A finitely generated non-special involutive commutative two-valued group~$X$, which does not contain a single-valued direct factor and consists of elements of order~$1$, $2$ or ~$4$, is completely defined by a Boolean group~$V$ and an involutive symmetric quasi-cocycle $\varphi\colon V\times V\to V$. The two-valued groups corresponding to the pairs $(V,\varphi)$ and~$(V',\varphi')$ are isomorphic if and only if there is an isomorphism of the groups~$V$ and~$V'$ taking the quasi-cocycle~$\varphi$ into a quasi-cocycle cohomologous to the quasi-cocycle~$\varphi'$.
\end{propos}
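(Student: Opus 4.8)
The plan is to regard the statement as an organisation of the constructions carried out just above into the two asserted claims: that $(V,\varphi)$ is a complete set of data for $X$, and that its isomorphism type is governed by cohomology of quasi-cocycles under group isomorphisms of $V$.

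First I would record the forward passage from $X$ to an involutive symmetric quasi-cocycle. The Boolean group $V$ and the partition $X=\bigsqcup_{v\in V}X_v$ are intrinsic to $X$: indeed $V$ is the set of elements of order $\le 2$ with the operation $\cdot$ of Proposition~\ref{propos_bullet}, and $X_v=\{x\in X:x^2=v\}$. By Proposition~\ref{propos_neras} each $X_v$ is nonempty, and by Lemma~\ref{lem_124mult} it is a single $V$-orbit with stabiliser $\{e,v\}$. Fixing representatives $x_v$ with $x_e=e$ and writing $x_u*x_v$ in the form \eqref{eq_phi_defin} defines $\varphi$; conditions \eqref{eq_phi_cond1}, \eqref{eq_phi_cond2} and \eqref{eq_phi_cond3} then merely restate, respectively, the commutativity of $*$, the identity $x_v^2=v$, and the strong identity axiom, while the associativity of $*$ is precisely the quasi-cocycle relation \eqref{eq_cocycle}. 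Since, as noted above, the entire multiplication is recovered from \eqref{eq_phi_defin}, the pair $(V,\varphi)$ determines $X$ up to isomorphism.

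Next I would check the converse, that any involutive symmetric quasi-cocycle $\varphi$ on a Boolean group $V$ produces a two-valued group of the required type. I would take $X=V\sqcup\bigsqcup_{v\ne e}V/\langle v\rangle$ with $V$ acting by left translation, pick representatives $x_v$, define $x_u*x_v$ by \eqref{eq_phi_defin}, and extend to all pairs using the $V$-action together with \eqref{eq_action}; by construction the elements of $X_e=V$ have order $\le 2$ and those of $X_v$, $v\ne e$, have order $4$. Commutativity follows from \eqref{eq_phi_cond1}, the strong identity from \eqref{eq_phi_cond3}, and involutivity from \eqref{eq_phi_cond2}, while associativity is exactly \eqref{eq_cocycle}. \emph{Non-speciality} is the one point meriting attention: for $u,v\ne e$ the two entries $\varphi(u,v)x_{uv}$ and $u\varphi(u,v)x_{uv}$ of \eqref{eq_phi_defin} coincide only if $u$ stabilises $x_{uv}$, which forces $u=e$ or $v=e$, so products of order-$4$ elements are genuine two-element multisets, and by \eqref{eq_action} it suffices to verify this on representatives. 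Finally, every nonidentity $v\in V$ is the square $x_v^2=v$, so by Proposition~\ref{propos_neras} the group has no single-valued direct factor.

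Finally I would treat the isomorphism criterion. Any isomorphism $F\colon X\to X'$ preserves orders, hence restricts to a bijection $V\to V'$ which, because $\cdot$ is defined from $*$, is a group isomorphism $g$; moreover $F(x)^2=F(x^2)$ shows that $F$ carries $X_v$ onto $X'_{g(v)}$. Applying $F$ to \eqref{eq_phi_defin} therefore transports $\varphi$ to the quasi-cocycle computed from the representatives $F(x_v)$ in $X'$, pushed through $g$; passing from these to the chosen representatives $x'_{g(v)}$ replaces it by a cohomologous quasi-cocycle in the sense of \eqref{eq_homological}, so $g_*\varphi$ is cohomologous to $\varphi'$. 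Conversely, given such a $g$, the relation \eqref{eq_homological} lets one adjust representatives so that $g$ intertwines the two multiplication rules \eqref{eq_phi_defin}, yielding an isomorphism of two-valued groups. The main obstacle is not any single computation but the verification in the converse direction that \eqref{eq_phi_defin} together with the $V$-action defines a consistent two-valued multiplication and that the quasi-cocycle conditions are exactly what is needed; once this is in place, the isomorphism statement is forced by the intrinsic nature of $V$ and of the partition into the orbits $X_v$.
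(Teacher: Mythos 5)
Your proposal is correct and follows essentially the same route as the paper, which presents this proposition as an immediate consequence of the preceding discussion in Section~\ref{section_124} (the partition into the orbits $X_v$, the definition of $\varphi$ via~\eqref{eq_phi_defin}, the derivation of~\eqref{eq_phi_cond1}--\eqref{eq_cocycle} from the axioms, and the change-of-representatives relation~\eqref{eq_homological}). You merely make explicit the routine verifications (the converse construction, non-speciality, and the transport of $\varphi$ under an isomorphism) that the paper leaves to the reader.
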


The terminology described above has the following origin. Let $G$ be a group and~$A$ be an abelian group in additive notation. Recall the definition of cohomology groups $H^*(G;A)$, see ~\cite[Chapter~III, \S 1]{Bro87}. 

For each $k\ge 0$ consider the group $\mathcal{C}^k(G;A)$ consisting of the maps
$$
\varphi\colon \underbrace{G\times\cdots\times G}_{k}\to A
$$
such that $\varphi(g_1,\ldots,g_k)=0$ if at least one of the elements~$g_i$ is equal to~$e$. The differential $\delta\colon \mathcal{C}^k(G;A)\to \mathcal{C}^{k+1}(G;A)$ is defined by the formula
\begin{multline*}
(\delta\varphi)(g_1,g_2,\ldots,g_{k+1})=\varphi(g_2,\ldots,g_{k+1})+
\sum_{i=1}^k(-1)^i\varphi(g_1,\ldots,g_ig_{i+1},\ldots,g_{k+1})\\{}+(-1)^{k+1}\varphi(g_1,\ldots,g_k).
\end{multline*}
The maps~$\varphi$ that lie in the kernel of this mapping are called $k$-\textit{cocycles} on the group~$G$ with values in~$A$. The cohomology group $H^k(G;A)$ is by definition the quotient group of the group of $k$-cocycles on~$G$ with values in~$A$ by the subgroup~$\delta\mathcal{C}^{k-1}(G;A)$.

If the group~$A$ is Boolean and we use the multiplicative notation for it, then the equation of the $2$-cocycle can be written as
\begin{equation}\label{eq_cocycle_strict} 
\varphi(u,v)\varphi(uv,w)\varphi(u,vw)\varphi(v,w)=e
\end{equation}
for all $u,v,w\in G$. Moreover, two cocycles~$\varphi$ and~$\varphi'$ are cohomologous if and only if there exists a map $\chi \colon G\to A$ such that $\chi(e)=e$ and
\begin{equation*}
\varphi'(u,v)=\varphi(u,v)\chi(u)\chi(v)\chi(uv)
\end{equation*}
for all $u,v\in G$.
Thus, the above definitions for quasi-cocycles are weaker versions of these conditions.

Proposition ~\ref{propos_2gqc} reduces our classification problem of finitely generated non-special involutive commutative two-valued groups that do not contain single-valued direct factors and consist of elements of orders~$1$, $2$ and~$4$ to the problem of classification of involutive symmetric quasi-cocycles on Boolean groups up to cohomology. On the set~$\CH(V)$ of cohomology classes of involutive symmetric quasi-cocycles on~$V$ the multiplication operation defines the structure of an abelian group. Our task is to calculate this group.
Before solving this problem, we note that for genuine cocycles satisfying the equation ~\eqref{eq_cocycle_strict} exactly, a similar problem is trivial.

\begin{lem}\label{lem_cocycle}
Let $V$ and~$W$ be Boolean groups and $\varphi\colon V\times V\to W$ be an involutive symmetric cocycle, that is, a map satisfying the cocycle equation~\eqref{eq_cocycle_strict} and the equations
\begin{gather}
\varphi(u,v)= \varphi(v,u),\label{eq_cond1_strict}\\
\varphi(u,u)= \varphi(e,u)=\varphi(u,e)=e.\label{eq_cond2_strict}
\end{gather}
Then $\varphi$ is cohomologically trivial, that is, $\varphi=\delta\chi$ for some $\chi\colon V\to W$ such that $\chi(e)=e$.
\end{lem}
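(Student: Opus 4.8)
The plan is to give a conceptual proof via the abelian extension associated with $\varphi$. The key observation is that the three hypotheses on $\varphi$ say exactly that it is the cocycle of a central extension of $V$ by $W$ whose total space is again a Boolean group, and that such an extension splits for the trivial reason that any subspace of an $\F_2$-vector space admits a complement.

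Concretely, first I would form the set $E=W\times V$ equipped with the twisted multiplication
$$(w_1,u_1)\cdot(w_2,u_2)=\bigl(w_1w_2\,\varphi(u_1,u_2),\,u_1u_2\bigr).$$
The cocycle equation~\eqref{eq_cocycle_strict} is precisely associativity, the symmetry~\eqref{eq_cond1_strict} gives commutativity, and the normalization~\eqref{eq_cond2_strict} makes $(e,e)$ a two-sided identity. The crucial use of involutivity $\varphi(u,u)=e$ is the computation
$$(w,u)\cdot(w,u)=\bigl(w^2\varphi(u,u),\,u^2\bigr)=(e,e),$$
which shows that every element of $E$ has order dividing~$2$; together with commutativity this means $E$ is a Boolean group, that is, a vector space over~$\F_2$.

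Next I would note that $W\cong\{(w,e):w\in W\}$ is a subgroup of~$E$ and that the second projection $\pi\colon E\to V$, $(w,u)\mapsto u$, is a surjective homomorphism with kernel this copy of~$W$; thus $0\to W\to E\xrightarrow{\pi} V\to 0$ is a short exact sequence of $\F_2$-vector spaces. Since any subspace of a vector space has a complement, I can choose a linear section, i.e.\ a group homomorphism $\sigma\colon V\to E$ with $\pi\sigma=\mathrm{id}$; writing $\sigma(u)=(\chi(u),u)$ defines a map $\chi\colon V\to W$ with $\chi(e)=e$. Expanding the homomorphism identity $\sigma(u)\cdot\sigma(v)=\sigma(uv)$ in the first coordinate gives $\chi(u)\chi(v)\varphi(u,v)=\chi(uv)$, and since $W$ is Boolean this rearranges to $\varphi(u,v)=\chi(u)\chi(v)\chi(uv)$, which is exactly $\varphi=\delta\chi$.

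There is no substantial obstacle once the extension is set up; the only point requiring care is that \emph{symmetry alone} would not suffice, since a symmetric but non-involutive cocycle can produce a non-split extension (for instance $C_4$ over $C_2$). It is precisely the hypothesis $\varphi(u,u)=e$ that forces $E$ to be elementary abelian and hence the sequence to split. I would also remark that the argument is insensitive to the dimension of~$V$: in the infinite-dimensional case the existence of a complementary subspace, and hence of the section~$\sigma$, follows from the usual basis-extension argument for vector spaces.
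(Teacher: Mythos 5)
Your proof is correct, but it takes a genuinely different route from the paper. The paper reduces to the case $W=\F_2$ and argues inside the cohomology ring $H^*(V;\F_2)=H^*\bigl((\mathbb{RP}^{\infty})^n;\F_2\bigr)=\F_2[t_1,\ldots,t_n]$: the class of a $2$-cocycle has the form $\sum\lambda_i t_i^2+\sum_{i<j}\mu_{ij}t_it_j$ with $\lambda_i=\varphi(b_i,b_i)$ and $\mu_{ij}=\varphi(b_i,b_j)+\varphi(b_j,b_i)$, and both families of coefficients vanish by the symmetry and involutivity hypotheses. You instead invoke the extension-theoretic meaning of $H^2$: you check that \eqref{eq_cocycle_strict}, \eqref{eq_cond1_strict} and $\varphi(e,u)=e$ make $E=W\times V$ with the twisted product an abelian group, that $\varphi(u,u)=e$ (together with $W$, $V$ Boolean) forces every element of $E$ to be an involution, so $E$ is an $\F_2$-vector space, and then split the resulting exact sequence $0\to W\to E\to V\to 0$ by choosing a linear complement; unwinding the section gives exactly $\varphi=\delta\chi$. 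All the verifications you sketch (associativity $\Leftrightarrow$ cocycle equation, commutativity $\Leftrightarrow$ symmetry, identity $\Leftrightarrow$ normalization, $\sigma(e)=(e,e)$ giving $\chi(e)=e$) go through. What your approach buys: it is more elementary (no reference to the cohomology of $(\mathbb{RP}^\infty)^n$ or to cup products), it isolates precisely where involutivity is used (your $C_4$-over-$C_2$ counterexample is apt), and it applies verbatim when $\dim V$ is infinite — a point worth having, since the paper's proof is written for $\dim V=n$ finite while the lemma is later invoked (via Lemma~\ref{lem_criterium_delta} inside Lemma~\ref{lem_restrict_finite}) for infinite Boolean groups as well. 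What the paper's computation buys in exchange is an explicit description of the full group $H^2(V;\F_2)$ and of which monomials detect the two hypotheses, which is closer in spirit to the quasi-cocycle calculations of Section~\ref{section_quasi}.
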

\begin{proof}
Obviously, it suffices to prove the Lemma in the case of $W=\F_2$. (In this case, of course, we will write ~$\F_2$ in additive notation.) Let $\dim V=n$ and $b_1,\ldots,b_n$ be a basis of the Boolean group~$V$ as a vector space over~$\F_2$. Then
$$
H^*(V;\F_2)=H^*\left((\mathbb{RP}^{\infty})^n;\F_2\right)=\F_2[t_1,\ldots,t_n],
$$
where $t_1,\ldots,t_n$ is the basis of the space $H^1(V;\F_2)=\Hom(V,\F_2)$ dual to the basis $b_1,\ldots,b_n$. Therefore, the cohomology class of the cocycle~$\varphi$ has the form $\sum\lambda_{i}t_i^2+\sum_{i< j}\mu_{ij}t_it_j$ for some $\lambda_i,\mu_{ij}\in\F_2$. It immediately follows from the definition of multiplication in cohomology that $\lambda_i=\varphi(b_i,b_i)$ and $\mu_{ij}=\varphi(b_i,b_j)+\varphi(b_j,b_i)$ Thus, from equations ~\eqref{eq_cond1_strict} and ~\eqref{eq_cond2_strict} it follows that the cohomology class of the cocycle~$\varphi$ is zero.
\end{proof}

Let us now give a non-trivial example of an involutive symmetric quasi-cocycle. Let $\B=\{b_i\}_{i\in\Lambda}$ be a basis of a Boolean group~$V$ (as a vector space over~$\F_2$). Then each element of the group~$V$ can be uniquely written in the form $b_I=\prod_{i\in I}b_i$, where $I\subseteq \Lambda$ is a finite subset; in particular, $b_{\varnothing}=e$. Let's put
\begin{equation}\label{eq_phiB}
\varphi_{\B}(b_I,b_J)=b_{I\cap J}.
\end{equation}
The conditions ~\eqref{eq_phi_cond1}--\eqref{eq_cocycle} can be verified directly, so that $\varphi_{\B}$ is indeed involutive symmetric quasi-cocycle on~$V$. Our main result about the group~$\CH(V)$ is as follows.

\begin{propos}\label{propos_quasi}
\textnormal{(1)} If $\dim V\le 2$, then every involutive symmetric quasi-cocycle on~$V$ is trivial, and thus the group~$\CH(V)$ is trivial.

\textnormal{(2)} If $3\le\dim V<\infty$, then the quasi-cocycle~$\varphi_{\B}$ defined by the formula ~\eqref{eq_phiB} is cohomologically nontrivial. In this case every involutive symmetric quasi-cocycle on~$V$ is cohomologous to either the trivial quasi-cocycle or the quasi-cocycle~$\varphi_{\B}$. (In particular, quasi-cocycles~$\varphi_{\B}$ corresponding to different bases~$\B$ are cohomologous to each other.) Thus, $\CH(V)$ is the cyclic group of order~$2$.
\end{propos}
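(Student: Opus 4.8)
The plan is to treat the set of involutive symmetric quasi-cocycles on $V$ modulo cohomology as a computation in the mod-$2$ cohomology ring $H^*(V;\F_2)=\F_2[t_1,\dots,t_n]$, where $t_1,\dots,t_n\in\Hom(V,\F_2)$ is the basis dual to a basis $b_1,\dots,b_n$ of $V$; the whole point is that, compared with Lemma~\ref{lem_cocycle}, the defining relations are relaxed modulo the subgroups $\langle u,v\rangle$, and I must locate exactly the extra freedom this relaxation creates. For part~(1) the argument is elementary: if $\dim V\le 2$, then for any $u,v$ either one of them is $e$ or $u=v$ (forcing $\varphi(u,v)\equiv e$ by \eqref{eq_phi_cond3} or \eqref{eq_phi_cond2}), or $u,v$ are independent, whence $\langle u,v\rangle=V$ and $\varphi(u,v)\equiv e$ trivially. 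Hence every such quasi-cocycle is equivalent, and so cohomologous, to the trivial one, and $\CH(V)=0$.

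For $\dim V\ge 3$ the heart of the matter is the nontriviality of $\varphi_\B$, which I would prove by exhibiting an explicit $\F_2$-valued invariant. Since restriction to a subgroup sends coboundaries to coboundaries and sends $\varphi_\B$ to the analogous quasi-cocycle, it suffices to work on a fixed three-dimensional $U=\langle b_1,b_2,b_3\rangle\cong C_2^3$. For each two-dimensional plane $P\subset U$ let $\ell_P\in\Hom(U,\F_2)$ be the unique nonzero functional vanishing on $P$ (so $\ell_P(x)=1$ exactly when $x\notin P$), and set
\[
\Omega(\varphi)=\sum_{\{u,v\}}\ell_{\langle u,v\rangle}\bigl(\varphi(u,v)\bigr)\in\F_2,
\]
the sum being over all unordered pairs of independent elements of $U$. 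This is well defined on equivalence classes because changing the representative $\varphi(u,v)$ within $\langle u,v\rangle$ does not change $\ell_{\langle u,v\rangle}(\varphi(u,v))$, and it is manifestly linear in $\varphi$. The key computation is that $\Omega$ vanishes on coboundaries: for $\varphi(u,v)=\delta\chi(u,v)=\chi(u)\chi(v)\chi(uv)$ this value equals a quantity $\sigma_P$ depending only on $P=\langle u,v\rangle$, so the three pairs inside $P$ contribute $3\,\ell_P(\sigma_P)=\ell_P(\sigma_P)$; summing the resulting seven terms over the seven planes, every coordinate of every value $\chi(a)$ occurs an even number of times and the total is $0$. A direct evaluation gives $\Omega(\varphi_\B)=1$ (only the plane $\{b_1b_2,\,b_1b_3,\,b_2b_3\}$ contributes). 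Hence $\varphi_\B$ is not cohomologous to the trivial quasi-cocycle when $\dim V\ge 3$; since $\Omega$ makes no reference to a basis, the same computation shows $\varphi_{\B'}$ is nontrivial for every basis $\B'$, which together with the classification below yields that all the $\varphi_{\B'}$ are mutually cohomologous.

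For the upper bound I would first symmetrize the representatives so that $\varphi(u,v)=\varphi(v,u)$ genuinely, and then reduce $\varphi$, up to cohomology, to a genuine symmetric cocycle $\tilde\varphi$ (satisfying \eqref{eq_cocycle_strict} and \eqref{eq_cond1_strict} exactly). Granting this reduction, the coordinatewise version of the computation in the proof of Lemma~\ref{lem_cocycle} gives the class of $\tilde\varphi$: symmetry kills all cross terms $t_it_j$, so the class is $\sum_i t_i^2\otimes w_i$ with $w_i=\tilde\varphi(b_i,b_i)$, and quasi-involutivity at $b_i$ forces $w_i\in\{e,b_i\}$. Thus $\tilde\varphi$ is cohomologous to $\varphi_S(u,v)=b_{\,S\cap\mathrm{supp}(u)\cap\mathrm{supp}(v)}$ for $S=\{i:w_i=b_i\}$. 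Finally, quasi-involutivity is a cohomology invariant (a coboundary changes $\varphi(u,u)$ by $\chi(u)^2\chi(e)=e$ modulo $\langle u\rangle$), so it holds for $\varphi_S$; the requirement $\varphi_S(u,u)=b_{S\cap\mathrm{supp}(u)}\in\langle u\rangle$ for all $u$ means $S\cap T\in\{\varnothing,T\}$ for every $T\subseteq\{1,\dots,n\}$, which forces $S=\varnothing$ or $S=\{1,\dots,n\}$, i.e. $\varphi\sim$ trivial or $\varphi\sim\varphi_\B$. Combined with the nontriviality above, this gives $\CH(V)\cong C_2$.

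The main obstacle is the reduction in the last paragraph, namely producing a genuine symmetric cocycle in the cohomology class of an arbitrary involutive symmetric quasi-cocycle. Concretely, writing $\mathcal E=\delta\varphi$ (a symmetric $V$-valued $3$-cocycle taking values in $\langle u,v,w\rangle$), one must solve $\mathcal E=\delta c$ with $c(u,v)\in\langle u,v\rangle$; since each coordinate $\mathcal E_k=\delta\varphi_k$ is already a genuine coboundary, the obstruction lies entirely in realizing the correction $c$ inside the subgroups $\langle u,v\rangle$, and I expect to handle it by an inductive normalization of $\varphi$ on the basis $b_1,\dots,b_n$, using the quasi-cocycle equation \eqref{eq_cocycle} to propagate the values and control the residual ambiguity.
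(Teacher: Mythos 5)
Your part (1), your non-triviality invariant, and your endgame are all correct, and the first two essentially reproduce the paper's Lemma~\ref{lem_dim3}: the quantity $\ell_{\langle u,v\rangle}\bigl(\varphi(u,v)\bigr)$ is exactly the paper's $\lambda(u,v)$, it depends only on the plane $\langle u,v\rangle$, and your sum over the $21$ unordered independent pairs equals (mod $2$) the paper's sum $\sigma(\lambda)$ over the $7$ planes; the verification that $\Omega$ kills coboundaries and that $\Omega(\varphi_{\B})=1$ matches the paper's computation. Your classification of \emph{genuine} symmetric cocycles via $H^*(V;\F_2)\otimes V$ (class $\sum_i t_i^2\otimes w_i$ with $w_i=\tilde\varphi(b_i,b_i)\in\{e,b_i\}$, so $\tilde\varphi\sim\varphi_S$, and quasi-involutivity forcing $S=\varnothing$ or $S=\{1,\dots,n\}$) is a clean way to finish \emph{once a genuine cocycle representative is in hand}.

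The gap is precisely the reduction you flag at the end and do not carry out: that every involutive symmetric quasi-cocycle is cohomologous, via a correction $c$ with $c(u,v)\in\langle u,v\rangle$, to a genuine symmetric cocycle. This is not a routine normalization --- it is where the entire difficulty of part (2) lives, and ``inductive normalization on the basis, which I expect to handle'' is not an argument. Concretely you must solve $\delta c=\delta\varphi$ subject to $c(u,v)\in\langle u,v\rangle$; writing $c(u,v)=u^{\lambda(u,v)}v^{\lambda(v,u)}$ turns this into the coupled exponent system that the paper solves inside Lemma~\ref{lem_restrict_3_inj} (the equations \eqref{eq_lmn1}--\eqref{eq_lmn3} for $\lambda,\mu,\nu$). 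That system is genuinely delicate: the individual equations can only be read off for non-degenerate triples (otherwise the correction is absorbed into $\langle u,v,w\rangle$ and carries no information), and the hypothesis $\dim V\ge 3$ enters essentially, to extend the additivity relation \eqref{eq_rho} to triples with $a_1a_2a_3=e$ before one can split $\rho(u,u')=\zeta(u)+\zeta(u')$ and assemble the correcting cochain. The paper circumvents your global rectification by proving instead that restriction to a $3$-dimensional subgroup is injective on $\CH$ (Lemma~\ref{lem_restrict_3_inj}) and then doing a direct count in dimension~$3$; but that injectivity lemma is exactly where the constrained lifting problem you have postponed is actually solved, so until you supply this step your proof of part (2) is incomplete.
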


We will postpone the proof of this Proposition until the next section, and now we will show how Theorem ~\ref{thm_124} follows from it. To do this, we calculate the quasi-cocycles~$\varphi$ corresponding to the two-valued groups~$X^{\ba}_{n\times 4}$ and~$X^{\bu}_n$.

\smallskip

1. Consider the two-valued group $X=X^{\bu}_{n}=(C_2^{n}\times C_2^{n})/\iota_{\bu}$, where $\iota_{\bu}(a,b)=(a,ab)$. The group of elements of order~$2$ in the two-valued group~$X$ is exactly the subgroup $V=\{e\}\times C_2^n\subset C_2^n\times C_2^n$ consisting of the elements fixed by the involution~$\iota_{\bu}$. Identifying $v$ with $(e,v)$, we identify~$C_2^n$ with~$V$. For any $v\in V=C_2^n$, the subset $X_v\subset X$ consists of the images under the projection $\pi\colon C_2^n\times C_2^n\to X$ of all elements of the form $(v,w)$, where $w\in C_2^n$. Let us choose a representative~$x_v=\pi(v,e)=\pi(v,v)$ in this subset. Then
$$
x_u*x_v=[x_{uv},ux_{uv}],
$$
therefore, the two-valued group~$X^{\bu}_n$ corresponds to the trivial quasi-cocycle on the group~$C_2^n$.

\smallskip

2. Consider the two-valued group $X=X^{\ba}_{n\times 4}=C_4^n/\iota_{\ba}$, where $\iota_{\ba}(a)=a^{-1}$. Let $\{a_i\}_{i\in\Lambda}$ be the system of standard generators of the direct product~$C_4^n$. The group of order-$2$ elements in~$X$ is exactly the subgroup $V= C_2^n\subset C_4^n$ consisting of the elements fixed by the involution~$\iota_{\ba}$. The basis $\B=\{b_i\}_{i\in\Lambda}$ of the Boolean group~$V$ consists of the elements~$b_i=a_i^2$. An arbitrary element of the group~$V$ has the form $b_I=\prod_{i\in I}b_i$, where $I\subseteq \Lambda$ is a finite subset. As a representative~$x_{b_I}$ of the subset $X_{b_I}\subset X$ we take the image of the element $a_I=\prod_{i\in I}a_i$ under the projection $\pi\colon C_4^n\to X$. We have $a_Ia_J=b_{I\cap J}a_{I\Delta J}$, $a_I^{-1}a_J=b_{I\setminus J}a_{I\Delta J}$, where $I\Delta J$ is the symmetric difference between the sets~$I$ and~$J$. Hence
$$
x_{b_I}*x_{b_J}=[b_{I\cap J}x_{b_{I\Delta J}},b_{I\setminus J}x_{b_{I\Delta J}}]=b_{I\cap J}[x_{b_{I\Delta J}},b_Ix_{b_{I\Delta J}}].
$$
Thus, the two-valued group~$X_{n\times 4}^{\ba}$ corresponds to the quasi-cocycle~$\varphi_{\B}$ on the group~$C_2^n$.

\smallskip

Theorem ~\ref{thm_124} immediately follows from the above calculation and Propositions ~\ref{propos_2gqc} and ~\ref{propos_quasi}. Recall also that an explicit isomorphism between two-valued groups~$X^{\ba}_{4,4}$ and~$X^{\bu}_2$ was described in the Example~\ref{ex_iso}.

\section{Quasi-cocycles on Boolean groups}\label{section_quasi}

In this section we will prove the Proposition ~\ref{propos_quasi}. When $\dim V\le 2$ from the fact that $\langle u,v\rangle =V$ for any pair of distinct nontrivial elements~$u,v\in V$, it immediately follows that every involutive symmetric quasi-cocycle on~$V$ is equivalent and, therefore, cohomologous to the trivial one. Thus, statement~(1) of Proposition~\ref{propos_quasi} is proved.

We divide the proof of statement~(2) into several Lemmas.

\begin{lem}\label{lem_dim3}
Let $V$ be the Boolean group of $8$ elements, that is, a three-dimensional vector space over~$\mathbb{F}_2$. Then $\CH(V)$ is a cyclic group of order two. If $\B$ is a basis of the Boolean group~$V$, then the cohomology class of the quasi-cocycle~$\varphi_{\B}$ is a generator of the group~$\CH(V)$.
\end{lem}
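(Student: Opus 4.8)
The plan is to turn the classification of involutive symmetric quasi-cocycles on the eight-element Boolean group $V$ into a finite linear-algebra problem over $\F_2$, exploiting that the value $\varphi(u,v)$ only matters modulo $\langle u,v\rangle$. For $\dim V=3$ and distinct nontrivial $u,v$ the subgroup $\langle u,v\rangle$ is a \emph{plane} (two-dimensional subspace) with $V/\langle u,v\rangle\cong\F_2$, while the normalization \eqref{eq_phi_cond3} and involutivity \eqref{eq_phi_cond2} force all values with $u=e$, $v=e$, or $u=v$ to be trivial. Thus, up to equivalence, an involutive symmetric quasi-cocycle is just an $\F_2$-valued function on the $21$ unordered pairs of distinct nontrivial elements (symmetry \eqref{eq_phi_cond1} being automatic). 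First I would record that the cocycle condition \eqref{eq_cocycle} descends to these equivalence classes, and that a triple $(u,v,w)$ gives a nontrivial condition only when it spans a plane $P$: if it spans all of $V$ the right-hand side of \eqref{eq_cocycle} is $V$ and the condition is vacuous, and if it spans a line it involves only trivial values.

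Next I would carry out the within-plane analysis. When $u,v,w$ lie in a plane $P$, all four pairs in \eqref{eq_cocycle} lie in $P$, and reducing modulo $P$ turns \eqref{eq_cocycle} into the ordinary $2$-cocycle equation for $\varphi|_P$ with values in $V/P\cong\F_2$. A direct check over the triples $(a,a,b)$, $(a,b,b)$, $(a,b,c)$, where $a,b,c$ are the nontrivial elements of $P$ and $c=ab$, shows this equation is equivalent to the three values $\varphi(a,b)$, $\varphi(a,c)$, $\varphi(b,c)$ coinciding in $V/P$; the same follows from Lemma~\ref{lem_cocycle}, since these are exactly the involutive symmetric $\F_2$-cocycles on $P$, all of which are of the form $\delta\chi_P$, for which the three values agree. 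Writing $\eta(P)\in V/P\cong\F_2$ for this common value, and noting that every prescription of the seven numbers $\eta(P)$ is realized by an actual quasi-cocycle (take trivial values on degenerate pairs and any lift otherwise; \eqref{eq_cocycle} then holds plane by plane and is vacuous across planes), I obtain that the group $Z$ of involutive symmetric quasi-cocycles modulo equivalence is canonically $\F_2^{7}$, with one coordinate $\eta(P)$ per plane of $V$.

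It then remains to factor out coboundaries. For $\varphi=\delta\chi$ one has $\varphi(a,b)\equiv\chi(a)\chi(b)\chi(c)\pmod P$, and the same product appears for $\{a,c\}$ and $\{b,c\}$, so $\eta(\delta\chi)(P)=\ell_P\bigl(\chi(a)\chi(b)\chi(c)\bigr)$, where $\ell_P\colon V\to\F_2$ is the nonzero functional with kernel $P$. Summing over all planes and regrouping by $g\in V\setminus\{e\}$ gives $\sum_P\eta(\delta\chi)(P)=\sum_{g\ne e}\bigl(\sum_{P\ni g}\ell_P\bigr)\!\bigl(\chi(g)\bigr)$; since the nonzero functionals vanishing at a fixed $g$ are the three nonzero elements of a plane in $V^{*}$, they sum to zero, so the total $\Sigma(\eta)=\sum_P\eta(P)$ vanishes on every coboundary. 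Hence $\Sigma$ descends to a map $\CH(V)=Z/B\to\F_2$. The main obstacle — the only genuinely computational step — is the reverse inequality $\dim B\ge 6$, i.e. that $\Sigma$ is the \emph{only} relation killing $B$: I would solve the dual system $\sum_{P\ni g}\lambda_P\,\ell_P=0\in V^{*}$ for all $g\ne e$, and check, by taking $g$ to be the three basis vectors and one further element, that it forces all $\lambda_P$ to be equal. Thus the annihilator of $B$ is one-dimensional, spanned by $\Sigma$, and $\CH(V)\cong\F_2$ is cyclic of order two.

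Finally I would identify the class of $\varphi_{\B}$. By \eqref{eq_phiB}, for distinct nonempty $I,J$ one has $\varphi_{\B}(b_I,b_J)=b_{I\cap J}$, which lies in $\langle b_I,b_J\rangle=\{e,b_I,b_J,b_{I\Delta J}\}$ precisely when $I\cap J=\varnothing$, $I\subseteq J$, or $J\subseteq I$; hence $\eta(P)=1$ exactly for planes whose two generators overlap with neither contained in the other. Running over the seven planes of $V=\langle b_1,b_2,b_3\rangle$, this occurs only for the plane with nontrivial elements $b_{12},b_{13},b_{23}$, so $\Sigma(\varphi_{\B})=1$. Therefore $\varphi_{\B}$ is cohomologically nontrivial and generates $\CH(V)\cong C_2$, completing the proof.
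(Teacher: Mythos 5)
Your proof is correct and follows essentially the same route as the paper: both reduce an involutive symmetric quasi-cocycle, up to equivalence, to an $\F_2$-valued function on the seven planes of $V$ (equivalently, on $V^*\setminus\{e\}$), identify the total sum over planes as the complete cohomology invariant, and verify that $\varphi_{\B}$ has sum $1$ because the only plane it detects is the one with nontrivial elements $b_1b_2$, $b_1b_3$, $b_2b_3$. The only localized difference is in proving completeness of the sum invariant: the paper exhibits the elementary coboundaries $\chi_{a,b}$ and checks that each flips exactly two coordinates, whereas you compute the annihilator of the coboundary subgroup by solving the dual linear system $\sum_{P\ni g}\lambda_P\ell_P=0$ --- two dual formulations of the same six-dimensionality computation.
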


\begin{proof}
Let $\varphi$ be an involutive symmetric quasi-cocycle on~$V$. If $u$ and~$v$ are distinct nontrivial elements of~$V$, then there are only two cosets of the group~$V$ by the subgroup~$\langle u,v\rangle$. Set $\lambda(u,v)=0$ if $\varphi(u,v)\in \langle u,v\rangle$, and $\lambda(u,v)=1$ if $\varphi(u,v)\notin \langle u,v\rangle$. Then $\lambda(u,v)=\lambda(v,u)$. The cocycle condition~\eqref{eq_cocycle} for the triple~$(u,v,uv)$ will be written as $\lambda(u,v)=\lambda(v,uv)$, which implies that the value~$\lambda(u,v)$ depends only on the two-dimensional subspace $\langle u,v\rangle\subset V$ and not on the specific choice of its basis~$u,v$. Moreover, two-dimensional subspaces of the vector space~$V$ are in a natural one-to-one correspondence with non-trivial elements of the dual vector space $V^*=\Hom(V,\F_2)$. Therefore, we get a well-defined map $\lambda\colon V^*\setminus\{e\}\to \F_2$. It is easy to see that the equivalent quasi-cocycles~$\varphi$ lead to identical maps~$\lambda$ and the quasi-cocycle~$\varphi$ can be uniquely recovered from the map~$\lambda$ up to equivalence.

Let us now prove that to any mapping $\lambda\colon V^*\setminus\{e\}\to \F_2$ there corresponds an equivalence class of quasi-cocycles. To do this, we first calculate the mapping $\lambda_{\B}$ corresponding to the quasi-cocycle~$\varphi_{\B}$, where $\B=\{b_1,b_2,b_3\}$ is some basis of the Boolean group~$V$. Direct calculation shows that $\lambda_{\B}(\xi_{\B})=1$, where $\xi_{\B}\in V^*$ is an element such that $\xi_{\B}(b_1)=\xi_{\B}(b_2)=\xi_{\B}(b_3)=1$, and $\lambda_{\B}(\xi)=0$ for all $\xi\ne\xi_{\B}$. The quasi-cocycles on~$V$ form a group with respect to pointwise multiplication, and the product of quasi-cocycles corresponds to the pointwise addition of the corresponding maps~$\lambda$. Thus, it is easy to see that by multiplying quasi-cocycles~$\varphi_{\B}$ for different~$\B$, we can get as $\lambda$ any map~$\lambda\colon V^*\setminus\{e\}\to \F_2$. Since the set~$V^*\setminus\{e\}$ consists of $7$ elements, there are exactly $128$ equivalence classes of quasi-cocycles on~$V$.

Let us now find out how the cohomological equivalence of two cocycles is rewritten in terms of maps~$\lambda$. For map $\lambda\colon V^*\setminus\{e\}\to \F_2$ we set
$$
\sigma(\lambda)=\sum_{\xi\in V^*\setminus\{e\}}\lambda(\xi).
$$
Let us prove that the quasi-cocycles corresponding to the maps~$\lambda$ and~$\lambda'$ are cohomologous if and only if $\sigma(\lambda)=\sigma(\lambda')$.

For two elements $a,b\in V\setminus\{e\}$, consider the map $\chi_{a,b}\colon V\to V$ such that $\chi_{a,b}(a)=b$ and $\chi_{a,b}(u)=e$ under $u\ne a$. Consider two quasi-cocycles cohomologous to each other by means of the map~$\chi_{a,b}$:
$$
\varphi'(u,v)=\varphi(u,v)\chi_{a,b}(u)\chi_{a,b}(v)\chi_{a,b}(uv),
$$
and compare the maps corresponding to them $\lambda,\lambda'\colon  V^*\setminus\{e\}\to \F_2$. Direct calculation shows that
$$
\lambda'(\xi)=\left\{
\begin{aligned}
&\lambda(\xi)+1,&&\text{if $\xi(a)=0$ and $\xi(b)=1$},\\
&\lambda(\xi),&&\text{otherwise}. 
\end{aligned}
\right.
$$
Thus, if $a=b$ then the quasi-cocycles~$\varphi$ and~$\varphi'$ are equivalent and $\lambda=\lambda'$, and if $a\ne b$ then $\lambda'(\xi)\ne \lambda(\xi)$ for exactly two elements $\xi\in V^*\setminus\{e\}$. Therefore, firstly, $\sigma(\lambda')=\sigma(\lambda)$, and secondly, by choosing a pair~$(a,b)$, we can change the map~$\lambda$ on any two predefined elements of the set~$V^*\setminus\{e\}$. Since any map $\chi\colon V\to V$ can be represented as a pointwise product of mappings of the form~$\chi_{a,b}$ for different pairs~$(a,b)$, we obtain that the equivalence classes of the quasi-cocycles corresponding to the maps $\lambda,\lambda'\colon V^*\setminus\{e\}\to\F_2$ lie in the same cohomology class if and only if~$\sigma(\lambda)=\sigma(\lambda')$. Therefore, $\CH(V)$ is the cyclic group of order~$2$. Since $\sigma(\lambda_{\B})=1$ for any basis~$\B$ of the group~$V$, all quasi-cocycles~$\varphi_{\B}$ are cohomologous and their cohomology class is generating the group ~$\CH(V)$.
\end{proof}

Let $V$ be a Boolean group and $\varphi\colon V\times V\to V$ be a quasi-cocycle. If $U\subset V$ is a subgroup and $\Pi\colon V\to U$  is a projection, then
the map
$$
U\times U\subset V\times V\xrightarrow{\varphi}V\xrightarrow{\Pi} U
$$
we will call \textit{restriction} of the quasi-cocycle~$\varphi$ to the subgroup~$U$ along the projection~$\Pi$. It is easy to see that this map is a quasi-cocycle on~$U$. Moreover, if the original quasi-cocycle was involutive and symmetric, then the same will be true for the resulting quasi-cocycle on~$U$. The operation of taking restriction along the projection~$\Pi$ induces a homomorphism $\Pi_*\colon\CH(V)\to\CH(U)$.

\begin{lem}\label{lem_restrict_3_sur}
Let $V$ be a finite Boolean group such that $\dim V> 3$, and $\Pi\colon V\to U$ be a projection onto some subgroup $U\subset V$ with $\dim U=3$. Then the homomorphism $\Pi_*\colon \CH(V)\to\CH(U)$ is surjective.
\end{lem}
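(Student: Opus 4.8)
The plan is to exhibit a single explicit class in $\CH(V)$ whose image under $\Pi_*$ is the generator of $\CH(U)$. By Lemma~\ref{lem_dim3}, the group $\CH(U)$ is cyclic of order two and is generated by the class of the quasi-cocycle $\varphi_{\B_U}$ associated with any basis $\B_U$ of $U$. Hence surjectivity of $\Pi_*$ reduces to producing one element of $\CH(V)$ that maps onto this generator, and the natural candidate is the class of a standard quasi-cocycle $\varphi_{\B}$ for a cleverly chosen basis $\B$ of $V$ (recall $\varphi_{\B}$ is involutive and symmetric by the verification following formula~\eqref{eq_phiB}, so its class indeed lies in $\CH(V)$).

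The key observation is that the quasi-cocycles $\varphi_{\B}$ restrict transparently once the basis is adapted to the projection. Since $\Pi$ is a projection onto $U$, its kernel $W=\ker\Pi$ is a complement of $U$, so that $V=U\oplus W$ as vector spaces over $\F_2$. First I would fix a basis $b_1,b_2,b_3$ of $U$ together with a basis $b_4,\ldots,b_n$ of $W$, and let $\B=\{b_1,\ldots,b_n\}$ be the resulting basis of $V$. With this choice $\Pi(b_i)=b_i$ for $i\le 3$ and $\Pi(b_i)=e$ for $i\ge 4$, so $\Pi$ acts as the obvious coordinate projection killing the last $n-3$ basis vectors.

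Next I would compute the restriction of $\varphi_{\B}$ along $\Pi$. Any two elements $u,v\in U$ are written uniquely as $u=b_I$ and $v=b_J$ with $I,J\subseteq\{1,2,3\}$, and the defining formula~\eqref{eq_phiB} gives $\varphi_{\B}(u,v)=b_{I\cap J}$. Since $I\cap J\subseteq\{1,2,3\}$, this value already lies in $U$, whence $\Pi\bigl(\varphi_{\B}(u,v)\bigr)=b_{I\cap J}$. Therefore the restriction of $\varphi_{\B}$ along $\Pi$ coincides exactly with the quasi-cocycle $\varphi_{\B_U}$ on $U$ for the basis $\B_U=\{b_1,b_2,b_3\}$. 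Consequently $\Pi_*[\varphi_{\B}]=[\varphi_{\B_U}]$, which by Lemma~\ref{lem_dim3} generates $\CH(U)$, and the surjectivity of $\Pi_*$ follows.

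There is essentially no hard obstacle in this argument: once the basis is adapted to the decomposition $V=U\oplus\ker\Pi$, the computation is immediate and the nontriviality is supplied directly by Lemma~\ref{lem_dim3}. The only point requiring a little care is to make sure the basis is chosen so that $\ker\Pi$ is spanned by basis vectors; this is precisely what forces $\varphi_{\B}(u,v)$ to remain inside $U$ for $u,v\in U$ and keeps the restriction equal to $\varphi_{\B_U}$ on the nose rather than merely cohomologous to it.
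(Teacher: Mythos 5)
Your proof is correct and is essentially the paper's own argument: choose a basis of $V$ extending a basis of $U$, observe that the restriction of $\varphi_{\B}$ along $\Pi$ is exactly $\varphi_{\B_U}$, and invoke Lemma~\ref{lem_dim3}. The only cosmetic difference is that you additionally arrange for the complementary basis vectors to span $\ker\Pi$, which is harmless but not needed — $\varphi_{\B}(b_I,b_J)=b_{I\cap J}$ already lies in $U$ whenever $I,J\subseteq\{1,2,3\}$, and $\Pi$ is the identity on $U$.
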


\begin{proof}
Let $\B$ be a basis of the Boolean group~$V$, part of which is a basis~$\mathcal{A}$ of the subgroup~$U$. Then the restriction of the quasi-cocycle~$\varphi_{\B}$ along~$\Pi$ coincides with the quasi-cocycle~$\varphi_{\mathcal{A}}$. This immediately implies the surjectivity of the homomorphism~$\Pi_*$, since according to Lemma~\ref{lem_dim3} the group~$\CH(U)$ is generated by the cohomology class of the quasi-cocycle~$\varphi_{\mathcal{A}}$.
\end{proof}

\begin{lem}\label{lem_restrict_3_inj}
Let $V$ be a finite Boolean group and $\Pi\colon V\to U$ be a projection onto some subgroup of $U\subset V$ with $\dim U\ge 3$. Then the homomorphism $\Pi_*\colon \CH(V)\to\CH(U)$ is injective.
\end{lem}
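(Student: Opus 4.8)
The plan is to prove $\ker\Pi_*=0$ in three stages: reduce to the codimension-one case, trivialise $\varphi$ on $U\times U$ by two explicit coboundaries, and then kill the remaining obstruction by a coboundary supported on the non-trivial coset.

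\emph{Reduction to codimension one.} First I would record that restriction is functorial: if $W\subseteq U\subseteq V$ with compatible projections $\Pi'\colon U\to W$, $\Pi\colon V\to U$ and $\Pi''=\Pi'\circ\Pi$, then directly from the definition $\Pi''_*=\Pi'_*\circ\Pi_*$ on $\CH$. Choosing a flag $U=U_0\subset U_1\subset\cdots\subset U_k=V$ with $\dim U_{i+1}=\dim U_i+1$ together with nested compatible projections $\Pi_i\colon U_{i+1}\to U_i$ (obtained from a flag of $\ker\Pi$), the homomorphism $\Pi_*$ factors as a composition of the $(\Pi_i)_*$. Since every $U_i$ has dimension $\ge\dim U\ge 3$, it suffices to prove the lemma when $\dim U=\dim V-1$. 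So I write $V=U\times\langle c\rangle$ with $c\notin U$, let $\Pi$ kill $c$, and let $\rho\colon V\to\F_2$ be the ``$c$-coordinate'' homomorphism.

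\emph{Trivialising on $U\times U$.} As $\Pi_*\varphi$ is cohomologically trivial, there is $\chi_0\colon U\to U$ with $\chi_0(e)=e$ and $\Pi(\varphi(a,b))\equiv\chi_0(a)\chi_0(b)\chi_0(ab)\pmod{\langle a,b\rangle}$; replacing $\varphi$ by its product with $\delta(\chi_0\circ\Pi)$ I may assume $\Pi(\varphi(a,b))\in\langle a,b\rangle$ for all $a,b\in U$. Then $g(a,b):=\rho(\varphi(a,b))$ is, by \eqref{eq_cocycle} and the symmetry and involutivity of $\varphi$, a genuine involutive symmetric $\F_2$-valued $2$-cocycle on $U$: the modulus $\langle a,b,d\rangle$ lies in $U$, so $\rho$ annihilates it and $\delta g=0$. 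By Lemma~\ref{lem_cocycle} one has $g=\delta h$ for some $h\colon U\to\F_2$, and multiplying $\varphi$ by $\delta(c^{\,h\circ\Pi})$ I reach the normal form in which $\varphi(a,b)\in\langle a,b\rangle$ for all $a,b\in U$; call this property $(**)$.

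\emph{The residual obstruction.} By symmetry, involutivity, and the special case $(u,v,uv)$ of \eqref{eq_cocycle}, the class $\Phi(P):=\varphi(u,v)\bmod P$ is well defined for each $2$-dimensional $P=\langle u,v\rangle$, and a coboundary $\delta\chi$ alters $\Phi(P)$ by $\chi(u)\chi(v)\chi(uv)\bmod P$. Property $(**)$ gives $\Phi(P)=e$ for all $P\subseteq U$, so it remains to annihilate $\Phi$ on the $2$-dimensional $P\not\subseteq U$ by a further coboundary $\delta\chi$ with $\chi|_U=e$. Writing $\kappa(\alpha):=\chi(\alpha c)$ and noting that such a $P$ has non-trivial elements $\alpha\beta\in U$, $\alpha c$ and $\beta c$, this amounts to solving the system $\kappa(\alpha)\kappa(\beta)\equiv\varphi(\alpha c,\beta c)\pmod{\langle\alpha c,\beta c\rangle}$ for all $\alpha\ne\beta$ in $U$; by well-definedness of $\Phi$ the mixed values $\varphi(a,\beta c)$ impose exactly the same equations.

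\emph{The main obstacle.} Solvability of this linear system over the \emph{varying} quotients $V/P$ is the heart of the argument, and its consistency is governed by the triangle relations asserting that the product $\varphi(\alpha c,\beta c)\,\varphi(\beta c,\gamma c)\,\varphi(\gamma c,\alpha c)$ lies in the $3$-dimensional subspace $Q=\langle\alpha c,\beta c,\gamma c\rangle$. I would verify each such relation as follows: for any $\xi\in\Hom(V,\F_2)$ vanishing on $Q$, the composite $\xi\circ\varphi$ restricted to $Q\times Q$ is a genuine involutive symmetric $\F_2$-cocycle on the $3$-dimensional group $Q$, hence a coboundary $\delta\eta$ by Lemma~\ref{lem_cocycle}; the triangle sum then collapses to $\xi\bigl(\varphi(\alpha\beta,\beta\gamma)\bigr)$, which vanishes by $(**)$ since $\varphi(\alpha\beta,\beta\gamma)\in\langle\alpha\beta,\beta\gamma\rangle\subseteq Q$. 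With all triangle relations in hand, one assembles $\kappa$ along a spanning tree on $U$, the remaining edges being forced by consistency; the final $\varphi$ is then equivalent, hence cohomologous, to the trivial quasi-cocycle. This is precisely where $\dim U\ge 3$ is used (there must be room for the $3$-dimensional $Q$), in agreement with Proposition~\ref{propos_quasi}(1), which shows that for smaller $U$ genuine non-trivial classes appear. The hardest part will be the bookkeeping in this last assembly of $\kappa$ across the quotients $V/P$, where the consistency relations must be tracked carefully.
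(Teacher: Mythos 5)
Your first two stages are sound and run parallel to the paper's own proof: the reduction to codimension one, and the use of Lemma~\ref{lem_cocycle} to normalise $\varphi$ so that $\varphi(a,b)\in\langle a,b\rangle$ for $a,b\in U$, are both correct (the paper packages the second step slightly differently, keeping the $c$-component as a function $\lambda$ and killing it only at the end, but this is immaterial). The gap is in the final assembly, which you defer as ``bookkeeping'' but which is the mathematical heart of the lemma and the only place the hypothesis $\dim U\ge 3$ can enter. A decisive sanity check: the statement is \emph{false} for $\dim U=2$, $\dim V=3$ (there $\CH(U)$ is trivial while $\CH(V)\cong C_2$ by Proposition~\ref{propos_quasi}), yet every step you describe goes through verbatim in that case. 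In particular, your stated use of $\dim U\ge 3$ --- ``room for the $3$-dimensional $Q$'' --- only requires $\dim V\ge 3$, since $Q=\langle\alpha c,\beta c,\gamma c\rangle$ is a subgroup of $V$, not of $U$. So the argument as written would prove a false statement.

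Here is where it concretely breaks. The residual obstruction attached to a pair $\{\alpha,\beta\}$ is essentially the single bit recording whether $\varphi(\alpha c,\beta c)\equiv e$ or $\equiv c$ modulo $\langle\alpha c,\beta c\rangle$, and solvability of your system $\kappa(\alpha)\kappa(\beta)\equiv\varphi(\alpha c,\beta c)$ requires the sum of these bits to vanish around \emph{every} triangle. Your triangle relation only asserts that the product $\varphi(\alpha c,\beta c)\,\varphi(\beta c,\gamma c)\,\varphi(\gamma c,\alpha c)$ lies in $Q$; this pins down the sum of the three bits exactly when $c\notin Q$, which holds if and only if $\dim\langle\alpha,\beta,\gamma\rangle=3$. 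For the degenerate triples --- those with $\alpha\beta\gamma=e$, where $c=\alpha c\cdot\beta c\cdot\gamma c\in Q$ --- membership in $Q$ says nothing, so your consistency conditions are silent on exactly the triangles that can fail (and do fail when $\dim U=2$). These degenerate triples are precisely where the paper invokes $\dim U\ge 3$: around equation~\eqref{eq_rho} it chooses an auxiliary $b\in U$ outside the plane $\langle\alpha,\beta\rangle$ and sums the three non-degenerate relations for $(\alpha,\beta,b)$, $(\beta,\alpha\beta,b)$, $(\alpha\beta,\alpha,b)$ to deduce the degenerate one. Supplying that averaging argument (and checking that insisting on $\chi|_U=e$ in the last stage loses no generality) is what is needed to close your proof; without it, the spanning-tree assembly is not justified.
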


\begin{proof}
Note that it suffices to prove the Lemma in the case when the subgroup~$U$ has codimension~$1$ in~$V$ (if we consider~$V$ as a vector space over~$\F_2$); then the general case will automatically follow by induction. Denote by~$c$ the generator of the one-dimensional kernel of the projection~$\Pi$. Then $V=U\times\langle c\rangle$.

Let $\varphi$ be an involutive symmetric quasi-cocycle on~$V$ and $\psi$ be its restriction to~$U$ along~$\Pi$. We need to prove that if $\psi$ is cohomologically trivial, then~$\varphi$ is also cohomologically trivial.

It follows from the cohomological triviality of the quasi-cocycle~$\psi$ that there exists a map $\xi\colon U\to U$ such that $\xi(e)=e$ and $\varphi(u_1,u_2)\xi(u_1)\xi(u_2)\xi(u_1u_2)\in \langle u_1,u_2,c\rangle$ for arbitrary $u_1,u_2\in U$. We set $\chi(u)=\xi(u)$ and $\chi(uc)=\xi(u)\varphi(u,c)$ for all $u\in U$ and consider the involutive symmetric quasi-cocycle~$\varphi'$ on~$V$ given by the formula
$$\varphi'(v,w)=\varphi(v,w)\chi(v)\chi(w)\chi(vw).$$
Then the quasi-cocycle~$\varphi'$ is cohomologous to~$\varphi$ and
\begin{equation}\label{eq_lambda}
\varphi'(u_1,u_2)\equiv c^{\lambda(u_1,u_2)}\pmod{\langle u_1,u_2\rangle},\qquad u_1,u_2\in U,
\end{equation}
where $\lambda\colon U\times U\to\F_2$ is a map satisfying $\lambda(u_1,u_2)=\lambda(u_2,u_1)$ and $\lambda(u,u)=\lambda(u,e)=0$. Moreover, $\varphi'(u,c)\in\langle u,c\rangle$ for all $u\in U$.

It follows from condition~\eqref{eq_cocycle} for triples $(c,c,u)$, where $u\in U$, that $\varphi'(uc,c)\in\langle u,c\rangle$.

Now, from the condition~\eqref{eq_cocycle} for triples $(c,u_1,u_2)$, where $u_1,u_2\in U$, it follows that $\varphi'(u_1c,u_2)\in\langle u_1,u_2,c\rangle$. Consequently,
\begin{equation}\label{eq_mu}
\varphi'(u_1c,u_2)\equiv c^{\mu(u_1,u_2)}\pmod{\langle u_1c,u_2\rangle},
\end{equation}
where $\mu\colon U\times U\to\F_2$ is a map that satisfies $\mu(e,u)=\mu(u,e)=0$.

Finally, from the condition ~\eqref{eq_cocycle} for triples $(u_1,c,u_2c)$, where $u_1,u_2\in U$, it follows that $\varphi'(u_1c,u_2c)\in\langle u_1,u_2,c\rangle$. Consequently,
\begin{equation}\label{eq_nu}
\varphi'(u_1c,u_2c)\equiv c^{\nu(u_1,u_2)}\pmod{\langle u_1c,u_2c\rangle},
\end{equation}
where $\nu\colon U\times U\to\F_2$ is a map satisfying $\nu(u_1,u_2)=\nu(u_2,u_1)$ and $\nu(u,u)=\nu(u,e)=0$.

Now the condition ~\eqref{eq_cocycle} for the triples $(u_1,u_2,u_3)$, $(u_1c,u_2,u_3)$ and~$(u_1c,u_2c,u_3)$, where $u_1,u_2, u_3\in U$, can be written as the system of equations
\begin{align}
\lambda(u_2,u_3)+\lambda(u_1u_2,u_3)+\lambda(u_1,u_2u_3)+\lambda(u_1,u_2)&=0,&&\label{eq_lmn1}\\
\lambda(u_2,u_3)+\mu(u_1u_2,u_3)+\mu(u_1,u_2u_3)+\mu(u_1,u_2)&=0&&\text{for }u_1\notin\langle u_2,u_3\rangle,\label{eq_lmn2}\\
\mu(u_2,u_3)+\lambda(u_1u_2,u_3)+\nu(u_1,u_2u_3)+\nu(u_1,u_2)&=0&&\text{for }u_1\notin\langle u_3\rangle, u_2\notin\langle u_3\rangle.\label{eq_lmn3}
\end{align}
The restrictions $u_1\notin\langle u_2,u_3\rangle$ in ~\eqref{eq_lmn2} and $u_1\notin\langle u_3\rangle$, $u_2\notin\langle u_3\rangle$ in ~\eqref{eq_lmn3} are essential. They are related to the fact that exactly under these restrictions the element~$c$ does not belong to the subgroup generated by the triple of elements under consideration, and therefore we can write down the condition that the degree of occurrence of the element~$c$ in the left hand side of the corresponding congruence~\eqref{eq_cocycle} is equal to zero.

Equation~\eqref{eq_lmn1} is exactly the usual cocycle condition for~$\lambda$.
Lemma ~\ref{lem_cocycle} implies that there is a map $\eta\colon U\to\F_2$ such that $\eta(e)=0$ and
\begin{equation}\label{eq_lambdaeta}
\lambda(u_1,u_2)=\eta(u_1)+\eta(u_2)+\eta(u_1u_2).
\end{equation}

If the elements $u,u'\in U$ are distinct and none of them is equal to~$e$, then equation~\eqref{eq_lmn3} for the triple $(u_1,u_2,u_3)=(u,u,u')$ gives
\begin{equation}\label{eq_munu}
\mu(u,u')=\nu(u,uu').
\end{equation}
Note that if one of the elements~$u$ and~$u'$ is equal to~$e$, then the equality ~\eqref{eq_munu} also holds, because $\mu(u,e)=\mu(e,u)=e$ and $\nu(u,u)=\nu(u,e)=\nu(e,u)=e$. However, the condition $u\ne u'$ is essential.

Now suppose that the elements of $u_1,u_2,u_3\in U$ are such that $u_1\notin\langle u_2,u_3\rangle$. Then each of the pairs~$(u_1u_2,u_3)$, $(u_1,u_2u_3)$ and~$(u_1,u_2)$ consists of distinct elements. Therefore, substituting ~\eqref{eq_lambdaeta} and ~\eqref{eq_munu} into ~\eqref{eq_lmn2}, we get the equation
\begin{equation*}
\nu(u_1u_2,u_1u_2u_3)+\nu(u_1,u_1u_2u_3)+\nu(u_1,u_1u_2)=\eta(u_2) +\eta(u_3)+\eta(u_2u_3).
\end{equation*}
Making change $a_1=u_1$, $a_2=u_1u_2$, $a_3=u_1u_2u_3$ in this equation and introducing a new function
\begin{equation}\label{eq_rhonu}
\rho(u,u')=\nu(u,u')+\eta(uu'),
\end{equation}
we get that
\begin{equation}\label{eq_rho}
\rho(a_1,a_2)+\rho(a_2,a_3)+\rho(a_3,a_1)=0
\end{equation}
provided that~$a_1$ is not equal to any of the elements~$e$, $a_1a_2$, $a_2a_3$ and~$a_3a_1$. The last condition means exactly that none of the elements~$a_1$, $a_2$, $a_3$ and~$a_1a_2a_3$ is equal to~$e$. Note that the symmetry of the function~$\nu$ immediately implies the symmetry of the function~$\rho$. Moreover, $\rho(u,u)=\nu(u,u)+\eta(e)=0$ for all $u\in U$.

Until now, we have never used the key condition~$\dim U\ge 3$, without which the Lemma is false. Let us prove that under this condition the equality ~\eqref{eq_rho} holds for all triples~$(a_1,a_2,a_3)$ of elements of the set~$U\setminus\{e\}$ without the assumption that $a_1a_2a_3\ne e$. Indeed, let $a_1$, $a_2$ and~$a_3$ be the elements in~$U\setminus\{e\}$ such that $a_1a_2a_3=e$. Then $\dim\langle a_1,a_2,a_3\rangle=2$. Therefore, it follows from the condition~$\dim U\ge 3$ that there is an element $b\in U$, which does not lie in the subgroup~$\langle a_1,a_2,a_3\rangle$. Then, as it was proved, the equation ~\eqref{eq_rho} holds for each of the triples~$(a_1,a_2,b)$, $(a_2,a_3,b)$ and~$(a_3,a_1,b)$. Adding these equations, we obtain the equation~\eqref{eq_rho} for the initial triple~$(a_1,a_2,a_3)$.

From the fact that the equation ~\eqref{eq_rho} holds for all triples~$(a_1,a_2,a_3)$ of elements of the set~$U\setminus\{e\}$, it immediately follows that there is a function $\zeta\colon U\setminus\{e\}\to\F_2$ such that
\begin{equation}\label{eq_rhozeta}
\rho(u,u')=\zeta(u)+\zeta(u'), \qquad u,u'\ne e.
\end{equation}
Indeed, by choosing an arbitrary element~$u_0\in U\setminus\{e\}$, we can put~$\zeta(u)=\rho(u_0,u)$ for all $u\in U\setminus\{e\}$. Then equality~\eqref{eq_rhozeta} will follow from equation~\eqref{eq_rho}. From ~\eqref{eq_munu}, \eqref{eq_rhonu} and ~\eqref{eq_rhozeta} we get
\begin{align}
\mu(u,u')&=\zeta(u)+\eta(u')+\zeta(uu'),&  u&\ne u',\label{eq_muzeta}\\
\nu(u,u')&=\zeta(u)+\zeta(u')+\eta(uu'),&  u,u'&\ne e.\label{eq_nuzeta}
\end{align}
We extend the function~$\zeta$ to the entire ~$U$ by setting $\zeta(e)=0$. Note, however, that formulae~\eqref{eq_rhozeta} and ~\eqref{eq_nuzeta} will not necessarily be true if one of the elements of~$u$ and~$u'$ is equal to~$e$, and the same holds for formula~\eqref{eq_muzeta}  if $u=u'$ .

Consider the map $\theta\colon V\to V$ and the involutive symmetric quasi-cocycle $$\varphi''\colon V\times V\to V$$ defined by the formulae
\begin{gather*}
\begin{aligned}
\theta(u)&=c^{\eta(u)},&u&\in U,\\
\theta(uc)&=c^{\zeta(u)},&u&\in U,
\end{aligned}\\
\varphi''(v,w)=\varphi'(v,w)\theta(v)\theta(w)\theta(vw),\qquad v,w\in V.
\end{gather*}
It follows from formulae~\eqref{eq_lambda}, \eqref{eq_mu}, \eqref{eq_nu}, \eqref{eq_lambdaeta}, \eqref{eq_muzeta} and ~\eqref{eq_nuzeta} that
\begin{equation}\label{eq_phi''}
\varphi''(v,w)\in \langle v,w\rangle
\end{equation}
for all $v,w\in V$, that is, the quasi-cocycle~$\varphi''$ is equivalent to the trivial one. Note that in the exceptional cases of $(v,w)=(uc,u)$ and $(v,w)=(uc,c)$, where the formulae ~\eqref{eq_muzeta} and ~\eqref{eq_nuzeta} may not be true, the inclusion~\eqref{eq_phi''} still holds, since in these cases $c\in \langle v,w\rangle$. Since $\varphi''$ is cohomologous to~$\varphi$ by construction, we obtain the statement of the Lemma.
\end{proof}

Statement~(2) of Proposition ~\ref{propos_quasi} follows from Lemmas ~\ref{lem_dim3}, ~\ref{lem_restrict_3_sur} and ~\ref{lem_restrict_3_inj}.

\section{Special two-valued groups}
\label{section_special}

In this section, we will prove the following theorem.

\begin{theorem}\label{theorem_special_classify}
Any special involutive commutative two-valued group that does not contain single-valued direct factors is isomorphic to a two-valued group~$Y_V$, where $V$ is a Boolean group with $\dim V\ge 2$.
\end{theorem}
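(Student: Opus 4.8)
The plan is to isolate, from a single special pair, a distinguished involution $s$ of order $2$ that governs the entire group, then to show that every element has order $1$, $2$, or $4$ with all elements of order $4$ squaring to this $s$, and finally to reconstruct $X$ as $Y_V$ for a Boolean group $V$ with $\dim V\ge 2$.

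First I would fix a special pair $(x_0,y_0)$, so that $\ord x_0,\ord y_0>2$ and $x_0*y_0=[z_0,z_0]$; here $x_0\ne y_0$, since otherwise $x_0*x_0$ would be doubled. Computing the four-element multiset $(x_0*y_0)*(x_0*y_0)$ in the two ways $(x_0*x_0)*(y_0*y_0)$ and $[z_0,z_0]*[z_0,z_0]$ and comparing the multiplicities of $e$ forces $x_0^2=y_0^2=z_0^2=:s$ and $s^2=e$. Thus $s$ is an element of order $2$, so $s\in V$, and by Proposition~\ref{propos_bullet}(c) we have $s*w=[w,w]$ for every $w$ with $w^2=s$. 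This $s$ is the candidate for the special point of the eventual $Y_V$.

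The heart of the argument is the claim that every element of order $>2$ has order $4$ and square $s$. Given $t$ with $\ord t>2$ and $t^2\ne s$, the products $x_0*t$, $y_0*t$, $z_0*t$ are all non-doubled (a doubled product would make the pair special and force equal squares), so Lemma~\ref{lem_z1z2_prelim} applies, and a short further computation shows that each has the form $[q,sq]$ with $q^2=s\cdot t^2$. Expanding $(x_0*y_0)*t=x_0*(y_0*t)$ and comparing the squares of the resulting elements then yields $s\cdot t^2=t^2$, i.e.\ $(t^2)^2=s$ by Proposition~\ref{propos_bullet}(c), hence $t^4=s$. In particular no element of order $4$ can have square $\ne s$ (that would give $s=e$), and any remaining ``bad'' element must have order exactly $8$ with $t^4=s$ and $t^2$ of order $4$. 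Excluding order $8$ is the \emph{main obstacle}: here I would use that, since $x_0*y_0$ is doubled, the order-$4$ element $u=t^2$ has a doubled product $u*p=[c,c]$ with some $p\in\{x_0,y_0\}$ distinct from $u$, and then compare $(t*t)*p=[p,p,c,c]$ with $t*(t*p)$. Because two order-$8$ elements with equal square can never have a doubled product (a multiplicity-of-$e$ count rules this out), one gets $t*p=[q_1,sq_1]$ with $q_1=vt$ for an order-$2$ element $v\ne e$; feeding this back produces an order-$2$ element inside $t*(t*p)$, while $[p,p,c,c]$ consists of order-$4$ elements only --- a contradiction. This confines the order of every element to $\{1,2,4\}$ and makes every order-$4$ element square to $s$.

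With the order structure settled, the squares subgroup of Lemma~\ref{lem_squares} is exactly $\{e,s\}$. If $X$ has no single-valued direct factor, then by Proposition~\ref{propos_neras} every element of order $2$ is a square, hence equals $s$, so $s$ is the unique element of order $2$ and $V=\{e,s\}$. It then remains to reconstruct the multiplication: for distinct order-$4$ elements $x,y$ the product $x*y$ must be doubled, since the only alternative permitted by the equal-square dichotomy is $y=vx$ with $v$ of order $2$ and $v\ne e,s$, which no longer exists. The assignment $x\odot y=z$ where $x*y=[z,z]$, together with $x\odot x=e$, turns $W=\{e\}\cup\{\text{order-}4\text{ elements}\}$ into a Boolean group, associativity being inherited from $X$; sending $W$ identically and $s$ to the added point identifies $X$ with $Y_W$, and $\dim W\ge 2$ because speciality supplies two distinct order-$4$ elements. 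The most delicate and error-prone part of the whole proof is the exclusion of order $8$, where the multiset bookkeeping and the choice of a doubled partner must be carried out with care.
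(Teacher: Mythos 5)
Your proposal is correct in substance but reaches the key intermediate result, Proposition~\ref{propos_orders124} (all orders lie in $\{1,2,4\}$ and all order-$4$ elements share the same square~$s$), by a genuinely different and considerably shorter route than the paper. The paper attaches to a hypothetical element $q$ of order outside $\{1,2,4\}$ the infinite sequences $\{x_n\}$, $\{y_n\}$, $\{z_n\}$ of Lemmas~\ref{lem_4not4} and~\ref{lem_special_sequences}, rules out order~$3$ separately (Lemma~\ref{lem_order3}), and proves part~(b) by yet another argument (Lemma~\ref{lem_44}); your single comparison of $(x_0*y_0)*t$ with $x_0*(y_0*t)$ forces $st^2=t^2$, hence $t^4=s$, for \emph{every} $t$ of order $>2$ with $t^2\ne s$, which simultaneously handles all orders (including $3$ and $\infty$), pins down the common square, and reduces everything to excluding order~$8$. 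I have checked the details: non-speciality of the relevant pairs follows from Lemma~\ref{lem_Y2contain} because the squares differ; Lemma~\ref{lem_z1z2_prelim} together with $s\in q_1*q_2$ gives the form $[q,sq]$ with $q^2=st^2$; every element of $[z_0,z_0]*t$ has square $st^2$ while every element of $x_0*[q,sq]$ has square $t^2$; and in the order-$8$ exclusion the multiset $(q_1*t)*(q_1*t)=[e,u]*[e,u]$ contains $e$ with multiplicity $3$, which does force one element of $q_1*t$ into $V$ and hence $q_1=vt$, leading to the order-$2$ element $v$ inside $t*(t*p)$ as you describe. The final reconstruction agrees with the paper's (which passes to the quotient $X/\{e,s\}$ rather than defining $\odot$ directly, thereby avoiding the small associativity check you leave implicit).

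One step is asserted rather than proved and should be spelled out: that $u=t^2$ has a doubled product with at least one of $x_0,y_0$. It is true, but not for free. If $u*x_0=[d_1,d_2]$ with $d_1\ne d_2$, comparing $(u*x_0)*(u*x_0)$ with $(u*u)*(x_0*x_0)=[e,e,e,e,s,s,s,s]$ and counting the multiplicity of $e$ (which is $2+[d_1^2=e]+[d_2^2=e]$, since $e\notin d_1*d_2$) forces $d_1,d_2\in V$ and hence $u=d_1x_0\in Vx_0$; since $x_0*y_0$ is doubled, the orbits $Vx_0$ and $Vy_0$ are disjoint, so $u$ fails to have a doubled product with at most one of $x_0,y_0$ (and $p\ne u$ is automatic because $u*u=[e,s]$ is not doubled). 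The same multiplicity count is what underlies your later ``equal-square dichotomy.'' With this supplied, your argument is a valid and more economical proof of the theorem.
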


\begin{lem}\label{lem_Y2contain}
If $(x,y)$ is a special pair in an involutive commutative two-valued group~$X$ and $x*y=[z,z]$, then $x^2=y^2=z^2$ and $5$ elements $e$, $x$, $y$, $z$ and~ $s=x^2$ are pairwise distinct and form a subgroup of the two-valued group~$X$ with multiplication table
\begin{gather*}
x*x=y*y=z*z=[e,s],\quad s*s=[e,e],\\
\begin{aligned}
x*y&=[z,z],& y*z&=[x,x],& z*x&=[y,y],\\
s*x&=[x,x],& s*y&=[y,y],& s*z&=[z,z],
\end{aligned}
\end{gather*}
that is, a subgroup isomorphic to a two-valued group~$Y_2$. In particular, $x$, $y$ and~$z$ are the elements of order~$4$.
\end{lem}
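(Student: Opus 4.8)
The plan is to extract the entire structure from two associativity identities together with the action of the order-two subgroup $V$ of Proposition~\ref{propos_bullet}; crucially, the non-special results (Lemmas~\ref{lem_z1z2}--\ref{lem_xy=zt}) are unavailable here, since $x*y$ is a doubled element rather than a pair of distinct ones. First I would identify the squares. As $\ord x,\ord y>2$, we have $x*x=[e,x^2]$ and $y*y=[e,y^2]$ with $x^2,y^2\ne e$. I compute the eight-element multiset $(x*y)*(x*y)$ two ways: it is $[z,z]*[z,z]=[e,e,e,e,z^2,z^2,z^2,z^2]$, and, by commutativity and associativity, it is $(x*x)*(y*y)=[e,x^2]*[e,y^2]=[e,e,x^2,x^2,y^2,y^2]\cup(x^2*y^2)$. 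Comparing the multiplicity of $e$ and using $x^2,y^2\ne e$ forces $x^2*y^2=[e,e]$, so by involutivity $x^2=y^2$; matching the remaining entries then gives $z^2=x^2=y^2$. Writing $s=x^2=y^2=z^2$, the relation $s*s=x^2*y^2=[e,e]$ shows $s\ne e$ has order $2$, hence $s\in V$, while $x^4=s^2=e$ and $x^2=s\ne e$ force $\ord x=\ord y=\ord z=4$.

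Next I would check that $e,x,y,z,s$ are pairwise distinct, which is a short case analysis. Here $x,y\ne e$ by hypothesis and $z\ne e$ (otherwise $e\in x*y$ gives $y=x$ and $\ord x=2$); moreover $s\ne e$ and $\ord s=2$ separates $s$ from the order-$4$ elements $x,y,z$. If $x=y$, then $[z,z]=x*x=[e,s]$ would force $e=s$; and if $x=z$, then $x*y=[x,x]$, so associativity gives the contradiction $[y,y,y,y]=(x*x)*y=x*(x*y)=[e,e,s,s]$, where I use $s*y=[y,y]$ from Proposition~\ref{propos_bullet}(c) (valid since $y^2=s$). The remaining inequalities, including $y\ne z$, follow symmetrically.

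Finally I would read off the whole multiplication table. From Proposition~\ref{propos_bullet}(c) we get $s*x=[x,x]$, $s*y=[y,y]$, $s*z=[z,z]$ directly, since $x^2=y^2=z^2=s$. For the cross products, Lemma~\ref{lem_basic} applied to $z\in x*y$ gives $x\in y*z$, so $y*z=[x,w]$ for some $w$; expanding $x*(y*z)=[x*x,x*w]=[e,s]\cup(x*w)$ and equating it with $(x*y)*z=[z,z]*z=[e,s,e,s]$ yields $x*w=[e,s]$, whence $e\in x*w$ and involutivity give $w=x$, i.e. $y*z=[x,x]$; the same computation gives $z*x=[y,y]$. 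Together with $x*x=y*y=z*z=[e,s]$ and $s*s=[e,e]$ this is exactly the table of $Y_2$ in Example~\ref{ex_Y2}, so $\{e,x,y,z,s\}$ is closed under $*$, hence a subgroup isomorphic to $Y_2$.

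The step I expect to be the main obstacle is the first one. Because $x*y$ is doubled, the convenient identity $z_1*z_2=[x^2,y^2]$ of Lemma~\ref{lem_z1z2} cannot be invoked, so the equalities $x^2=y^2=z^2$ have to be squeezed purely out of the multiplicity bookkeeping in $(x*x)*(y*y)=[z,z]*[z,z]$; it is precisely the hypothesis $\ord x,\ord y>2$, ensuring $x^2,y^2\ne e$, that forces the two surplus copies of $e$ to come from $x^2*y^2$ and thereby unlocks the argument.
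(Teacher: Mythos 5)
Your proof is correct and follows essentially the same route as the paper's: everything is squeezed out of elementary associativity/multiset computations plus Lemma~\ref{lem_basic}, involutivity, and Proposition~\ref{propos_bullet}. The only difference is in sequencing — the paper opens with $(x*y)*z=x*(y*z)$ to obtain $x^2=z^2$ and $y*z=[x,x]$ simultaneously and then uses $x*y*y$ to pin down the orders and the products with $s$, whereas you open with $(x*y)*(x*y)=(x*x)*(y*y)$ to get all the squares and orders first and then recover the cross-products from $x*y*z$; your explicit distinctness check is a small addition the paper leaves implicit.
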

\begin{proof}
Let $x*y=[z,z]$. By Lemma ~\ref{lem_basic} we have $y*z=[x,x']$ for some~$x'$. Then
\begin{align*}
x*y*z&=(x*y)*z=[z,z]*z=[e,e,z^2,z^2],\\
x*y*z&=x*(y*z)=x*[x,x']=[e,x^2,x*x'].
\end{align*}
However, we know that $x^2\ne e$ because $\ord x>2$. So $x^2=z^2$. In addition, we see that the identity~$e$ is contained in the multiset~$x*x'$, which implies that $x'=x$. Thus, $y*z=[x,x]$. Similarly we have $y^2=x^2$ and $x*z=[y,y]$. Let us set $s=x^2=y^2=z^2$, so $x*x=y*y=z*z=[e,s]$.

We have
\begin{align*}
x*y*y&=(x*y)*y=[z,z]*y=[x,x,x,x],\\
x*y*y&=x*(y*y)=x*[e,s]=[x,x,x*s],
\end{align*}
so $x*s=[x,x]$ and hence $x^3=x$. Since $\ord x>2$, it follows that $\ord x=4$. Hence $\ord s=2$, that is, $s*s=[e,e]$. Similarly to the equality $x*s=[x,x]$ it is proved that $y*s=[y,y]$ and $z*s=[z,z]$. Thus $5$ elements of $e,s,x,y,z$ form a subgroup of the two-valued group~$X$ isomorphic to~$Y_2$.
\end{proof}

It follows from this Lemma that if $(x,y)$ is a special pair and $x*y=[z,z]$, then $(x,z)$ and $(y,z)$ are also special pairs and $x*z=[y,y]$ and $y*z=[x,x]$. Such triples of elements $(x,y,z)$ we will call \textit{special triples}. Elements of a special triple always have orders~$4$ and $x^2=y^2=z^2$ holds for them.

\begin{propos}\label{propos_orders124}
Let $X$ be a special involutive commutative two-valued group. Then

\textnormal{(a)} all elements of the two-valued group~$X$ have orders $1$, $2$ or~$4$;

\textnormal{(b)} all elements of order~$4$ of the two-valued group~$X$ have the same square, i.e. there exists an element $s\in X$ such that $x^2=s$ for all elements of $x\in X$ of order~$4$.
\end{propos}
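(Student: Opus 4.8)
The plan is to reduce both (a) and (b) to the single claim that every element $w$ with $\ord w>2$ satisfies $w^2=s$, where $s$ is the common square of a special triple. Since $X$ is special it contains a special pair, and by Lemma~\ref{lem_Y2contain} this produces a special triple $(x,y,z)$ with $x^2=y^2=z^2=s$ and $\ord s=2$. Granting the claim, any $w$ with $\ord w>2$ has $w^2=s$ of order $2$, so $\ord w$ divides $4$; as $w^2=s\ne e$ we get $\ord w=4$. This yields (a) (orders $3,5,6,\dots,\infty$ cannot occur, since $w^2$ would then have order $\ne 2$) and (b) (all order-$4$ elements share the square $s$) at once. I note that, by Proposition~\ref{propos_bullet}(c), the claim is equivalent to saying that $s$ fixes every element of order $>2$ under the $V$-action, i.e. $s*w=[w,w]$; equivalently, the subgroup $Q\subseteq X$ of all squares (Lemma~\ref{lem_squares}) equals $\{e,s\}$.

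To prove the claim I would fix $w$ with $\ord w>2$ and examine the three products $w*x$, $w*y$, $w*z$. The easy case is when one of them, say $w*x$, is a doubled multiset $[p,p]$: then $(w,x)$ is a special pair (both orders exceed $2$), so Lemma~\ref{lem_Y2contain} applies directly and gives $w^2=x^2=s$. Hence it remains to handle the generic situation in which $w*x$, $w*y$ and $w*z$ all consist of two distinct elements, and there I would argue by contradiction, assuming $w^2\ne s$.

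In this generic case the standard structural tools for non-special groups, namely Lemma~\ref{lem_equal_squares} and Lemma~\ref{lem_xy=zt}, are not available, so the analysis must proceed by direct multiset computations. Writing $w*x=[a,b]$ with $a\ne b$ and comparing $(w*x)*x$ with $w*(x*x)=w*[e,s]=[w,w,sw,sw]$, while using $x^2=s$ and the $V$-action of $s$, one rules out the partition of this four-element multiset into two doubled pairs (that branch forces a special pair of square $s$, contradicting $w^2\ne s$) and is left with $b=sa$ and $a^2\ne s$; Lemma~\ref{lem_z1z2_prelim} then gives $a*b=[w^2,s]$, whence $w^2=sa^2$. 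Repeating the computation against $y$ and $z$ and feeding in the triple relations $x*y=[z,z]$, $y*z=[x,x]$, $z*x=[y,y]$ through associativity yields a whole web of such relations among $w$, $a$ and the corresponding elements produced by $y$ and $z$. The hard part will be extracting a contradiction from this web: the individual relations turn out to be self-consistent, so the argument must exploit the rigidity forced by the \emph{doubled} products $x*y$, $y*z$, $z*x$. I expect the cleanest way to close the gap is to pass to the quotient $\bar X=X/\{e,s\}$ (a well-defined $C_2$-extension quotient, since $\{e,s\}$ is a subgroup), in which the special triple collapses to order-$2$ elements; proving that $w^2=s$ then amounts to showing $\bar w$ has order $\le 2$ in $\bar X$, and this is where the bulk of the work lies. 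Once the claim is secured, (a) and (b) follow as in the first paragraph.
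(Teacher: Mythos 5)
Your reduction of both parts to the single claim ``every $w$ with $\ord w>2$ satisfies $w^2=s$'' is correct, and the easy case (one of $w*x$, $w*y$, $w*z$ doubled, so that Lemma~\ref{lem_Y2contain} applies) is fine. But the remaining case is the entire content of the proposition, and you have not proved it --- you say so yourself (``this is where the bulk of the work lies''). This is exactly where the paper spends several pages. Concretely, three separate difficulties have to be overcome, none of which your sketch addresses: (i) elements $q$ with $\ord q\notin\{1,2,4\}$, which the paper handles by building the doubly infinite sequences $\{x_n\}$, $\{y_n\}$, $\{z_n\}$ of Lemmas~\ref{lem_4not4} and~\ref{lem_special_sequences} and deriving a contradiction from the compatibility relations $x_1*y_1=[z,z_2]$, etc.; (ii) elements of order~$3$, which need the separate argument of Lemma~\ref{lem_order3} (and of Lemma~\ref{lem_special_square}, which excludes square roots of members of a special pair); and (iii) for part~(b), two order-$4$ elements with distinct squares, which the paper kills with Lemma~\ref{lem_44} and a computation in the resulting configuration $a,b,c$. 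Your ``web of relations'' in the generic case is not pushed to a contradiction, and even the step ``$b=sa$'' is not forced: from $[sa,sb]=[a,b]$ one could also have $sa=a$, $sb=b$, i.e.\ $a^2=b^2=s$, a branch you do not close.

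I would also warn you off the proposed escape route through $\bar X=X/\{e,s\}$. The quotient is a well-defined involutive commutative two-valued group, but the special triple $(x,y,z)$ collapses to elements of order~$2$ in $\bar X$ (since $\bar x^2=\bar s=\bar e$), so $\bar X$ need not be special and the hypothesis you are trying to exploit evaporates in the quotient. Showing $\bar w$ has order $\le 2$ in $\bar X$ is indeed equivalent to $w^2\in\{e,s\}$, but nothing about $\bar X$ makes that easier to see than it was upstairs; the paper's argument works entirely in $X$ itself for this reason.
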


The proof of this Proposition will be divided into several Lemmas.

\begin{lem}\label{lem_4not4}
Assume that in an involutive commutative two-valued group~$X$, the order of element~$x$ is equal to~$4$, while the order of element~$q$ is neither~$1$, nor~$2$, nor~$4$. Set $s=x^2$. Then there is a sequence~$\{x_n\}_{n\in\Z}$ of elements of the two-valued group~$X$ such that $x_0=x$ and 
\begin{align}
q^k*x_n&=[x_{n-k},x_{n+k}],& k,n&\in\Z,\label{eq_qk*xn}\\
s x_n&=x_{-n},& n&\in\Z,\label{eq_dxn}\\
x*x_1&=[q,s q],&&\label{eq_xx1}\\ x_1*x_1&=[e,s q^2].\label{eq_x1x1}&&
\end{align}
Moreover, such a sequence is unique up to reversal of the sign of the index~$n$, that is, up to the simultaneous permutation $x_n\leftrightarrow x_{-n}$ for all~$n$.
At the same time, $x_1\ne x_{-1}$ and the order of each of the elements~$x_1$ and~$x_{-1}$ is neither~$1$, nor~$2$, nor~$4$.
\end{lem}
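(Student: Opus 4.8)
The plan is to grow the sequence outward from the single product $q*x$, using the one-step relation as a recurrence and the order-two element $s=x^2$ as a reflection; this mirrors the construction of powers in Lemmas~\ref{lem_sequence} and~\ref{lem_sequence_mult}. The crux, and the step I expect to be the main obstacle, is the analysis of $q*x$ itself. First, since $\ord x=4$, Proposition~\ref{propos_bullet}(c) gives $sx=x$, so the action formula~\eqref{eq_action} yields $q*x=s(q*x)$, i.e. the multiset $q*x$ is invariant under $s$. Next, $q*x$ cannot be a double $[a,a]$: otherwise $(q,x)$ would be a special pair and Lemma~\ref{lem_Y2contain} would force $\ord q=4$, contrary to hypothesis. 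So $q*x=[a,b]$ with $a\ne b$, and $s$-invariance leaves two possibilities: either $a$ and $b$ are both fixed by $s$ (so $a^2=b^2=s$), or $b=sa$. I would rule out the first case using Lemma~\ref{lem_z1z2_prelim}, which gives $a*b=[q^2,s]$ (note $q^2\ne s$, as $\ord q\ne 4$), and then comparing $(a*a)*b=[e,s]*b=[b,b,b,b]$ with $a*(a*b)=(a*q^2)\cup[a,a]$, whose right-hand side contains $a\ne b$ --- a contradiction. Hence $b=sa$; set $x_1=a$ and $x_{-1}=sx_1$, so that $q*x_0=[x_1,x_{-1}]$ with $x_1\ne x_{-1}$.

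Having fixed $x_1$, I would next read off~\eqref{eq_xx1} and~\eqref{eq_x1x1}. From $x_1\in q*x_0$ and Lemma~\ref{lem_basic} we get $q\in x_0*x_1$; since $sx_0=x_0$ the product $x_0*x_1$ is $s$-invariant while $q$ is not $s$-fixed, so $x_0*x_1=[q,sq]$, which is~\eqref{eq_xx1}. For the square, combine $a*b=[q^2,s]$ with $a*b=a*(sa)=s(a*a)=[s,sa^2]$ (again by~\eqref{eq_action}); since $q^2\ne s$ this forces $sa^2=q^2$, i.e. $x_1^2=sq^2$, giving~\eqref{eq_x1x1}. Because $\ord q\notin\{1,2,4\}$ we have $q^2\ne s$ and $q^4\ne e$, hence $x_1^2=sq^2\ne e$ and $(x_1^2)^2=q^4\ne e$; thus $\ord x_1\notin\{1,2,4\}$, and the same holds for $x_{-1}=sx_1$, which has the same square.

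With the base data in place, I would define $x_{n+1}$ for $n\ge 1$ by declaring $q*x_n=[x_{n-1},x_{n+1}]$; this is legitimate because two applications of Lemma~\ref{lem_basic} to $x_n\in q*x_{n-1}$ yield $x_{n-1}\in q*x_n$, so $x_{n+1}$ is well defined as the remaining element. For negative indices I would simply set $x_{-n}=sx_n$; relation~\eqref{eq_dxn} then holds by construction, and applying $s$ to $q*x_n=[x_{n-1},x_{n+1}]$ through~\eqref{eq_action} shows the one-step relation also holds at $-n$. The general relation~\eqref{eq_qk*xn} would then follow by a two-step induction on $k\ge 0$, using $q*q^{k-1}=[q^{k-2},q^k]$ (Lemma~\ref{lem_sequence}) together with multiset cancellation, exactly as in the proof of~\eqref{eq_mult1}; the cases $k=0,1$ are immediate and negative $k$ is automatic from the symmetry of $[x_{n-k},x_{n+k}]$ under $k\mapsto -k$.

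Finally, for uniqueness up to $n\mapsto -n$: any sequence satisfying~\eqref{eq_qk*xn} and~\eqref{eq_dxn} has $\{x_1,x_{-1}\}$ equal to the two elements of $q*x$ with $x_{-1}=sx_1$, after which the recurrence determines every $x_n$; the only freedom is which of the two elements of $q*x$ is labelled $x_1$, and swapping the labels amounts precisely to applying $s$ throughout, that is, to the reflection $x_n\leftrightarrow x_{-n}$. I note that none of these steps uses the speciality of $X$, so the statement holds in any involutive commutative two-valued group.
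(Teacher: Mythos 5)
Your proof is correct, and while it follows the same overall skeleton as the paper's (non-speciality of the pair $(q,x)$ via Lemma~\ref{lem_Y2contain}, the one-step recurrence, and the two-step induction for~\eqref{eq_qk*xn}), the base-case analysis is genuinely different. The paper first computes $x*x_1=[q,sq]$ from $(x*x)*q=[e,s]*q$, then rules out $\ord x_1\in\{1,2,4\}$ by locating $q^2$ inside $(x*x_1)*(x*x_1)=[e,s]*[e,x_1^2]$, deduces $sx_1=x_{-1}$ from Proposition~\ref{propos_bullet}(c), and only obtains~\eqref{eq_x1x1} at the very end, after the whole sequence is built, via $(sq^2)*x_1=q^2*x_{-1}=[x_1,x_{-3}]$ and Lemma~\ref{lem_basic}. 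You instead pivot on Lemma~\ref{lem_z1z2_prelim}, which the paper does not invoke here: it gives $x_1*x_{-1}=[q^2,s]$ at once (the hypothesis $q^2\ne s$ holds since $q^2=s$ would force $q^4=e$), and comparing with $x_1*(sx_1)=s(x_1*x_1)=[s,sx_1^2]$ yields the explicit value $x_1^2=sq^2$; this delivers~\eqref{eq_x1x1} and the order claim immediately, before the sequence is even constructed, while the relation $x_{-1}=sx_1$ is secured by excluding the alternative $s$-fixed configuration through a short associativity computation. Your definition $x_{-n}:=sx_n$ followed by a verification of the recurrence also replaces the paper's downward recurrence plus induction proof of~\eqref{eq_dxn}; since the recurrence determines the sequence uniquely, the two constructions agree. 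The net effect is a somewhat more economical route to~\eqref{eq_x1x1} (no need for $x_{-3}$), at the cost of a case analysis on the $s$-action that the paper's ordering of deductions avoids. All the individual steps check out, including the uniqueness argument and the observation that speciality of~$X$ is nowhere used.
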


\begin{proof}
Let us set $x_0=x$. Since $\ord q\ne 4$, Lemma ~\ref{lem_Y2contain} implies that the pair $(q,x)$ is not special. Since the orders of ~$q$ and ~$x$ are greater than ~$2$, it follows that $q*x=[x_1,x_{-1}]$ for some elements $x_1,x_{-1}\in X$ such that $x_1\ne x_{-1}$.

The element $s=x^2$ has order~$2$ and we have the following equalities
$$
[x*x_1,x*x_{-1}]=x*(x*q)=(x*x)*q=[e,s]*q=[q,q,sq,sq].
$$
In addition we have
$$
s(x*x_1)=(s x)*x_1=x*x_1.
$$
Hence
$$
x*x_1=x*x_{-1}=[q,s q].
$$
From the fact that $q\in x*x_1$ it follows that the element $q^2$ belongs to the multiset
$$
(x*x_1)*(x*x_1)=(x*x)*(x_1*x_1)=[e,s]*[e,x_1^2]=[e,e,s,s,x_1^2,x_1^2,s x_1^2,s x_1^2].
$$
If the order of element~$x_1$ were equal to~$1$, $2$ or~$4$, then the order of element~$x_1^2$ would be equal to~$1$ or $2$ and the multiset~$(x*x_1)*(x*x_1)$ would consist of elements of orders~$1$ and~$2$. But this cannot be true since the order of element~$q^2$ is neither ~$1$ nor~$2$. So $\ord x_1\notin\{1,2,4\}$. Similarly, $\ord x_{-1}\notin\{1,2,4\}$.

Now it follows from Proposition~\ref{propos_bullet}(c) that $s x_1\ne x_1$ and $s x_{-1}\ne x_{-1}$. However, at the same time
$$
[s x_1, s x_{-1}]=s [x_1,x_{-1}]=s (x*q)=(s x)*q=x*q=[x_1,x_{-1}].
$$
Hence $s x_1=x_{-1}$ and $s x_{-1}=x_1$.

Equalities~\eqref{eq_qk*xn} with $k=1$ have the form
\begin{equation}\label{eq_q_recur}
q*x_n=[x_{n-1},x_{n+1}].
\end{equation}
These equalities allow to define recursively uniquely the elements of the sequence~$\{x_n\}$, starting from the already defined elements~$x_0$, $x_1$ and~$x_{-1}$. Indeed, the elements~$x_n$ with positive indices are defined consequently in ascending order of ~$n$. By construction, for $n\ge 1$, the element $x_{n}$ lies in the multiset $q*x_{n-1}$, therefore, by Lemma~\ref{lem_basic}, the multiset~$q*x_n$ does indeed contain the element~$x_{n-1}$ and we can and must take the second element of this multiset as~$x_{n+1}$. Similarly, the elements~$x_n$ with negative indices are uniquely defined consequently in descending order of~$n$.

Let us prove that $q^k*x_n=[x_{n+k},x_{n-k}]$ by induction on~$k$. For $k=0$ and $k=1$ this is true by construction. For $k\ge 2$ we will prove this, assuming that the statement is true for~$k-2$ and~$k-1$. We have
\begin{align*}
q*q^{k-1}*x_n&=q*(q^{k-1}*x_n)=q*[x_{n+k-1},x_{n-k+1}]=[x_{n+k},x_{n+k-2}, x_{n-k+2},x_{n-k}],\\
q*q^{k-1}*x_n&=(q*q^{k-1})*x_n=[q^{k-2},q^k]*x_n=[x_{n+k-2},x_{n-k+2},q^k*x_n],
\end{align*}
so $q^k*x_n=[x_{n+k}, x_{n-k}]$.

It suffices to prove equality ~\eqref{eq_dxn} for $n\ge 0$, and this is also done by induction on $n$. Indeed, the induction base for $n=0$ and $n=1$ is true. The inductive step is obtained by the action of the element ~$s$ on the equality ~\eqref{eq_q_recur}: if $s x_{n-1}=x_{-n+1}$ and $s x_n=x_{-n}$, then
$$
[x_{-n+1},x_{-n-1}]=q*x_{-n}=q*(s x_n)=s(q*x_n)=s [x_{n+1},x_{n-1}]=[s x_{n+1},x_{-n+1}],
$$
so $s x_{n+1}=x_{-n-1}$.

Let us now prove the equalities ~\eqref{eq_xx1} and ~\eqref{eq_x1x1}.

Since $q*x=[x_1,s x_1]$, $x_1$ lies in the multisets $q*x$ and $(s q)*x$. By Lemma~\ref{lem_basic}, this implies that the elements~$q$ and~$sq$ lie in the multiset~$x*x_1$. However, $\ord q\ne 4$, so according to Proposition~\ref{propos_bullet}(c) $q\ne sq$. Hence $x*x_1=[q,sq]$.

Finally, we have
$$
(s q^2)*x_1=q^2*(s x_1)=q^2*x_{-1}=[x_1,x_{-3}],
$$
so, by Lemma~\ref{lem_basic}, it follows that the element~$s q^2$ lies in the multiset~$x_1*x_1$. Since $\ord q\ne 4$, then $\ord q^2\ne 2$ and hence $s q^2\ne e$. Therefore, $x_1*x_1=[e,s q^2]$.\end{proof}

\begin{lem}\label{lem_special_sequences}
Let $(x,y,z)$ be a special triple in~$X$, $s=x^2=y^2=z^2$ and $q\in X$ be an element whose order is neither~$1$, nor~$2$, nor~$4$. Let $\{x_n\}_{n\in\Z}$, $\{y_n\}_{n\in\Z}$ and~$\{z_n\}_{n\in\Z}$ be sequences satisfying the conditions of Lemma ~\ref{lem_4not4} for the pairs $(x,q)$, $(y,q)$ and~$(z,q)$ respectively. Then for all $n\in\Z$ the following equalities hold:
\begin{align*}
x_n*y&=[z_n,s z_n],&
y_n*z&=[x_n,s x_n],&
z_n*x&=[y_n,s y_n],\\
x*y_n&=[z_n,s z_n],&
y*z_n&=[x_n,s x_n],&
z*x_n&=[y_n,s y_n].
\end{align*}
In addition, changing if necessary $z_{n}\leftrightarrow z_{-n}$ for all $n\in\Z$ simultaneously, we can ensure that the following equalities hold:
\begin{align*}
x_1*y_1&=[z,z_2],& 
x_2*y_1=x_1*y_2&=[z_1,z_3],\\
y_1*z_1&=[x,x_2],&
y_2*z_1=y_1*z_2&=[x_1,x_3],\\
z_1*x_1&=[y,y_2],&
z_2*x_1=z_1*x_2&=[y_1,y_3].
\end{align*}
\end{lem}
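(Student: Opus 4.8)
The plan is to prove the six equalities of the first part by a single induction together with the symmetry of the special triple, and then to read off the second part by transporting the basic identity $x_1*y_1=[z,z_2]$ with the operator $q$; the only genuinely delicate point will be the simultaneous orientation of the three sequences.

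\emph{First part.} I would prove $x_n*y=[z_n,sz_n]$ by induction on $n\ge0$; the cases $n<0$ then follow on applying $s$, since $sx_n=x_{-n}$, $sz_n=z_{-n}$ and the multiset $[z_n,sz_n]$ is unchanged under $z_n\leftrightarrow z_{-n}$. As $z_0=z$ has $z^2=s$, Proposition~\ref{propos_bullet}(c) gives $sz_0=z_0$, so the case $n=0$ is just the defining relation $x*y=[z,z]$. For $n=1$ I would expand $q*(x*y)=(q*x)*y$: the left side is $q*[z,z]=[z_1,z_1,z_{-1},z_{-1}]$ and the right side is $(x_{-1}*y)\cup(x_1*y)$, and since $x_{-1}*y=s(x_1*y)$ by~\eqref{eq_action}, writing $x_1*y=[a,b]$ gives $[a,b,sa,sb]=[z_1,z_1,z_{-1},z_{-1}]$, which forces $\{a,b\}=\{z_1,z_{-1}\}$ once $a=b$ is excluded; and $a=b$ is impossible, for then $(x_1,y)$ would be a special pair, whence Lemma~\ref{lem_Y2contain} would give $\ord x_1=4$, contrary to $\ord x_1\notin\{1,2,4\}$. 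For $n\ge2$ I would use $q*(x_{n-1}*y)=(q*x_{n-1})*y$: by the inductive hypothesis the left side is $q*[z_{n-1},z_{-(n-1)}]=[z_{n-2},z_{-(n-2)},z_n,z_{-n}]$ and the right side is $(x_{n-2}*y)\cup(x_n*y)$, so cancelling the known multiset $x_{n-2}*y=[z_{n-2},z_{-(n-2)}]$ leaves $x_n*y=[z_n,z_{-n}]$. The remaining five families come from cyclically permuting $(x,y,z)$ (using $y*z=[x,x]$ and $z*x=[y,y]$) and from commutativity, e.g.\ $x*y_n=y_n*x$ is the $y$-subscripted instance of the same computation.

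\emph{Second part: the basic identity and its $q$-translates.} Analysing $x_1*y_1$ in the same way, from $q*(x_1*y)=(q*y)*x_1$ one gets $(x_1*y_{-1})\cup(x_1*y_1)=[z_0,z_0,z_2,z_{-2}]$; using $sz_0=z_0$, $sz_2=z_{-2}$ and excluding the doubled case by Lemma~\ref{lem_Y2contain} forces $x_1*y_1=[z_0,z_{\pm2}]$, and I reindex $\{z_n\}$ by $z_n\leftrightarrow z_{-n}$ if necessary so that $x_1*y_1=[z,z_2]$. Applying $q$ to this identity in the two ways $x_1*(q*y_1)$ and $(q*x_1)*y_1$ and cancelling $[z_1,z_{-1}]$ then gives at once $x_1*y_2=x_2*y_1=[z_1,z_3]$.

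\emph{The main obstacle: orientation consistency.} With the orientations of $\{x_n\}$ and $\{y_n\}$ already fixed, I must show that the single reindexing making $x_1*y_1=[z,z_2]$ also yields $z_1*x_1=[y,y_2]$ and $y_1*z_1=[x,x_2]$; note that replacing $\{z_n\}$ by $\{z_{-n}\}$ flips the sign in all three of these at once, so only their common sign is at stake. I would pin it using within-sequence products: a direct computation from the relations of Lemma~\ref{lem_4not4} (recalling $q^{-1}=q$), namely $q*(y_1*y_1)=(q*y_1)*y_1$ with \eqref{eq_x1x1}, \eqref{eq_xx1} and \eqref{eq_qk*xn}, gives $y_0*y_1=[q,sq]$ and $y_2*y_1=[q,sq^3]$, whereas $y_{-2}*y_1=s(y_2*y_1)=[sq,q^3]$ is a \emph{different} multiset (here $\ord q\notin\{1,2,4\}$ keeps $q,sq,q^3,sq^3$ distinct), and likewise $z_1*z_0=[q,sq]$, $z_1*z_2=[q,sq^3]$. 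Now associativity $z_1*(x_1*y_1)=(z_1*x_1)*y_1$, with $z_1*x_1=[y_0,y_{\epsilon2}]$, reads $(z_1*z_0)\cup(z_1*z_2)=(y_0*y_1)\cup(y_{\epsilon2}*y_1)$, i.e.\ $[q,q,sq,sq^3]=[q,sq]\cup(y_{\epsilon2}*y_1)$, which forces $y_{\epsilon2}*y_1=[q,sq^3]$ and hence $\epsilon=+1$, giving $z_1*x_1=[y,y_2]$; the analogous identity $y_1*(z_1*x_1)=(y_1*z_1)*x_1$ upgrades this to $y_1*z_1=[x,x_2]$. Translating $y_1*z_1$ and $z_1*x_1$ by $q$ in two ways, exactly as for $x_1*y_2=x_2*y_1$, then produces $y_2*z_1=y_1*z_2=[x_1,x_3]$ and $z_2*x_1=z_1*x_2=[y_1,y_3]$. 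Throughout I would deliberately avoid the product $(x_1*x_1)*y_1$, whose expansion through $x_1^2=sq^2$ is precisely the step that, combined with the basic identity, is later used to reach the contradiction establishing Proposition~\ref{propos_orders124}.
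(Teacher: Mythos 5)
Your proof is correct, and its skeleton matches the paper's: the first family of identities via the action of $q$, the dichotomy $x_1*y_1\in\bigl\{[z,z_2],[z,z_{-2}]\bigr\}$, a reindexing of $\{z_n\}$, and $q$-translation for the remaining products. (Your first part is an induction on $n$, the paper's is a one-step computation of $q^n*x*y$ in two ways; this is a cosmetic difference.) The one place you genuinely diverge is the delicate step you flagged: showing the orientation fixed by $x_1*y_1=[z,z_2]$ is compatible with $y_1*z_1=[x,x_2]$ and $z_1*x_1=[y,y_2]$. The paper's route is shorter: having $x*y_1=[z_1,sz_1]$ and $x_2*y_1=[z_1,z_3]$, it applies Lemma~\ref{lem_basic} to the common element $z_1$ to conclude that both $x$ and $x_2$ lie in $y_1*z_1$, which pins down $y_1*z_1=[x,x_2]$ as soon as $x\ne x_2$, the case $x_2=x_{-2}$ being vacuous since then the two candidate answers coincide. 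Your route instead computes the within-sequence products $x_1*x_2=[q,sq^3]$ and $x_1*x_{-2}=[sq,q^3]$, observes these multisets differ because $\ord q\notin\{1,2,4\}$, and plays associativity against the already-fixed $x_1*y_1=[z,z_2]$ to force the sign. Both are valid; yours costs one extra computation (of $y_2*y_1$ from $q*(y_1*y_1)=(q*y_1)*y_1$) but yields the by-product $x_2\ne x_{-2}$, so you never need the paper's separate degenerate case. A small point in your favour elsewhere: in the first part you explicitly exclude $x_n*y=[z_n,z_n]$ via Lemma~\ref{lem_Y2contain} (a doubled product would make $(x_n,y)$ a special pair and force $\ord x_n=4$); the paper's one-line multiset comparison silently relies on the same observation.
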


\begin{proof}
We have
\begin{align*}
q^n*x*y&=(q^n*x)*y=[x_n,sx_n]*y=[x_n*y,s(x_n*y)],\\
q^n*x*y&=q^n*(x*y)=q^n*[z,z]=[z_n,s z_n,z_n,s z_n],
\end{align*}
so $x_n*y=[z_n,s z_n]$. The formulae for $y_n*z$, $z_n*x$, $x*y_n$, $y*z_n$ and~$z*x_n$ are proved similarly.

We have
\begin{align*}
(x*y)*(q*q)&=[z,z]*[e,q^2]=[z,z,z,z,z_2,z_2,sz_2,sz_2],\\
(q*x)*(q*y)&=[x_1,s x_1]*[y_1,s y_1]=[x_1*y_1,x_1*y_1,s (x_1*y_1),s (x_1*y_1)].
\end{align*}
Since $s z=z$ it follows that either $x_1*y_1=[z,z_2]$ or $x_1*y_1=[z,s z_2]=[z,z_{-2}]$. In the second case we need to make a permutation of $z_n\leftrightarrow z_{-n}$ simultaneously for all~$n$. Thus, we achieve the equality $x_1*y_1=[z,z_2]$. Then
\begin{align*}
q*x_1*y_1&=(q*x_1)*y_1=[x,x_2]*y_1=[z_1,s z_1,x_2*y_1],\\
q*x_1*y_1&=q*(x_1*y_1)=q*[z,z_2]=[z_1,s z_1,z_1,z_3],
\end{align*}
so $x_2*y_1=[z_1,z_3]$. Similarly, $x_1*y_2=[z_1,z_3]$.

Similarly, we get that either $y_1*z_1=[x,x_2]$, or $y_{1}*z_1=[x,x_{-2}]$. However, now we can no longer do the permutation $x_{n}\leftrightarrow x_{-n}$ since it would spoil the already proved equality~$x_1*y_1=[z,z_2]$. Therefore, we need to prove that the equality $y_1*z_1=[x,x_2]$ holds in another way. Note that in the case when $x_2=x_{-2}$ this equality has already been proved. Therefore, we only need to consider the case $x_2\ne x_{-2}$. In this case $x_2\ne x$. Indeed, if the equality $x=x_2$ were true, then the equality $x_{-2}=s x_2=s x=x=x_2$ would hold. Now note that it follows from the already proven equalities $x*y_1=[z_1,s z_1]$ and $x_2*y_1=[z_1,z_3]$ and Lemma ~\ref{lem_basic} that the elements~$x$ and~$x_2$ lie in the multiset~$y_1*z_1$. Since $x_2\ne x$, this implies the required equality $y_1*z_1=[x,x_2]$. Similarly we have $z_1*x_1=[y,y_2]$.

The formulae for $y_2*z_1$, $y_1*z_2$, $z_2*x_1$, $z_1*x_2$ are now proved similarly to the formulae for~$x_2*y_1$ and~$x_1*y_2$.
\end{proof}

\begin{lem}\label{lem_special_square}
Let $(x,y)$ be a special pair in an involutive commutative group~$X$. Then there is no element~$q$ in $X$ such that $q^2=x$.
\end{lem}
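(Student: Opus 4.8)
The plan is to assume, for contradiction, that $q^2=x$, and to exploit the strong rigidity this equation forces on the power sequence of~$q$. First I would invoke Lemma~\ref{lem_Y2contain}: since $(x,y)$ is special, it sits inside a special triple $(x,y,z)$ with $s=x^2=y^2=z^2$, all three elements of order~$4$, and $x*y=[z,z]$, $x*z=[y,y]$, $y*z=[x,x]$. From $q^2=x$ and the power laws of Lemma~\ref{lem_sequence_mult} one gets $q^{2k}=x^k$; hence $q^8=x^4=e$, while $q^4=x^2=s\ne e$ and $q^2=x\ne e$, so that $\ord q=8$. In particular $\ord q\notin\{1,2,4\}$, which is exactly what is needed to apply Lemma~\ref{lem_special_sequences} to the special triple $(x,y,z)$ and the element~$q$, producing sequences $\{x_n\}$, $\{y_n\}$, $\{z_n\}$ together with the relations recorded there.

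The heart of the argument is to pin down the first terms of the sequence $\{x_n\}$ attached to the pair $(x,q)$. Since $x_0=x=q^2$, formula~\eqref{eq_mult1} gives $q*x=q*q^2=[q^3,q]$, so $\{x_1,x_{-1}\}=\{q,q^3\}$; using the freedom to reverse the index I would fix $x_1=q$ and $x_{-1}=q^3=sq$ (consistent with $sx_1=x_{-1}$). Then $q*x_1=q*q=[e,x]$, together with the recurrence $q*x_1=[x_0,x_2]=[x,x_2]$, forces $x_2=e$. Now the relations of Lemma~\ref{lem_special_sequences} can be read off with these concrete values. From $x_1*y_1=q*y_1=[z,z_2]$ and the $y$-recurrence $q*y_1=[y_0,y_2]=[y,y_2]$ I obtain $[y,y_2]=[z,z_2]$; since $y\ne z$ this yields $y_2=z$. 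Finally the identity $x_1*y_2=x_2*y_1$ supplied by Lemma~\ref{lem_special_sequences} reads $q*z=e*y_1=[y_1,y_1]$, a doubled multiset.

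This is the desired contradiction: the pair $(q,z)$ has $\ord q=8$ and $\ord z=4$, both greater than~$2$, yet it is not a special pair, because by Lemma~\ref{lem_Y2contain} the elements of a special pair have order~$4$, whereas $\ord q=8$. Hence $q*z$ must consist of two distinct elements, which is incompatible with $q*z=[y_1,y_1]$. The step requiring the most care is the bookkeeping of sign conventions: the sequences of Lemma~\ref{lem_4not4} are determined only up to $n\leftrightarrow -n$, so I must fix $x_1=q$ by hand and only afterwards invoke Lemma~\ref{lem_special_sequences}, whose normalisation adjusts the signs of $\{z_n\}$ to make the displayed relations valid; verifying that $x_2=e$ is genuinely forced by $q^2=x$, and not an artefact of a careless choice, is the one computational point that must be checked explicitly.
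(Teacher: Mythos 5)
Your proof is correct and follows essentially the same route as the paper's: both rest on Lemma~\ref{lem_Y2contain} and the sequences of Lemma~\ref{lem_special_sequences}, and both extract the contradiction from the relation $x_2*y_1=x_1*y_2=[z_1,z_3]$ after using $q^2=x$ to pin down $x_2$. Your normalisation $x_1=q$ (giving $x_2=e$ exactly, rather than the paper's $x_2\in\{e,s\}$) and your endgame via the non-speciality of the pair $(q,z)$ are minor streamlinings of the same argument, and all the steps you flagged do check out.
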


\begin{proof}
Assume that such an element~$q$ exists. Let $x*y=[z,z]$ and $s=x^2=y^2=z^2$ and $\{x_n\}_{n\in\Z}$, $\{y_n\}_{n\in\Z}$ and~$\{z_n\}_{n\in\Z}$ be sequences satisfying the conditions of Lemma ~\ref{lem_special_sequences}.

We have
$$
[x_2,s x_2]=q^2*x=x*x=[e,s],
$$
so either $x_2=e$ or $x_2=s$. Therefore, either $x_2*y_1=[y_1,y_1]$ or $x_2*y_1=[s y_1,s y_1]$. However, by Lemma ~\ref{lem_special_sequences}, we have $x_2*y_1=[z_1,z_3]$, which means that either $y_1=z_1$, or $s y_1=z_1$, that is, $y_1=z_{-1}$. In each of these cases, we get that the multiset $q*y_1$ contains the elements~$y$ and~$z$. Since $y\ne z$, we have $q*y_1=[y,z]$. Finally,
\begin{multline*}
[y_1,y_1,x*y_1]=[e,x]*y_1=(q*q)*y_1=q*(q*y_1)=q*[y,z]={} \\ [y_1,y_{-1},z_1,z_{-1}]=[y_1,y_{-1},y_1,y_{-1}],
\end{multline*}
so $x*y_1=[y_{-1},y_{-1}]$. However, by the formula from Lemma ~\ref{lem_special_sequences}, we have $x*y_1=[y_{1},y_{-1}]$. We obtain a contradiction since $y_1\ne y_{-1}$, which completes the proof of the Lemma.
\end{proof}

\begin{lem}\label{lem_order3}
A special involutive commutative two-valued group~$X$ cannot contain elements of order~$3$.
\end{lem}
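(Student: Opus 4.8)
The plan is to argue by contradiction. Suppose the special group $X$ contains an element $q$ with $\ord q=3$. Since $X$ is special it contains a special pair, hence by Lemma~\ref{lem_Y2contain} a special triple $(x,y,z)$ whose members have order $4$ and satisfy $x^2=y^2=z^2=s$ with $\ord s=2$. The decisive feature of order three is that in the power notation of Lemma~\ref{lem_sequence} one has $q*q=[e,q]$, i.e. $q^2=q$; this is exactly what will produce a forbidden square.

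First I would feed $q$, whose order lies outside $\{1,2,4\}$, together with the order-$4$ elements $x$ and $y$ into the sequence machinery. Applying Lemma~\ref{lem_4not4} to the pairs $(x,q)$ and $(y,q)$ yields elements $x_1$ and $y_1$ for which~\eqref{eq_x1x1} gives $x_1*x_1=[e,sq^2]$ and $y_1*y_1=[e,sq^2]$. Because $q^2=q$ this means $x_1^2=y_1^2=sq$, so by Lemma~\ref{lem_equal_squares} the elements $x_1$ and $y_1$ lie in the same $V$-orbit; write $y_1=a\cdot x_1$ with $a\in V$.

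The heart of the argument is to compute the product $x_1*y_1$ in two ways. On one hand Lemma~\ref{lem_special_sequences}, applied to the triple $(x,y,z)$ and the element $q$, gives $x_1*y_1=[z,z_2]$, so the order-$4$ element $z$ occurs in $x_1*y_1$. On the other hand, substituting $y_1=a\cdot x_1$ and using the action formula~\eqref{eq_action} of Proposition~\ref{propos_bullet} together with $x_1*x_1=[e,sq]$ gives
\[
x_1*y_1=(a\cdot x_1)*x_1=a\cdot(x_1*x_1)=a\cdot[e,sq]=[a,\,a\cdot sq].
\]
Hence $z\in\{a,\,a\cdot sq\}$. But $a\in V$ has order at most $2$, so $a^2=e$, while $a\cdot sq$ lies in the $V$-orbit of $sq$ and therefore $(a\cdot sq)^2=(sq)^2=q^2=q$ by Lemma~\ref{lem_equal_squares}. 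In either case $z^2\in\{e,q\}$, contradicting $z^2=s$, which has order $2$ and is thus distinct from both $e$ and $q$ (the latter because $\ord q=3$). This contradiction shows no element of order $3$ can exist, proving the lemma.

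I expect the only real obstacle to be organizational rather than computational: one must check that the hypotheses of Lemmas~\ref{lem_4not4} and~\ref{lem_special_sequences} are genuinely met (a bona fide special triple, and $\ord q\notin\{1,2,4\}$), and one must be careful that the two evaluations of $x_1*y_1$ refer to the same element $x_1\in q*x$ produced by the normalization in Lemma~\ref{lem_special_sequences}. Once the identity $x_1*y_1=[z,z_2]$ is secured, the order-three hypothesis enters solely through $q^2=q$, which manufactures the spurious squares $e$ and $q$ and collides with $z^2=s$.
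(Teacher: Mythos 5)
Your strategy is sound and, once one step is repaired, gives a correct proof that is genuinely shorter than the paper's. The one real problem is the appeal to Lemma~\ref{lem_equal_squares} to get $y_1=a\cdot x_1$ from $x_1^2=y_1^2$: that lemma sits in the section whose standing hypothesis is that the two-valued group is \emph{non-special}, whereas here $X$ is special, and the implication ``equal squares $\Rightarrow$ same $V$-orbit'' genuinely fails in special groups (in $Y_2$ the three order-$4$ elements all have square $s$, yet each forms a singleton $V$-orbit). So you cannot cite the lemma as stated. The gap closes easily, though: by Lemma~\ref{lem_4not4} the orders of $x_1$ and $y_1$ lie outside $\{1,2,4\}$, while by Lemma~\ref{lem_Y2contain} every special pair consists of elements of order~$4$; hence the pair $(x_1,y_1)$ is not special, and the proof of Lemma~\ref{lem_equal_squares} (which uses non-speciality only for the pair in question) applies verbatim. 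Even more directly: Lemma~\ref{lem_special_sequences} already gives $x_1*y_1=[z,z_2]$ with $z\ne z_2$, so comparing the two expansions of $x_1*x_1*y_1*y_1$ and using $z^2=s\ne e$ forces $z_2\in V$, whence $y_1=z_2x_1$ by Lemma~\ref{lem_basic}. Your second use of Lemma~\ref{lem_equal_squares}, namely $(a\cdot sq)^2=q^2$, is just the $V$-action formula~\eqref{eq_action} and holds in any involutive commutative two-valued group, so it is unproblematic.

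With that repair the argument closes exactly as you say: $z\in x_1*y_1=[a,\,a\cdot sq]$, and $z\notin V$ forces $z=a\cdot sq$, giving $z^2=q\ne s$, a contradiction. This is a different route from the paper's: there the authors first establish $x_2=sx_1$, $y_2=sy_1$, $z_2=sz_1$ (ruling out $x_2=x_1$ by a separate associativity computation) and then derive the contradiction $y_1=y_{-1}$ by expanding $(x_1*x_1)*y_1$ and $x_1*(x_1*y_1)$ in two ways. Both proofs rest on the same machinery (Lemmas~\ref{lem_4not4} and~\ref{lem_special_sequences}) and both exploit $q^2=q$, but yours trades the associativity bookkeeping for the single observation that $z$ would have to have square $q$ rather than $s$, which is tidier.
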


\begin{proof}
Assume the opposite: let $q\in X$ be an element of order~$3$, then $q^2=q$, that is, $q*q=[e,q]$. Let $(x,y,z)$ be some special triple in~$X$, $s=x^2=y^2=z^2$ and $\{x_n\}_{n\in\Z}$, $\{y_n\}_{n\in\Z}$ and~$\{z_n\}_{n\in\Z}$ be the sequences satisfying the conditions of Lemma ~\ref{lem_special_sequences}.
We have
$$
[x_2,x_{-2}]=q^2*x=q*x=[x_1,x_{-1}].
$$
So either $x_2=x_1$ or $x_2=x_{-1}=s x_1$. If the equality $x_2=x_1$ were true, then we would get that $q*x_1=[x_1,x]$ and, therefore,
$$
[x_1,x_1,x_1,x]=[e,q]*x_1=(q*q)*x_1=q*(q*x_1)=q*[x,x_1]=[x_1,x_{-1},x_1,x].
$$
This is not true since $x_{1}\ne x_{-1}$. Hence $x_2=s x_1$. Likewise, $y_2=s y_1$ and $z_2=s z_1$.

Now, using the equalities $x_1*x_1=[e,s q]$, $x_1*z=[y_1,s y_1]$, $x_1*y_1=[z,z_2]$ and~$x_1*z_1=[y,y_2]$, we get
\begin{align*}
(x_1*x_1)*y_1&=[e,s q]*y_1=[y_1,y_1,s y_2,s y]=[y_1,y_1,y_1,y],\\
x_1*(x_1*y_1)&=x_1*[z,z_2]=x_1*[z,s z_1]=[y_1,s y_1,s y,s y_2]=[y_1,y_{-1},y,y_1],
\end{align*}
which gives a contradiction since $y_1\ne y_{-1}$. This completes the proof of the Lemma.
\end{proof}

\begin{proof}[Proof of Proposition~\ref{propos_orders124}\textnormal{(a)}]
Assume that the two-valued group~$X$ contains an element~$q$ whose order is neither~$1$, nor~$2$, nor~$4$. According to Lemma~\ref{lem_order3}, the order of $q$ is not equal to~$3$; thus $\ord q>4$. Let $(x,y,z)$ be some special triple in~$X$, $s=x^2=y^2=z^2$ and $\{x_n\}_{n\in\Z}$, $\{y_n\}_{n\in\Z}$ and~$\{z_n\}_{n\in\Z}$ be the sequences satisfying the conditions of Lemma ~\ref{lem_special_sequences}. Using the properties of these sequences, we obtain
\begin{align*}
q*y_1*x_2&=q*(y_1*x_2)=q*[z_1,z_3]=[z,z_2,z_2,z_4],\\
q*y_1*x_2&=(q*y_1)*x_2=[y,y_2]*x_2=[z_2,s z_2,x_2*y_2],
\end{align*}
which implies that element~$s z_2$ coincides with one of the three elements~$z$, $z_2$ and~$z_4$.

If the equality $s z_2=z_2$ were true, then we would get that $q^2*z=[z_2,z_2]$. Since $\ord q^2>2$ and $\ord z=4$, it would follow that $(q^2,z)$ is a special pair, which is impossible by Lemma ~\ref{lem_special_square}. Therefore, $s z_2\ne z_2$

If the equality $s z_2=z$ were true, then we would again get that
$$
s z_2=z=s z=z_2,
$$
which, as we have already shown, is impossible. So $s z_2\ne z$.

Thus, $s z_2=z_4$. Then
$$
[z_3,z_5]=q*z_4=q*(s z_2)=s (q*z_2)=s [z_1,z_3]=[s z_1,s z_3].
$$
If $z_3=s z_3$, then $q^3*z=[z_3,z_3]$. Hence, either $\ord q^3\le 2$ or $(q^3,z)$ is a special pair, and then by Lemma~\ref{lem_Y2contain} the order of element~$q^3$ is equal to~$4$. Thus, in any case, the order of element~$q^3$ divides~ $4$, and hence the order of element~$q$ divides~$12$. On the other hand, the order of the element $q$ is larger than $4$, so $\ord q=6$, or $\ord q=12$. Then either $q^2$ or $q^4$ has order~$3$, which is impossible by Lemma~\ref{lem_order3}.
Hence $z_3\ne s z_3$, so $z_3= s z_1$. Then
$$
[z_2,z_4]=q*z_3=q*(s z_1)=s (q*z_1)=s [z,z_2]=[z,s z_2]=[z,z_4],
$$
so $z=z_2$. But then again $s z_2=z_2$, which, as we have already proved, is impossible. The resulting contradiction completes the proof of part~(a) of the Proposition.
\end{proof}

\begin{lem}\label{lem_44}
Let $x$ and~$p$ be two elements of order~$4$ in an involutive commutative two-valued group~$X$ such that $x^2\ne p^2$. Then there is an element $a\in X$ of order~$4$ such that $a^2=x^2 p^2$ and
\begin{align*}
x*p&=[a,x^2 a],&
x*a&=[p,x^2 p],&
p*a&=[x,p^2 x].
\end{align*}
\end{lem}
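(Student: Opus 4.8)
The plan is to locate the element $a$ as one of the two entries of the product $x*p$, read off its order and its square, and then obtain the two remaining products by a symmetric argument. Set $s=x^2$ and $t=p^2$; since $x$ and $p$ have order $4$, both $s$ and $t$ are non-identity elements of the Boolean group $V$, and by hypothesis $s\neq t$, so $st\neq e$. First I would check that the pair $(x,p)$ is not special: were $x*p=[w,w]$, then Lemma~\ref{lem_Y2contain} would force $x^2=p^2$, contrary to assumption. Hence $x*p=[a,a']$ with $a\neq a'$, and since $x^2\neq p^2$, Lemma~\ref{lem_z1z2_prelim} gives $a*a'=[x^2,p^2]=[s,t]$.

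Next I would pin down the relation between $a$ and $a'$ using the action of $V$ from Proposition~\ref{propos_bullet}. Because $s=x^2$, part~(c) gives $s\cdot x=x$, so applying \eqref{eq_action} to $x*p=[a,a']$ yields $[a,a']=[sa,sa']$. Thus either $sa=a$ and $sa'=a'$, or $sa=a'$ and $sa'=a$. The first alternative is ruled out as follows: from $s\in a*a'$ and Lemma~\ref{lem_basic} we get $a\in a'*s$, and since $s$ has order~$2$, Lemma~\ref{lem_order2} gives $a'*s=[sa',sa']$; if $sa'=a'$ this forces $a=a'$, a contradiction. Hence $a'=sa=x^2a$, which is the first claimed equality $x*p=[a,x^2a]$. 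Computing $a*a'=a*(sa)=s(a*a)=s[e,a^2]=[s,sa^2]$ and comparing with $a*a'=[s,t]$ gives $sa^2=t$, i.e. $a^2=st=x^2p^2$. Since $st\in V\setminus\{e\}$ has order~$2$, it follows that $\ord a=4$.

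For the remaining two products I would argue symmetrically. From $a\in x*p$, Lemma~\ref{lem_basic} together with commutativity gives $p\in x*a$ and $x\in p*a$. The pairs $(x,a)$ and $(p,a)$ are non-special because $x^2=s\neq st=a^2$ and $p^2=t\neq st=a^2$, so each product has two distinct entries. Writing $x*a=[p,p'']$ and applying the action of $s=x^2$ exactly as above (using $s\cdot x=x$) shows $p''=sp=x^2p$, so $x*a=[p,x^2p]$. Likewise, writing $p*a=[x,x'']$ and applying the action of $t=p^2$ (using $t\cdot p=p$) shows $x''=tx=p^2x$, so $p*a=[x,p^2x]$. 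In each case the degenerate alternative $sp=p$ (resp.\ $tx=x$) is excluded by Proposition~\ref{propos_bullet}(c), since it would force $p^2=s$ (resp.\ $x^2=t$), contrary to $s\neq t$.

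The main obstacle is the second step: identifying which of the two entries of $x*p$ is the sought $a$ and showing that the two entries are interchanged by multiplication by $s=x^2$ rather than both being fixed by it. Once this degeneracy is excluded, the square $a^2$ and the two remaining products follow by routine repetitions of the same $V$-action argument, so I expect no further difficulty.
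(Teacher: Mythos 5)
Your proof is correct, and it takes a mildly but genuinely different route from the paper's. The paper first derives $x*a=[p,x^2p]$ and $p*a=[x,p^2x]$ (from $p\in x*a$, the invariance $x^2(x*a)=x*a$, and $x^2p\ne p$), then computes $a^2$ by expanding the two factorizations of the eight-element multiset $x*x*a*a=(x*x)*(a*a)=(x*a)*(x*a)$ and likewise $p*p*a*a$, obtaining $a^2\in\{p^2,x^2p^2\}\cap\{x^2,x^2p^2\}=\{x^2p^2\}$; only at the very end does it identify the second entry of $x*p$ as $x^2a$. You reverse the order: you first pin down $x*p=[a,x^2a]$ via the $V$-action together with Lemmas~\ref{lem_basic} and~\ref{lem_order2}, and then get $a^2=x^2p^2$ in one stroke by combining Lemma~\ref{lem_z1z2_prelim} (which gives $a*a'=[x^2,p^2]$) with the just-established relation $a'=x^2a$, so that $a*a'=[x^2,x^2a^2]$ and $x^2a^2=p^2$. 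This replaces the paper's double-product computation with a single appeal to Lemma~\ref{lem_z1z2_prelim} and is somewhat more economical; the paper's version is more self-contained. All the lemmas you invoke (Lemmas~\ref{lem_basic}, \ref{lem_order2}, \ref{lem_z1z2_prelim}, \ref{lem_Y2contain} and Proposition~\ref{propos_bullet}) are stated for general involutive commutative two-valued groups, so they are legitimately available in the special (non-coset) setting where Lemma~\ref{lem_44} is used, and your exclusion of the degenerate alternatives via Proposition~\ref{propos_bullet}(c) is exactly the right mechanism.
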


\begin{proof}
Since $x^2\ne p^2$ it follows from Lemma ~\ref{lem_Y2contain} that the pair $(x,p)$ is not special. So $x*p=[a,b]$ for some $a\ne b$. Lemma ~\ref{lem_basic} now implies that $p\in x*a$. At the same time, on the one hand,
$$
x^2(x*a)=(x^2x)*a=x*a,
$$
and on the other hand $x^2 p\ne p$ (by Proposition~\ref{propos_bullet}(c) since $p^2\ne x^2$). Hence $x*a=[p,x^2 p]$. Similarly $p*a=[x,p^2 x]$.

Now we have
\begin{align*}
(x*x)*(a*a)&=[e,x^2]*[e,a^2]=[e,e,x^2,x^2,a^2,a^2,x^2 a^2,x^2 a^2],\\
(x*a)*(x*a)&=[p,x^2 p]*[p,x^2 p]=[e,e,x^2,x^2,p^2,p^2,x^2p^2,x^2 p^2],
\end{align*}
so either $a^2=p^2$, or $a^2=x^2p^2$. Similarly, from consideration of the product $p*p*a*a$ it follows that either $a^2=x^2$, or $a^2=x^2 p^2$. Since $x^2\ne p^2$ it follows that $a^2=x^2 p^2$.

Since $x^2$ and $p^2$ are different elements of order~$2$, we get that $\ord a^2=2$, hence $\ord a=4$. Moreover, we see that $a^2\ne x^2$ since $p^2\ne e$. Hence, by Proposition ~\ref{propos_bullet}(c) $x^2 a\ne a$. However,
$$
[x^2 a, x^2 b]=x^2(x*p)=(x^2 x)*p=x*p=[a,b],
$$
hence $b=x^2 a$.
\end{proof}

\begin{proof}[Proof of Proposition~\ref{propos_orders124}\textnormal{(b)}]
Let $(x,y,z)$ be some special triple in~$X$ and $s=x^2=y^2=z^2$. Assume that there is an element $p\in X$ of order~$4$ such that $p^2=t\ne s$. Applying Lemma ~\ref{lem_44} to pairs $(x,p)$, $(y,p)$ and~$(z,p)$, we get that in the group~$X$ there are elements~$a$, $b$ and~$c$ of order~$4$ such that
\begin{gather*}
a^2=b^2=c^2=st,\\
\begin{aligned}
x*p&=[a,s a],&
x*a&=[p,s p],&
p*a&=[x,t x],\\
y*p&=[b,s b],&
y*b&=[p,s p],&
p*b&=[y,t y],\\
z*p&=[c,s c],&
z*c&=[p,s p],&
p*c&=[z,t z].
\end{aligned}
\end{gather*}
We have
\begin{align*}
x*y*p&=(x*y)*p=[z,z]*p=[c,c,s c,s c],\\
x*y*p&=x*(y*p)=x*[b,s b]=[x*b,s (x*b)].
\end{align*}
Moreover, by Lemma~\ref{lem_Y2contain}, the pair~$(x,b)$ is not special, since $x^2\ne b^2$. Hence, the multiset $x*b$ consists of two different elements. Hence $x*b=[c,s c]$. Now it follows from Lemma~\ref{lem_basic} that the element $x$ belongs to the multiset~$b*c$. Hence, the element $t x=st x=b^2 x$ also belongs to the multiset
$$
b^2(b*c)=(b^2  b)*c=b*c.
$$
At the same time, $t x\ne x$ by Proposition ~\ref{propos_bullet}(c) since $x^2\ne t$. Thus, $b*c=[x,t x]$. Finally, we get that
\begin{multline*}
[c,c,c,c]=[e,c^2]*c=[e,b^2]*c=(b*b)*c=b*(b*c)=b*[x,t x]={}\\
[c,s c,t c,st c]=[c,s c,s c,c],
\end{multline*}
so $s c=c$, which is impossible according to Proposition~\ref{propos_bullet}(c) since $c^2\ne s$. Thus, $p^2=s$ for all elements of $p\in X$ of order~$4$.
\end{proof}

\begin{proof}[Proof of Theorem~\ref{theorem_special_classify}]
According to Proposition~\ref{propos_orders124}, all elements of the two-valued group~$X$ have orders~$1$, $2$ or~$4$ and, moreover,  there is an element~$s$ of order~$2$ in~$X$ such that $s=x^2$ for all elements~$x\in X$ of order~$4$. Since $X$ does not contain single-valued direct factors, it follows from Proposition~\ref{propos_neras} that $s$ is the only element of order~$2$ in~$X$. Thus, $x^2=s$ for all elements $x\in X$, except~$e$ and~$s$. The elements~$e$ and~$s$ form a two-element subgroup of the two-valued group~$X$. Consider the two-valued quotient group $V=X/\{e,s\}$. We have $sx=x$ for all elements~$x$, except~$e$ and~$s$, so $V$ is simply the set~$X$, in which two elements~$e$ and~$s$ are identified in one element~$e$ and no other identifications made. In the two-valued group $V$ the equality $x^2=s$ turns into the equality~$x^2=e$. Thus, all nontrivial elements of the two-valued group~$V$ have order~$2$. According to Proposition~\ref{propos_bullet} we get that $V$ is a Boolean group with doubled multiplication, so $X\cong Y_V$.
\end{proof}

\section{End of classification in finitely generated case}\label{section_final_proof}

In this section, we collect together the results of sections ~\ref{section_1gen}--\ref{section_special} and prove Theorem ~\ref{theorem_main_fg}.

It follows from Proposition~\ref{propos_rasch_iso} and Theorems~\ref{theorem_nonspecial_classify}, \ref{thm_124} and ~\ref{theorem_special_classify} that every finitely generated involutive commutative two-valued group~$X$ is isomorphic to one of the following two-valued groups~$X^{\ba}_A\times C_2^m$, where $A$ is finitely generated abelian group, $X^{\bu}_n\times C_2^m$ or ~$Y_n\times C_2^m.$ Taking into account the isomorphism $X_A^{\ba}\times C_2^m\cong X_{A\times C_2^m}$, we see that $X$ is isomorphic to one of the two-valued groups listed in Theorem ~\ref{theorem_main_fg}.

Now we need to show that all these two-valued groups are pairwise non-isomorphic with the following exceptions:
\begin{gather*}
X^{\ba}_{m\times 2,4}\cong X^{\bu}_1\times C_2^m\cong Y_1\times C_2^m,\label{eq_main_iso1'}\\
X^{\ba}_{m\times 2,4,4}\cong X^{\bu}_2\times C_2^m.\label{eq_main_iso2'}
\end{gather*}
The first of these exceptional isomorphisms is obvious; the second follows from the isomorphism $X^{\ba}_{4,4}\cong X^{\bu}_2$, see example ~\ref{ex_iso} and Theorem ~\ref{thm_124}.

Two-valued groups are divided into two types: special and non-special, and two-valued groups of different types cannot be isomorphic to each other. According to Proposition~\ref{propos_special_yn}, two-valued groups $Y_n\times C_2^m$ with $n\ge 2$ are special, and all other two-valued groups listed in theorem ~\ref{theorem_main_fg} are non-special.

The special two-valued group $Y_n\times C_2^m$ consists of $2^m(2^n+1)$ elements, so these groups are pairwise non-isomorphic for different pairs $(n,m)$.

The two-valued group $X_n^{\bu}\times C_2^m$ consists of $2^m(2^{2n-1}+2^{n-1})$ elements, so these groups for different pairs $(n,m)$ are also pairwise non-isomorphic.

To complete the proof of Theorem ~\ref{theorem_main_fg}, it remains for us to prove the following two Propositions.

\begin{propos}\label{propos_ua_different}
If $n\ge 3$ and $m\ge 0$, then the two-valued group $X_n^{\bu}\times C_2^m$ is not isomorphic to any of the two-valued groups~$X^{\ba}_{A}$ of the principal series.
\end{propos}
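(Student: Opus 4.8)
The plan is to assume an isomorphism $X_n^{\bu}\times C_2^m\cong X_A^{\ba}$ for some abelian group~$A$ and derive a contradiction with Theorem~\ref{thm_124}. First I would note that $X_n^{\bu}\times C_2^m$ is finite and that all of its elements have order~$1$, $2$, or~$4$; hence the same must hold for~$X_A^{\ba}$. Since $\pi^{-1}(e)=\{e\}$ and $\pi(a)^k=\pi(a^k)$, the order of $\pi(a)$ in~$X_A^{\ba}$ equals the order of~$a$ in~$A$, so finiteness of~$X_A^{\ba}$ forces $A$ to be finite and the order condition forces $A$ to have exponent dividing~$4$. Therefore $A\cong C_2^p\times C_4^q$ for some $p,q\ge 0$, and using the canonical isomorphism~\eqref{eq_iso_a} this gives
$$
X_A^{\ba}\cong X_{q\times 4}^{\ba}\times C_2^p.
$$

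Next I would invoke uniqueness of the Boolean direct factor. Both $X_n^{\bu}$ and $X_{q\times 4}^{\ba}$ are non-special by Proposition~\ref{propos_special_yn}, and neither contains a single-valued direct factor: by Proposition~\ref{propos_neras} it suffices to check that every element of order~$2$ is a square, which is immediate in each case. Indeed, in $X_n^{\bu}$ the square of $\pi(a,b)$ with $a\ne e$ is the order-$2$ element~$\pi(e,a)$, and as $a$ ranges over the nonidentity elements these exhaust all order-$2$ elements; in $X_{q\times 4}^{\ba}$ every order-$2$ element is the image of some $v=a^2\in C_4^q$ and equals the square~$\pi(a)^2=\pi(a^2)$. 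Consequently $X_n^{\bu}\times C_2^m$ and $X_{q\times 4}^{\ba}\times C_2^p$ are two decompositions of one and the same two-valued group into a factor without single-valued direct factors and a Boolean group, so the uniqueness statement of Proposition~\ref{propos_rasch_iso} yields $X_n^{\bu}\cong X_{q\times 4}^{\ba}$ (and $C_2^m\cong C_2^p$, which we will not need).

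Finally I would compare cardinalities: $X_n^{\bu}$ has $2^{2n-1}+2^{n-1}$ elements and $X_{q\times 4}^{\ba}$ has $2^{2q-1}+2^{q-1}$ elements, and since $k\mapsto 2^{2k-1}+2^{k-1}$ is strictly increasing this gives $n=q$. But Theorem~\ref{thm_124} asserts precisely that for $n\ge 3$ the two-valued groups $X_{n\times 4}^{\ba}$ and $X_n^{\bu}$ are \emph{not} isomorphic, contradicting $X_n^{\bu}\cong X_{n\times 4}^{\ba}$. This contradiction proves the Proposition.

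The substantive content is carried entirely by Theorem~\ref{thm_124}; there is no genuinely hard new step. The only points demanding care are the reduction steps: verifying that the order hypothesis forces $A$ to have exponent dividing~$4$ (so that $A\cong C_2^p\times C_4^q$), and checking that both candidate factors $X_n^{\bu}$ and $X_{q\times 4}^{\ba}$ are really free of single-valued direct factors, which is what licenses the application of the uniqueness of decomposition in Proposition~\ref{propos_rasch_iso}.
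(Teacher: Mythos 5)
Your proposal is correct and follows essentially the same route as the paper: reduce via Lemma~\ref{lem_orders} to $A\cong C_4^q\times C_2^p$, strip off the Boolean factor using Propositions~\ref{propos_neras} and~\ref{propos_rasch_iso}, and conclude by the non-isomorphism $X_n^{\bu}\not\cong X_{n\times 4}^{\ba}$ for $n\ge 3$ from Theorem~\ref{thm_124}. The only (immaterial) difference is that the paper pins down $p=n$, $q=m$ by a cardinality count before invoking the uniqueness of the decomposition, whereas you apply the uniqueness first and then compare cardinalities of the reduced factors.
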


\begin{propos}\label{propos_different}
Let $A$ and~$B$ be finitely generated abelian groups such that the two-valued groups~$X_A^{\ba}$ and~$X_B^{\ba}$ are isomorphic. Then the groups~$A$ and~$B$ are also isomorphic.
\end{propos}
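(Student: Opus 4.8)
The plan is to recover the abelian group $A$, up to isomorphism, purely from the two-valued group $X=X_A^{\ba}$, by extracting two invariants: the torsion subgroup $T$ of $A$ and the free rank $r$, where $A\cong\Z^r\times T$ by the structure theorem. Once both $T$ and $r$ are shown to be invariants of the two-valued group, the Proposition follows at once: if $X_A^{\ba}\cong X_B^{\ba}$ these invariants agree, so $A\cong B$.

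First I would recover the torsion. For $a\in A$ the sequence $\{\pi(a^k)\}_{k\ge 0}$ satisfies the defining recursion of Lemma~\ref{lem_sequence} with $\pi(a^0)=e$, $\pi(a^1)=\pi(a)$, since $\pi(a)*\pi(a^k)=[\pi(a^{k+1}),\pi(a^{k-1})]$; by uniqueness it is the sequence of powers, so $\pi(a)^k=\pi(a^k)$ and hence $\ord\pi(a)=\ord a$. The projection $\pi$ is two-to-one on the set $\{a:\ord a\ge 3\}$ (the genuine pairs $\{a,a^{-1}\}$) and one-to-one on the $\iota_{\ba}$-fixed set $\{a:\ord a\le 2\}$. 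Therefore, for each finite $d$, the number of order-$d$ elements of $A$ is determined by $X$: it is the number of order-$d$ elements of $X$ for $d\le 2$, and twice that number for $d\ge 3$. Since every finite-order element of $A=\Z^r\times T$ lies in $T$, these are exactly the order statistics of the finite group $T$. By the classical fact that a finite abelian group is determined up to isomorphism by its order statistics (reduce to $p$-groups, where $|T_p[p^k]|$ determines the conjugate of the partition describing $T_p$), the group $T$ is an invariant of $X$.

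The main obstacle is the rank $r$: every torsion-type invariant, such as the $n$-torsion or the order statistics, is blind to the torsion-free summand $\Z^r$ and sees only $T$. To detect $r$ I would pass to the quotient by squares. Let $Q\subseteq X$ be the set of squares, which is a subgroup by Lemma~\ref{lem_squares}; since $\pi(a)^2=\pi(a^2)$, we have $Q=\pi(2A)$. Tracing the equivalence relation of Lemma~\ref{lem_equiv} for the quotient $X/Q$, one finds $\pi(a)\sim\pi(a')$ if and only if $a'\equiv\pm a\pmod{2A}$; but in $A/2A$ one has $-a=a$, so the classes are precisely the cosets of $2A$ and no further collapsing occurs. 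Thus $X/Q$ is the double of the elementary abelian group $A/2A$, and in particular
\[
|X/Q|=|A/2A|=2^{\,r+\dim_{\F_2}(T/2T)}.
\]
Since $|X/Q|$ is manifestly an invariant of the two-valued group and $\dim_{\F_2}(T/2T)$ is determined by the already-recovered group $T$, we obtain
\[
r=\log_2|X/Q|-\dim_{\F_2}(T/2T),
\]
so $r$ too is an invariant of $X$.

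Putting these together, both $T$ and $r$ are determined by $X_A^{\ba}$, which completes the proof. I expect the only genuinely delicate point to be this rank extraction, namely the verification that the $\sim$-classes in $X/Q$ are exactly the cosets of $2A$ (equivalently, that the antipodal involution becomes trivial modulo $2A$ and that $|X/Q|=|A/2A|$ with no extra identifications); this is precisely the computation indicated above. The recovery of $T$ from the order statistics is routine once the classical lemma on finite abelian groups is invoked.
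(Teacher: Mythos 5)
Your proof is correct, and the torsion half (recovering the order statistics of $A$ from those of $X_A^{\ba}$ via $\ord\pi(a)=\ord a$ and the two-to-one/one-to-one dichotomy) coincides with the paper's Lemmas \ref{lem_orders} and \ref{lem_orders_compare} combined with Corollary \ref{cor_ab_diff}. Where you genuinely diverge is in detecting the free rank. The paper's Lemma \ref{lem_ranks_compare} characterizes $r$ intrinsically as the maximal size of a system $x_1,\ldots,x_r\in X_A^{\ba}$ such that no multiset $x_1^{n_1}*\cdots*x_r^{n_r}$ with some $n_i\ne 0$ contains $e$ --- an ``independence'' criterion that reads off the rank directly, without reference to the torsion. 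You instead form the quotient by the subgroup $Q$ of squares and compute $|X/Q|=|A/A^2|=2^{\,r+\dim_{\F_2}(T/T^2)}$; your verification that the $\sim$-classes are exactly the cosets of $A^2$ (because the antipodal involution is trivial modulo squares) is sound, and $|X/Q|$ is manifestly an isomorphism invariant. The trade-off: your route leans on the quotient-group machinery of Lemma \ref{lem_equiv} and Lemma \ref{lem_squares} and yields a clean numerical formula, but it only isolates $r$ after $T$ has already been recovered, since you must subtract $\dim_{\F_2}(T/T^2)$; the paper's independence criterion extracts $r$ without that detour. Both arguments are complete, and either combines with the order-statistics lemma to finish the proof.
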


We start with the following simple Lemma, which will be used in the proofs of both Propositions.

\begin{lem}\label{lem_orders}
Let $A$ be an abelian group and $\pi\colon A\to X_A^{\ba}$ be the quotient map by the antipodal involution~$\iota_{\ba}$. Then $\ord \pi(a)=\ord a$ for all $a\in A$.
\end{lem}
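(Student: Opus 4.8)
The plan is to identify the $k$-th power $\pi(a)^k$ in the two-valued group $X_A^{\ba}$, computed via the recurrence of Lemma~\ref{lem_sequence}, with the image $\pi(a^k)$ of the ordinary $k$-th power in~$A$. First I would set $x=\pi(a)$ and consider the sequence $\xi^k=\pi(a^k)$, $k\ge 0$. It has the correct initial data, $\xi^0=\pi(e)=e$ and $\xi^1=\pi(a)=x$, and using the multiplication rule of the coset construction it satisfies
$$
x*\xi^k=\pi(a)*\pi(a^k)=\bigl[\pi(a^{k+1}),\pi(a^{k-1})\bigr]=[\xi^{k-1},\xi^{k+1}],
$$
which is exactly the recurrence~\eqref{eq_1}. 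By the uniqueness part of Lemma~\ref{lem_sequence} this forces $\pi(a)^k=\pi(a^k)$ for every $k\ge 0$, and hence for all $k\in\Z$, since both sides are invariant under $k\mapsto -k$ (the two-valued power by the convention $x^{-k}=x^k$, and $\pi(a^{-k})=\pi(a^k)$ because $\pi$ identifies inverse elements).

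Second, I would observe that the $\pi$-fiber over the identity is a single point: $\pi(b)=e$ holds if and only if $b\in\{e,e^{-1}\}=\{e\}$, i.e. $b=e$. Combining this with the first step, $\pi(a)^k=e$ if and only if $\pi(a^k)=e$ if and only if $a^k=e$.

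Finally, $\ord\pi(a)$ is by definition the least positive $k$ with $\pi(a)^k=e$ (or $\infty$ if no such $k$ exists), and the two preceding observations show this coincides with the least positive $k$ with $a^k=e$, that is, with $\ord a$. There is no genuine obstacle here; the only points requiring care are the appeal to uniqueness in Lemma~\ref{lem_sequence} to justify the identification of powers, together with the elementary but essential remark that $e$ is its own inverse, so that its $\pi$-fiber is a singleton and $\pi(a^k)=e$ cannot hold for any $a^k\ne e$.
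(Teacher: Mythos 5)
Your proof is correct and follows essentially the same route as the paper: both establish $\pi(a)^k=\pi(a^k)$ from the coset multiplication rule (the paper by induction, you via the uniqueness clause of Lemma~\ref{lem_sequence}, which amounts to the same thing) and then use that the $\pi$-fiber over the identity is the singleton $\{e\}$.
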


\begin{proof}
By the definition of a coset two-valued group, we have $$\pi(a^k)*\pi(a)=\bigl[\pi(a^{k-1}),\pi(a^{k+1})\bigr]$$ for all $k$. From this equality it follows easily by induction that
$\pi(a)^k=\pi(a^k)$ for all $a\in A$ and $k>0$. Since $\ord \pi(a)$ is by definition the least~$k$ such that $\pi(a)^k=e$ and the inverse image of the identity of the two-valued group $X_A^{\ba}$ under the map ~$\pi$ consists only of the identity of the group~$A$, then $\ord \pi(a)=\ord a$.
\end{proof}

\begin{proof}[Proof of Proposition~\ref{propos_ua_different}]
The two-valued group~$X_n^{\bu}\times C_2^m$ consists of elements of orders~$1$, $2$ and~$4$. Therefore, the isomorphism $X_n^{\bu}\times C_2^m\cong X^{\ba}_A$ can hold only if all elements of the two-valued group~$X^{\ba}_A$ have orders~$1$, $2$ and~$4$. According to Lemma~\ref{lem_orders}, this is equivalent to the fact that all elements of the abelian group~$A$ have orders~$1$, $2$ and~$4$, that is, $A\cong C_4^p\times C_2^q$ for some $p,q\ge 0$. Then the two-valued group~$X^{\ba}_A$ consists of $2^q(2^{2p-1}+2^{p-1})$ elements, whence $p=n$ and $q=m$. Thus, the only two-valued group of the principal series that could be isomorphic to the two-valued group~$X_n^{\bu}\times C_2^m$ is the two-valued group $X^{\ba}_{C_4^n\times C_2^m}\cong X^{\ba}_{n\times 4}\times C_2^m$. It is easy to see that in the two-valued groups $X^{\ba}_{n\times 4}$ and~$X^{\bu}_n$ all elements of order~$2$ are squares. Therefore by Proposition ~\ref{propos_neras} these two two-valued groups do not contain single-valued direct factors. Hence by Proposition ~\ref{propos_rasch_iso} the isomorphism $X_n^{\bu}\times C_2^m\cong X^{\ba}_{n\times 4}\times C_2^m$ would imply the isomorphism $X_n^{\bu}\cong X^{\ba}_{n\times 4}$. It remains to note that by Theorem~\ref{thm_124}, the two-valued groups~$X_n^{\bu}$ and~$X^{\ba}_{n\times 4}$ are not isomorphic when $n\ge 3$.
\end{proof}

Denote by~$N_k(A)$ (respectively, $N_k(X)$) the number of elements of order~$k$ in the abelian group~$A$ (respectively, in the involutive commutative two-valued group~$X$).
The following well-known property of finite abelian groups is easily deduced from their classification theorem.

\begin{lem}
If finite abelian groups~$A$ and~$B$ are such that $N_k(A)=N_k(B)$ for all~$k$, then $A$ and~$B$ are isomorphic.
\end{lem}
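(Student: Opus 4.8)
The plan is to reduce the statement to finite abelian $p$-groups via the primary decomposition, and then to show that the order statistics determine such a $p$-group up to isomorphism. First I would recall that every finite abelian group splits canonically as $A\cong\bigoplus_p A_p$, where $A_p$ is the $p$-primary (Sylow) subgroup, and that an element $a=(a_p)_p$ has order $\prod_p\ord(a_p)$ because the factors have pairwise coprime orders. In particular, for a prime power $k=p^j$ with $j\ge 1$, an element has order $p^j$ if and only if its $p$-component has order $p^j$ and all other components are trivial; hence $N_{p^j}(A)=N_{p^j}(A_p)$. Thus the hypothesis $N_k(A)=N_k(B)$ for all $k$ gives, in particular, $N_{p^j}(A_p)=N_{p^j}(B_p)$ for every prime $p$ and every $j\ge 0$. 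It therefore suffices to prove that a finite abelian $p$-group $P$ is determined up to isomorphism by the numbers $N_{p^j}(P)$, $j\ge 0$.

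For the $p$-group step I would pass from the order statistics to the cumulative counts $M_j(P)=\#\{a\in P:a^{p^j}=e\}=\sum_{i=0}^{j}N_{p^i}(P)$, which are manifestly determined by the $N_{p^i}(P)$. Writing $P\cong C_{p^{\lambda_1}}\times\cdots\times C_{p^{\lambda_r}}$ with $\lambda_1\ge\cdots\ge\lambda_r\ge 1$, the subgroup of elements killed by $p^j$ is $\prod_i C_{p^{\min(j,\lambda_i)}}$, so that $M_j(P)=p^{\sum_i\min(j,\lambda_i)}$. Taking logarithms base $p$ and forming successive differences gives
$$
\log_p M_j(P)-\log_p M_{j-1}(P)=\sum_i\bigl(\min(j,\lambda_i)-\min(j-1,\lambda_i)\bigr)=\#\{i:\lambda_i\ge j\},
$$
which is exactly the value $\lambda'_j$ of the conjugate partition. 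Hence the numbers $M_j(P)$ recover the whole conjugate partition $\lambda'$, and therefore $\lambda$ itself, i.e. the isomorphism type of $P$. Applying this to $A_p$ and $B_p$ yields $A_p\cong B_p$ for every $p$, and reassembling the primary components gives $A\cong B$.

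The routine parts are the primary-decomposition reduction and the counting of the torsion subgroups; the conceptual core—and the only place where anything must really be checked—is the observation that the increments of $\log_p M_j$ compute the conjugate partition, which is precisely what makes the order statistics determine the partition. I expect no serious obstacle here: the argument is elementary and self-contained once the formula $M_j(P)=p^{\sum_i\min(j,\lambda_i)}$ is in hand, and this formula follows immediately from the computation of $p^j$-torsion in each cyclic factor.
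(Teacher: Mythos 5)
Your proof is correct. The paper gives no argument for this lemma, remarking only that it ``is easily deduced from the classification theorem'' of finite abelian groups; your write-up is exactly that deduction, carried out in the standard way (primary decomposition, then recovery of the conjugate partition from the $p^j$-torsion counts), so there is nothing to compare beyond noting that you have supplied the details the authors omitted.
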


\begin{cor}\label{cor_ab_diff}
If finitely generated abelian groups~$A$ and~$B$ are such that $N_k(A)=N_k(B)$ for all~$k$ and the ranks of the free parts of the Abelian groups~$A$ and~$B$ are equal to each other, then $A$ and ~$B$ are isomorphic.
\end{cor}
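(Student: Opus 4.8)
The plan is to reduce the statement to the preceding Lemma about finite abelian groups by splitting off the free part. First I would write each of the two finitely generated abelian groups as a product of its free part and its torsion subgroup. Since by hypothesis the ranks of the free parts agree, say both equal to $r$, I can write $A\cong C_\infty^{r}\times T(A)$ and $B\cong C_\infty^{r}\times T(B)$, where $T(A)$ and $T(B)$ denote the torsion subgroups. The crucial structural point to record here is that $T(A)$ and $T(B)$ are \emph{finite} abelian groups, so that the preceding Lemma will be applicable to them.

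The key observation is that the counting functions $N_k$, for finite $k$, detect only the torsion part. Indeed, any element of $A$ whose component in the free factor $C_\infty^{r}$ is nontrivial has infinite order; hence, for every finite $k\ge 1$, the elements of order $k$ in $A$ are exactly the elements of order $k$ lying in $T(A)$. This gives $N_k(A)=N_k\bigl(T(A)\bigr)$ and, symmetrically, $N_k(B)=N_k\bigl(T(B)\bigr)$ for all finite $k$. Combining these identities with the hypothesis $N_k(A)=N_k(B)$ yields $N_k\bigl(T(A)\bigr)=N_k\bigl(T(B)\bigr)$ for all $k$. Applying the preceding Lemma to the finite abelian groups $T(A)$ and $T(B)$ then gives $T(A)\cong T(B)$, and therefore
$$
A\cong C_\infty^{r}\times T(A)\cong C_\infty^{r}\times T(B)\cong B,
$$
as required.

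There is no genuine obstacle in this argument; it is a routine corollary. The only points that deserve care, and which I would state explicitly, are the two places where the hypotheses are actually used: the equality of free ranks is indispensable because the functions $N_k$ with finite $k$ carry no information whatsoever about the free part, so without it one could not recover the free factor; and the finiteness of $T(A)$ and $T(B)$ is exactly what licenses the appeal to the preceding Lemma.
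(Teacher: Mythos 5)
Your proof is correct and is exactly the intended argument: the paper states the Corollary without proof as an immediate consequence of the preceding Lemma, and the reduction you spell out (splitting off the free part of equal rank, observing that $N_k$ for finite $k$ only sees the finite torsion subgroup, and applying the Lemma to the torsion subgroups) is the standard route. No issues.
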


\begin{lem}\label{lem_orders_compare}
If $A$ is a finitely generated abelian group, then $N_2(X_A^{\ba})=N_2(A)$ and~$N_k(X_A^{\ba})=N_k(A)/2$ for all $k>2$.
\end{lem}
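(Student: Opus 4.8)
The plan is to analyze the quotient map $\pi\colon A\to X_A^{\ba}$ directly and reduce the whole statement to a counting of orbits. Its fibers are exactly the orbits of the antipodal involution $\iota_{\ba}$, that is, the sets $\{a,a^{-1}\}$. The elementary dichotomy to exploit is that $a$ is a fixed point of $\iota_{\ba}$ precisely when $a=a^{-1}$, i.e. when $a^2=e$, i.e. when $\ord a\le 2$; otherwise the orbit $\{a,a^{-1}\}$ consists of two distinct elements. By Lemma~\ref{lem_orders} we moreover have $\ord\pi(a)=\ord a$ for every $a$, so $\pi$ restricts, for each $k$, to a surjection from the set of order-$k$ elements of $A$ onto the set of order-$k$ elements of $X_A^{\ba}$, with fibers the $\iota_{\ba}$-orbits. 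Thus computing $N_k(X_A^{\ba})$ amounts to counting $\iota_{\ba}$-orbits inside the set of order-$k$ elements of $A$.

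First I would treat $k=2$: every element $a$ with $\ord a=2$ satisfies $a=a^{-1}$, so it is a fixed point of $\iota_{\ba}$ and its orbit is the singleton $\{a\}$. Hence $\pi$ gives a bijection between the order-$2$ elements of $A$ and those of $X_A^{\ba}$, which yields $N_2(X_A^{\ba})=N_2(A)$. For $k>2$ the key point is the opposite: an element $a$ with $\ord a=k>2$ satisfies $a\ne a^{-1}$ (since $a^2\ne e$), while $a^{-1}$ also has order $k$. Therefore the $N_k(A)$ elements of order $k$ are partitioned into two-element orbits $\{a,a^{-1}\}$, and the number of such orbits is $N_k(A)/2$; by the reduction above this equals $N_k(X_A^{\ba})$. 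In particular this shows that $N_k(A)$ is even for $k>2$, so that the asserted formula even makes sense as an integer.

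The argument uses only this dichotomy together with Lemma~\ref{lem_orders}, so there is no genuine obstacle to overcome; the single point that demands care is precisely the appeal to Lemma~\ref{lem_orders}, which guarantees that $\pi$ preserves orders and hence that order-$k$ orbits map to order-$k$ elements, making the counts match level by level. Since $A$ is finitely generated, its torsion subgroup is finite and each $N_k(A)$ with $k$ finite is a well-defined finite number, so all the counts above are finite and the two identities follow.
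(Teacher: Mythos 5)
Your proposal is correct and follows essentially the same route as the paper's own proof: the fibers of $\pi$ are the $\iota_{\ba}$-orbits $\{a,a^{-1}\}$, which are singletons exactly for elements of order $\le 2$ and two-element sets otherwise, and Lemma~\ref{lem_orders} guarantees that orders are preserved so the counts match level by level. Your write-up merely spells out the orbit-counting in more detail than the paper does.
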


\begin{proof}
Under the projection $\pi\colon A\to X^{\ba}_A$ each pair $\{a,a^{-1}\}$ of mutually inverse elements of the group~$A$ is glued into one element of the two-valued group~$X^{\ba}_A$. In addition $a=a^{-1}$ if $\ord a = 2$, and $a\ne a^{-1}$ if $\ord a >2$. Therefore the claim immediately follows from Lemma ~\ref{lem_orders}.
\end{proof}

\begin{lem}\label{lem_ranks_compare}
Let $A$ be a finitely generated abelian group. Then the rank of the free part of the group~$A$ is equal to the largest cardinality of the system of elements $x_1,\ldots,x_r\in X_A^{\ba}$ such that none of the multisets of $x_1^{n_1}*\cdots*x_r^{n_r}$, where $n_1,\ldots,n_r\in\Z$ and at least one of the numbers~$n_i$ is non-zero, contains the identity ~$e$.
\end{lem}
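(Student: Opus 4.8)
The plan is to translate the condition on multisets in $X_A^{\ba}$ into a statement about $\Z$-linear independence in $A$, and then invoke the standard description of the torsion-free rank. Write $\pi\colon A\to X_A^{\ba}$ for the quotient map by $\iota_{\ba}$. From the computation in the proof of Lemma~\ref{lem_orders} together with the identity $\pi(g)=\pi(g^{-1})$ one obtains $\pi(a)^k=\pi(a^k)$ for every $a\in A$ and every $k\in\Z$, and also $\pi(g)=e$ if and only if $g=1$. Hence, if $x_i=\pi(a_i)$, then $x_i^{n_i}=\pi(a_i^{n_i})$ for all $n_i\in\Z$.

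The first step is to compute a general product. By induction on $r$, using the defining formula $\pi(g)*\pi(h)=[\pi(gh),\pi(gh^{-1})]$ and associativity, I would show that
$$
\pi(b_1)*\cdots*\pi(b_r)=\bigl[\pi(b_1b_2^{\varepsilon_2}\cdots b_r^{\varepsilon_r})\ :\ \varepsilon_2,\ldots,\varepsilon_r\in\{\pm1\}\bigr],
$$
a $2^{r-1}$-element multiset. Applying this with $b_i=a_i^{n_i}$ and using $\pi(g)=e\iff g=1$, the multiset $x_1^{n_1}*\cdots*x_r^{n_r}$ contains the identity precisely when $a_1^{n_1}a_2^{\varepsilon_2n_2}\cdots a_r^{\varepsilon_rn_r}=1$ in $A$ for some choice of signs $\varepsilon_i$.

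The key step is the sign bookkeeping. As $(n_1,\ldots,n_r)$ ranges over $\Z^r\setminus\{0\}$ and the signs $\varepsilon_i$ vary, the exponent vector $(n_1,\varepsilon_2n_2,\ldots,\varepsilon_rn_r)$ ranges over exactly $\Z^r\setminus\{0\}$, since its $i$th coordinate vanishes if and only if $n_i=0$. Consequently, the requirement that no multiset $x_1^{n_1}*\cdots*x_r^{n_r}$ with $(n_i)\neq 0$ contains $e$ is equivalent to the assertion that $a_1^{m_1}\cdots a_r^{m_r}\neq 1$ for every $(m_1,\ldots,m_r)\in\Z^r\setminus\{0\}$, that is, to the $\Z$-linear independence of $a_1,\ldots,a_r$ in $A$. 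This equivalence does not depend on the choice of representatives $a_i$, since replacing $a_i$ by $a_i^{-1}$ preserves linear independence; moreover, linear independence automatically forces the $x_i$ to be pairwise distinct and different from $e$, so that an independent system of size $r$ genuinely yields a set of $r$ admissible elements of $X_A^{\ba}$, and conversely.

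Finally, I would conclude by the standard fact that the maximal cardinality of a $\Z$-linearly independent subset of a finitely generated abelian group $A$ equals its torsion-free rank, which is precisely the rank of the free part: a torsion element alone is already linearly dependent, the free part supplies that many independent elements, and any larger family becomes $\Q$-linearly dependent in $A\otimes\Q$ and hence $\Z$-linearly dependent after clearing denominators. The main obstacle, such as it is, is purely organizational---establishing the product-multiset formula by induction and carrying out the sign bookkeeping exactly---after which the statement reduces to this textbook description of the rank.
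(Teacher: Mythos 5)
Your argument is correct and follows essentially the same route as the paper: compute the $2^{r-1}$-element product multiset $\bigl[\pi\bigl(a_1^{n_1}a_2^{\pm n_2}\cdots a_r^{\pm n_r}\bigr)\bigr]$, observe that it contains $e$ for some nonzero $(n_1,\ldots,n_r)$ exactly when the $a_i$ satisfy a nontrivial relation in $A$, and then identify the maximal size of an independent system with the rank of the free part. Your write-up merely makes explicit the induction, the sign bookkeeping, and the final textbook fact that the paper leaves implicit.
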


\begin{proof}
If $a_1,\ldots,a_r$ are elements of the group~$A$ and $x_i=\pi(a_i)$, $i=1,\ldots,r$, then the multiset $x_1^{n_1}*\cdots*x_r^{n_r}$ consists of $2^{r-1}$ elements $\pi\left(a_1^{n_1} a_2^{\pm n_2}\cdots a_r^{\pm n_r}\right)$ corresponding to all possible choices of signs~$\pm$.
Therefore, the condition that none of these elements is equal to~$e$ if at least one of the numbers~$n_i$ is non-zero is equivalent to the condition of the absence of a non-trivial relation $ a_1^{m_1}\cdots a_r^{m_r}=e$.
\end{proof}

\begin{proof}[Proof of Proposition~\ref{propos_different}]
According to Lemmas ~\ref{lem_orders_compare} and ~\ref{lem_ranks_compare}, the numbers $N_k(A)$ and the rank of the free part of a finitely generated abelian group~$A$ are uniquely determined by the structure of the two-valued group~$X_A^{\ba}$. Therefore Proposition~\ref{propos_different} follows from Corollary~\ref{cor_ab_diff}.
\end{proof}

Thus, Theorem ~\ref{theorem_main_fg} is proved.

\section{Non-finitely generated two-valued groups}\label{section_non_fg}

Let us now discuss to what extent our classification theorem can be extended to the case of non-finitely generated involutive commutative two-valued groups. There are two natural formulations of the problem: we can consider two-valued groups without topology or topological two-valued groups. The first problem will be considered in this section, the second one in sections~\ref{section_top} and~\ref{section_lc}. Naturally, in neither case can we expect a complete classification theorem such as Theorem ~\ref{theorem_main_fg} in finitely generated case. Indeed, in the non-finitely generated case, there is no classification theorem even for single-valued commutative groups. Therefore, the best result we can hope for is the solution of the classification problem for involutive commutative two-valued groups `modulo' the classification of single-valued commutative groups .

The main result of this section is the following theorem.

\begin{theorem}\label{theorem_non_fg}
Every involutive commutative two-valued group is isomorphic to one of the two-valued groups
\begin{itemize}
\item[\textnormal(1)] $X^{\ba}_A$, where $A$ is an abelian group;
\item[\textnormal(2)] $X^{\bu}_V\times W$, where $V$ and~$W$ are Boolean groups and $\dim V\ge 3$;
\item[\textnormal(3)] $Y_V\times W$, where $V$ and~$W$ are Boolean groups and $\dim V\ge 2$.
\end{itemize}
Two-valued groups from different series are never isomorphic to each other. Two two-valued groups, both in the series~$2$ or both in the series~$3$, are isomorphic to each other if and only if the corresponding Boolean groups~$V$ are isomorphic and simultaneously the corresponding Boolean groups~$W$ are isomorphic.
\end{theorem}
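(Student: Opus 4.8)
The plan is to bootstrap from the finitely generated classification, most of whose ingredients are in fact dimension-free. For an arbitrary involutive commutative two-valued group $X$ I would argue by the following trichotomy. If $X$ contains an element of order $\notin\{1,2,4\}$, then $X$ is non-special (Proposition~\ref{propos_orders124}(a)) and Theorem~\ref{theorem_nonspecial_classify} applies directly---this theorem needs no finite generation (Remark~\ref{remark_nonspecial_classify})---giving $X\cong X^{\ba}_A$, series~$1$. Otherwise all elements of $X$ have order in $\{1,2,4\}$; here I first split off a (possibly infinite-dimensional) Boolean direct factor, $X\cong X'\times W$ with $X'$ containing no single-valued direct factor, and then distinguish two cases: if $X'$ is special, Theorem~\ref{theorem_special_classify} gives $X'\cong Y_V$ with $\dim V\ge2$, so $X\cong Y_V\times W$, series~$3$; if $X'$ is non-special, it falls under the infinite-dimensional extension of Theorem~\ref{thm_124} discussed below. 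Throughout I would first verify that Proposition~\ref{propos_neras}, Proposition~\ref{propos_orders124}, Theorem~\ref{theorem_special_classify}, and the quasi-cocycle reduction of Proposition~\ref{propos_2gqc} are genuinely dimension-free: their proofs manipulate only finitely many elements at a time and never invoke a finite generating set, so they carry over verbatim. The identity $X^{\ba}_A\times W\cong X^{\ba}_{A\times W}$ of~\eqref{eq_iso_a} reabsorbs $W$ in the principal case.

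The first new point is the existence and uniqueness of the decomposition $X\cong X'\times W$ with $W$ now allowed to be infinite-dimensional, since the termination argument of Proposition~\ref{propos_rasch_iso} (bounding the process by the number of generators) is no longer available. For existence I would linearize: let $Q$ be the subgroup of squares (Lemma~\ref{lem_squares}), $V$ the Boolean group of elements of order $\le2$, $U=Q\cap V$, and fix an $\F_2$-complement $W$ to $U$ in $V$. Homomorphisms of two-valued groups $X\to W$ are exactly the $\F_2$-linear maps out of the universal linearization $M$ of $X$ (the $\F_2$-vector space on generators $[x]$ modulo $[e]=0$ and $[x]+[y]+[z]=0$ for $z\in x*y$), and the inclusion $W\hookrightarrow X$ induces an $\F_2$-linear map $W\to M$ which is injective because no nontrivial element of $W$ is a square---this is precisely the contradiction extracted in the proof of Proposition~\ref{propos_neras}, together with $W\cap Q=\{e\}$. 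Over the field $\F_2$ every injection of vector spaces splits, so there is a retraction $M\to W$, that is, a homomorphism $F\colon X\to W$ restricting to the identity on $W$; then $x\mapsto(\pi(x),F(x))$ realizes $X\cong(X/W)\times W$, and $X/W$ has no single-valued direct factor since its order-$2$ elements are images of $U\subseteq Q$. Uniqueness of the isomorphism types of $X'$ and $W$ then follows exactly as in the second half of the proof of Proposition~\ref{propos_rasch_iso}, which is pure $\F_2$-linear algebra and insensitive to dimension.

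The heart of the matter is extending Proposition~\ref{propos_quasi} to infinite-dimensional $V$: one must show $\CH(V)$ is cyclic of order two for every $V$ with $\dim V\ge3$, generated by $[\varphi_{\B}]$. Nontriviality of $[\varphi_{\B}]$ is immediate, since restricting along the coordinate projection onto a three-dimensional subspace $U$ spanned by basis vectors sends $\varphi_{\B}$ to $\varphi_{\mathcal A}$ for a basis $\mathcal A$ of $U$, whose class generates $\CH(U)\cong\F_2$ by Lemma~\ref{lem_dim3}. For the converse I would take an arbitrary involutive symmetric quasi-cocycle $\varphi$, fix a coordinate three-dimensional subspace $U_0$, and after multiplying $\varphi$ by $\varphi_{\B}$ if necessary arrange that $\varphi|_{U_0}$ is cohomologically trivial; it then remains to prove $\varphi$ globally trivial. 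This is the infinite-dimensional incarnation of the injectivity in Lemma~\ref{lem_restrict_3_inj}, and I expect it to be the main obstacle. My approach is a Zorn's lemma argument: consider pairs $(U,\chi_U)$, where $U_0\subseteq U\subseteq V$ and $\chi_U$ trivializes $\varphi|_U$; chains have upper bounds because the trivialization condition is finitary; and a maximal $U$ must be all of $V$, for otherwise one adjoins a single basis vector and extends the trivialization using the codimension-one step of Lemma~\ref{lem_restrict_3_inj}---which requires only that the base have dimension $\ge3$, and whose appeal to Lemma~\ref{lem_cocycle} survives in infinitely many variables since it concerns only the degree-two part of $H^*(U;\F_2)$---together with the injectivity of $\F_2$-vector spaces to make the extended trivialization agree with $\chi_U$. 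Feeding $\CH(V)=\F_2$ into the generalized Proposition~\ref{propos_2gqc} identifies the trivial class with $X^{\bu}_V$ and the class $[\varphi_{\B}]$ with $X^{\ba}_{C_4^{(\dim V)}}$, exactly as in the finite case; thus the non-special cores with trivial class and $\dim V\ge3$ form series~$2$, while the class $[\varphi_{\B}]$ and the cases $\dim V\le2$ land in the principal series and are absorbed into series~$1$.

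Finally I would prove the distinctness and uniqueness assertions. Speciality is an isomorphism invariant, so by Proposition~\ref{propos_special_yn} series~$3$ is disjoint from series~$1$ and~$2$. Within series~$2$ and~$3$, and between series~$1$ and~$2$, the uniqueness of the decomposition just established reduces every comparison to the cores: two cores $X^{\bu}_{V_1},X^{\bu}_{V_2}$ (respectively $Y_{V_1},Y_{V_2}$) are isomorphic iff $V_1\cong V_2$, since $V$ is recovered from $X^{\bu}_V$ as its group of elements of order $\le2$ and from $Y_V$ as the quotient by $\{e,s\}$ appearing in the proof of Theorem~\ref{theorem_special_classify}, while in both series $W$ is recovered up to isomorphism as the Boolean factor. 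The separation of series~$1$ from series~$2$ is the non-finitely-generated form of Proposition~\ref{propos_ua_different}: an isomorphism $X^{\bu}_V\times W\cong X^{\ba}_A$ forces all orders of $A$ into $\{1,2,4\}$ by Lemma~\ref{lem_orders}, hence $A\cong C_4^{(\kappa)}\times C_2^{(\mu)}$; matching cores then identifies $X^{\bu}_V$ with $X^{\ba}_{C_4^{(\kappa)}}$, whence $\kappa=\dim V\ge3$, and compares their quasi-cocycle classes (trivial versus $[\varphi_{\B}]\neq0$), a contradiction. This completes the classification and the determination of all isomorphisms.
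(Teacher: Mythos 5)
Your overall architecture coincides with the paper's: the same trichotomy (an element of order outside $\{1,2,4\}$ / non-special with all orders in $\{1,2,4\}$ / special), the observation that Theorems~\ref{theorem_nonspecial_classify} and~\ref{theorem_special_classify} and Propositions~\ref{propos_neras} and~\ref{propos_2gqc} are dimension-free, and the same scheme for the uniqueness assertions. Your splitting-off of the maximal Boolean direct factor via the universal $\F_2$-linearization $M$ of $X$ is a clean alternative to the paper's transfinite-induction version of Proposition~\ref{propos_rasch_iso}, and it works: the relation $[x]+[y]+[z]=0$ for $z\in x*y$ is exactly the homomorphism condition from the proof of Proposition~\ref{propos_neras}, injectivity of $W\to M$ reduces to Lemma~\ref{lem_squares} as you say, and a retraction exists because every subspace of an $\F_2$-vector space is a direct summand.

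The gap is in the extension of Proposition~\ref{propos_quasi} to infinite-dimensional $V$, i.e.\ the analogue of Lemma~\ref{lem_restrict_3_inj} (the paper's Lemma~\ref{lem_restrict_3_inj_inf}). Your Zorn argument on pairs $(U,\chi_U)$ breaks at the maximality step. Given a maximal $(U,\chi_U)$ with $U\ne V$ and $U'=U\oplus\langle c\rangle$, the codimension-one case of Lemma~\ref{lem_restrict_3_inj} produces \emph{some} trivialization of $\varphi|_{U'}$, but not one restricting to the prescribed $\chi_U$: the trivialization constructed there already multiplies the given map $\xi$ on $U$ by the correction $c^{\eta(u)}$. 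Two trivializations of $\varphi|_U$ differ by a quasi-homomorphism, i.e.\ a map $\mu$ with $\mu(u)\mu(v)\mu(uv)\in\langle u,v\rangle$, not by a linear map, so ``injectivity of $\F_2$-vector spaces'' supplies no extension principle here; and it is not clear that every trivialization of $\varphi|_U$ extends to $U'$, so a maximal element of your poset could be stuck strictly below $V$. This compatibility-of-trivializations problem is exactly what the paper's argument is organized around: Lemma~\ref{lem_criterium_delta} linearizes the trivialization data so that the set $\Lambda_U$ of admissible data on each finite subgroup is finite and nonempty, and Lemma~\ref{lem_restrict_finite} then runs a genuine compactness argument --- a transfinite recursion maintaining the coherence condition $(*)$, which is much stronger than pairwise compatibility precisely so that the recursion never gets stuck. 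Until you either reproduce that argument or replace it (for instance by invoking the nonemptiness of an inverse limit of finite nonempty sets over the directed poset of finite subgroups, after the linearization of Lemma~\ref{lem_criterium_delta}), the key injectivity step, and hence the classification in the non-special order-$\{1,2,4\}$ case, is incomplete. The remaining parts of your proposal (separation of the three series, recovery of $V$ and $W$ up to isomorphism) match the paper and are fine once this is repaired.
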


In order to give theorem ~\ref{theorem_non_fg} a complete form, one would like to answer the following question.

\begin{question}
Can it happen that abelian groups~$A$ and~$B$ are not isomorphic, but two-valued groups~$X_A^{\ba}$ and~$X_B^{\ba}$ are isomorphic?
\end{question}

We do not know the answer to this question for arbitrary abelian groups. Recall that in the case of finitely generated abelian groups, the negative answer was obtained in Proposition~\ref{propos_different}. This was proved on the basis of counting the number of elements of different orders; this proof, of course, cannot be generalized to the non-finitely generated case.

The proof of Theorem ~\ref{theorem_non_fg} follows the same scheme as the proof of Theorem ~\ref{theorem_main_fg}. Let $X$ be an arbitrary involutive commutative two-valued group. Just as in the proof of Theorem ~\ref{theorem_main_fg}, we consider three cases:

\begin{enumerate}
\item $X$ contains an element whose order is neither~$1$, nor~$2$, nor~$4$ (and then $X$ is necessarily non-special);
\item $X$ is non-special and consists entirely of elements of orders~$1$, $2$ and~$4$;
\item $X$ is special.
\end{enumerate}

In the first case, by Theorem ~\ref{theorem_nonspecial_classify}, a two-valued group~$X$ is isomorphic to a coset two-valued group of the form~$X_A^{\ba}$, where $A$ is an abelian group. Recall that in the proof of Theorem ~\ref{theorem_nonspecial_classify} we never used the finite generation property, see Remark~\ref{remark_nonspecial_classify}.

In the second and third cases, we first of all note that Propositions~\ref{propos_neras} and~\ref{propos_rasch_iso} can easily be carried over to the non-finitely generated case. The only difference is that in the proof of Proposition ~\ref{propos_rasch_iso}, the usual induction on dimension must be replaced in the standard way by a transfinite induction. Thus, from an arbitrary involutive commutative two-valued group, the maximal single-valued direct factor, which is a Boolean group, is split off in a unique way up to isomorphism. This allows us to reduce the classification theorem to the case of two-valued groups~$X$ that do not contain single-valued direct factors, that is (by Proposition ~\ref{propos_neras}) such two-valued groups~$X$, in which every element of order~$2$ is the square of an element of order ~$4$. If such a two-valued group~$X$ is special, then $X$ is isomorphic to a group of the form~$Y_V$ according to Theorem~\ref{theorem_special_classify}, in the proof of which the finite generation property was also not used anywhere.

It remains for us to consider the case when $X$ is a non-special two-valued group consisting entirely of elements of orders~$1$, $2$ and~$4$, in which each element of order~$2$ is the square of an element of order~ $4$, and prove that in this case $X$ is isomorphic either to a group of the form~$X_{C_4^{\omega}}^{\ba}$, where $\omega$ is some (possibly infinite) cardinal, or to a group of the form~$X_V^{\bu}$, where $V\cong C_2^{\omega}$ is a Boolean group (possibly infinite). Here $A^{\omega}$ denotes the direct sum (not the direct product) of $\omega$ copies of the abelian group~$A$. This case is the most  substantial and requires some additional considerations, in addition to those contained in sections ~\ref{section_124} and ~\ref{section_quasi}. More precisely, in the case of a not necessarily finitely generated two-valued group~$X$ (and, therefore, not necessarily a finite Boolean group~$V$ of its elements of order~$2$), all arguments of sections ~\ref{section_124} and ~\ref{section_quasi} except for the proof of Lemma ~\ref{lem_restrict_3_inj} carry over word for word. Thus, we only need to prove the following statement.

\begin{lem}\label{lem_restrict_3_inj_inf}
Let $V$ be an infinite Boolean group and $\Pi\colon V\to U$ be a projection onto some finite subgroup $U\subset V$ with $\dim U\ge 3$. Then the homomorphism $\Pi_*\colon \CH(V)\to\CH(U)$ is injective.
\end{lem}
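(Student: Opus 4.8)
The plan is to deduce the infinite case from the finite Lemma~\ref{lem_restrict_3_inj} by a compactness argument over finite subgroups, the only genuinely new ingredient being a passage to a completion of~$V$ followed by a correction back into~$V$. Write $V=U\oplus K$ with $K=\ker\Pi$ and fix once and for all a basis of the infinite Boolean group~$K$. For every finite-dimensional subgroup $W=U\oplus K_0$, with $K_0$ spanned by finitely many of the chosen basis vectors of~$K$, this basis gives a projection $\Pi_W\colon V\to W$ (kill the basis vectors of $K$ outside $K_0$); these projections satisfy $\Pi_W^{W'}\circ\Pi_{W'}=\Pi_W$ for $W\subseteq W'$ and restrict to the identity on~$W$. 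Let $\varphi_W=\Pi_W\circ\varphi$ be the restriction of~$\varphi$ to~$W$ along~$\Pi_W$, an involutive symmetric quasi-cocycle on the \emph{finite} group~$W$.

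First I would carry out the reduction. Since $\Pi|_W\circ\Pi_W=\Pi$, the restriction of~$\varphi_W$ to~$U$ along $\Pi|_W\colon W\to U$ equals the restriction of~$\varphi$ to~$U$ along~$\Pi$, whose class is zero by hypothesis; that is, $(\Pi|_W)_*[\varphi_W]=\Pi_*[\varphi]=0$. As $\dim U\ge 3$, the finite Lemma~\ref{lem_restrict_3_inj} applies to $\Pi|_W\colon W\to U$ and shows $(\Pi|_W)_*$ is injective, whence $[\varphi_W]=0$ in $\CH(W)$ for every such~$W$. Thus every finite restriction~$\varphi_W$ is cohomologically trivial, and it remains only to glue the corresponding trivializations into a single map $\chi\colon V\to V$.

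Next I would glue by compactness. For each~$W$ let $T_W$ be the (finite, nonempty) set of maps $\chi\colon W\to W$ with $\chi(e)=e$ that trivialize~$\varphi_W$. The assignment $\chi'\mapsto \Pi_W^{W'}\circ\chi'|_W$ carries $T_{W'}$ into~$T_W$: applying $\Pi_W^{W'}$ to $\varphi_{W'}(a,b)\chi'(a)\chi'(b)\chi'(ab)\in\langle a,b\rangle$ and using $\Pi_W^{W'}\circ\varphi_{W'}=\varphi_W$ on~$W$ together with $\Pi_W^{W'}(\langle a,b\rangle)=\langle a,b\rangle$ for $a,b\in W$ gives exactly the trivialization condition for~$\varphi_W$. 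These maps compose compatibly, so the~$T_W$ form an inverse system of nonempty finite sets over the directed poset of such~$W$, and its inverse limit is nonempty (a standard compactness fact). A compatible family $(\chi_W)$ assembles into a single map $\hat\chi\colon V\to \hat V$ into the completion $\hat V=\varprojlim_W W=U\oplus\widehat K$, where $\widehat K=\prod_j \F_2$ is the full product. Projecting the conditions shows $\varphi(a,b)\,\hat\chi(a)\hat\chi(b)\hat\chi(ab)\in\langle a,b\rangle$ for all $a,b\in V$: the projection of this element to each~$W$ lies in the fixed finite set $\langle a,b\rangle$, so the element itself does. Hence $\hat\chi$ trivializes~$\varphi$, but a priori only with values in~$\hat V$.

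Finally I would correct $\hat\chi$ back into~$V$; this is the step I expect to be the main obstacle, since naively the limit only lands in the completion. The key observation is that the displayed condition gives $\hat\chi(a)\hat\chi(b)\hat\chi(ab)=\varphi(a,b)\cdot(\text{element of }\langle a,b\rangle)\in V$, so the reduction $\bar\chi\colon V\to \hat V/V=\widehat K/K$ is a genuine homomorphism of Boolean groups. Since every $\F_2$-vector space is free, $\mathrm{Ext}_{\F_2}(V,K)=0$, so $\bar\chi$ lifts to a homomorphism $g\colon V\to\widehat K$. Replacing $\hat\chi$ by $\chi=\hat\chi\cdot g$ we obtain a map with $\chi(a)\in V$ for all~$a$ (because $\chi\equiv e$ modulo~$V$), while $\chi(a)\chi(b)\chi(ab)$ differs from $\hat\chi(a)\hat\chi(b)\hat\chi(ab)$ only by $g(a)g(b)g(ab)=e$; hence $\chi\colon V\to V$ still trivializes~$\varphi$. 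Therefore $\varphi$ is cohomologically trivial on~$V$, i.e. $\Pi_*[\varphi]=0$ forces $[\varphi]=0$, which is the asserted injectivity of~$\Pi_*$. The crux is thus not the routine compactness but the fact that the obstruction to finite support is precisely a homomorphism into $\widehat K/K$, which is unobstructed over~$\F_2$.
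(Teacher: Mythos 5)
Your argument is correct, and while the outer reduction (apply Lemma~\ref{lem_restrict_3_inj} to each finite $W\supseteq U$, then run a compactness argument over finite subgroups) matches the paper's strategy, the way you resolve the gluing difficulty is genuinely different. The paper's Lemma~\ref{lem_restrict_finite} first re-encodes cohomological triviality in terms of an $\F_2$-valued function $\lambda$ on pairs (Lemma~\ref{lem_criterium_delta}, resting on Lemma~\ref{lem_cocycle}); because $\lambda$ restricts to subgroups honestly, with no projection involved, the compatible family produced by a transfinite-recursion compactness argument glues pointwise to a function on $V\times V$ and never leaves the intended target. You instead keep the $V$-valued trivializing maps $\chi_W$, accept that the inverse limit only lands in the completion $\hat{V}=U\oplus\widehat{K}$, and then exploit the fact that $\varphi(a,b)\hat{\chi}(a)\hat{\chi}(b)\hat{\chi}(ab)\in\langle a,b\rangle\subset V$ forces the reduction $V\to\hat{V}/V\cong\widehat{K}/K$ to be a homomorphism, which lifts to $\widehat{K}$ since $\F_2$-vector spaces are free; multiplying by the lift pushes $\hat{\chi}$ back into $V$ without changing the coboundary. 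Both routes use choice (a well-ordering of the finite subgroups there; a basis of $V$ and nonemptiness of a cofiltered limit of nonempty finite sets here). Your version is arguably more conceptual, isolating the obstruction to finite support as an element of $\Hom\bigl(V,\widehat{K}/K\bigr)$ and killing it by projectivity, and it only needs triviality of the restrictions along one compatible cofinal family of projections, whereas the paper's Lemma~\ref{lem_restrict_finite} formally asks for triviality along all projections to all finite subgroups; what the paper's route buys is an explicit $\F_2$-linear certificate ($\lambda$) that makes the gluing mechanical.
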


We will need the following two technical Lemmas.

\begin{lem}\label{lem_criterium_delta}
Let $\varphi$ be an involutive symmetric quasi-cocycle on a Boolean group~$W$ for which the conditions
\begin{gather}
\varphi(u,v)= \varphi(v,u),\label{eq_cond1_strict_new}\\
\varphi(u,u)= \varphi(e,u)=\varphi(u,e)=e\label{eq_cond2_strict_new}
\end{gather}
hold exactly. Consider the map $\delta\varphi\colon W\times W\times W\to W$ given by the formula
\begin{equation}\label{eq_deltavarphi}
(\delta\varphi)(u,v,w)=\varphi(v,w)\varphi(uv,w)\varphi(u,vw)\varphi(u,v).
\end{equation}
The quasi-cocycle~$\varphi$ is cohomologically trivial if and only if there exists a map $\lambda\colon W\times W\to\F_2$ such that $\lambda(u,u)=\lambda(e,u)=\lambda(u,e)=0$ for all $u\in W$ and
\begin{equation}\label{eq_deltaphiuvw}
(\delta\varphi)(u,v,w)=
u^{\lambda(uv,w)+\lambda(u,vw)+\lambda(u,v)}
v^{\lambda(v,w)+\lambda(uv,w)+\lambda(vw,u)+\lambda(v,u)}
w^{\lambda(w,v)+\lambda(w,uv)+\lambda(wv,u)}
\end{equation}
for all $u,v,w\in W$.
\end{lem}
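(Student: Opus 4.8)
The plan is to recognize the map $\delta$ of \eqref{eq_deltavarphi} as the ordinary coboundary on $2$-cochains of $W$ with trivial coefficients in $W$ (written multiplicatively, all values of order $2$), exactly as in the definition recalled before Lemma~\ref{lem_cocycle}. In particular $\delta$ is a homomorphism for pointwise multiplication and the composite $\delta\circ\delta$ is trivial; for a $1$-cochain $\chi$ with $\chi(e)=e$ one has $(\delta\chi)(u,v)=\chi(u)\chi(v)\chi(uv)$, which is precisely the factor occurring in the definition of cohomologous quasi-cocycles. With this identification both implications reduce to a single algebraic computation, namely the evaluation of $\delta$ on a map of the form $g(u,v)=u^{\lambda(u,v)}v^{\lambda(v,u)}$.

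First I would prove the `only if' part. By definition of cohomological triviality there is a map $\chi\colon W\to W$ with $\chi(e)=e$ such that $g:=\varphi\cdot(\delta\chi)$ satisfies $g(u,v)\in\langle u,v\rangle$ for all $u,v$. Since $\varphi$ and $\delta\chi$ are both symmetric and satisfy $\varphi(u,u)=\delta\chi(u,u)=e$ and $\varphi(e,u)=\delta\chi(e,u)=e$ (and likewise with arguments swapped), the map $g$ is symmetric with $g(u,u)=g(e,u)=g(u,e)=e$. For linearly independent $u,v$ the subgroup $\langle u,v\rangle=\{e,u,v,uv\}$ gives a unique expression $g(u,v)=u^{a}v^{b}$ with $a,b\in\F_2$; setting $\lambda(u,v)=a$ and using symmetry of $g$ to identify $b=\lambda(v,u)$, and putting $\lambda=0$ in the degenerate cases (where $g=e$), I obtain a well-defined $\lambda\colon W\times W\to\F_2$ with $\lambda(u,u)=\lambda(e,u)=\lambda(u,e)=0$ and $g(u,v)=u^{\lambda(u,v)}v^{\lambda(v,u)}$. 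Because $\delta\circ\delta$ is trivial we have $\delta\varphi=\delta g$, and a direct expansion of $(\delta g)(u,v,w)=g(v,w)g(uv,w)g(u,vw)g(u,v)$, collecting the exponents of $u$, $v$ and $w$ separately, yields exactly formula~\eqref{eq_deltaphiuvw}.

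For the `if' direction I would run the same computation backwards. Given $\lambda$ with the stated normalisation, define $g(u,v)=u^{\lambda(u,v)}v^{\lambda(v,u)}$; this is symmetric, satisfies $g(u,v)\in\langle u,v\rangle$ and $g(u,u)=g(e,u)=g(u,e)=e$, and the same expansion shows $\delta g$ equals the right-hand side of \eqref{eq_deltaphiuvw}, hence $\delta g=\delta\varphi$. Therefore $\tilde\varphi:=\varphi\cdot g$ is an involutive symmetric map with $\delta\tilde\varphi=(\delta\varphi)(\delta g)=e$, that is, a genuine cocycle satisfying \eqref{eq_cocycle_strict}, \eqref{eq_cond1_strict} and \eqref{eq_cond2_strict}. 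By Lemma~\ref{lem_cocycle} it is cohomologically trivial, $\tilde\varphi=\delta\chi$ with $\chi(e)=e$, so $\varphi=\tilde\varphi\cdot g=(\delta\chi)\cdot g\equiv\delta\chi\pmod{\langle u,v\rangle}$, which says precisely that $\varphi$ is cohomologous to the trivial quasi-cocycle.

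The two bracket-expansions are routine; the only genuine point requiring care is the extraction of $\lambda$ from $g$ in the `only if' direction, where one must verify that $g(u,v)=u^{\lambda(u,v)}v^{\lambda(v,u)}$ is consistent both for linearly independent pairs (uniqueness of coordinates in $\langle u,v\rangle$) and for the degenerate pairs $u=e$, $v=e$, $u=v$ (where $g=e$ forces $\lambda=0$). The conceptual key that renders everything mechanical is the observation that $\delta$ is literally the simplicial coboundary, so that $\delta\circ\delta$ is trivial and $\delta$ is multiplicative; this is what allows one to pass freely between $\delta\varphi$, $\delta g$ and $\delta(\varphi\cdot g)$.
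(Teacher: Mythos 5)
Your proposal is correct and takes essentially the same route as the paper: both proofs rest on writing the ambiguity modulo $\langle u,v\rangle$ of a cohomology as a factor $g(u,v)=u^{\lambda(u,v)}v^{\lambda(v,u)}$, expanding $\delta g$ to obtain the right-hand side of \eqref{eq_deltaphiuvw}, and in the converse direction correcting $\varphi$ by $g$ to a strict involutive symmetric cocycle so that Lemma~\ref{lem_cocycle} applies. Your explicit verification that $\lambda$ is well defined (uniqueness of coordinates in $\langle u,v\rangle$ for independent pairs, the zero convention in the degenerate cases) merely spells out what the paper leaves as ``easy to see.''
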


\begin{proof}
It is easy to see that any quasi-cocycle cohomologous to the quasi-cocycle~$\varphi$ and satisfying the conditions~\eqref{eq_cond1_strict_new}, \eqref{eq_cond2_strict_new} has the form
$$
\varphi'(u,v)=\varphi(u,v)\chi(u)\chi(v)\chi(uv)u^{\lambda(u,v)}v^{\lambda(v,u)}
$$
for some maps $\chi\colon W\to W$ and $\lambda\colon W\times W\to\F_2$ such that $\chi(e)=e$ and $\lambda(u,u)=\lambda(e,u)=\lambda(u,e)=0$ for all $u\in W$. Then
\begin{multline*}
(\delta\varphi')(u,v,w)={}\\(\delta\varphi)(u,v,w)
 u^{\lambda(uv,w)+\lambda(u,vw)+\lambda(u,v)}
v^{\lambda(v,w)+\lambda(uv,w)+\lambda(vw,u)+\lambda(v,u)}
w^{\lambda(w,v)+\lambda(w,uv)+\lambda(wv,u)}.
\end{multline*}
If quasi-cocycle~$\varphi$ is cohomologically trivial, then as~$\varphi'$ we can take the trivial quasi-cocycle, and we obtain exactly the equality ~\eqref{eq_deltaphiuvw}.

Conversely, if there is a map~$\lambda$ satisfying ~\eqref{eq_deltaphiuvw}, then the quasi-cocycle~$\varphi$ is equivalent to a true involutive symmetric cocycle
$$
\varphi'(u,v)=\varphi(u,v)u^{\lambda(u,v)}v^{\lambda(v,u)},
$$
for which condition $(\delta\varphi')(u,v,w)=1$ and conditions ~\eqref{eq_cond1_strict_new}, \eqref{eq_cond2_strict_new} are satisfied exactly. By Lemma~\ref{lem_cocycle}, the cocycle~$\varphi'$ is cohomologically trivial, and hence the quasi-cocycle~$\varphi$ is also cohomologically trivial.
\end{proof}

\begin{lem}\label{lem_restrict_finite}
An involutive symmetric quasi-cocycle on an infinite Boolean group~$V$ is cohomologically trivial if its restrictions to all finite subgroups along all possible projections are cohomologically trivial.
\end{lem}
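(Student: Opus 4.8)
The plan is to reduce, via the criterion of Lemma~\ref{lem_criterium_delta}, to the purely combinatorial problem of finding one global map $\lambda\colon V\times V\to\F_2$ solving \eqref{eq_deltaphiuvw}, and then to build this $\lambda$ from local solutions on finite subgroups by a Tychonoff compactness argument.

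First I would reduce to the case where $\varphi$ satisfies \eqref{eq_cond1_strict_new} and \eqref{eq_cond2_strict_new} exactly, not merely modulo the relevant subgroups. This is legitimate because replacing $\varphi$ by an equivalent quasi-cocycle changes neither its cohomology class nor the cohomology classes of its restrictions $\Pi_*\varphi$: if $\varphi\equiv\varphi'$ then for $u,v$ in a subgroup $U$ the values $\Pi\varphi(u,v)$ and $\Pi\varphi'(u,v)$ differ by an element of $\Pi(\langle u,v\rangle)=\langle u,v\rangle$, so restriction sends equivalent quasi-cocycles to equivalent ones. Within each class one chooses, for every unordered pair $\{u,v\}$, a single representative value to force strict symmetry, together with $\varphi(u,u)=\varphi(e,u)=\varphi(u,e)=e$; the quasi-cocycle condition \eqref{eq_cocycle} is preserved since only changes inside $\langle u,v\rangle\subseteq\langle u,v,w\rangle$ are made. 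After this reduction Lemma~\ref{lem_criterium_delta} applies, so proving cohomological triviality of $\varphi$ amounts to exhibiting a map $\lambda\colon V\times V\to\F_2$ with $\lambda(u,u)=\lambda(e,u)=\lambda(u,e)=0$ for which \eqref{eq_deltaphiuvw} holds for every triple.

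The compactness framework is as follows. Regard the candidate maps as points of the product space $\F_2^{V\times V}$, which is compact by Tychonoff's theorem; the normalisation constraints and, for each fixed triple $(u,v,w)$, the condition \eqref{eq_deltaphiuvw} all cut out closed subsets, each depending on only finitely many coordinates (the finitely many pairs drawn from $\langle u,v,w\rangle$; note $\delta\varphi(u,v,w)\in\langle u,v,w\rangle$ by \eqref{eq_cocycle}, so \eqref{eq_deltaphiuvw} is meaningful). By the finite intersection property it suffices to solve any finite family of triples simultaneously. Given such a family, let $U_0\subseteq V$ be the finite subgroup it generates and fix a projection $\Pi\colon V\to U_0$, which exists since $U_0$ is a direct summand over $\F_2$. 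Form the restriction $\psi=\Pi_*\varphi$, an involutive symmetric quasi-cocycle on $U_0$ still satisfying \eqref{eq_cond1_strict_new} and \eqref{eq_cond2_strict_new}. The key point is that $\delta\psi=\delta\varphi$ on $U_0$: since $\Pi$ is a homomorphism restricting to the identity on $U_0$ and $\delta\varphi(u,v,w)$ already lies in $\langle u,v,w\rangle\subseteq U_0$, applying $\Pi$ to the defining product \eqref{eq_deltavarphi} leaves it unchanged. By hypothesis $\psi$ is cohomologically trivial, so Lemma~\ref{lem_criterium_delta} yields a map $\lambda_{U_0}$ on $U_0$ solving \eqref{eq_deltaphiuvw} for all triples in $U_0$; extending $\lambda_{U_0}$ by zero solves the chosen finite family, since each condition \eqref{eq_deltaphiuvw} for a triple in $U_0$ involves only values of $\lambda$ at pairs from $\langle u,v,w\rangle\subseteq U_0$.

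Compactness then produces a global $\lambda$, and Lemma~\ref{lem_criterium_delta} gives the cohomological triviality of $\varphi$. I expect the main obstacle to be precisely the identity $\delta\psi=\delta\varphi$ on $U_0$ together with the locality of \eqref{eq_deltaphiuvw} to $\langle u,v,w\rangle$: this is what converts the hypothesis about the \emph{projected} cocycle $\Pi_*\varphi$ into a genuine local solution for $\varphi$ itself, and it is the step where the quasi-cocycle condition \eqref{eq_cocycle}, ensuring that $\delta\varphi$ takes values in the generated subgroup, is indispensable. The reduction to the strict conditions and the verification that the Tychonoff argument applies are routine by comparison.
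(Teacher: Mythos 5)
Your proof is correct, and the first half coincides with the paper's: both normalise $\varphi$ so that \eqref{eq_cond1_strict_new} and \eqref{eq_cond2_strict_new} hold exactly, reduce via Lemma~\ref{lem_criterium_delta} to producing one global map $\lambda\colon V\times V\to\F_2$ solving \eqref{eq_deltaphiuvw}, and extract a local solution on each finite subgroup from the hypothesis by way of the identity $\delta(\Pi_*\varphi)=\delta\varphi$ on that subgroup (the paper is terser here; your explicit check that $\delta\varphi(u,v,w)\in\langle u,v,w\rangle$ is fixed by $\Pi$, and that the normalisation step does not disturb the hypothesis on restrictions, fills in details the paper glosses over). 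Where you genuinely diverge is the globalisation step. The paper chooses representatives $\lambda_U\in\Lambda_U$ for \emph{every} finite subgroup $U$ by transfinite recursion over a well-ordering of the set of finite subgroups, maintaining a coherence condition $(*)$ strong enough to force any two chosen maps to agree on the intersection of their domains, and then glues them into a global $\lambda$. You instead work directly in the compact space $\F_2^{V\times V}$, note that each instance of \eqref{eq_deltaphiuvw} is a clopen condition involving only the finitely many coordinates indexed by pairs from $\langle u,v,w\rangle$, and invoke the finite intersection property, so that the local solutions (extended by zero) already witness every finite subfamily. Both routes rest on the axiom of choice; yours avoids the delicate bookkeeping of the condition $(*)$ entirely and is the cleaner argument, at the price of replacing the paper's purely set-theoretic recursion with a Tychonoff compactness principle.
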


\begin{proof}
Let $\varphi$ be an involutive symmetric quasi-cocycle on~$V$ whose restrictions along all possible projections to finite subgroups are cohomologically trivial. Replacing the quasi-cocycle~$\varphi$ with an equivalent one, we can assume that the conditions~\eqref{eq_cond1_strict_new}, \eqref{eq_cond2_strict_new} are satisfied exactly. Consider the map $\delta\varphi\colon V\times V\times V\to V$ given by the formula ~\eqref{eq_deltavarphi}.

Denote by~$\CF$ the set of finite subgroups of the group~$V$.
According to Lemma~\ref{lem_criterium_delta}, for each $U\in\CF$ there is a map $\lambda\colon U\times U\to\F_2$ such that $\lambda(u,u)=\lambda(u,e)=\lambda(e,u)=0$ for all $u\in U$ and the equality ~\eqref{eq_deltaphiuvw} holds for all $u,v,w\in U$. Denote by~$\Lambda_U$ the set of all maps~$\lambda$ that have these properties. Then $\Lambda_U$ is a non-empty finite set. Obviously, if $U'\subset U$, then $\lambda|_{U'}\in\Lambda_{U'}$ for every $\lambda\in\Lambda_U$. (For simplicity, hereafter we write $\lambda|_{U'}$ instead of~$\lambda|_{U'\times U'}$.)

Let us prove that it is possible to choose representatives~$\lambda_U\in\Lambda_U$, $U\in\CF$, so that for any two subgroups $U_1,U_2\in\CF$, the restrictions of the mappings~$\lambda_{U_1}$ and~$\lambda_{U_2}$ to~$U_1\cap U_2$ coincide. This choice is made in the following way. By Zermelo's theorem, the set~$\CF$ has a well-ordering~$\prec$. Moreover, we can assume that the smallest with respect to this ordering is the trivial subgroup~$\{e\}$. Let's select the representatives of~$\lambda_U$ in the order given by the ordering~$\prec$ using transfinite recursion. Each representative~$\lambda_U$ will be chosen such that the following property holds:
\begin{itemize}
\item[$(*)$] for any finite set of subgroups $U_1,\ldots,U_k\in\CF$ such that $U_1\prec \cdots\prec U_k=U$ and any subgroup $W\in \CF$ containing all the subgroups $U_1,\ldots,U_k$, there exists a map $\mu\in \Lambda_W$ such that $\mu|_{U_i}=\lambda_{U_i}$ with $i=1,\ldots,k$.
\end{itemize}

The recursion starts with a trivial map~$\lambda_{\{e\}}$ on a trivial subgroup~$\{e\}$.

Let us prove that the representative~$\lambda_U$ can indeed be chosen in accordance with the condition~$(*)$ if the previous representatives~$\lambda_{U'}$, $U'\prec U$ were chosen so that for each of them the condition~$(*)$ is satisfied. Assume the contrary: none of the maps in the set~$\Lambda_U$ if chosen as $\lambda_U$ will satisfy the condition~$(*)$. Let $\lambda_1,\ldots,\lambda_N$ be all elements of the finite set~$\Lambda_U$. Then for each of the maps~$\lambda_j$ there is a set of subgroups $U_1^{(j)},\ldots,U_{k_j}^{(j)}\in\CF$ such that $U_1^{(j)}\prec\cdots\prec U_{k_j}^{(j)}=U$, and a subgroup of $W_j\in \CF$ containing $U_1^{(j)},\ldots,U_{k_j}^{(j)}$, such that the set of maps $\lambda_{U_1^{(j)}},\ldots,\lambda_{U_{k_j-1}^{(j)}},\lambda_j$ cannot be extended to a map belonging to the set~$\Lambda_{W_j}$. Combine the sets of subgroups $U_1^{(j)},\ldots,U_{k_j}^{(j)}$, $j=1,\ldots,N$ by dropping the repeating ones and ordering the resulting finite set of subgroups using~$\prec$. As a result, we get a finite sequence of subgroups $U_1\prec \cdots\prec U_k=U$. Let $W$ be a finite subgroup of the group~$V$ generated by all the subgroups $W_1,\ldots,W_N$. Then for all~$j$ the set of maps $\lambda_{U_1},\ldots,\lambda_{U_{k-1}},\lambda_j$ cannot be extended to a map belonging to the set~$\Lambda_{W}$. However, it follows from the condition~$(*)$ for the subgroup $U_{k-1}\prec U$ that there is a map $\mu\in\Lambda_W$ such that $\mu|_{U_i}=\lambda_{U_i}$, $i=1,\ldots,k-1$. Since $\mu|_U$ is one of the mappings~$\lambda_1,\ldots,\lambda_N$, we obtain a contradiction, which completes the proof of the possibility of constructing maps~$\lambda_U$.

From the condition~$(*)$, in particular, it follows that for any two subgroups $U_1, U_2\in \CF$, the restrictions of the maps~$\lambda_{U_1}$ and~$\lambda_{U_2}$ to the intersection $U_1\cap U_2$ coincide. Therefore, the maps~$\lambda_U$ all together are combined into a map $\lambda\colon V\times V \to\F_2$ such that $\lambda(u,u)=\lambda(u,e)=\lambda(e,u)=0$ for all $u\in V$ and the equality ~\eqref{eq_deltaphiuvw} holds for all $u,v,w\in V$. Therefore, by Lemma ~\ref{lem_criterium_delta}, the quasi-cocycle~$\varphi$ is cohomologically trivial.
\end{proof}

Lemma~\ref{lem_restrict_3_inj_inf} follows from Lemmas~\ref{lem_restrict_3_sur}, \ref{lem_restrict_3_inj} and~\ref{lem_restrict_finite}.

Thus, we have proved that any involutive commutative two-valued group belongs to one of the three series listed in Theorem ~\ref{theorem_non_fg}.

The two-valued groups of the third series are special and, therefore, are not isomorphic to the two-valued groups of the first two series. The two-valued groups of the first and second series (in the case of the presence of only elements of orders~$1$, $2$ and~$4$)  are different since the corresponding quasi-cocycles are not cohomologous. To complete the proof of Theorem~\ref{theorem_non_fg}, we need to establish that a pair of Boolean groups~$(V,W)$ can be uniquely restored up to isomorphism from a two-valued group~$X$ of the second or third series. This is indeed true: the Boolean group~$W$ is the maximal single-valued direct factor in the decomposition $X\cong X'\times W$, which  by analogy with Proposition~\ref{propos_rasch_iso} is unique up to an isomorphism, and $V$ is the Boolean group of all elements of order $2$ in~$X'$ for the second series and the Boolean group of all elements of order $4$ in~$X'$ for the third series.

\section{Topological two-valued groups}\label{section_top}

If $Y$ is a topological space, then we endow the symmetric square $\Sym^2(Y)$ with the quotient topology of the direct product topology on~$Y\times Y$.

\begin{defin}
\textit{Topological two-valued group} is a topological space ~$X$ with two-valued group structure such that the operation of two-valued multiplication $X\times X\to \Sym^2(X)$ and the operation of taking the inverse $X\to X$ are continuous.
\end{defin}

We will consider only Hausdorff topological two-valued groups; the non-Hausdorff case is apparently much more complicated. It is easy to see that if $A$ is a single-valued commutative Hausdorff topological group and $\iota$ is an involutive continuous automorphism of a group~$A$, then the coset two-valued group $A/\iota$ has the natural structure of a Hausdorff topological two-valued group. Thus, topological two-valued groups~$X_A^{\ba}$ for an arbitrary Hausdorff topological abelian group~$A$ and $X_V^{\bu}$ for a Hausdorff topological Boolean group~$V$ are well defined. (Here and below we refer to \textit{topological Boolean group} as a topological group all of whose nontrivial elements have order~$2$.).

Again, we have three essentially different cases:

\begin{enumerate}
\item $X$ contains an element whose order is neither~$1$, nor~$2$, nor~$4$;
\item $X$ is non-special and consists entirely of elements of orders~$1$, $2$ and~$4$;
\item $X$ is special.
\end{enumerate}

Let's consider them separately. In cases (1) and (3) we will obtain analogues of the classification Theorems~\ref{theorem_nonspecial_classify} and~\ref{theorem_special_classify}. Case~(2) is more complicated. In this case the classification problem remains open. We will construct a non-trivial example showing that a direct analogue of the classification Theorem~\ref{thm_124} is false.

\subsection*{Topological two-valued groups~$X$ containing an element~$t$ such that $\ord t\notin\{1,2,4\}$.} In this case, there is a direct analogue of Theorem ~\ref{theorem_nonspecial_classify}, but its proof requires additional considerations.
\begin{theorem}
\label{theorem_nonspecial_top}
Let $X$ be an involutive commutative Hausdorff topological two-valued group containing at least one element whose order is neither~$1$, nor~$2$, nor~$4$. Then $X$ is isomorphic to a coset topological two-valued group of the form $X_A^{\ba}=A/\iota_{\ba}$, where $A$ is a commutative Hausdorff topological group and $\iota_{\ba}$ is the antipodal involution on~$A$.
\end{theorem}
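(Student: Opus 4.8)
The plan is to equip the set $A=\{(x,p)\in X\times X\mid p\in t*x\}$ from Proposition~\ref{propos_main_construction} with the subspace topology inherited from $X\times X$, and to prove that with this topology $(A,\bullet)$ is a commutative Hausdorff topological group, that the antipodal involution $\iota_{\ba}\colon(x,p)\mapsto(x,p')$ (where $t*x=[p,p']$) is continuous, and that the first projection $\rho\colon A\to X$, $(x,p)\mapsto x$, realises $X$ as the topological quotient $A/\iota_{\ba}$. Granting these three facts, the algebraic isomorphism of Proposition~\ref{propos_main_construction}(c) upgrades to an isomorphism of topological two-valued groups $X_A^{\ba}\cong X$, which is the assertion.

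The structural part I would set up first through a single \emph{selection lemma}. For Hausdorff $X$ the incidence set $E=\{([a,b],c)\in\Sym^2(X)\times X\mid c\in[a,b]\}$ is closed, the map $\phi\colon X\times X\to E$, $(a,b)\mapsto([a,b],a)$, is a homeomorphism (continuity of its inverse, the complement map $([a,b],c)\mapsto$ ``the other element'', is precisely where Hausdorffness enters), and under $\phi$ the projection $E\to\Sym^2(X)$ becomes the symmetric-square quotient map $q\colon X\times X\to\Sym^2(X)$, which is open. Now $A$ is exactly the fibre product $X\times_{\Sym^2(X)}E$ along the continuous map $\mu_t\colon X\to\Sym^2(X)$, $x\mapsto t*x$: it is therefore closed in $X\times X$ (hence Hausdorff), $\rho$ is the base change of the open map $E\to\Sym^2(X)$ and so is itself open, and $\iota_{\ba}$ is the base change of the continuous complement involution of $E$ and so is continuous. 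Since $\iota_{\ba}$ is a continuous involution whose orbits are exactly the fibres of the open continuous surjection $\rho$, the map $\rho$ induces a homeomorphism $A/\iota_{\ba}\xrightarrow{\ \sim\ }X$.

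The crux is the continuity of $\bullet\colon A\times A\to A$. By Proposition~\ref{propos_main_construction}(a) the value $(z,r)=(x,p)\bullet(y,q)$ is the unique pair satisfying the incidence conditions~(i)--(iii); since $*$ is continuous and $X$ is Hausdorff, each of those conditions is closed, so the graph of $\bullet$ is closed in $A\times A\times A$. On the open ``general position'' locus, where $t$, $x$ and $y$ lie in pairwise distinct $V$-orbits and none of them lies in~$V$, Lemma~\ref{lem_3decomp} shows that $x*q$ and $p*y$ meet in a single point; the selection lemma then exhibits $r$ as the (locally continuous) common element of $x*q$ and $p*y$, and $z$ as the branch of $x*y=[z,z']$ singled out by the closed condition $r\in t*z$. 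Hence $\bullet$ is continuous there. I expect the genuine difficulty to be continuity at the \emph{degenerate strata}: where one of $x,p,y,q$ lies in $V$ or in the orbit $Vt$, or where $x,p$ share a $V$-orbit with $y,q$. On each such stratum the value of $\bullet$ is given by the explicit formulae~\eqref{eq_et}--\eqref{eq_sp} together with the equivariance~\eqref{eq_uv_mult} and~\eqref{eq_G_action} under the continuous actions of $V$ and of $\langle t\rangle$, which are continuous selections in their own right; the remaining task is to verify continuity \emph{across} the boundaries between strata. This I would settle by combining the already established closedness of the graph of $\bullet$ with the openness of $\rho$ and the fact that away from the branch locus $\{(x,p):p=p'\}$ the projection $\rho$ is a local homeomorphism, so that $\bullet$ is locally a continuous lift through $\rho$ of the continuous two-valued multiplication of $X$. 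Once $\bullet$ and $\iota_{\ba}$ are continuous, $(A,\bullet)$ is a commutative Hausdorff topological group and the theorem follows.
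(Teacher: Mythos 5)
Your construction is the same as the paper's: you put the subspace topology on $A\subset X\times X$, use Hausdorffness of $X$ to make the ``complement'' operation $([a,b],c)\mapsto$ the other element continuous (this is how the paper, citing Dold, gets continuity of $(x,p)\mapsto(x,p')$), and you correctly observe that conditions (i)--(iii) of Proposition~\ref{propos_main_construction} cut out a closed graph for $\bullet$ inside $A\times A\times A$. The setup of $A$ as a two-fold Smith--Dold ramified covering of $X$ and the identification $A/\iota_{\ba}\approx X$ are also as in the paper.

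The gap is in the crux, the continuity of $\bullet$ at the degenerate strata. A closed graph does not by itself imply continuity: along a net approaching a degenerate point the values $(z,r)$ could a priori fail to converge at all, and closedness of the graph only tells you that \emph{if} they converge, the limit is the right one. Your proposed repair --- that away from the branch locus $\rho$ is a local homeomorphism, so $\bullet$ is ``locally a continuous lift through $\rho$'' --- breaks down precisely where it is needed, namely at points whose image lands on or near the branch locus of the relevant covering, and ``combining closedness of the graph with openness of $\rho$'' is not an argument. What closes the gap (and makes the whole stratification unnecessary) is the paper's Lemma~\ref{lem_DS}: \emph{a set-theoretic section of a Smith--Dold ramified covering of Hausdorff spaces whose graph is closed is continuous}. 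One applies it to the four-fold ramified covering $\sigma\colon B\to A\times A$, where $B\subset A\times A\times A$ is the closed set of triples with $z\in x*y$; the continuous four-valued inverse of $\sigma$ comes from continuity of $*$, and it is exactly this finiteness-plus-continuity of the fibres that forces the values to converge, which your argument lacks. You have verified both hypotheses of that lemma (closed graph, ramified-covering structure), so the missing ingredient is the lemma itself rather than any computation; but as written the proof of continuity of $\bullet$ is not complete.
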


As in section ~\ref{section_not124}, we define the desired topological group as the set
\begin{equation}
A=\{(x,p)\in X\times X\mid p\in t*x\}
\end{equation}
with the multiplication and reciprocal operations defined in Proposition~\ref{propos_main_construction}. Endow the set~$A$ with the topology induced by the direct product topology on~$X\times X$. The fact that the resulting structure of a commutative group is well-defined  was proved in section ~\ref{section_not124}, so we only need to prove that the introduced multiplication and inverse operations in the group~$A$ are continuous. (It is obvious that the isomorphism $A/\iota_{\ba}\to X$ given by the projection onto the first factor is continuous in both directions.) To prove the continuity of operations, we need the notion of the Smith-Dold ramified covering, see ~\cite{Smi83}, ~\cite{Dol86}.

\begin{defin}\label{defin_DS}
Let $\pi\colon \widetilde{Y}\to Y$ be a surjective continuous mapping of topological spaces with finite inverse images of points. A map~$\pi$ is called \textit{$d$-fold ramified covering in the sense of Smith-Dold} if there is a continuous map $\tau\colon Y\to \Sym^d\bigl( \widetilde{Y}\bigr)$ such that
\begin{enumerate}
\item $\tilde y\in \tau\circ\pi(\tilde y)$ for any point $\tilde y\in \widetilde{Y}$,
\item $\Sym^d(\pi)\circ\tau (y)=\bigl[\,\underbrace{y,\dots,y}_d\,\bigr]$ for any point $y\in Y$.
\end{enumerate}
\end{defin}

\begin{remark}
In the above definition we follow ~\cite{BR-08}. The equivalent original definition of L.\,Smith is as follows. A surjective continuous mapping~$\pi\colon \widetilde{Y}\to Y$ is called \textit{ramified covering} if there is a map
$\mu\colon \widetilde{Y}\to \Z_{>0}$, called \textit{multiplicity map}, and the following two conditions are satisfied.
\begin{enumerate}
\item For any point $y\in Y$
$$
\sum_{\tilde{y}\in\pi^{-1}(y)}\mu(\tilde{y})=d.
$$
\item A map $\tau\colon Y\to \Sym^d\bigl(\widetilde{Y}\bigr)$ such that $y$ goes into a multiset~$\pi^{-1}(y)$ in which each point~$\tilde y$ is counted with a multiplicity~$\mu(\tilde y)$ is continuous.
\end{enumerate}
The multiplicity map~$\mu$ is recovered from the map~$\tau$ from the definition~\ref{defin_DS} as the multiplicity of occurrence of the point~$\tilde y$ in the multiset~$\tau\circ\pi(\tilde y)$.
\end{remark}

Informally speaking, a $d$-fold ramified covering is a continuous map having a continuous $d$-valued inverse.

Let $\pi\colon A\to X$ be the projection onto the first factor. Since the multiplication in the two-valued group~$X$ is continuous, the map $x\mapsto (x,t*x)$ is a continuous two-valued inverse of the map~$\pi$. Therefore the projection~$\pi$ is a two-fold ramified Smith-Dold covering. As noted in A.~Dold~\cite[Sec.~1]{Dol86}, in the Hausdorff case the partially defined operation of removing one of the elements of the multiset $$\Sym^d(Y)\times Y\dashrightarrow \Sym^{d-1}(Y),\qquad (\sigma,y)\mapsto \sigma\setminus y$$
is continuous on its domain. This immediately implies the continuity of the operation of taking the inverse in the group~$A$, which was defined by the formula~$(x,p)^{-1}=(x,p')$, where $[p,p']=x*t$.

\begin{remark}
Let us pay attention to the specifics of two-fold Smith-Dold ramified coverings: on the total space of such a covering there is always a continuous involution that swaps sheets. In our case this involution is an antipodal involution on the group~$A$. In the case of the $d$-fold Smith-Dold ramified covering $\widetilde{Y}\to Y$ with $d>2$ the situation is more complicated: on the space~$\widetilde{Y}$ itself, generally speaking, there is no action of the order $d$ group. Nevertheless, Buchstaber and Rees ~\cite{BR-08}, using the results of Dold~\cite{Dol86}, found a construction of the associated $d!$-fold ramified covering $E$ over~$Y$ with the action of the permutation group~$S_d$ such that $\widetilde{Y}=E\times_{S_d}\{1,\dots,d\}$.
\end{remark}

To prove the continuity of the product operation, we need the following technical Lemma.

\begin{lem}\label{lem_DS}
Let $\pi\colon\widetilde{Y}\to Y$ be the Smith-Dold ramified covering of Hausdorff topological spaces and $s\colon Y\to \widetilde{Y}$ be its set-theoretic section, that is, a map such that $\pi\circ s=\mathrm{id}_Y$. Assume that the graph $\Gamma_s\subset Y\times \widetilde{Y}$ of the section~$s$ is closed. Then $s$ is continuous.
\end{lem}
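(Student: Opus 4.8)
The plan is to prove continuity of $s$ pointwise: fix $y_0\in Y$ and an open neighbourhood $N$ of $s(y_0)$ in $\widetilde{Y}$, and produce an open neighbourhood $O$ of $y_0$ with $s(O)\subseteq N$. Write $d$ for the degree and let $\tau\colon Y\to\Sym^d(\widetilde{Y})$ be the continuous map from Definition~\ref{defin_DS}. Its two defining properties show that the support of the multiset $\tau(y)$ is exactly the finite fibre $\pi^{-1}(y)$, and that $s(y)\in\tau(y)$ for every $y$ (apply property~(1) to $\tilde y=s(y)$, using $\pi(s(y))=y$). Write $\pi^{-1}(y_0)=\{w_1,\dots,w_k\}$ with $w_1=s(y_0)$, and let $m_j$ be the multiplicity of $w_j$ in $\tau(y_0)$, so $\sum_j m_j=d$. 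Using that $\widetilde{Y}$ is Hausdorff, I would first choose pairwise disjoint open sets $N=:W_1,W_2,\dots,W_k$ with $w_j\in W_j$ (shrinking the given $N$ if necessary); then every point of $\tau(y_0)$ lies in $U=W_1\cup\dots\cup W_k$, with exactly $m_j$ of them in $W_j$.

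The next ingredient is a soft openness fact about symmetric powers: for an open set $W\subseteq\widetilde{Y}$ and an integer $m$, the set of multisets having at least $m$ points (with multiplicity) in $W$ is open in $\Sym^d(\widetilde{Y})$, since its preimage under the quotient map $\widetilde{Y}^{\,d}\to\Sym^d(\widetilde{Y})$ is the union over $m$-element index sets $I$ of the sets $\{(z_1,\dots,z_d): z_i\in W\ \forall i\in I\}$, which is open and $S_d$-invariant. Intersecting these for the data $(W_j,m_j)$, $j=1,\dots,k$, gives an open set $G\subseteq\Sym^d(\widetilde{Y})$ containing $\tau(y_0)$. Because the $W_j$ are disjoint and $\sum_j m_j=d$, the condition $\tau(y)\in G$ forces $\tau(y)$ to have exactly $m_j$ points in $W_j$ and none outside $U$. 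Hence $O'=\tau^{-1}(G)$ is an open neighbourhood of $y_0$ on which $\tau(y)\subseteq U$.

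Finally I would use the closed graph to discard the wrong sheets. For each $j\ge 2$ we have $(y_0,w_j)\notin\Gamma_s$ because $s(y_0)=w_1\ne w_j$; since $\Gamma_s$ is closed there are open sets $O_j\ni y_0$ and (after shrinking) $W_j\ni w_j$ with $(O_j\times W_j)\cap\Gamma_s=\varnothing$, i.e.\ $s(y)\notin W_j$ for all $y\in O_j$. Set $O=O'\cap\bigcap_{j\ge 2}O_j$. For $y\in O$ we have $s(y)\in\tau(y)\subseteq U=N\cup\bigcup_{j\ge 2}W_j$, while $s(y)\notin W_j$ for every $j\ge 2$; therefore $s(y)\in N$, giving $s(O)\subseteq N$ and continuity at $y_0$.

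The conceptual heart, and the only place the hypotheses are genuinely used, is this last step: convergence in $\Sym^d$ controls the fibre only as an unordered multiset, so a priori nothing prevents the section from jumping from the sheet through $w_1$ to a sheet through some $w_j$, and the closed-graph assumption is exactly what excludes this, while the multiplicity bookkeeping of the second step is what guarantees that $\tau(y)$ cannot leak mass away from the fibre over $y_0$. The one routine point to treat carefully is the compatibility of the shrinkings: one should first fix pairwise disjoint $W_1,\dots,W_k$ and only then intersect each $W_j$ with $j\ge 2$ against the neighbourhood produced by the closed-graph condition, which preserves both disjointness and all the inclusions used above.
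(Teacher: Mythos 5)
Your proof is correct and follows essentially the same route as the paper's: confine the fibre over nearby points inside a disjoint union of neighbourhoods of the fibre points using continuity of the multivalued inverse $\tau$, and use closedness of the graph to exclude the sheets through $w_2,\dots,w_k$. The only difference is presentational — you make explicit the openness fact about $\Sym^d$ and the multiplicity bookkeeping that the paper leaves implicit, and you work with product neighbourhoods in $Y\times\widetilde Y$ where the paper first identifies $\Gamma_s$ with a closed subset of $\widetilde Y$.
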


\begin{proof} Identifying $\Gamma_s$ with its preimage under the embedding $\pi\times\mathrm{id}\colon \widetilde{Y}\to Y\times\widetilde{Y}$, we may assume that $\Gamma_s\subset\widetilde{Y}$. The subset $\Gamma_s$ is closed in $\widetilde{Y}$ since it is closed in~$Y\times\widetilde{Y}$.
Let us prove the continuity of the section~$s$ at an arbitrary point $y\in Y$. Consider an arbitrary neighbourhood $U$ of the point~$s(y)$. Let $y_1=s(y),y_2,\ldots,y_k$ be all different inverse images of the point~$y$ under the map~$\pi$ without regard to multiplicities. The points $y_2,\ldots,y_k$ do not belong to the graph~$\Gamma_s$. Therefore, since the graph is closed, it follows that they have open neighbourhoods $U_2,\ldots,U_k$ respectively, disjoint from~$\Gamma_s$. Now, from the continuity of the multivalued inverse to the projection~$\pi$ it follows that there is an open neighbourhood~$V$ of the point~$y$ in~$Y$ such that the set~$\pi^{-1}(V)$ is contained in the union of neighbourhoods $U\cup U_2\cup\cdots\cup U_k$. Therefore the set $s(V)$ is contained in the intersection of this union of neighbourhoods with the graph~$\Gamma_s$ and, therefore, in the original neighbourhood~$U$. Hence the map~$s$ is continuous at the point~ $y$.
\end{proof}

Let us deduce from Lemma~\ref{lem_DS} the continuity of the operation of multiplication~$\bullet$ constructed in Proposition~\ref{propos_main_construction}. Consider the closed subset $B\subset A\times A\times A$ consisting of all triples $\bigl((x,p),(y,q),(z,r)\bigr)$ such that $z\in x*y$ and the projection $\sigma\colon B\to A\times A$ onto the product of the first two factors. It is easy to see that $\sigma$ is a four-fold Smith-Dold ramified covering. Indeed, a continuous four-valued inverse mapping assigns to each pair $\bigl((x,p),(y,q)\bigr)$ four (with multiplicities) triples $\bigl((x,p),(y,q),(z,r)\bigr)$ such that $z\in x*y$ and $r\in t*z$. According to the construction in Proposition~\ref{propos_main_construction}, as the product $(x,p)\bullet (y,q)$ we need to take $(z,r)$ such that $\bigl((x,p),(y,q),(z,r)\bigr)\in\sigma^{-1}\bigl((x,p),(y,q)\bigr)$, which additionally satisfies the conditions $z\in x*y$, $z\in p*q'$, $z\in p'*q$, $r\in x*q$ , $r\in p*y$, $r'\in x*q'$ and $r'\in p'*y$, where $t*x=[p,p']$, $t*y=[q,q']$ and~$t*z=[r,r']$, and these conditions always determine the desired point~$(z,r)$ uniquely. In this way,
$$
s\colon\bigl((x,p),(y,q)\bigr)\mapsto \bigl((x,p),(y,q),(x,p)\bullet (y,q)\bigr)
$$
is a set-theoretic section of the ramified covering~$\sigma$. As noted earlier, it follows from the Hausdorff property that the points~$p'$, $q'$, and $r'$ depend continuously on the pairs $(x,p)$, $(y,q)$ and~$(z,r)$, respectively. Therefore each of the listed conditions cuts out a closed subset in $A\times A\times A$. The graph of the section $s$ is the intersection of all these closed subsets and the closed set~$B$. Hence, it is closed and, therefore, by Lemma~\ref{lem_DS}, the section~$s$ is continuous, which completes the proof of Theorem~\ref{theorem_nonspecial_top}.

\subsection*{Special topological two-valued groups.} Let $V$ be a Boolean group with discrete topology and $\dim V\ge 2$, $U$ be an arbitrary Hausdorff topological Boolean group and~$s\in U$ be an arbitrary element other than ~$e$. Consider the quotient space $W=U/\langle s\rangle$ with the quotient topology (which is always Hausdorff). Let $\pi\colon U\to W$ be the projection.

Now consider the topological space
\begin{equation}\label{eq_YVUs}
Y_{V,U,s}= (\{e\}\times U)\sqcup \bigl( (V\setminus\{e\})\times W\bigr),
\end{equation}
where $\sqcup$ denotes a disjoint union, and define a commutative two-valued multiplication on it by the formulae
\begin{align*}
(e,u_1)*(e,u_2)&=\bigl[(e,u_1u_2),(e,u_1u_2)\bigr],&&u_1,u_2\in U,\\ 
(e,u)*(v,w)&=\bigl[(v,\pi(u)w),(v,\pi(u)w)\bigr],&&u\in U,\,v\in V\setminus\{e\},\,w\in W,\\
(v,w_1)*(v,w_2)&=\bigl(e,\pi^{-1}(w_1w_2)\bigr),&&v\in V\setminus\{e\},\,w_1,w_2\in W,\\
(v_1,w_1)*(v_2,w_2)&=\bigl[(v_1v_2,w_1w_2),(v_1v_2,w_1w_2)\bigr],&&v_1,v_2\in V\setminus\{e\},\,v_1\ne v_2,\, w_1,w_2\in W.
\end{align*}
It can be directly verified that $Y_{V,U,s}$ is a special Hausdorff topological two-valued group. Moreover, if we forget about topology, this two-valued group becomes isomorphic to the two-valued group $Y_V\times W$.

\begin{theorem}\label{theorem_special_top}
Any special involutive commutative Hausdorff topological two-valued group~$X$ is isomorphic to one of the topological two-valued groups of the form~$Y_{V,U,s}$. Two topological two-valued groups $Y_{V_i,U_i,s_i}$, $i=1,2$ are isomorphic to each other if and only if $V_1\cong V_2$ as abstract groups and there exists an isomorphism of topological groups $U_1\cong U_2$ taking~$s_1$ to~$s_2$.
\end{theorem}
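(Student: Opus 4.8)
The plan is to reconstruct the data $(V,U,s)$ intrinsically from $X$ and then to build the isomorphism $X\cong Y_{V,U,s}$ by hand, the only genuinely topological point being the identification of each orbit with $W=U/\langle s\rangle$.

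First I would recover the algebraic skeleton. By Proposition~\ref{propos_orders124} every element of $X$ has order $1$, $2$ or $4$, and there is a distinguished element $s$ of order $2$ with $x^2=s$ for every order-$4$ element $x$. Let $U\subseteq X$ be the set of elements of order $\le2$. The map $x\mapsto x*x$ and the diagonal inclusion $X\hookrightarrow\Sym^2(X)$, $x\mapsto[x,x]$, are continuous, and the latter is a closed embedding because $X$ is Hausdorff; hence $U=\{x:x*x=[e,e]\}$ and $X_4=\{x:x*x=[e,s]\}$ are each preimages of a point, so both are closed, and since $X=U\sqcup X_4$ they are in fact clopen. On $U$ the product $x*y$ is a doubled multiset by Lemma~\ref{lem_order2}, so $\cdot$ is single-valued and continuous there; thus $U$ is a Hausdorff topological Boolean group (Proposition~\ref{propos_bullet}) and $s\in U$. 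Put $W=U/\langle s\rangle$ with $\pi\colon U\to W$. The group $U$ acts continuously on $X_4$, and by Proposition~\ref{propos_bullet}(c) the stabilizer of every order-$4$ point is exactly $\langle s\rangle$. Finally let $V$ be the group whose non-identity elements are the $U$-orbits in $X_4$, with the multiplication induced by $*$ (two distinct orbits multiply into a single third orbit, and an orbit times itself lands in $U$); this is a Boolean group, and I give it the discrete topology.

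The crucial step is to show that $X_4$ is the topological disjoint union of its orbits and that each orbit is homeomorphic to $W$. Since $U$ is clopen in $X$, the set $\Sym^2(U)$ is clopen in $\Sym^2(X)$ (the symmetric-square quotient map is open), and for order-$4$ elements one checks $x*y\in\Sym^2(U)$ exactly when $x,y$ lie in the same orbit; hence the relation $R=\{(x,y):O_x=O_y\}$ is the preimage of a clopen set under the continuous map $*$ and is clopen in $X_4\times X_4$. Consequently every orbit is clopen in $X_4$, the orbit space is discrete, and $X_4=\coprod_{v\in V\setminus\{e\}}O_v$ as a topological disjoint union. For a fixed orbit $O_v$ and a chosen point $x_v\in O_v$, the two-valued product furnishes a continuous map $h_v\colon O_v\to W$: if $x=g\cdot x_v$ then $x_v*x=[g,gs]$ lies in $\Sym^2(U)$ and $\Sym^2(\pi)$ sends it to the doubled point $[\pi(g),\pi(g)]$, so $h_v(x):=\pi(g)$ depends continuously on $x$. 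This $h_v$ is a bijection (its fibres are the $\langle s\rangle$-orbits, which are singletons because $s\cdot x=x$), and it satisfies $\pi=h_v\circ a_v$, where $a_v\colon U\to O_v$, $u\mapsto u\cdot x_v$, is the continuous orbit map. Because $\pi$ is a quotient map, the elementary fact that $g$ is a quotient map whenever $g\circ f$ is quotient and $g$ is continuous shows that $h_v$ is a quotient map; being a continuous bijection it is therefore a homeomorphism $O_v\cong W$. This last manoeuvre, which replaces the (generally false) openness of the orbit map by the continuous ``difference'' map coming from $*$, is the main obstacle and the heart of the argument.

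It remains to assemble a global isomorphism. By Theorem~\ref{theorem_special_classify} together with the direct-product decomposition (Proposition~\ref{propos_rasch_iso}) the two-valued group $X$ is abstractly isomorphic to $Y_V\times W$; transporting the elements $(v,e)$ back gives representatives $x_v\in O_v$ that multiply on the nose, $x_{v_1}*x_{v_2}=[x_{v_1v_2},x_{v_1v_2}]$ for $v_1\ne v_2$ (equivalently, the obstruction to such a choice is an involutive symmetric cocycle $V\times V\to W$, which is a coboundary by Lemma~\ref{lem_cocycle}). Using these $x_v$ to define $h_v$, I set $\Phi\colon X\to Y_{V,U,s}$ by $\Phi(u)=(e,u)$ for $u\in U$ and $\Phi(x)=(v,h_v(x))$ for $x\in O_v$. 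A short case check against the four defining formulae of $Y_{V,U,s}$ shows $\Phi$ is a two-valued group homomorphism, and it is a homeomorphism because it carries the clopen decomposition $X=U\sqcup\coprod_vO_v$ homeomorphically onto $(\{e\}\times U)\sqcup\coprod_v(\{v\}\times W)$. For the final ``if and only if'', note that $U$ (the order-$\le2$ elements with the operation $\cdot$), the element $s$ (the common square of the order-$4$ elements), and $V$ (the orbit group) are all intrinsic: any isomorphism $Y_{V_1,U_1,s_1}\cong Y_{V_2,U_2,s_2}$ restricts to a topological group isomorphism $U_1\cong U_2$ sending $s_1$ to $s_2$ and induces $V_1\cong V_2$, while conversely such a pair of isomorphisms (together with the induced $W_1\cong W_2$) assembles, via the naturality of the defining formulae, into an isomorphism of the two glued two-valued groups.
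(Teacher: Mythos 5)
Your proposal is correct and follows essentially the same route as the paper: decompose $X$ into the clopen piece $U$ and the clopen $U$-orbits of order-$4$ elements, identify each orbit with $W=U/\langle s\rangle$ via the continuous map furnished by multiplication with a fixed representative, normalize the representatives using the abstract classification (equivalently, Lemma~\ref{lem_cocycle}), and recover $(V,U,s)$ intrinsically for the uniqueness statement. The only real difference is cosmetic: where the paper invokes the Smith--Dold ramified-covering formalism to see that each orbit carries the quotient topology $\T_W$, you use the elementary factorization $\pi=h_v\circ a_v$ together with the fact that a continuous map is a quotient map whenever its precomposition with a continuous map is --- the same underlying observation in more elementary language.
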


\begin{proof}
Theorem ~\ref{theorem_non_fg} immediately implies that, if we forget about the topology, then $X$ is isomorphic to some two-valued group of the form~$Y_{V,U,s}$. Therefore, the first statement of Theorem ~\ref{theorem_special_top} follows from the following Proposition.

\begin{propos}
Any Hausdorff topology on the set $Y_{V,U,s}$, with respect to which the introduced operation of two-valued multiplication is continuous, has the form of a disconnected union of some topology on~$\{e\}\times U=U$, with respect to which $U$ is a Hausdorff topological group, and the product topology on $(V\setminus\{e\})\times W$, where the set $V\setminus\{e\}$ is endowed with the discrete topology, and the set~$W$ is equipped with the quotient topology of the chosen topology on~$U$.
\end{propos}

\begin{proof}
Consider an arbitrary Hausdorff topology~$\T$ on a two-valued group~$Y=Y_{V,U,s}$ with respect to which the operation of two-valued multiplication is continuous. We will identify the subset $\{e\}\times U$ with~$U$ and denote the element~$(e,u)$ of the two-valued group~$Y$ simply by~$u$. Note that $s\in x*x$ if and only if the element $x$ lies in the subset $(V\setminus\{e\})\times W$ of the two-valued group~$Y$. Since the multiplication operation in~$Y$ is continuous, the condition $s\in x*x$ singles out a closed subset in~$Y$. Hence, the subset $\{e\}\times U$ is open in~$Y$. Denote by~$\T_U$ the restriction of the topology~$\T$ to~$U$; then $(U,\T_U)$ is a topological group. Denote by $\T_W$ the quotient topology of the topology~$\T_U$ under the projection $\pi\colon U\to W$.

It remains for us to prove that each of the sets $\{v\}\times W$ is also open in~$Y$ and the topology~$\T|_{\{v\}\times W}$ coincides with the quotient topology~$\T_W$ under the standard identification of $\{v\}\times W$ with~$W$. Let $y = (v,e)$; then the formula $x\mapsto x*y$ defines a continuous map $\mu_y\colon Y\to\Sym^2(Y)$. First, note that the set $\{v\}\times W$ coincides with the inverse image of the open set $\Sym^2(U)$ under the mapping~$\mu_y$, so $\{v\}\times W$ is open. Secondly, if the mapping~$\mu_y$ is restricted to the set~$\{e\}\times U$, then its two branches coincide and it defines a continuous mapping $\nu_y\colon \{e\}\times U\to\{v\}\times W$ coinciding with the mapping $(e,u)\mapsto \bigl(v,\pi(u)\bigr)$. Third, the mapping~$\mu_y|_{\{v\}\times W}$ is a continuous two-valued inverse of the mapping~$\nu_y$. Therefore, $\nu_y$ is a two-fold Smith-Dold ramified covering, which immediately implies that the topology on the set~$\{v\}\times W$ coincides with the quotient topology~$\T_W$.
\end{proof}

The second statement of Theorem~\ref{theorem_special_top} follows from the fact that the topological Boolean group~$U$, its element~$s$, and the discrete Boolean group~$V$ are uniquely restored up to isomorphism from the topological two-valued group~$Y=Y_{V,U,s}$. Indeed, $U\subset Y$ is a subgroup consisting of the identity and all elements of order~$2$, with the topology of a subset, $s$ is the element which is the square of all elements of order~$4$ and $V\cong Y/U$.
\end{proof}

\subsection*{Non-special topological two-valued groups consisting of elements of orders~1, 2, and~4.} Here we give an example showing that fundamentally new effects arise in this case and that the direct analogue of Theorem ~\ref{thm_124} is false.

First of all, we note that for topological two-valued groups there are no analogues of direct factorization results, such as Propositions~\ref{propos_neras} and ~\ref{propos_rasch_iso}. This is because even for ordinary commutative Hausdorff topological groups there is no well-defined operation of taking a quotient group that would leave us in the category of Hausdorff topological groups (for example, when a subgroup is dense in a group). Therefore, in the topological case it is natural to generalise the definition of the unipotent series $X_V^{\bu}\times W$ as follows.

We will need the following category~$\CP$, which we will call \textit{category of pairs of Hausdorff topological Boolean groups}. The object of this category is the pair $(V,U)$, where
\begin{itemize}
\item $V$ and~$U$ are Hausdorff topological Boolean groups, with respect to some topologies~$\T_V$ and~$\T_U$ respectively (which are part of the structure);
\item $U$ is a subgroup of the group~$V$, and the inclusion $U\subset V$ is continuous with respect to the topologies~$\T_U$ and~$\T_V$.
\end{itemize}
Note that we do not require the topology~$\T_U$ to be the same as the restriction of topology~$\T_V$ to~$U$; the definition only implies that the topology of $\T_U$ either coincides with the restriction of the topology~$\T_V$ or is finer than it. A morphism in the category~$\CP$ between objects~$(V_1,U_1)$ and~$(V_2,U_2)$ is a commutative diagram
\begin{center}
\begin{tikzcd} [row sep=small]
U_1\arrow[r, "\alpha"] & U_2\\
V_1\arrow[r, "\beta"]\arrow[u, phantom, sloped, "\supset"] & V_2\arrow[u, phantom, sloped,"\supset"]
\end{tikzcd}
\end{center}
where $\alpha$ is a homomorphism continuous with respect to the topologies~$\T_{U_1}$ and~$\T_{U_2}$, and~$\beta$ is a homomorphism continuous with respect to the topologies~$\T_{V_1}$ and~$\T_{V_2}$. We will call the objects of the category~$\CP$ \textit{pairs of Hausdorff topological Boolean groups}.

Let $(V,U)$ be a pair from the category~$\CP$. Consider the continuous \textit{unipotent involution}
$\iota_{\bu}\colon U\times V\to U\times V$, determined by the formula
\begin{equation*}
\iota_{\bu}(u,v)=(u,uv).
\end{equation*}
Then the coset two-valued group
\begin{equation}\label{unip_top}
X^{\bu}_{V,U}=(U\times V)/\iota_{\bu}
\end{equation}
is an involutive commutative Hausdorff topological two-valued group.

In the particular case when the subgroup~$U$ is distinguished as a topological direct factor, $V=U\times W$, and the topology~$\T_V$ is the topology of a product, this construction yields topological two-valued groups $X^{\bu}_{V,U}\cong X^{\bu}_{U}\times W$.

Denote by~$\CC$ the category of non-special involutive commutative Hausdorff topological two-valued groups consisting of elements of orders~$1$, $2$ and~$4$ and their continuous homomorphisms.
A natural analogue of Theorem~\ref{thm_124} would be the statement that any topological two-valued group from the category~$\CC$ is isomorphic either to a two-valued group of the form~$X^{\ba}_A$, where $A$ is a Hausdorff topological group consisting of elements of orders~ $1$, $2$ and~$4$, or to a two-valued group of the form~$X^{\bu}_{V,U}$, where $(V,U)$ is a pair of Hausdorff topological Boolean groups. However, below we will give an example showing that this statement is not true.

Before building this example, let's discuss how the cohomological classification approach from sections~\ref{section_124} and~\ref{section_quasi} is adapted to the topological case under consideration and what problems arise along the way.

Let $X$ be a topological two-valued group from the category~$\CC$. Denote by~$V$ the Boolean group, consisting of all elements of orders~$1$ and~$2$ in~$X$, with the Hausdorff topology ~$\T_V$ induced by the inclusion $V\subset X$. Denote by~$U\subset V$ the subgroup consisting of all elements~$x^2$, where $x\in X.$ Similarly to Lemma~\ref{lem_124mult}, for every element $u\in U$ the set $X_u$ consisting of all $x$ such that $x^2=u$, is a $V$-orbit. Moreover, the stabilizer of the elements of this orbit is the subgroup~$\{e,u\}\subset V$ and, if $x\in X_u$, $y\in X_{w}$ and~$x*y=[z_1,z_2]$, then $z_1,z_2\in X_{uw}$ and $z_2=uz_1=wz_1$. Thus, there is a canonical bijection $X/V\leftrightarrow U$ and the (single-valued) multiplication in~$U$ is induced by the two-valued multiplication in~$X$. Let us supply $U$ with the quotient topology $\T_U.$ A priori it is not obvious that this topology is Hausdorff. We note, however, that the continuity of the operation of squaring in ~$X$ implies the continuity of the embedding $X/V=U\subset V$, where $U$ is endowed with the quotient topology~$\T_U$, and $V$ is endowed with the subset topology~$\T_V$. Therefore, the topology $\T_U$ is Hausdorff and the pair $(V,U)$ is an object of the category~$\CP$. Thus, the constructed correspondence
$$
X\mapsto (V,U)
$$
defines a functor from the category~$\CC$ to the category~$\CP$, which we will denote by~$\Phi$.

It can be directly verified that $\Phi(X_{V,U}^{\bu})=(V,U)$ for any pair $(V,U)$ from the category~$\CP$. This implies, in particular, that topological two-valued groups~$X^{\bu}_{V_1,U_1}$ and~$X^{\bu}_{V_2,U_2}$ are isomorphic if and only if pairs of topological Boolean groups~$(V_1,U_1)$ and~$(V_2,U_2)$ are isomorphic.

The functor~$\Phi$ allows splitting the problem of classifying topological two-valued groups~$X$ from the category~$\CC$ into separate problems of classifying all possible~$X$ with each given (up to isomorphism) pair~$\Phi(X)=(V,U)$. If we choose in each $V$-orbit~$X_u$, where~$u\in U$, a representative~$x_u$ (so that $x_e=e$), then multiplication in the two-valued group~$X$ will be uniquely given by the rule of multiplication of representatives~$x_u$, which will look like
\begin{equation}\label{eq_quasi_coc_top}
x_u*x_v=\varphi(u,v)[x_{uv},ux_{uv}],\qquad u,v\in U,
\end{equation}
for some involutive symmetric quasi-cocycle $\varphi\colon U\times U\to V$, that is, a map such that
\begin{gather}
\varphi(u,v)\equiv \varphi(v,u) \pmod{\langle u,v\rangle},\label{eq_cond1_top}\\
\varphi(u,u)\equiv\varphi(e,u)\equiv\varphi(u,e)\equiv e\pmod{\langle u\rangle},\label{eq_cond2_top}\\
\varphi(u,v)\varphi(uv,w)\varphi(u,vw)\varphi(v,w)\in\langle u,v,w\rangle\label{eq_cond3_top}
\end{gather}
for all $u,v,w\in U$.

So far, the only difference from what happened in section~\ref{section_124} is that the group~$U$ is not necessarily the same as~$V$. However, now we need to ensure that the multiplication given by the formula~\eqref{eq_quasi_coc_top} is continuous. Naturally, in order to follow the continuity property of multiplication, we would like the map $U\to X$ of the choice of representatives of $V$-orbits $u\mapsto x_u$ to be continuous (with respect to the topology~$\T_U$ on~$U$). Let's call a topological group~$X$ from the category~$\CC$ \textit{tame} if it has a continuous mapping of the choice of representatives of $V$-orbits $u\mapsto x_u$, and \textit{wild} if there is no such continuous mapping. This is where the first fundamental problem arises.

\begin{question}
Does the category~$\CC$ contain wild topological two-valued groups?
\end{question}

The authors do not know the answer to this question at the moment. On the one hand, there are no reasonable arguments in favour of the negative answer, especially if we take into account that the topological space~$U$ is not simply connected. On the other hand, so far we failed to construct an example of a wild two-valued group.

In what follows, we focus on the study of tame two-valued groups from the category~$\CC$ and show that even among them there is a topological group that belongs neither to the principal, nor to the unipotent series. First of all, we prove that, as a topological space, the two-valued group~$X$ is homeomorphic to the unipotent two-valued group $X^{\bu}_{V,U}=(U\times V)/\iota_{\bu}$.

\begin{propos}
Let $X$ be a tame topological two-valued group from the category~$\CC$ and $\Phi(X)=(V,U)$. Then as the topological space the two-valued group $X$ is homeomorphic to the unipotent two-valued group $X^{\bu}_{V,U}=(U\times V)/\iota_{\bu}$. More precisely, if $u\mapsto x_u$ is a continuous mapping of the choice of representatives in the $V$-orbits of the two-valued group~$X$, then
the mapping $g\colon U\times V\to X$ defined by the formula $g(u,v)= vx_u$ induces a homeomorphism $\bar g\colon (U\times V)/\iota_{\bu}\approx X$.
\end{propos}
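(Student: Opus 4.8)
The plan is to show that $\bar g$ is a continuous bijection whose inverse is also continuous, the last point being the only real difficulty.

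First I would verify that $\bar g$ is a well-defined bijection, which is routine. Writing $u=y^2$, every $y\in X$ lies in the $V$-orbit $X_u$ of $x_u$, so $y=vx_u=g(u,v)$ for some $v\in V$; this gives surjectivity. For well-definedness note that $(vx_u)^2=x_u^2=u$ by Lemma~\ref{lem_equal_squares}, so by Proposition~\ref{propos_bullet}(c) the element $vx_u$ is fixed by $u$, whence $g(u,uv)=u(vx_u)=vx_u=g(u,v)$ and $\bar g$ descends. Injectivity follows because $g(u,v)=g(u',v')$ forces $u=u'$ (equal squares) and then $vv'$ lies in the stabilizer $\{e,u\}$ of $x_u$, i.e. $(u',v')\in\{(u,v),(u,uv)\}$. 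The one computation I will record for later use is $y*x_u=[v,uv]$ for $y=vx_u$, obtained from $x_u*x_u=[e,u]$ together with \eqref{eq_action}; it lies in $\Sym^2(V)$ since $x_u,y\in X_u$ and the product of two elements of $X_u$ lies in $X_{u^2}=X_e=V$.

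Next, continuity of $\bar g$. As the quotient map $U\times V\to X^{\bu}_{V,U}$ is open, it suffices to prove that $g(u,v)=v\cdot x_u$ is continuous, and since $u\mapsto x_u$ is continuous by tameness this reduces to continuity of the $V$-action $a\colon V\times X\to X$, $a(v,y)=v\cdot y$. For $v\in V$ Lemma~\ref{lem_order2} gives $v*y=[v\cdot y,\,v\cdot y]$, so the restriction of the continuous multiplication to $V\times X$ factors through the diagonal subset $d(X)=\{[z,z]\}\subset\Sym^2(X)$; because $X$ is Hausdorff and the quotient map $X\times X\to\Sym^2(X)$ is open, the diagonal map $d\colon X\to\Sym^2(X)$ is a topological embedding, and hence $a=d^{-1}\circ{*}|_{V\times X}$ is continuous.

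The crux, and the main obstacle, is continuity of $\bar g^{-1}$: since $X^{\bu}_{V,U}$ is not locally trivial over $U$ (the fibres jump at $u=e$), one cannot simply build local sections, and I would instead exhibit $\bar g^{-1}$ as a composite of continuous maps through a symmetric square. The key auxiliary fact is that \emph{for any Hausdorff space $Z$ with a continuous involution $\iota$, the orbit map $e\colon Z/\iota\to\Sym^2(Z)$, $[z]\mapsto[z,\iota z]$, is a topological embedding}: the graph $\Gamma_\iota\subset Z\times Z$ is closed (as $\iota$ is continuous and $Z$ is Hausdorff) and $S_2$-invariant, so restricting the open-and-closed quotient map $Z\times Z\to\Sym^2(Z)$ to the saturated closed set $\Gamma_\iota$ realizes $\Gamma_\iota/S_2\cong Z/\iota$ as the subspace $e(Z/\iota)$. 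Applying this to $Z=U\times V$, $\iota=\iota_{\bu}$, I define $\Theta\colon X\to\Sym^2(U\times V)$ by $\Theta(y)=\delta\bigl(y^2,\,y*x_{y^2}\bigr)$, where $\delta\colon U\times\Sym^2(V)\to\Sym^2(U\times V)$ is the continuous map $(u,[a,b])\mapsto[(u,a),(u,b)]$ (continuous because it lifts to $U\times V\times V$ and descends through the open quotient $U\times\Sym^2(V)$). Here $\Theta$ is continuous: $y\mapsto y^2$ and $y\mapsto x_{y^2}$ are continuous, so $y\mapsto y*x_{y^2}$ is continuous into $\Sym^2(X)$ and lands in the embedded closed subspace $\Sym^2(V)$, using that $V$ is closed in $X$ as the preimage of the point $[e,e]$ under $y\mapsto y*y$. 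From the computation $y*x_{y^2}=[v,uv]$ one checks pointwise that $\Theta(y)=[(u,v),(u,uv)]=e\bigl(\bar g^{-1}(y)\bigr)$ with $u=y^2$, $y=vx_u$. Therefore $\bar g^{-1}=e^{-1}\circ\Theta$ on the image of $e$, which is continuous by the embedding lemma, and $\bar g$ is a homeomorphism.
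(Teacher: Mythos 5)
Your proof is correct and follows essentially the same route as the paper: bijectivity from the stabilizer computation, continuity of $g$ from tameness and continuity of multiplication, and continuity of $\bar g^{-1}$ from the continuous two-valued inverse $y\mapsto\bigl[\pi(y),\,y*x_{\pi(y)}\bigr]$, i.e.\ the observation that $g$ is a two-fold Smith--Dold ramified covering. Your only addition is to spell out, via the embedding $Z/\iota\hookrightarrow\Sym^2(Z)$ for a closed involution, the step the paper compresses into ``this immediately implies the continuity of $\bar g^{-1}$''.
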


\begin{proof}
From the fact that the stabilizer of each element of the orbit~$Vx_{u}$ under the action of the group~$V$ is equal to~$\{e,u\}$, it follows that $\bar g$ is a bijection. The continuity of the map $g$ (and hence, of the map $\bar g$) follows from the continuity of the mapping $u\mapsto x_u$ and the continuity of multiplication in the two-valued group~$X$. Denote by $\pi$ the (continuous) projection of $X\to X/V=U$. The mapping $g$ admits a two-valued continuous inverse mapping $X\to \Sym^2(U\times V)$ defined by formula $x\mapsto [\pi(x),x*x_{\pi(x)}]$. (Thus, $g$ is a two-fold Smith-Dold ramified covering.) This immediately implies the continuity of the mapping~$\bar g^{-1}$.
\end{proof}

Thus, an arbitrary tame two-valued group~$X$ with $\Phi(X)=(V,U)$ can be considered as a two-valued group of the unipotent series~$X_{V,U}^{\bu}$ with multiplication deformed by an involutive symmetric quasi-cocycle~$\varphi$ according to the formula ~\eqref{eq_quasi_coc_top}. (Recall that, as was shown in section ~\ref{section_124}, the trivial quasi-cocycle corresponds to the two-valued group~$X_{V,U}^{\bu}$ itself.) We will use the notation~ $X_{V,U,\varphi}$ for the two-valued group with multiplication ~\eqref{eq_quasi_coc_top}.

It is easy to check that for the multiplication in the two-valued group~$X_{V,U,\varphi}$ to be continuous, it is necessary and sufficient that the quasi-cocycle $\varphi$ is quasi-continuous in the following sense. Recall that, in fact, each of the values~$\varphi(u,v)$ is not uniquely defined, but up to multiplication by an element from~$\langle u,v\rangle$. We say that a quasi-cocycle $\varphi\colon U\times U\to V$ is \textit{quasi-continuous} if the four-valued mapping $U\times U\to \Sym^4(V)$ defined by the formula
$$
(u,v)\mapsto \bigl[\varphi(u,v),u\varphi(u,v),v\varphi(u,v),uv\varphi(u,v)\bigr],
$$
is continuous.

If instead of a set of representatives of $V$-orbits~$x_u$ we choose another set of representatives~$x_u'$, where again $x_e'=e$, then $x_u'=\chi(u)x_u$ for some mapping~$\chi\colon U\to V$ such that $\chi(e)=e$. Then the quasi-cocycle $\varphi$ is replaced by a quasi-cocycle~$\varphi'$ such that
\begin{equation}\label{eq_homological_top}
\varphi'(u,v)\equiv\varphi(u,v)\chi(u)\chi(v)\chi(uv)\pmod{\langle u,v\rangle}.
\end{equation}
Moreover, the transition to a new set of representatives~$x_u'$ will continuously depend on~$u$ if and only if the mapping~$\chi$ is quasi-continuous in the sense that the two-valued mapping $U\to \Sym^2(V)$ given by the formula
$$
u\mapsto \bigl[\chi(u),u\chi(u)\bigr]
$$
 is continuous.
 
Thus, it is natural to call two quasi-continuous involutive symmetric quasi-cocycles~$\varphi$ and~$\varphi'$ \textit{cohomologous} if they are related by the relation ~\eqref{eq_homological_top} for some quasi-continuous mapping~$\chi\colon U\to V$ such that $\chi(e)=e$. Then the following analogue of the Proposition ~\ref{propos_2gqc} holds.

\begin{propos}\label{propos_2gqc_top}
Topological two-valued groups~$X_{V,U,\varphi}$ and~$X_{V',U',\varphi'}$ are isomorphic if and only if there is an isomorphism of pairs of topological Boolean groups~$(V,U)$ and~$(V',U')$ taking a quasi-continuous quasi-cocycle~$\varphi$ into a quasi-continuous quasi-cocycle cohomologous to~$\varphi'$. In particular, $X_{V,U,\varphi}$ is isomorphic to~$X_{V,U}^{\bu}$ if and only if the quasi-continuous quasi-cocycle~$\varphi$ is cohomologically trivial.
\end{propos}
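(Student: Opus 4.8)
The plan is to follow the scheme of the proof of Proposition~\ref{propos_2gqc}, adding the topological bookkeeping. First I would dispose of the pair $(V,U)$. Since $\Phi$ is a functor and $\Phi(X_{V,U,\varphi})=(V,U)$ (the quasi-cocycle affects neither the subgroup of elements of order $\le 2$ nor the squaring map, because $\varphi(u,u)\equiv e\pmod{\langle u\rangle}$ forces $x_u^2=u$), any isomorphism $F\colon X_{V,U,\varphi}\to X_{V',U',\varphi'}$ induces an isomorphism of pairs $\Phi(F)\colon (V,U)\to(V',U')$, while conversely an isomorphism of pairs transports $\varphi$ to $(V',U')$. Hence in both directions it suffices to treat the case $(V,U)=(V',U')$ with $F$ (resp.\ the sought $F$) inducing the identity on the quotient $U=X/V$. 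Such an $F$ is automatically $V$-equivariant (a two-valued homomorphism restricting to the identity on $V$ commutes with the doubling action~$\cdot$ of $V$), carries the orbit $X_u$ to $X'_u$, and therefore has the form $F(x_u)=\chi(u)x'_u$ for a unique map $\chi\colon U\to V$ with $\chi(e)=e$; by equivariance $F(vx_u)=v\chi(u)x'_u$.

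For the forward direction, expanding the homomorphism identity $F(x_u)*F(x_v)=F(x_u*x_v)$ by means of~\eqref{eq_quasi_coc_top}, the equivariance of $F$, and the observation that the multiset $\varphi'(u,v)[x'_{uv},ux'_{uv}]$ depends on $\varphi'(u,v)$ only modulo $\langle u,v\rangle$ (the generators $u$ and $uv$ of $\langle u,v\rangle$ either fix or interchange its two elements) yields exactly the congruence $\varphi'(u,v)\equiv\varphi(u,v)\chi(u)\chi(v)\chi(uv)\pmod{\langle u,v\rangle}$, i.e.\ relation~\eqref{eq_homological_top}; this computation is identical to the discrete case. The genuinely topological point is the quasi-continuity of $\chi$: computing in $X'$ one finds $F(x_u)*x'_u=(\chi(u)x'_u)*x'_u=\chi(u)\cdot(x'_u*x'_u)=\chi(u)\cdot[e,u]=[\chi(u),u\chi(u)]$, and the left-hand side is continuous in $u$ since the representative maps $u\mapsto x_u$ and $u\mapsto x'_u$ are continuous (tameness), $F$ is continuous, and two-valued multiplication is continuous. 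Its values lie in $\Sym^2(V)$, and as $V\hookrightarrow X'$ is a topological embedding, so is $\Sym^2(V)\hookrightarrow\Sym^2(X')$; hence $u\mapsto[\chi(u),u\chi(u)]$ is continuous, which is precisely quasi-continuity of $\chi$.

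For the backward direction I would run this in reverse. Given quasi-cocycles cohomologous via a quasi-continuous $\chi$, define $F$ by $F(vx_u)=v\chi(u)x'_u$; it is well defined (if $v'\in\{v,vu\}$ then $v'\chi(u)x'_u=v\chi(u)x'_u$ because $u$ fixes $x'_u$), bijective, and the computation above read backwards shows it is a two-valued homomorphism. Continuity of $F$ is the crux. I would first show that $u\mapsto\chi(u)x'_u$ is continuous by noting that the continuous map $u\mapsto [\chi(u),u\chi(u)]*x'_u$ (a continuous combination of the quasi-continuous datum of $\chi$ with the continuous map $u\mapsto x'_u$) collapses to the four-fold diagonal multiset $[\chi(u)x'_u,\chi(u)x'_u,\chi(u)x'_u,\chi(u)x'_u]$, and that the diagonal $X'\hookrightarrow\Sym^4(X')$ is a topological embedding for Hausdorff $X'$. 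Then $(u,v)\mapsto v\chi(u)x'_u$ is continuous on $U\times V$, factors through $\iota_{\bu}$, and descends through the homeomorphism $\bar g\colon(U\times V)/\iota_{\bu}\approx X_{V,U,\varphi}$ of the preceding Proposition to give continuity of $F$. Since cohomology is symmetric with the same $\chi$ ($V$ being Boolean), the identical argument applied to $F^{-1}$ shows $F^{-1}$ is continuous, so $F$ is an isomorphism of topological two-valued groups.

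Finally, the displayed ``in particular'' statement is the special case in which $\varphi'$ is the trivial quasi-cocycle, where $X_{V,U,\mathbf{1}}=X^{\bu}_{V,U}$; here one also uses that any automorphism $\theta$ of the pair $(V,U)$ carries the trivial quasi-cocycle to itself and induces an automorphism of cohomology classes, so that $\theta_*\varphi$ being cohomologically trivial forces $\varphi$ to be cohomologically trivial. I expect the main obstacle to be exactly the passage between ordinary continuity of $F$ and quasi-continuity of $\chi$: the algebraic skeleton is the same as in Proposition~\ref{propos_2gqc}, but one must carefully invoke the Hausdorff embeddings $V\hookrightarrow X$ and $X\hookrightarrow\Sym^d(X)$ together with the Smith--Dold structure in order to convert back and forth between continuity of maps into $X$ and the quasi-continuity encoded in $\Sym^2$ and $\Sym^4$.
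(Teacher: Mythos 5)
Your proof is correct and follows exactly the route the paper intends: the Proposition is stated there without a separate proof, as an assembly of the immediately preceding discussion (the functor $\Phi$, the change-of-representatives relation~\eqref{eq_homological_top}, and the quasi-continuity characterizations), and your write-up is precisely that assembly with the needed topological details --- the embeddings $\Sym^2(V)\hookrightarrow\Sym^2(X')$ and the diagonal $X'\hookrightarrow\Sym^4(X')$, and the descent through $\bar g$ --- made explicit. One negligible slip: $\chi(u)$ is determined by $F(x_u)=\chi(u)x'_u$ only modulo $\langle u\rangle$ (the stabilizer of $x'_u$), not uniquely, but every place you use $\chi$ --- the congruence~\eqref{eq_homological_top} and the condition that $u\mapsto\bigl[\chi(u),u\chi(u)\bigr]$ be continuous --- is insensitive to this ambiguity.
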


Denote by $\CH_{\mathrm{qc}}(U,V)$ the Boolean group of cohomology classes of quasi-continuous quasi-cocycles $U\times U\to V$.
Thus, the classification of the tame topological two-valued groups~$X$ such that $\Phi(X)=(V,U)$ reduces to the following problem.

\begin{prob}
Describe the group~$\CH_{\mathrm{qc}}(U,V)$.
\end{prob}

Naturally, there is a topology-forgetting homomorphism $$f\colon\CH_{\mathrm{qc}}(U,V)\to\CH(U,V),$$ where $\CH(U,V)$ is the Boolean group of all (without the requirement of quasi-continuity) quasi-cocycles up to the cohomology defined by an arbitrary (again without the requirement of quasi-continuity) mapping~$\chi$. Using Lemma~\ref{lem_cocycle}, it is easy to verify that the isomorphism $\CH(U,V)\cong \CH(U)$ always holds, and hence the group $\CH(U,V)$ is trivial when $\dim U\le 2$ and $\CH(U,V)\cong C_2$ when $\dim U\ge 3$. Apparently, in the general situation, the homomorphism $f$ is neither injective, nor surjective. 

We now give an example of an element in the kernel of the homomorphism~$f$, which provides us with the construction of a tame topological two-valued group from the category~$\CC$, which does not belong neither to the antipodal family~$X_A^{\ba}$, nor to the unipotent family~$X_{V,U}^{\bu}$. In our example, $U=V$, so from now on we will write $\CH_{\mathrm{qc}}(V)$ instead of $\CH_{\mathrm{qc}}(V,V)$ and $X_{V,\varphi}$ instead of $X_{V,V,\varphi}$.

To construct the above example, we need the notion of \textit{free topological Boolean group}. Let $Y$ be a completely regular Hausdorff topological space. A.\,A.~Markov~\cite{Mar45} introduced the concepts of a free topological group and a free abelian topological group with basis~$Y$. We will need a similar concept of a free Boolean group ~$B(Y)$ with basis~$Y$, see ~\cite{Sip}. Algebraically, $B(Y)$ is just a Boolean group~$C_2^Y$, that is, a vector space over a field~$\F_2$ whose basis is the set~$Y$. As a topology on~$B(Y)$, we take a topology that is consistent with the group structure and satisfies the following two conditions:
\begin{enumerate}
\item the embedding of a basis $Y\hookrightarrow B(Y)$ is a homeomorphism onto the image;
\item any continuous mapping of the space~$Y$ into an arbitrary Boolean topological group extends to a continuous homomorphism of the group~$B(Y)$ into this topological group.
\end{enumerate}
Such a topology exists, is unique, and is simultaneously the finest among all topologies compatible with the group structure and satisfying condition~(1), and the coarsest among all topologies compatible with the group structure and satisfying condition~(2). Proofs of analogues of these statements for free topological groups and free abelian topological groups are contained in ~\cite{Mar45} and ~\cite{Gra48}; the necessary extension of these results to the Boolean case can be found in ~\cite{Sip}.

The elements of the group~$B(Y)$ can be interpreted as finite subsets of the space~$Y$ (in the usual sense, that is, without multiplicities). In this case, the operation in the group is the symmetric difference of subsets, and the identity is the empty set. For convenience, we will denote the symmetric difference of subsets~$u$ and~$v$ through~$uv$.

We are interested in the case when $Y$ is a compact metric space with metric~$d$. Then the topology of a free topological Boolean group on~$B(Y)$ can be described as follows, see ~\cite{Gra48}, \cite{Sip}.

For each subset $v\in B(Y)$ define its \textit{norm} $\| v\|$ with values in $\R_{\ge 0}\cup\{+\infty\}$ as follows:
\begin{itemize}
\item if the set~$v$ consists of an even number of elements, then $\| v\|$ is the minimum of the sums
$$
\sum_{i=1}^{k}d(p_i,q_i)
$$
over all partitions of the set~$v$ into pairwise disjoint pairs $\{p_1,q_1\},\ldots,\{p_k,q_k\}$;
\item if the set~$v$ consists of an odd number of elements, then $\| v\|=+\infty$.
\end{itemize}

The constructed function is a norm in the following sense:
\begin{itemize}
\item $\|v\|\ge 0$ for all~$v$ and $\|v\|=0$ if and only if $v=e$;
\item $\|uv\|\le \|u\|+\|v\|$.
\end{itemize}

The introduced norm is analogous to the seminorms from A.\,A.~Markov~\cite{Mar45} and M.\,I.~Graev~\cite{Gra48} for topological Boolean groups; see ~\cite[Section~2]{Sip} for specific formulae.

So the function
$$
\rho(u,v)=\|uv\|
$$
defines a well-defined $\infty$-metric on~$B(Y)$, that is, a metric with values in~$\R_{\ge 0}\cup\{+\infty\}$. We could look at the corresponding metric topology at ~$B(Y)$, but that would not be the topology we want. The topology we need is finer, it is obtained as follows. 

For each~$k\ge 0$ we denote by~$B_k(Y)$ the subset of the group~$B(Y)$ consisting of all subsets of $v\subset Y$ with at most~$k$ elements. Endow each set~$B_k(Y)$ with the metric topology induced by the metric~$\rho$. The desired topology of the free topological Boolean group on~$B(Y)$ is the topology of the direct limit of these metric topologies. This means that the set~$U$ is open in~$B(Y)$ if and only if the intersection~$U\cap B_k(Y)$ is open in~$\bigl(B_k(Y),\rho\bigr)$ for every~$k$.

Informally, $B(Y)$ can be thought of as a configuration space of finite collections of points on~$Y$ with the rule that when a pair of points collides, they both vanish. In particular, it is easy to see that if $Y$ is path-connected, then $B(Y)$ consists of two path-connected components: $B_{even}(Y)$ and~$B_{odd}(Y)$, consisting of all subsets of even and odd cardinality, respectively.

\begin{example}
Let $K\subset \R^d$ be a compact convex body (i.e. a convex set with non-empty interior) in a finite-dimensional Euclidean space of dimension $d\ge 1$. Consider a free topological Boolean group~$B(K)$.

We define a quasi-cocycle $\varphi\colon B(K)\times B(K)\to B(K)$ by the formula
\begin{equation}\label{eq_nice_cocycle}
\varphi(u,v)=\prod_{p\in u}\prod_{q\in v}\left\{\frac{p+q}2\right\},
\end{equation}
where $\{\cdot\}$ denotes a one-point subset. It can be directly verified that the map~$\varphi$ satisfies the conditions~\eqref{eq_cond1_top}--\eqref{eq_cond3_top}, that is, it is indeed an involutive symmetric quasi-cocycle. Moreover, it is obvious that the map $\varphi$ is continuous and, moreover, quasi-continuous.
\end{example}

\begin{propos}\label{propos_strange_example}
Let $\varphi\colon B(K)\times B(K)\to B(K)$ be a quasi-cocycle given by the formula ~\eqref{eq_nice_cocycle}.
Then
\begin{itemize}
\item[\textnormal{(a)}] the quasi-cocycle~$\varphi$ becomes cohomologically trivial if we forget about the topology on the Boolean group~$B(K)$; in other words, $\varphi$ represents a trivial class in the group~$\CH(B(K))$;

\item[\textnormal{(b)}] quasi-cocycle~$\varphi$ is cohomologically non-trivial as a quasi-continuous quasi-cocycle, that is, it represents a non-trivial class in the group~$\CH_{\mathrm{qc}}(B(K))$.
\end{itemize}
\end{propos}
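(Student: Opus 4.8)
The plan is to exploit that $\varphi$ is $\mathbb{F}_2$-\emph{bilinear}. Writing elements of $B(K)$ additively as finite subsets of $K$ under symmetric difference, the double product in \eqref{eq_nice_cocycle} is additive in each variable, so $\varphi(u_1u_2,v)=\varphi(u_1,v)\varphi(u_2,v)$ and $\varphi$ is entirely determined by its values on singletons $\varphi(\{p\},\{q\})=\{\tfrac{p+q}{2}\}$. Two immediate consequences, which I would record first, are that bilinearity forces the cocycle relation \eqref{eq_cond3_top} to hold \emph{strictly} ($\delta\varphi=e$), and that the diagonal is $\varphi(u,u)=u$, so that $\varphi(u,u)\equiv e\pmod{\langle u\rangle}$ as an involutive symmetric quasi-cocycle should.

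For part \textnormal{(a)} I would produce an explicit discontinuous primitive. Set $\chi_0(u)=\sum_{\{p,q\}\subseteq u,\,p\ne q}\{\tfrac{p+q}{2}\}$, the symmetric difference of all pairwise midpoints of $u$, with $\chi_0(e)=\chi_0(\{p\})=e$. The main computation is the disjoint additivity $\chi_0(u\sqcup v)=\chi_0(u)+\chi_0(v)+\varphi(u,v)$, proved by partitioning the pairs of $u\sqcup v$ into pairs inside $u$, pairs inside $v$, and mixed pairs (the mixed pairs reconstitute $\varphi(u,v)$). Combined with bilinearity, this exhibits $\varphi$, after forgetting topology, as cohomologous to a \emph{genuine} involutive symmetric cocycle on $B(K)$ valued in $B(K)$; the only point needing care is the diagonal defect, which is absorbed into the $\langle u,v\rangle$-slack of the equivalence of quasi-cocycles. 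Lemma~\ref{lem_cocycle} then trivializes the resulting genuine cocycle, giving $[\varphi]=0$ in $\CH(B(K))$.

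The real content is part \textnormal{(b)}: this same $\varphi$ admits no \emph{quasi-continuous} primitive. Assuming a quasi-continuous $\chi$ with $\varphi\equiv\delta\chi\pmod{\langle u,v\rangle}$, I would first reduce to one dimension using convexity: the midpoint of two points of a segment $[A,B]\subset K$ again lies on $[A,B]$, so $\varphi$ restricts to the free Boolean group of that segment, and it suffices to rule out quasi-continuous trivialization for $K=[0,1]$ with $\varphi(\{s\},\{t\})=\{\tfrac{s+t}{2}\}$. One then extracts the singleton data $g(p)=\chi(\{p\})$ and the pair data, linked by $\{\tfrac{s+t}{2}\}\equiv g(s)+g(t)+\chi(\{s,t\})\pmod{\langle\{s\},\{t\}\rangle}$, and uses that a continuous map of a compact connected space into the direct limit $B([0,1])=\varinjlim B_k([0,1])$ lands in a fixed stage $B_N$ of bounded cardinality and is controlled by the Graev norm. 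Tracking $\{s,t\}$ as it degenerates ($t\to s$, where $\{s,t\}\to e$) and as it runs over a family where the midpoint $\tfrac{s+t}{2}$ sweeps the interval, quasi-continuity of \emph{both} branches of $\bigl[\chi(\{s,t\}),\{s,t\}\chi(\{s,t\})\bigr]$ is made to force a globally consistent continuous ``halving'' selection that is obstructed by a mod-$2$ degree computation.

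The main obstacle is precisely this last step: converting the combinatorial nontriviality of $\varphi$ — which, forgetting topology, is visible only through the rigid finite three-dimensional restrictions of Lemma~\ref{lem_dim3} — into a topological invariant that survives quasi-continuity. The delicate part is the interplay of the two distinct notions of slack, namely $\langle u,v\rangle$ for the cocycle relation versus the $\Sym^2$-identification governing quasi-continuity, together with the passage through the direct-limit topology. Concretely, I expect the cleanest route is to show that a quasi-continuous primitive would split the associated tame topological two-valued group $X_{B(K),\varphi}$ continuously as the unipotent group $X^{\bu}_{B(K)}$, and then to contradict this by an invariant of $X_{B(K),\varphi}$ phrased through the Smith--Dold double cover $\pi\colon A\to X$ — the non-existence of a continuous section of the squaring map — which distinguishes it from $X^{\bu}_{B(K)}$.
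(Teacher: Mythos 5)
Your part (a) follows the paper's route: the same primitive $\chi_0(v)=\prod_{\{p,q\}\subset v,\ p\ne q}\{(p+q)/2\}$ and the same bookkeeping of pairs inside $u$, inside $v$, and mixed. Be careful, however, with the ``diagonal defect'' that you propose to absorb into the $\langle u,v\rangle$-slack: the terms with $p=q$ in~\eqref{eq_nice_cocycle} contribute exactly $u\cap v$ to the product $\varphi(u,v)\chi_0(u)\chi_0(v)\chi_0(uv)$, and already for $u=\{p,q\}$, $v=\{p,r\}$ the singleton $\{p\}=u\cap v$ does \emph{not} lie in $\langle u,v\rangle=\bigl\{e,\{p,q\},\{p,r\},\{q,r\}\bigr\}$. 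So ``absorbed into the slack'' is not an argument; this step has to be verified honestly rather than asserted, and your extra detour through Lemma~\ref{lem_cocycle} does not repair it.

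The genuine gap is in part (b): what you write is a programme, and the decisive mechanism is missing. The paper's proof runs as follows: given a quasi-continuous trivializing $\chi$, use the splitting of $B(K)$ into the two path-components $B_{even}(K)$ and $B_{odd}(K)$ to select, continuously in $q$, the even branch $\psi(\{q\})$ of $\bigl[\chi(\{q\}),\{q\}\chi(\{q\})\bigr]$, extend $\psi$ to a continuous homomorphism and correct $\chi$ to $\widetilde\chi=\chi\psi$, so that $\widetilde\chi(\{q\})\in\{e,\{q\}\}$ for every point; connectedness then forces $\widetilde\chi(\{p,q\})$ to be one of $\{(p+q)/2,p\}$ and $\{(p+q)/2,q\}$; and for a triple $p,q,r$ in general position the cocycle congruences force $\widetilde\chi(\{p,q,r\})$ to contain exactly one point of each of the pairs $\{p,q\}$, $\{p,r\}$, $\{q,r\}$, which is impossible mod~$2$. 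Nothing in your proposal substitutes for this: the ``mod-$2$ degree computation'' is never specified, the even/odd normalization (the only place quasi-continuity is genuinely converted into a single-valued continuous choice) is absent, and the appeals to the direct-limit topology and the Graev norm do no work. Your fallback route is moreover circular: by Proposition~\ref{propos_2gqc_top}, showing that $X_{B(K),\varphi}$ is not continuously isomorphic to $X^{\bu}_{B(K)}$ is \emph{equivalent} to part (b), not a tool for proving it, and the invariant you suggest --- non-existence of a continuous section of the squaring map, i.e.\ wildness --- fails to distinguish the two groups, since both are tame by construction. Finally, the reduction to $K=[0,1]$ is inessential: the paper's concluding remark notes that only path-connectedness of $K$ is used.
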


\begin{proof}
Let us prove statement~(a). Define a discontinuous map $\chi_0\colon B(K)\to B(K)$ by the formula
\begin{equation}\label{eq_chi_particular}
\chi_0(v)=\prod_{\{p,q\}\subset v,\ p\ne q}\left\{\frac{p+q}2\right\},
\end{equation}
where the product is taken over all unordered subsets~$\{p,q\}$ of the set~$v$. It is easy to see that
\begin{equation}\label{eq_chi_particular_homology}
\varphi(u,v)=\chi_0(u)\chi_0(v)\chi_0(uv)
\end{equation}
for all $u,v\in B(K)$, from which statement~(a) immediately follows. Note that the map~$\chi_0$ defined by formula~\eqref{eq_chi_particular} is not quasi-continuous, so the equality ~\eqref{eq_chi_particular_homology} does not imply the cohomological triviality of the quasi-cocycle ~$\varphi$ in the group~$\CH_{\mathrm{qc}}(B(K))$.

Let us prove statement~(b) by contradiction. Suppose that $\varphi$ is cohomologically trivial as a quasi-continuous quasi-cocycle. Then there is a quasi-continuous mapping $\chi\colon B(K)\to B(K)$ such that $\chi(e)=e$ and
$$
\varphi(u,v)\equiv \chi(u)\chi(v)\chi(uv)\pmod{\langle u,v\rangle}
$$
for all $u,v\in B(K)$.

Restricting~$\chi$ to one-element subsets of the set~$K$, we get the mapping $K\to B(K)$, $q\mapsto\chi(\{q\})$. This mapping is quasi-continuous in the sense that the two-valued mapping
\begin{equation}\label{eq_chi_2val}
q\mapsto\bigl[\chi(\{q\}),\{q\}\chi(\{q\})\bigr]
\end{equation}
is continuous.
Now note that of the two elements~$\chi(\{q\})$ and~$\{q\}\chi(\{q\})$ exactly one always lies in the connected component~$B_{even}(K)$ and the other one lies in~$B_{odd}(K)$. Therefore, if from these two elements we always choose the one that lies in~$B_{even}(K)$, we get a continuous one-valued branch of the two-valued mapping ~\eqref{eq_chi_2val}.
$$
\psi(\{q\})=\left\{
\begin{aligned}
&\chi(\{q\})&\text{if }\chi(\{q\})&\in B_{even}(K),\\
&\{q\}\chi(\{q\})&\text{if }\chi(\{q\})&\in B_{odd}(K).
\end{aligned}
\right.
$$
Then the map $q\mapsto\psi(\{q\})$ is continuous. We extend the map~$\psi$ to a continuous homomorphism $\psi\colon B(K)\to B_{even}(K)$ by setting
$$
\psi(v) = \prod_{q\in v}\psi(\{q\}).
$$

Let us replace the map~$\chi$ with a new map $\widetilde{\chi}\colon B(K)\to B(K)$ such that
$$
\widetilde\chi(v)=\chi(v)\psi(v)
$$
for all~$v\in B(K)$. Since $\psi$ is a homomorphism, we still have $\widetilde\chi(e)=e$ and
\begin{equation}\label{eq_chi_tilde}
\varphi(u,v)\equiv \widetilde\chi(u)\widetilde\chi(v)\widetilde\chi(uv)\pmod{\langle u,v\rangle}
\end{equation}
for all $u,v\in B(K)$. Moreover, it follows from the continuity of the homomorphism~$\psi$ that the map~$\widetilde\chi$ is still quasi-continuous.

By construction for any point $q\in K$ we have either $\widetilde\chi(\{q\})=e$ or $\widetilde\chi(\{q\})=\{q\}$. Let $p$ and~$q$ be two different points of the set~$K$. Congruence~\eqref{eq_chi_tilde} for the pair $(u,v)=(\{p\},\{q\})$ implies that
$$
\left\{\frac{p+q}2\right\}\subseteq \widetilde\chi(\{p,q\})\subseteq \left\{\frac{p+q}2,p,q\right\}.
$$
In addition, note that the set $\{p,q\}$ belongs to the same connected component~$B_{even}(K)$ of the space~$B(K)$ as the empty set~$e$. The quasi-continuity of the mapping~$\widetilde\chi$ means that the unordered pair of sets $\bigl[\widetilde\chi(w),w\widetilde\chi(w)\bigr]$ depends continuously on~$w$. If $w=e$, then both sets in this pair are empty and, therefore, lie in~$B_{even}(K)$. Therefore, both sets $\widetilde\chi(\{p,q\})$ and $\{p,q\}\widetilde\chi(\{p,q\})$ also lie in~$B_{even}(K)$. So $\widetilde\chi(\{p,q\})$ is one of the two sets $\{(p+q)/2,p\}$ and $\{(p+q)/2,q\}$.

Let now $p$, $q$ and~$r$ be three points in the set~$K$ such that $6$ points $p$, $q$, $r$, $(p+q)/2$, $(q+r)/2$, $(r+p)/2$ are pairwise distinct. Then it follows from the congruence ~\eqref{eq_chi_tilde} for the pair $(u,v)=(\{p,q\},\{r\})$ that the set~$\widetilde\chi(\{p,q,r\})$ contains exactly one of the two points~$p$ and~$q$. However, similarly, this set contains exactly one of the two points~$p$ and~$r$ and exactly one of the two points~$q$ and~$r$. Together, these three statements lead us to a contradiction, which completes the proof of the Proposition.
\end{proof}

\begin{cor}\label{cor_strange_example}
The tame topological two-valued group~$X_{B(K),\varphi}$ is not isomorphic (as a topological group) to any two-valued group~$X_A^{\ba}$ of the principal series and to any two-valued group~$X_{V,U}^{\bu}$ of the unipotent series.
\end{cor}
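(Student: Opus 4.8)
The plan is to establish the two non-isomorphism assertions separately, each time invoking one of the two parts of Proposition~\ref{propos_strange_example}. First I would rule out the unipotent series. Suppose, aiming at a contradiction, that $X_{B(K),\varphi}$ were isomorphic, as a topological two-valued group, to some member $X_{V,U}^{\bu}=X_{V,U,\mathbf{1}}$ of the unipotent series, where $\mathbf{1}$ denotes the trivial quasi-cocycle. By Proposition~\ref{propos_2gqc_top} such an isomorphism must be induced by an isomorphism of pairs $\Psi\colon (B(K),B(K))\to (V,U)$ in the category~$\CP$ carrying $\varphi$ to a quasi-continuous quasi-cocycle cohomologous to~$\mathbf{1}$. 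Such an isomorphism of pairs induces an isomorphism $\Psi_*\colon\CH_{\mathrm{qc}}(B(K))\to\CH_{\mathrm{qc}}(U,V)$ of the groups of cohomology classes of quasi-continuous quasi-cocycles, and the stated condition says precisely that $\Psi_*$ sends the class $[\varphi]$ to the trivial class. Since $\Psi_*$ is injective, this forces $[\varphi]=0$ in $\CH_{\mathrm{qc}}(B(K))$, contradicting Proposition~\ref{propos_strange_example}(b). Hence $X_{B(K),\varphi}$ is isomorphic to no two-valued group of the unipotent series.

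Second I would rule out the principal series, and here the crucial input is Proposition~\ref{propos_strange_example}(a). That part asserts that, once the topology is forgotten, $\varphi$ represents the trivial class in $\CH(B(K))$; by the non-topological criterion of Proposition~\ref{propos_2gqc} this means that the underlying abstract two-valued group of $X_{B(K),\varphi}$ is isomorphic to the unipotent group $X_{B(K)}^{\bu}$. Because $K$ is a convex body with non-empty interior it contains a ball and hence infinitely many points, so $B(K)$ is an infinite-dimensional Boolean group; in particular $\dim B(K)\ge 3$. Thus $X_{B(K)}^{\bu}$ belongs to the unipotent series~(2) of Theorem~\ref{theorem_non_fg} with $\dim V\ge 3$, and that theorem states that two-valued groups of this series are never isomorphic to two-valued groups of the principal series. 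Consequently $X_{B(K),\varphi}$ is not even abstractly isomorphic to any $X_A^{\ba}$, and \emph{a fortiori} it is not isomorphic to any $X_A^{\ba}$ as a topological two-valued group.

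Taken together, the two parts give the Corollary. No genuinely new computation is needed: the whole argument is bookkeeping resting on the already-established cohomological non-triviality of $\varphi$ in Proposition~\ref{propos_strange_example}(b) (for the unipotent case) and its abstract triviality in part~(a) together with the classification Theorem~\ref{theorem_non_fg} (for the principal case). The only point that must be spelled out carefully is that an isomorphism in~$\CP$ induces an isomorphism on $\CH_{\mathrm{qc}}$ taking trivial classes to trivial classes, which is what legitimises the transport step $\Psi_*[\varphi]=0\Rightarrow[\varphi]=0$ in the unipotent part.
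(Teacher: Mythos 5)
Your proposal is correct and follows essentially the same route as the paper: part (a) of Proposition~\ref{propos_strange_example} plus Theorem~\ref{theorem_non_fg} rules out the principal series already at the abstract level, while part (b) plus Proposition~\ref{propos_2gqc_top} (equivalently, the paper's observation that $\Phi\bigl(X_{B(K),\varphi}\bigr)=(B(K),B(K))$ reduces the unipotent case to the single candidate $X^{\bu}_{B(K)}$) rules out the unipotent series. The only cosmetic difference is that you package the transport step as an induced isomorphism $\Psi_*$ on $\CH_{\mathrm{qc}}$, which is a harmless reformulation of the same argument.
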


\begin{proof}
It follows from statement~(a) of Proposition~\ref{propos_strange_example} that as an abstract two-valued group, that is, after forgetting the topology, the two-valued group~$X_{B(K),\varphi}$ is isomorphic to the two-valued group~$X^{\bu}_{B(K)}$ of the unipotent series. From here it immediately follows (by the theorem ~\ref{theorem_non_fg}) that~$X_{B(K),\varphi}$ is not isomorphic to any group~$X_A^{\ba}$ of the principal series even without topology.

We have $\Phi\bigl(X_{B(K),\varphi}\bigr) = \bigl(B(K),B(K)\bigr)$. Therefore, the topological two-valued group~$X_{B(K),\varphi}$ certainly cannot be isomorphic to any topological two-valued group of the unipotent series, except for the two-valued group $X_{B(K),B(K)}^{\bu}= X_{B(K)}^{\bu}$. The absence of isomorphism between the topological groups~$X_{B(K)}^{\bu}$ and~$X_{B(K),\varphi}$ follows from the cohomological nontriviality of the quasi-cocycle~$\varphi$ in the group~$\CH_{\mathrm{qc}}(B(K))$ and Proposition~\ref{propos_2gqc_top}.
\end{proof}

\begin{remark}
The construction of the quasi-cocycle~$\varphi$ can be varied quite a lot so that Proposition~\ref{propos_strange_example} and Corollary~\ref{cor_strange_example} remain true (and their proofs are verbatim the same). Namely, we can consider any quasi-cocycle of the form
$$
\varphi(u,v)=\prod_{p\in u}\prod_{q\in v}F(p,q),
$$
where $F\colon K\times K\to B_{odd}(K)$ is an arbitrary symmetric continuous mapping such that $F(q,q)=\{q\}$ for all $q\in K$. In this case, only pathwise connectivity of the set~$K$ is necessary, and convexity is not important: we needed this only for our chosen specific map $F(p,q)=\{(p+q)/2\}$ to be well-defined.
\end{remark}

\section{Locally compact two-valued groups}\label{section_lc}

The most important class of topological groups is the class of locally compact (Hausdorff) topological groups. There is a rich structural theory for this class of groups, the development of which is primarily associated with the solution of Hilbert's fifth problem. The formulation of this problem in its most common interpretation is as follows: \textit{is this true that every topological group which is a topological manifold is isomorphic to a Lie group?} 

In connection with the questions considered in this paper, we are primarily interested in commutative locally compact groups. The foundations of their structural theory were laid in the works of L.S.~Pontryagin~\cite{Pon34},~\cite{Pon36}; in particular, he gave a positive solution to Hilbert's fifth problem for commutative groups. A positive solution to Hilbert's fifth problem in general case was obtained by E.~Gleason~\cite{Gle52} and D.~Montgomery and L.~Zippin~\cite{MoZi52}. Later, even stronger structure theorems were obtained for locally compact groups. 

The most important for us is the following theorem on locally compact groups without small subgroups. By definition, a topological group \textit{has no small subgroups} if it contains a neighbourhood of identity that does not contain nontrivial subgroups.

\begin{theorem}[Gleason--Yamabe theorem]
Let $G$ be a locally compact Hausdorff topological group without small subgroups. Then $G$ is isomorphic to a Lie group.
\end{theorem}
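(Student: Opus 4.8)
The plan is to prove the stronger assertion that a locally compact Hausdorff group $G$ without small subgroups is \emph{locally Euclidean}, after which the Lie group structure follows from the classical fact that a locally Euclidean topological group is a Lie group. The whole strategy rests on analysing the continuous one-parameter subgroups of $G$, that is, the continuous homomorphisms $X\colon\R\to G$, and showing that they organise into a finite-dimensional real Lie algebra whose exponential map furnishes local coordinates near the identity.

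First I would use local compactness to fix a Haar measure and build, by convolving suitable bump functions against this measure, an adapted left-invariant metric $d$ on a neighbourhood of the identity. Writing $\|g\|=d(g,e)$, the goal is a \emph{Gleason metric}: one admitting an escape estimate $\|g^n\|\ge c\,n\|g\|$ valid as long as $n\|g\|$ stays below a fixed threshold, together with a commutator estimate $\|g^{-1}h^{-1}gh\|\le C\|g\|\,\|h\|$. The no-small-subgroups hypothesis enters decisively here: it provides a neighbourhood of the identity containing no nontrivial subgroup, which is exactly what prevents the metric from being collapsed by short orbits and makes the escape estimate possible. Constructing such a metric, with the harmonic-analytic control on the relevant convolution norms that it demands, is the main obstacle and the technical heart of the whole argument.

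Granting the Gleason metric, I would next show that every element sufficiently close to the identity lies on a unique one-parameter subgroup, and that the set $L(G)$ of one-parameter subgroups becomes a real vector space under the operations $(X+Y)(t)=\lim_{n\to\infty}\bigl(X(t/n)Y(t/n)\bigr)^n$ and the obvious scaling, carrying a Lie bracket defined through an analogous commutator limit; the commutator estimate guarantees that these limits exist and are again one-parameter subgroups. The escape estimate combined with local compactness then bounds the number of independent directions, giving $\dim L(G)<\infty$.

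Finally, defining $\exp\colon L(G)\to G$ by $\exp(X)=X(1)$, the escape and commutator estimates show that $\exp$ restricts to a homeomorphism from a neighbourhood of $0$ in $L(G)\cong\R^N$ onto a neighbourhood of the identity in $G$, so $G$ is locally Euclidean. In these exponential coordinates the group law is governed by a Baker--Campbell--Hausdorff-type series that is real-analytic, whence multiplication and inversion are smooth and $G$ is a Lie group. Alternatively, once $G$ is known to be locally Euclidean one may invoke the Montgomery--Zippin solution of Hilbert's fifth problem directly at the last step.
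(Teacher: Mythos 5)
The paper does not actually prove this statement: it is quoted as a classical theorem, with the attribution and references supplied in the remark that follows it (Gleason~\cite{Gle52} for the finite-dimensional case, Yamabe~\cite{Yam53} in general, and \cite[Corollary~1.5.8]{Tao14}), so there is no internal argument to measure your proposal against. Your outline is a faithful summary of the standard modern proof --- the Gleason-metric approach as presented in Tao's monograph --- and every stage you describe is an accurate description of what that proof does.

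As a proof, however, it has a genuine gap exactly where you yourself locate the ``technical heart'': the construction of a left-invariant Gleason metric satisfying the escape estimate $\|g^n\|\ge c\,n\|g\|$ and the commutator estimate $\|g^{-1}h^{-1}gh\|\le C\|g\|\,\|h\|$ is asserted rather than carried out. This is not a routine convolution computation: it requires building the Gleason-type functions, establishing uniform bounds on the relevant convolution ``derivatives'', and running a nontrivial bootstrap in which the no-small-subgroups hypothesis is used to exclude elements whose powers remain trapped near the identity. Every subsequent step --- uniqueness of the one-parameter subgroup through a point near $e$, the vector-space and bracket structure on $L(G)$, finite-dimensionality, and $\exp$ being a local homeomorphism --- is driven by those two estimates, so nothing after the first paragraph stands until they are proved. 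A secondary concern is your fallback ending: the assertion that a locally Euclidean topological group is a Lie group is the Montgomery--Zippin resolution of Hilbert's fifth problem, which is itself built on the NSS theorem you are proving, so invoking it as a ``classical fact'' in the last step risks circularity. The Baker--Campbell--Hausdorff route you mention first is the correct way to close the argument, but the real-analyticity of the local group law in exponential coordinates also has to be established rather than announced.
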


\begin{remark}
The statement of this theorem was proved by E.~Gleason~\cite{Gle52} under the additional assumption that the group is finite-dimensional and by H.~Yamabe~\cite{Yam53} in the general case, see also the monograph~\cite[Corollary~1.5.8]{Tao14}.
\end{remark}

It is well known that any commutative Lie group can be represented as a direct product of a compact torus~$(S^1)^m$, a vector group~$\R^n$, and a discrete abelian group. Thus, in the commutative case the Gleason--Yamabe theorem takes the following form.

\begin{cor}
Let $G$ be a commutative locally compact Hausdorff topological group without small subgroups. Then $G\cong (S^1)^m\times\R^n\times A$, where $m\ge 0$, $n\ge 0$ and $A$ is a discrete abelian group.
\end{cor}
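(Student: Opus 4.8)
The plan is to deduce the Corollary directly from the Gleason--Yamabe theorem together with the classical structure theory of abelian Lie groups, so the argument naturally splits into a reduction and a splitting step. First I would invoke the Gleason--Yamabe theorem: since $G$ is a locally compact Hausdorff topological group with no small subgroups, it is isomorphic, as a topological group, to a Lie group, and commutativity is clearly preserved, so $G$ is a commutative Lie group. It then remains to establish the stated product decomposition for an arbitrary commutative Lie group, which is the content of the structure fact recalled in the text just before the Corollary.

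Second, I would reduce to the identity component. Write $G^0$ for the connected component of the identity. Because $G$ is a Lie group, $G^0$ is an open (hence closed) subgroup, so the quotient $A=G/G^0$ is a discrete group, and it is abelian as a quotient of $G$. The component $G^0$ is a connected abelian Lie group, and for such a group the exponential map $\exp\colon\mathfrak g\to G^0$ is a surjective continuous homomorphism from the Lie algebra $\mathfrak g\cong\R^d$, $d=\dim G$, with discrete kernel $\Lambda$ (it is a homomorphism by commutativity and a local diffeomorphism near $0$). Hence $G^0\cong\R^d/\Lambda$, and since a discrete subgroup of $\R^d$ is a free abelian lattice $\Lambda\cong\Z^m$ spanning an $m$-dimensional subspace, an adapted basis yields $G^0\cong(S^1)^m\times\R^n$ with $n=d-m$.

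Third, and this is the only genuinely non-formal point, I would split the extension
\[
1\longrightarrow G^0\longrightarrow G\longrightarrow A\longrightarrow 1.
\]
The key observation is that $G^0\cong(S^1)^m\times\R^n$ is a divisible abelian group, hence an injective $\Z$-module, so $\mathrm{Ext}^1(A,G^0)=0$ and the underlying abstract sequence of abelian groups splits: there is a subgroup $A'\subseteq G$ mapping isomorphically onto $A$. Finally I would upgrade this to a topological splitting. Since $G^0$ is open in $G$ and $A\cong A'$ is discrete, the subgroup $A'$ is discrete, and the multiplication map $G^0\times A'\to G$ is a continuous bijective homomorphism which restricts to a homeomorphism on each coset $G^0a'$; as these cosets are open, it is a homeomorphism. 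Thus $G\cong G^0\times A'\cong(S^1)^m\times\R^n\times A$, as required.

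The main obstacle is precisely this splitting step: one must verify both that the abstract extension splits, which is handled by the divisibility and hence injectivity of $G^0$, and that the abstract splitting is automatically topological, which is handled by the openness of $G^0$ together with the discreteness of $A$. Everything else is a direct appeal to Gleason--Yamabe and to the exponential description of connected abelian Lie groups.
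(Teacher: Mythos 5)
Your proof is correct and follows the same route as the paper: apply the Gleason--Yamabe theorem to conclude $G$ is a commutative Lie group, then use the decomposition of commutative Lie groups into $(S^1)^m\times\R^n\times A$. The only difference is that the paper simply cites this decomposition as well known, whereas you supply a (correct) proof of it via the exponential map on the identity component and the splitting of the extension by $A=G/G^0$ using divisibility of $G^0$ together with openness of $G^0$ to make the splitting topological.
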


The natural question is how this result can be carried over to the case of involutive commutative two-valued groups. As in the case of ordinary groups, we say that a topological two-valued group \textit{has no small subgroups} if it contains a neighbourhood of identity that does not contain non-trivial two-valued subgroups.

\begin{theorem}\label{theorem_lc}
Let $X$ be an involutive commutative locally compact Hausdorff topological two-valued group without small subgroups. Then one of two possibilities holds:
\begin{enumerate}
\item $X\cong \bigl((S^1)^m\times\R^n\times A\bigr)/\iota_{\ba}$, where $m\ge 0$, $n\ge 0$, $m+n>0$, $A$ is a discrete abelian group and $\iota_{\ba}$ is the antipodal involution;
\item $X$ is discrete.
\end{enumerate}
\end{theorem}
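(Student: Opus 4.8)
The plan is to split on whether $X$ contains an element of order outside $\{1,2,4\}$, and in each branch to reduce to an already-established structural result. Before doing this I would isolate one general fact valid for any involutive commutative topological two-valued group: \emph{if $\{e\}$ is open, then $X$ is discrete.} Indeed, fix $y\in X$ and consider the continuous map $\mu_y\colon X\to\Sym^2(X)$, $x\mapsto x*y$. Since $\{e\}$ is open, the set $W_e=\{\sigma\in\Sym^2(X):e\in\sigma\}$ is open, because its preimage in $X\times X$ is $(\{e\}\times X)\cup(X\times\{e\})$ and the quotient map $X\times X\to\Sym^2(X)$ is open. By the uniqueness of the inverse together with involutivity, $e\in x*y$ forces $x=y^{-1}=y$, so $\mu_y^{-1}(W_e)=\{y\}$; hence every singleton is open and $X$ is discrete. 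This reduces the whole problem to showing that $e$ is isolated in the two branches where $X$ is not already of the desired continuous form.

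Consider first the branch in which every element of $X$ has order $1$, $2$, or $4$; by Proposition~\ref{propos_orders124}(a) this covers both the relevant non-special case and the special case. Let $V_0\subseteq X$ be the subset of elements of order $\le 2$. By Proposition~\ref{propos_bullet} and Lemma~\ref{lem_order2} it is a Boolean group, and in the subspace topology it is a topological Boolean group, since $u*v=[uv,uv]$ for $u,v\in V_0$ and extracting the repeated product is continuous. Each subgroup $H\le V_0$ doubles to a two-valued subgroup of $X$, so the absence of small subgroups in $X$ passes to $V_0$; but a topological Boolean group without small subgroups is discrete, because any nonidentity $u$ in a subgroup-free neighbourhood of $e$ already yields the nontrivial subgroup $\{e,u\}$. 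Thus $V_0$ is discrete. Next I would show $e$ is isolated in $X$: choosing an open $O\subseteq X$ with $O\cap V_0=\{e\}$ and using continuity of $x\mapsto x*x$, the open set $\{x:x*x\subseteq O\}$ contains $e$ and consists only of points with $x^2\in O\cap V_0=\{e\}$; since an order-$4$ element has $x^2\in V_0\setminus\{e\}$, every such point lies in $V_0\cap O=\{e\}$. Hence $\{e\}$ is open, and by the general fact $X$ is discrete, giving case~(2). Notably this branch uses neither local compactness nor the (open) classification of the order-$\{1,2,4\}$ case.

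In the remaining branch $X$ contains an element $t$ of order $\notin\{1,2,4\}$, and Theorem~\ref{theorem_nonspecial_top} provides a topological isomorphism $X\cong A/\iota_{\ba}$ with $A$ a commutative Hausdorff topological group, realized (Proposition~\ref{propos_main_construction}) as the subset $A=\{(x,p)\in X\times X:p\in t*x\}$ with projection $\pi\colon A\to X$ a two-fold Smith--Dold ramified covering. The goal is to apply the commutative form of the Gleason--Yamabe theorem to $A$, so I must verify that $A$ is locally compact Hausdorff without small subgroups. Local compactness is immediate since $A$ is closed in the locally compact space $X\times X$ (it is the preimage of the closed incidence relation $\{(p,\sigma):p\in\sigma\}$). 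For the no-small-subgroups property I would use that $\pi^{-1}(e)=\{(e,t)\}$ is a single point: given a neighbourhood $U$ of $e$ in $X$ free of nontrivial two-valued subgroups, pick a neighbourhood $N$ of the identity $(e,t)$ of $A$ with $\pi(N)\subseteq U$; any nontrivial single-valued subgroup $H\le A$ inside $N$ maps to a nontrivial two-valued subgroup $\pi(H)\subseteq U$, a contradiction. Then $A\cong(S^1)^m\times\R^n\times B$ with $B$ discrete; if $m+n>0$ we land in case~(1), while if $m=n=0$ then $A$, and hence $X=A/\iota_{\ba}$, is discrete, i.e.\ case~(2).

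The main obstacle I expect is the bookkeeping in this last branch: correctly transporting both local compactness and especially the no-small-subgroups property across the branched double cover $\pi$, which hinges on $\pi$ being a genuine two-valued homomorphism carrying single-valued subgroups of $A$ to two-valued subgroups of $X$ and on the branch fibre over $e$ being a single point. A secondary technical point is the continuity of the squaring map used to isolate $e$, which rests on the Hausdorff continuity of removing a marked element from a multiset; here I would invoke Dold's remark already used in the proof of Theorem~\ref{theorem_nonspecial_top}.
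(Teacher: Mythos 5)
Your proposal follows essentially the same route as the paper: the same ``$\{e\}$ open implies discrete'' lemma, the same use of the no-small-subgroups hypothesis to exclude order-$2$ and order-$4$ elements near $e$ (via the subgroup $\{e,u\}$ and the continuity of squaring), and the same reduction of the remaining case to Theorem~\ref{theorem_nonspecial_top} plus Gleason--Yamabe; if anything you are more explicit than the paper in checking local compactness and the absence of small subgroups for the covering group $A$. One small repair in your first branch: from $x*x\subseteq O$ you may conclude $x\in V_0$ but not $x\in O$, so you should instead take the neighbourhood $O\cap\{x:x*x\subseteq O\}$, which is open, contains $e$, and is contained in $V_0\cap O=\{e\}$.
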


\begin{remark}
Discrete two-valued groups are, of course, always locally compact and do not contain small subgroups. The classification of discrete involutive commutative two-valued groups is given by the theorem ~\ref{theorem_non_fg}.
\end{remark}

To prove Theorem ~\ref{theorem_lc}, we need two simple Lemmas.

\begin{lem}\label{lem_open_e}
Let $X$ be a Hausdorff topological two-valued group with identity~$e$. Assume that the one-element set~$\{e\}$ is open. Then $X$ is discrete.
\end{lem}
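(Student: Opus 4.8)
The plan is to prove that every one-point subset of $X$ is open. The naive approach of translating the open point $\{e\}$ by an element $x$ fails here because the multiplication is two-valued: the multiset $x*x^{-1}$ need not equal $[e,e]$, so one cannot simply pull back $\{e\}$ along left multiplication. The key idea is instead to work with the larger open set consisting of all multisets that \emph{contain} the identity, which is exactly the preimage of the identity under the membership relation and which is insensitive to the second (unknown) entry of $x*x^{-1}$.

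First I would show that the subset
$$
U=\bigl\{\,\xi\in\Sym^2(X)\ :\ e\in\xi\,\bigr\}
$$
is open. Since $\Sym^2(X)$ carries the quotient topology induced by the natural projection $q\colon X\times X\to\Sym^2(X)$, it suffices to check that $q^{-1}(U)$ is open. But $q^{-1}(U)=(\{e\}\times X)\cup(X\times\{e\})$, which is a union of two open subsets of $X\times X$ because $\{e\}$ is open by hypothesis. Hence $U$ is open.

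Next I would fix an element $x\in X$ and consider the map $F_x\colon X\to\Sym^2(X)$ defined by $F_x(y)=x*y$; it is continuous, being the composite of the continuous inclusion $y\mapsto(x,y)$ with the continuous two-valued multiplication. I would then identify its preimage of $U$: by the existence part of the inverse axiom the multiset $x*x^{-1}$ contains $e$, so $x^{-1}\in F_x^{-1}(U)$, while the uniqueness part of the inverse axiom shows that $e\in x*y$ forces $y=x^{-1}$. Therefore $F_x^{-1}(U)=\{x^{-1}\}$, and by continuity of $F_x$ this singleton is open.

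Finally I would observe that $(x^{-1})^{-1}=x$: indeed $x^{-1}*x$ contains $e$, so by uniqueness the inverse of $x^{-1}$ is $x$, and hence the assignment $x\mapsto x^{-1}$ is a bijection of $X$ onto itself. Consequently every element of $X$ has the form $x^{-1}$, so every singleton $\{x^{-1}\}=\{x\}$ is open; that is, $X$ is discrete. I expect the only delicate point to be the verification that $U$ is open, specifically the reminder that openness in the quotient space $\Sym^2(X)$ is tested on preimages under $q$. Once that is in place the remaining steps are formal, and it is worth noting that continuity of the inverse map is not even needed.
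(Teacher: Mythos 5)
Your proof is correct and follows essentially the same route as the paper: both arguments show that the set of multisets containing $e$ is open in $\Sym^2(X)$ and pull it back along the continuous map $y\mapsto x*y$ to conclude that singletons are open. Your write-up is in fact slightly more careful than the paper's on two points — you justify the openness of $U$ via the quotient map $q$, and you correctly identify the preimage as $\{x^{-1}\}$ (rather than $\{x\}$) and then invoke the bijectivity of inversion, which handles the non-involutive case covered by the statement.
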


\begin{proof}
Consider in $\Sym^2(X)$ the subset~$W$ consisting of all multisets of the form~$[e,x]$, where $x\in X$. The fact that the set~$\{e\}$ is open in~$X$ immediately implies that the set $W$ is open in~$\Sym^2(X)$. For any element $y\in X$, multiplication by ~$y$ defines a continuous map $\mu_y\colon X\to\Sym^2(X)$. Since $e\in x*y$ if and only if $x=y$, then $\mu_y^{-1}(W)=\{y\}$. Thus, all one-element sets~$\{y\}$ are open, hence the topology on~$X$ is discrete.
\end{proof}

\begin{lem}\label{lem_neigh_wo_24}
Let $X$ satisfy the conditions of theorem ~\ref{theorem_lc}. Then in~$X$ there is a neighbourhood of the identity that does not contain elements of orders~$2$ and~$4$.
\end{lem}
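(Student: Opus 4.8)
The plan is to use the no-small-subgroups hypothesis to produce an open neighbourhood $N$ of the identity containing no nontrivial two-valued subgroup, and then to show that any element of order $2$ or $4$ lying sufficiently close to $e$ would generate such a subgroup inside $N$. The two orders are handled separately: order $2$ directly, and order $4$ after a single shrinking of $N$ that uses continuity of squaring. Throughout I would take $N$ open by passing to the interior.

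For order $2$ the argument is immediate: if $v\in N$ has order $2$ then $v*v=[e,e]$, so the set $\{e,v\}$ is closed under multiplication (one has $e*v=[v,v]$ and $v*v=[e,e]$), hence is a nontrivial two-valued subgroup. Since $e\in N$, the whole set $\{e,v\}$ lies in $N$, contradicting the choice of $N$. Thus $N$ already contains no element of order $2$.

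The only real obstacle is order $4$. An order-$4$ element $x$ has $x^2$ of order $2$, and, using Lemma~\ref{lem_sequence_mult} together with the equality $x^3=x$, one checks that $\{e,x,x^2\}$ is a three-element subgroup (it is $X^{\ba}_{C_4}$): indeed $x*x=[e,x^2]$, $x*x^2=[x,x]$ and $x^2*x^2=[e,e]$. The difficulty is that $x\in N$ does not by itself place $x^2$ in $N$, so $\{e,x,x^2\}$ need not be contained in $N$. To force this, I would use that the squaring map $s\colon X\to\Sym^2(X)$, $s(x)=x*x$, is continuous (it is the diagonal followed by the continuous multiplication). The set $\Sym^2(N)$ of all multisets $[a,b]$ with $a,b\in N$ is open in $\Sym^2(X)$, because its preimage under the quotient map $X\times X\to\Sym^2(X)$ is exactly $N\times N$, which is open and invariant under the coordinate swap. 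Since $s(e)=[e,e]\in\Sym^2(N)$, the set $N_1=N\cap s^{-1}\bigl(\Sym^2(N)\bigr)$ is an open neighbourhood of $e$.

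Finally I would conclude by contradiction: if $x\in N_1$ has order $4$, then $s(x)=[e,x^2]\in\Sym^2(N)$ with $x^2\ne e$, which forces $x^2\in N$; together with $x\in N_1\subseteq N$ and $e\in N$ this puts the entire subgroup $\{e,x,x^2\}$ inside $N$, contradicting the absence of nontrivial subgroups in $N$. Hence $N_1$ contains no element of order $4$, and since $N_1\subseteq N$ it also contains none of order $2$, so $N_1$ is the required neighbourhood. I note that only the no-small-subgroups hypothesis and the continuity of multiplication are used, so local compactness is not actually needed for this lemma.
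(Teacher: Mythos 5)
Your proof is correct and follows essentially the same route as the paper: a neighbourhood $U$ without nontrivial two-valued subgroups contains no element of order~$2$, since $\{e,v\}$ would be such a subgroup, and intersecting $U$ with the preimage of $U$ under squaring then excludes elements of order~$4$. Your extra care in establishing openness via $\Sym^2(N)$ and the quotient topology (rather than simply asserting continuity of the single-valued map $x\mapsto x^2$, as the paper does) is a minor but welcome refinement, and your observation that local compactness is not used here is also accurate.
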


\begin{proof}
Let $U$ be an open neighbourhood of the identity of a two-valued group~$X$ that does not contain nontrivial two-valued subgroups. Let $U'$ be the set consisting of all elements of $x\in X$ such that $x^2\in U$. Since the map $x\mapsto x^2$ is continuous, $U'$ is also an open neighbourhood of the identity.

Note that the set~$U$ does not contain elements of order~$2$. Indeed, if in~$U$ there were an element of order~$2$, that is, an element~$x\ne e$ such that $x*x=[e,e]$, then the set~$\{e,x\}$ would be a non-trivial two-valued subgroup contained in~$U$. The square of any element of order~$4$ is an element of order~$2$. Moreover, the squares of elements from~$U'$ lie in~$U$. Therefore, the set~$U'$ does not contain elements of order~$4$. Thus, as the desired neighbourhood, we can take the set~$U\cap U'$.
\end{proof}

\begin{proof}[Proof of Theorem~\ref{theorem_lc}]
Let $V$ be an open neighbourhood of the identity without elements of order~$2$ and~$4$ given by Lemma~\ref{lem_neigh_wo_24}. Assume that the two-valued group~$X$ is not discrete. Then by Lemma~\ref{lem_open_e} the one-element set~$\{e\}$ is not open. Hence, the neighbourhood~$V$ contains at least one element $t$ different from~$e$. Then $\ord t\notin \{1,2,4\}$. By Theorem~\ref{theorem_nonspecial_top} there is an isomorphism $X\cong G/\iota_{\ba}$, where $G$ is a commutative Hausdorff topological group and $\iota_{\ba}$ is the antipodal involution. Using the fact that the projection $G\to X$ is a two-fold Smith-Dold ramified covering, it is easy to deduce from the local compactness of the two-valued group~$X$ the local compactness of the group~$G.$ Now, by the Gleason--Yamabe theorem there is an isomorphism $G\cong (S^1)^m\times\R^n\times A$ with $m+n>0$, since $X$ is not discrete.
\end{proof}

For compact two-valued groups without small subgroups, Theorem \ref{theorem_lc} leads to a complete classification similar to Theorem ~\ref{theorem_main_fg}.

\begin{theorem}\label{theorem_compact}
Let $X$ be an involutive commutative compact Hausdorff topological two-valued group without small subgroups. Then $X$ is isomorphic to one of the following two-valued groups:
\begin{itemize}
\item[\textnormal(1)] $X^{\ba}_{d_1,\ldots,d_k}(m)=\bigl((S^1)^m\times C_{d_1}\times\cdots\times C_{d_k}\bigr)/\iota_{\ba}$, where $2\le d_i<\infty$ and $d_1|d_2|\cdots|d_k$;
\item[\textnormal(2)] $X^{\bu}_n\times C_2^m$, where $n\ge 1$, $m\ge 0$;
\item[\textnormal(3)] $Y_n\times C_2^m$, where $n\ge 1$, $m\ge 0$.
\end{itemize}
All isomorphisms between the listed two-valued groups are exhausted by the isomorphisms~\eqref{eq_main_iso1},~\eqref{eq_main_iso2} from Theorem ~\ref{theorem_main_fg}.
\end{theorem}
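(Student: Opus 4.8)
The plan is to deduce the classification from Theorem~\ref{theorem_lc}, splitting the compact two-valued groups into the positive-dimensional ones (series~(1) with a torus factor) and the finite ones (governed by Theorem~\ref{theorem_main_fg}), and then to check that no isomorphisms occur beyond those already recorded.

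First I would invoke Theorem~\ref{theorem_lc}: a compact $X$ as in the statement is either discrete or isomorphic to $G/\iota_{\ba}$ with $G\cong (S^1)^m\times\R^n\times A$ and $m+n>0$. In the discrete case compactness forces $X$ to be finite, hence finitely generated, so by Theorem~\ref{theorem_main_fg} it is isomorphic to $X^{\ba}_{d_1,\ldots,d_k}$ with all $d_i$ finite (this is series~(1) with $m=0$), to $X^{\bu}_n\times C_2^m$, or to $Y_n\times C_2^m$. In the non-discrete case I would first show that $G$ is compact: the projection $G\to X=G/\iota_{\ba}$ is the quotient by the finite group $\langle\iota_{\ba}\rangle$, hence a closed map with finite fibres and therefore proper, so the preimage of the compact space $X$ is compact. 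A compact abelian Lie group has no $\R^n$ factor, giving $n=0$, and its group of components $A$ is compact and discrete, hence finite; writing $A\cong C_{d_1}\times\cdots\times C_{d_k}$ in invariant-factor form yields series~(1) with $m>0$.

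Then comes the uniqueness part. The key point is that from the topological two-valued group $X$ of series~(1) one can recover both $m$ and $A$. The number $m$ is the topological dimension of $X$: indeed the branched double cover $G\to X$ preserves dimension and $\dim G=m$. To recover $A$, I would pass to the identity component $X_0$, which is a closed, open, finite-index subgroup equal to $\pi\bigl((S^1)^m\bigr)=(S^1)^m/\iota_{\ba}$; the finite quotient two-valued group $X/X_0$ is intrinsically defined, and since $\iota_{\ba}$ induces the antipodal involution on the component group $A$ of $G$, it is isomorphic to $X^{\ba}_A$. By Proposition~\ref{propos_different} (applicable because $A$ is finite) the isomorphism type of $A$, and hence the invariant factors $d_1\mid\cdots\mid d_k$, is determined by $X$. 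Consequently two groups in series~(1) are isomorphic only if they share the same $(m,d_1,\ldots,d_k)$, and a positive-dimensional group of series~(1) is never isomorphic to a finite one (they have different topological dimension), nor to a group of series~(2) or~(3), which are finite.

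Finally, the finite members of all three series carry the discrete topology, so a topological isomorphism between them is the same as an abstract isomorphism; hence the isomorphisms among them are exactly those of Theorem~\ref{theorem_main_fg}, namely~\eqref{eq_main_iso1} and~\eqref{eq_main_iso2}. Combined with the distinctness of the positive-dimensional groups, this shows these are all isomorphisms, completing the proof. I expect the main obstacle to be the reconstruction step: verifying compactness of $G$ from the properness of the ramified covering, and above all recovering the finite part $A$ intrinsically from $X$ through the component two-valued group $X/X_0\cong X^{\ba}_A$, so that Proposition~\ref{propos_different} can be brought to bear; the reduction via Theorem~\ref{theorem_lc}, Theorem~\ref{theorem_main_fg} and the Gleason--Yamabe corollary is by contrast fairly direct.
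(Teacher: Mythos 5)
Your proposal is correct and follows essentially the same route as the paper: reduce via Theorem~\ref{theorem_lc} and Theorem~\ref{theorem_main_fg}, then separate the listed groups using the identity component $X_0\cong (S^1)^m/\iota_{\ba}$ (distinguished by dimension) and the quotient $X/X_0\cong X^{\ba}_{d_1,\ldots,d_k}$, whose isomorphism type determines the $d_i$. You merely spell out a few steps the paper leaves implicit (properness of $G\to X$ forcing $G$ compact, hence $n=0$ and $A$ finite), which is a welcome but not substantively different elaboration.
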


\begin{proof}
The theorem follows almost immediately from Theorems~\ref{theorem_lc} and ~\ref{theorem_main_fg}.
We only need to verify that the topological two-valued groups~$X^{\ba}_{d_1,\ldots,d_k}(m)$ corresponding to different collections $(m;d_1,\ldots,d_k)$ are pairwise non-isomorphic and, for~$m\ge 1$, are not isomorphic to two-valued groups of the unipotent and special series. To do this, note that the connected component of the identity in a two-valued group $X=X^{\ba}_{d_1,\ldots,d_k}(m)$ is the group~$X_0=X^{\ba}_{\varnothing}(m) = (S^1)^m/\iota_{\ba}$, and the quotient group~$X/X_0$ is isomorphic to the two-valued group $X^{\ba}_{d_1,\ldots,d_k}$. Moreover, the topological two-valued groups~$X^{\ba}_{\varnothing}(m)$ are pairwise non-isomorphic, since they have different dimensions, and the two-valued groups $X^{\ba}_{d_1,\ldots,d_k}$ are pairwise non-isomorphic by Lemma ~\ref{theorem_main_fg}. This completes the proof of the theorem.
\end{proof}

\section{Algebraic two-valued groups}\label{section_alg}

We now discuss the algebraic version of commutative involutive groups following \cite{B-90,Buc06, BuVe19}. For simplicity, we will work over the field of complex numbers~$\mathbb C.$

In the simplest setting, the algebraic two-valued multiplication law $(x,y)\mapsto z=x*y$ is given in local coordinates by the equation $F(x,y,z)=0$, where $F(x,y,z)$ is a polynomial, which is symmetric in the variables $x,y,z$ and of degree 2 in each of them.
The two-valued multiplication law must satisfy the associativity condition
$$
(x*y)*z=x*(y*z)
$$
in the sense of the correspondence theory: the systems of equations
$$
\left\{
\begin{aligned}
 F(x,y,u)&=0,\\ F(u,z,w)&=0
\end{aligned}
\right.
$$
and
$$
\left\{
\begin{aligned}
F(y,z,v)&=0, \\ F(x,v,w)&=0
\end{aligned}
\right.
$$
after the elimination of $u$ and $v$ define the same set $(x,y,z,w) \in \mathbb C^4.$
We also assume that $$F(0,y,z)=(z-y)^2,$$ which means that 0 is the (strong) identity of the group. Note that due to symmetry we have the equality $F(x,y,0)=(x-y)^2,$ which means that the corresponding group is automatically involutive. And vice versa, the addition law of an involutive two-valued group must be symmetric in all three variables, see Lemma ~\ref{lem_basic}.

Such groups were classified in \cite{B-90,BuVe19}, where it was shown that the corresponding polynomial must be of the form
\begin{equation}
\label{univ}
F(x,y,z)=(x+y+z-a_2 xyz)^2 -4(1 + a_3 xyz) (xy+xz+yz+a_1 xyz),
\end{equation}
where $a_1,a_2,a_3$ are arbitrary parameters. In general, the equation $F=0$ for such a polynomial defines the coset group $\mathcal E/\iota_{\ba}$, where $\mathcal E$ is an elliptic curve (considered as an abelian group) and $\iota_{\ba}\colon \mathcal E \to \mathcal E$ is the antipodal involution.

More precisely, consider an elliptic curve in the standard Weierstrass form $$v^2=4u^3-g_2u-g_3$$ and a point $\alpha$ on it, and set the corresponding parameters as
\begin{equation}
\label{param}
a_1=3\wp(\alpha),\quad a_2=3\wp(\alpha)^2-\frac{g_2}{4},\quad a_3=\frac{1}{4}(4\wp(\alpha)^3-g_2\wp(\alpha)-g_3),
\end{equation}
where $\wp$ is the classical Weierstrass elliptic function satisfying the equation
$$
(\wp')^2=4\wp^3-g_2\wp-g_3.
$$

\begin{theorem}[\cite{B-90,BuVe19}]
The equation $F(x,y,z)=0$ with the symmetric polynomial $F(x,y,z),$ satisfying the condition $F(0,y,z)=(z-y)^2,$ defines the structure of an algebraic two-valued group if and only if $F$ has the form (\ref{univ}).

The multiplication law $F(x,y,z)=0$ with $F$ of the form (\ref{univ}) with parameters (\ref{param}) is reduced to the form $X\pm Y\pm Z=0$ by a change of variables
\begin{equation}\label{conju}
x=(\wp(X)+\wp(\alpha))^{-1},\quad y=(\wp(Y)+\wp(\alpha))^{-1},\quad z=(\wp(Z)+\wp(\alpha))^{-1}.
\end{equation}
The corresponding two-valued group is isomorphic to the involutive coset two-valued group $\mathcal E/\iota_{\ba}=\mathbb{CP}^1$, where $\mathcal E$ is the elliptic curve given by the equation
\begin{equation}\label{curve}
v^2=u^3+a_1u^2+a_2u+a_3,
\end{equation}
and $\iota_{\ba}$ is its involution $v\mapsto -v.$

In the case where the roots of the polynomial on the right side coincide, the elliptic curve degenerates into a rational one, and the corresponding two-valued involutive groups are isomorphic to the coset groups $\mathbb C^*/\iota_{\ba}$ and $\mathbb C/\iota_{\ba}$, where $\iota_{\ba}(z)=z^{-1}$ and $\iota_{\ba}(z)=-z$ respectively.
\end{theorem}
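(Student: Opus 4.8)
The plan is to treat the assertions separately: first the characterization of the admissible polynomials~$F$, and then the explicit identification of the resulting two-valued group with a coset group of an elliptic curve, together with its degenerations.

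First I would establish the \emph{necessity} of the form~\eqref{univ}. Starting from the most general polynomial $F(x,y,z)$ that is symmetric in its three arguments, of degree~$2$ in each of them, and normalized by $F(0,y,z)=(z-y)^2$, one is left with only finitely many free coefficients. The associativity requirement is then imposed in the correspondence-theoretic form stated above: eliminating the intermediate variable~$u$ from $\{F(x,y,u)=0,\ F(u,z,w)=0\}$ and the intermediate variable~$v$ from $\{F(y,z,v)=0,\ F(x,v,w)=0\}$ must yield the same hypersurface in $(x,y,z,w)$-space. Equating the two eliminants produces a system of polynomial identities in the unknown coefficients of~$F$; solving it forces all of them to be expressible through three free parameters $a_1,a_2,a_3$ in exactly the pattern~\eqref{univ}. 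This is the computational core of the theorem and was carried out in~\cite{B-90}; I would reproduce only its outcome, since the elimination itself is lengthy.

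Second, I would prove \emph{sufficiency} together with the coset description by invoking the classical addition theorem for the Weierstrass function. For the curve $v^2=4u^3-g_2u-g_3$ the condition $X+Y+Z=0$ is equivalent to the collinearity of the three points $(\wp(X),\wp'(X))$, $(\wp(Y),\wp'(Y))$, $(\wp(Z),\wp'(Z))$; writing the common line as $v=\lambda u+\mu$, the values $P=\wp(X)$, $Q=\wp(Y)$, $R=\wp(Z)$ are the three roots of $4u^3-\lambda^2u^2-(2\lambda\mu+g_2)u-(\mu^2+g_3)=0$, and eliminating $\lambda,\mu$ through Vieta's formulas gives the symmetric biquadratic relation
\begin{equation*}
16(P+Q+R)(4PQR-g_3)=\bigl(4(PQ+QR+RP)+g_2\bigr)^2.
\end{equation*}
Since $\wp$ is even, this relation detects only the cosets $\{\pm X\}$, that is, it is exactly the multiplication law of $\mathcal E/\iota_{\ba}$. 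It then remains to substitute the M\"obius change of coordinate $x=(\wp(X)+\wp(\alpha))^{-1}$, i.e. $P=x^{-1}-\wp(\alpha)$, into this relation, clear denominators by multiplying through by $x^2y^2z^2$, and check that the result is $F(x,y,z)=0$ with $F$ of the form~\eqref{univ} and with $a_1,a_2,a_3$ given by~\eqref{param}. The normalization is then automatic: $x=0$ corresponds to $X=0$, whence $R=Q$, reproducing $F(0,y,z)=(z-y)^2$. Because the multiplication so obtained is literally that of the coset group $\mathcal E/\iota_{\ba}=\mathbb{CP}^1$, associativity, commutativity and involutivity are inherited from the coset construction, which both proves that~\eqref{univ} defines an algebraic two-valued group and identifies it.

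Finally, the degenerate cases follow by specialization. When the cubic $u^3+a_1u^2+a_2u+a_3$ acquires a multiple root the curve~\eqref{curve} becomes singular; its smooth locus is the multiplicative group~$\mathbb{C}^*$ in the nodal case and the additive group~$\mathbb{C}$ in the cuspidal case, and the antipodal involution descends to $z\mapsto z^{-1}$ and $z\mapsto -z$ respectively, yielding the two limiting coset groups. I expect the main obstacle to be the necessity step: turning the correspondence-theoretic associativity into explicit equations on the coefficients of~$F$ and solving them is the only genuinely heavy part, whereas the reduction via~$\wp$ and the degenerations are direct verifications.
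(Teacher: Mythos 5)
The paper does not prove this theorem: it is stated with attribution to \cite{B-90,BuVe19} and used as a citation, so there is no internal proof to compare your proposal against. Judged on its own, your outline follows what is evidently the original route of those references, and the parts you actually carry out are correct. In particular, your biquadratic relation checks out: with $v=\lambda u+\mu$ the three intersection abscissae $P,Q,R$ satisfy $\lambda^2=4(P+Q+R)$, $\mu^2=4PQR-g_3$ and $2\lambda\mu=-\bigl(4(PQ+QR+RP)+g_2\bigr)$, and squaring the last identity gives exactly $16(P+Q+R)(4PQR-g_3)=\bigl(4(PQ+QR+RP)+g_2\bigr)^2$, which is symmetric and of degree $2$ in each variable, so the substitution $P=x^{-1}-\wp(\alpha)$ followed by clearing $x^2y^2z^2$ does produce a polynomial of the required shape, and the coset construction then supplies associativity and involutivity for free.

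Two points deserve to be made explicit. First, the necessity half is not actually proved in your text: you reduce it to "solve the system of identities coming from equating the two eliminants" and then defer entirely to \cite{B-90}. Since that elimination is the only place where one learns that \emph{no} polynomials outside the family \eqref{univ} work, a self-contained proof would have to carry it out (or at least exhibit the resulting system and its solution); as written this half is a citation, not an argument. Second, your sufficiency argument only covers those triples $(a_1,a_2,a_3)$ that arise from \eqref{param}; to conclude that \emph{every} $F$ of the form \eqref{univ} defines a two-valued group you should note that the map $(g_2,g_3,\wp(\alpha))\mapsto(a_1,a_2,a_3)$ is onto: given $(a_1,a_2,a_3)$ set $\wp(\alpha)=a_1/3$, then $g_2=4\bigl(a_1^2/3-a_2\bigr)$ and $g_3=4\wp(\alpha)^3-g_2\wp(\alpha)-4a_3$, with the singular discriminant locus accounting precisely for the degenerate cases $\mathbb C^*/\iota_{\ba}$ and $\mathbb C/\iota_{\ba}$. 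With those two additions the sketch becomes a complete proof along the classical lines.
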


Interestingly, the general family (\ref{univ}) includes all cases of the formal two-valued groups that first appeared in topology in the works of V.\,M.~Buchstaber and S.\,P.~Novikov \cite{BuNo71,B-73,B-75}: the case with $a_1=0$ corresponds to elliptic cohomology, the case $a_2=a_3=0$ corresponds to $K$-theory, and the most degenerate case $a_1=a_2=a_3=0$ corresponds to the ordinary cohomology (for details see \cite{BuVe19}).

Note that, as it follows from the parametrization (\ref{conju}), from the algebro-geometric point of view, the general equation (\ref{univ}) defines the (affine part) of the special Kummer surface $\mathcal E\times\mathcal E/{\pm I}$. General Kummer surfaces corresponding to Jacobi varieties of curves of genus 2 were discussed in V.\,M.~Buchstaber and V.~Dragovic \cite{BD} in connection with integrable billiards. The corresponding two-valued groups on the Kummer surfaces embedded in $\mathbb{CP}^3$ are explicitly described in terms of the addition laws for the Klein $\wp$-functions \cite{BL}.

It is natural to assume that general algebraic two-valued commutative involutive groups are obtained by the coset construction from abelian varieties and their degenerations, but the details of a precise formulation need to be clarified. The results of A.~Pillay~\cite{Pil09}, using the Hrushovski theorem from model theory, may be useful for the proof.

The general theory of the one-dimensional two-valued formal groups was actually completed by V.\,M.~Buchstaber in \cite{B-75},~\cite{B-75-2}.
In particular, the concept of canonical invariant operators $d_n$ was introduced there and it was shown that for two-valued formal addition laws given by the relation
$$
z^2-\Theta_1(x,y)z+\Theta_2(x,y)=0,
$$
the canonical operator $d_1$ has the form
\begin{equation}\label{canon}
d_1=\frac{1}{2}\varphi_1(x)\frac{d}{dx}+\frac{1}{8}\varphi_2(x) \frac{d^2}{dx^2},
\end{equation}
where
\begin{gather*}
\varphi_1(x)=\frac{\partial \Theta_1(x,y)}{\partial y}|_{y=0}, \qquad \varphi_2(x)=\frac{\partial \sigma(x,y)}{\partial y}|_{y=0}, \\ \sigma(x,y)=\Theta_1^2(x,y)-4\Theta_2(x,y).
\end{gather*}
It was also shown there that the algebra of invariant operators on the group is freely generated by the canonical operator $d_1$ and a general formula was found for the exponential of such groups.

It is easy to check that for algebraic two-valued groups (\ref{univ}) the canonical operator (\ref{canon}) in the parametrization (\ref{conju}) is simply the second derivative operator $\frac{d^2}{dX^2}$ in the elliptic coordinate $X$ (or, the special case of the Lamé operator $\mathcal L=\frac{d^2}{dX^2}-m(m+1)\wp(X)$ with $m=0$), which generates an algebra of differential operators that are invariant under shifts on an elliptic curve (\ref{curve}) and involution $\iota_{\ba}$. The corresponding exponential is written explicitly in elliptic functions in \cite{B-90}.

\section{Concluding remarks}\label{section_conclusion}

The structural theory of $n$-valued groups is at the very beginning of its development and most of the questions are open.

In our paper, we completed the classification of the two-valued commutative involutive groups in finitely generated case and in some important topological classes.

In the general case, the most promising direction seems to be the development of the theory of the
$n$-valued Lie groups. In particular, in the case of the $n$-valued group $G/H$, $H \subset \Aut G$ defined by the coset construction from the Lie group $G$, it is natural to study the algebra of differential operators on $G$, which are invariant under (say, left) shifts on $G$ and actions of the automorphism group $H.$
The corresponding algebra can be considered as an analogue of the universal enveloping algebra for the $n$-valued Lie group, the study of which in the general case seems to be an important and interesting problem. 

The example of an elliptic two-valued group shows the existence of a structure of a two-valued Lie group on a two-dimensional sphere, which does not admit the usual group structure. The question of topological obstructions to a $n$-valued group structure is of great interest.
As shown by H.~Hopf ~\cite{Hopf41}, if a topological space admits a (single-valued) multiplication with identity, then in the cohomology ring of this space there is a structure of the Hopf algebra. The nonexistence of such a structure is an obstruction to the existence of multiplication on a topological space. It was shown by V.\,M.~Buchstaber and E.~Rees~\cite{BR-96, BR-97} (see also \cite{BR-08}) that the existence of an $n$-valued multiplication on a space leads to a similar structure, which they called \textit{Hopf $n$-algebra}. In the case of four-dimensional manifolds, the corresponding obstructions were studied by T.E.~Panov~\cite{Panov}. He obtained an explicit list of homotopy types of four-dimensional manifolds whose cohomology admits the structure of a Hopf $2$-algebra. As a result, only manifolds from this list may admit two-valued multiplication. Note that the simply-connected singular Kummer variety $\mathcal{E}\times\mathcal{E}/\pm I\approx T^4/\iota_{\ba}$ is a two-valued group, which realizes the case of the second Betti number $6$ pointed out by Panov.  A number of results on multivalued multiplications on spheres were obtained by D.V.~Gugnin~\cite{Gugnin}.

In the theory of formal two-valued groups, significant advances were made in the 1970s in the works of V.M.~Buchstaber \cite{B-73,B-75}, who was motivated by the applications to the cobordism theory \cite{BuNo71}. These works, in particular, revealed a deep connection with the Delsarte-Levitan theory of generalized shift operators \cite{Lev73} (see also in this direction the paper~\cite{Gur}).
The methods of papers \cite{B-73,B-75}, as well as connections with the theory of quantum groups and Hopf algebras \cite{BR-96, BR-97}, may be useful in the more general context of our problems.

In the theory of $n$-valued groups there are important constructions arising from various branches of mathematics. One of such constructions is the  $n$-valued group structure on the set of the irreducible representations of a noncommutative group, see ~\cite{Buc06}. It is natural to consider this construction as an interpretation in group-theoretic terms of the Tanaka--Krein duality (see ~\cite{BaIt87}), which is an analogue of Pontryagin duality for non-commutative groups. The duality theory for the $n$-valued groups is discussed in ~\cite{BuMo03}, ~\cite{Yag09},~\cite{Yag10}.

Let us mention one more construction. The group algebra of a coset $n$-valued group is a subalgebra of the group algebra of the original group generated by the elements given by the sums of the points of the orbits of the action of the automorphism group. A more general example of group origin is given by the so-called Schur rings (see ~\cite{Wie}). In connection with the problem of Schur rings some examples of non-coset $n$-valued groups were constructed using number-theoretic methods in ~\cite{BVEP96}.

Finally, there are important connections with the theory of integrable systems ~\cite{BuVe96,Ves91, Ves92}, where also many questions remain open (see, for example, the discussion and references in the review \cite{BuVe19}).
In particular, in the works of A.P.~Veselov \cite{Ves91MZ, Ves92} it was proposed an approach to the integrability of multi-valued mappings in terms of the growth of the number of images of their iterations. The connection of this approach with the theory of multivalued groups is discussed in \cite{BuVe96}.
It is interesting to note that the original proof of Gromov's remarkable theorem that finitely generated groups of polynomial growth are almost nilpotent, uses the results of E.~Gleason, D.~Montgomery and L.~Zippin, see ~\cite{Gromov}.

\end{document}